\newcounter{my_enumerate_counter}
\newcommand{\pushcounter}{\setcounter{my_enumerate_counter}{\value{enumi}}}
\newcommand{\popcounter}{\setcounter{enumi}{\value{my_enumerate_counter}}}
\newcommand{\cstu}{\mathrm{C}^*_u}
\newcommand{\roeq}{\mathrm{Q}^*_u}
\DeclareMathOperator{\Ker}{Ker}
\DeclareMathOperator{\Fin}{Fin}
\DeclareMathOperator{\End}{End}
\DeclareMathOperator{\ind}{ind}
\newcommand{\bfB}{\mathbf B}
\newcommand{\bfD}{\mathbf D} 
\newcommand{\bfE}{\mathbf E} 
\newcommand{\bbP}{\mathbb P}
\newcommand{\Clop}{\mathrm{Clop}}
\newcommand{\bfF}{\mathbf F} 
\DeclareMathOperator{\Ext}{Ext}
\DeclareMathOperator{\Ad}{Ad}
\newcommand{\norm}[1]{\left\lVert #1 \right\rVert}
\DeclareMathOperator{\id}{id}
\newcommand{\cU}{\mathcal U}
\newcommand{\forces}{\Vdash}
\newcommand{\bbF}{{\mathbb F}}
\newcommand{\bbZ}{{\mathbb Z}}
\newcommand{\bbT}{\mathbb T}
\newcommand{\bbN}{{\mathbb N}}
\newcommand{\bbC}{\mathbb C}
\newcommand{\bbQ}{\mathbb Q}
\newcommand{\bbR}{\mathbb R}
\newcommand{\cJ}{{\mathcal J}}
\newcommand{\cI}{{\mathcal I}}
\newcommand{\cM}{{\mathcal M}}
\newcommand{\cN}{{\mathcal N}}
\newcommand{\calL}{\mathcal L}
\newcommand{\cX}{{\mathcal X}}
\newcommand{\cY}{{\mathcal Y}}
\newcommand{\cZ}{{\mathcal Z}}
\newcommand{\cA}{{\mathcal A}}
\newcommand{\fc}{\mathfrak c} 
\newcommand{\fd}{\mathfrak d} 
\newcommand{\fA}{\mathfrak A}
\newcommand{\rs}{\restriction}
\newcommand{\cC}{\mathcal C}
\newcommand{\cF}{\mathcal F}
\newcommand{\cV}{\mathcal V}
\newcommand{\cG}{\mathcal G}
\newcommand{\cB}{\mathcal B}
\newcommand{\cK}{\mathcal K}
\newcommand{\cQ}{\mathcal Q}
\newcommand{\calD}{\mathcal D}
\newtheorem{thm}{Theorem}[section]
\newtheorem{theorem}[thm]{Theorem}
\newtheorem{corollary}[thm]{Corollary}
\newtheorem{conjecture}[thm]{Conjecture}
\newtheorem{metaconj}[thm]{Conjecture Template}
\newtheorem{question}[thm]{Question}
\newtheorem{claim}[thm]{Claim}
\newtheorem{lemma}[thm]{Lemma}
\newtheorem{proposition}[thm]{Proposition}
\newtheorem{convention}[thm]{Convention}
\DeclareMathOperator{\Aut}{Aut}
\DeclareMathOperator{\Homeo}{Homeo}
\DeclareMathOperator{\propg}{prop}
\theoremstyle{definition}
\newtheorem{definition}[thm]{Definition}
\newtheorem{problem}[thm]{Problem}
\newtheorem{example}[thm]{Example}
\newcommand{\cP}{\mathcal P} 
\DeclareMathOperator{\Th}{Th}
\newcommand{\cstar}{$\mathrm{C}^*$}
\DeclareMathOperator{\dom}{dom}
\newcommand{\sfF}{\mathsf F}
\newcommand{\sfG}{\mathsf G}
\newcommand{\CH}{\mathrm{CH}}
\newcommand{\MA}{\mathrm{MA}}
\newcommand{\MAsigmalinked}{\mathrm{MA_{\sigma-linked}}}
\newcommand{\OCA}{\mathrm{OCA_T}}
\newcommand{\OCAsharp}{\mathrm{OCA^\#}}
\newcommand{\OCAi}{\mathrm{OCA_\infty}}
\newcommand{\ZFC}{\mathrm{ZFC}}
\newcommand{\PFA}{\mathrm{PFA}}
\newcommand{\bSigma}{\mathbf \Sigma}
\newcommand{\bPi}{\mathbf \Pi}
\DeclareMathOperator{\Null}{Null}
 \numberwithin{equation}{section}
\renewcommand{\phi}{\varphi}
\DeclareMathOperator{\Part}{Part_{\bbN}}
\DeclareMathOperator{\Exh}{Exh}
\DeclareMathOperator{\NWD}{NWD}
\DeclareMathOperator{\NULL}{NULL}
\newcommand{\cZlog}{\cZ_{\log}}
\newcommand{\META}{Conjecture Template~\ref{Meta.1}}
\DeclareMathOperator{\even}{even}\DeclareMathOperator{\odd}{odd}
\newcommand{\twolo}{\{0,1\}^{<\bbN}}\newcommand{\twoo}{\{0,1\}^{\bbN}}
\newcommand{\ucup}{\underline\cup}
\newcommand{\sfD}{\mathsf D}
\newcommand{\enumthree}{\setenumerate[1]{label=(\thesubsection.\arabic*)}}
\newcommand{\enumtwo}{\setenumerate[1]{label=(\thesection.\arabic*)}}
\newcommand{\enumone}{\setenumerate[1]{label=(\arabic*)}}
\date{\today}
\title{Corona rigidity}
\author[I. Farah]{Ilijas Farah}
\address{Department of Mathematics and Statistics\\
York University\\
4700 Keele Street\\
North York, Ontario\\ Canada, M3J
1P3 and Matemati\c vki Institut SANU, Kneza Mihaila 36, Belgrade 11001, Serbia}
\urladdr{https://ifarah.mathstats.yorku.ca}
\email{ifarah@yorku.ca}
\author[S. Ghasemi]{Saeed Ghasemi}
\address{Department of Mathematics and Statistics\\
York University\\
4700 Keele Street\\
North York, Ontario\\ Canada, M3J 1P3 and Department of Mathematical Sciences, Lakehead University\\
	955 Oliver Road,
	Thunder Bay, Ontario\\
	Canada P7B 5E1 (current address)}
\urladdr{https://www.lakeheadu.ca/users/G/sghasem2/node/201808}
\email{sghasem2@lakeheadu.ca}
\author[A. Vaccaro]{Andrea Vaccaro}
\address{Mathematisches Institut, Fachbereich Mathematik und Informatik der
Universit\"at M\"unster, Einsteinstrasse 62, 48149 M\"unster, Germany.}
\email{avaccaro@uni-muenster.de}
\urladdr{https://sites.google.com/view/avaccaro}
\author[A. Vignati]{Alessandro Vignati}
\address{Institut de Math\'ematiques de Jussieu (IMJ-PRG)\\
Universit\'e Paris Cit\'e and Institut Universitaire de France\\
B\^atiment Sophie Germain\\
8 Place Aur\'elie Nemours \\ 75013 Paris, France}
\email{vignati@imj-prg.fr}
\urladdr{https://www.automorph.net/avignati}
\begin{document}

\maketitle

\begin{abstract}
We give a unified overview of the study of the effects of additional set theoretic axioms on quotient structures. Our focus is on rigidity, measured in terms of existence (or rather non-existence) of suitably \emph{non-trivial} automorphisms of the quotients in question. A textbook example for the study of this topic is the Boolean algebra $\cP(\bbN)/\Fin$, whose behavior is the template around which this survey revolves: Forcing axioms imply that all of its automorphisms are \emph{trivial}, in the sense that they are induced by almost permutations of $\bbN$, while under the Continuum Hypothesis this rigidity fails and $\cP(\bbN)/\Fin$ admits uncountably many non-trivial automorphisms. We consider far-reaching generalisations of this phenomenon and present a wide variety of situations where analogous patterns persist, focusing mainly (but not exclusively) on the categories of Boolean algebras, \v Cech--Stone remainders, and \cstar-algebras. We survey the state of the art and the future prospects of this field, discussing the major open problems and outlining the main ideas of the proofs whenever possible.

\end{abstract}
\begin{quote}
	\emph{Dedicated to Saharon Shelah, without whom the topic of this survey would not have existed.} 
\end{quote}

\setcounter{secnumdepth}{4}
\setcounter{tocdepth}{2}

\section{Introduction}


In 1956 W. Rudin proved that if the Continuum Hypothesis (CH) holds then $\cP(\bbN)/\Fin$, the quotient of the Boolean algebra of $\cP(\bbN)$ by the Fr\'echet ideal (i.e., the ideal of finite subsets of $\bbN$), $\Fin$, has $2^\mathfrak{c}$ automorphisms\footnote{This is not the whole truth, see \S\ref{S.Abel}.}, with $\mathfrak{c} = | \bbR |$ being the size of the continuum (\cite{Ru}). In 1979 Shelah described a forcing extension of the universe where all automorphisms of $\cP(\bbN)/\Fin$ are induced by an {almost permutation} of $\bbN$, namely a bijection between two cofinite subsets of $\bbN$ (\cite{Sh:Proper}). We refer to these automorphisms as \emph{trivial}.
Later on, in the work of several authors, Shelah’s conclusion was proved to follow from forcing axioms (\cite{ShSte:PFA, Ve:OCA}).
Since there are only $\fc$ almost permutations of $\bbN$, Shelah's theorem contradicts Rudin's, while Rudin's result shows the failure of the rigidity of $\cP(\bbN)/\Fin$ under CH.

Rudin's result is, by today's standards, trivial: The algebra $\cP(\bbN)$ is countably saturated\footnote{I.e., $\aleph_1$-saturated.} (in model-theoretic sense; this is  not to be confused with the set-theoretic notion of a saturated ideal, also not to be confused with the notion of countable saturation as defined in \cite[\S 2.3]{ChaKe}), hence $\CH$ implies that it is saturated. A routine back-and-forth argument produces a complete binary tree of height $\aleph_1=\fc$ whose branches are distinct automorphisms. The fact that the theory of atomless Boolean algebras admits elimination of quantifiers facilitates the construction by implying that any partial isomorphism between countable subalgebras of $\cP(\bbN)/\Fin$ can be extended to an automorphism. This is, however, a convenience rather than a necessity. 
On the other hand, Shelah's construction of an oracle-cc forcing extension of the universe in which every automorphism of $\cP(\bbN)/\Fin$ is given by an almost permutation is, unlike most of the 1970s memorabilia, still as formidable as when it first appeared and its ramifications reverberate throughout the subject this survey is about. 

Fast-forwarding 30 years, the same pattern was isolated again in a different quotient structure, this time originating in the context of algebras of operators on Hilbert
spaces (see \S\ref{ss:pre} for the definitions): The Calkin algebra. The Calkin algebra $\cQ(H)$ is the quotient of the \cstar-algebra of all bounded linear operators on a complex, infinite-dimensional, separable Hilbert space~$H$, by the ideal of all compact operators. In this case, we say that an automorphism of $\cQ(H)$ is \emph{inner} if it is implemented by a unitary in $\cQ(H)$ (equivalently, it is lifted by a conjugation by an isometry between two closed subspaces of $H$ with finite co-dimension).
In the early 20th century Weyl and von Neumann initiated the study of unitary equivalence of self-adjoint operators modulo compact perturbations. Their elegant characterisation of this equivalence was extended to normal operators (\cite{berg, sikonia}) and then to essentially normal operators, those whose images in $\mathcal Q(H)$ are normal (\cite{BrDoFi:Unitary}). While two normal operators are unitarily equivalent modulo compact perturbations if and only if their images in the Calkin algebra are conjugate by an automorphism, it was not clear whether this conclusion could be extended to essentially normal operators. 
 In 1973 (a few years before Shelah’s result) Brown, Douglas, and Fillmore asked whether $\cQ(H)$ has an \emph{outer} (i.e. not inner) automorphism. If one replaces `trivial automorphism’ with `inner automorphism’ then the exact analogs of Rudin’s and Shelah’s results hold for $\cQ(H)$: Phillips and Weaver (\cite{PhWe:Calkin}) showed that CH implies that $\cQ(H)$ admits $2^\fc$
 automorphisms (most of them outer), while the first author (\cite{Fa:All}) proved that under forcing axioms all of its automorphisms are inner.
 This implies that, consistently with ZFC, two essentially normal operators are unitarily equivalent modulo compact perturbations if and only if their images in the Calkin algebra are conjugate by an automorphism. It remains open whether this is a theorem of ZFC. 

These theorems taken together form only the tip of the iceberg and this survey is largely motivated by the remarkable fact that many quotient structures follow this
dichotomous paradigm. In \S\ref{S.Intro} we will introduce a general framework of Borel quotient structures, leading to the following question.

\begin{question}\label{Q.main}
	Under what assumptions is it true that every isomorphism (homomorphism) between Borel quotient structures $\cM/E$ and $\cN/F$ has a Borel lifting?
\end{question}

The assumptions referred to in this question come in three varieties: 
\begin{enumerate}
	\item \label{1.item.1} The assumptions on the original structures $\cM$ and $\cN$. 
	\item \label{1.item.2} The assumptions on the congruence relations $E$ and $F$. 
	\item \label{1.item.3} The additional set-theoretic assumptions. 
\end{enumerate}
In terms of~\eqref{1.item.1}, we will mostly concentrate on the categories in which we have something nontrivial to say: Boolean algebras, \cstar-algebras, fields, linear orderings, trees, and sufficiently random graphs (see also questions in \S\ref{S.Absoluteness}). Regarding~\eqref{1.item.2}, we will assume that $E$ and $F$ are Borel (but see a short paragraph in \S\ref{S.other} on ultrapowers). The two long and central sections, \S\ref{S.Independence} and \S\ref{S.Independence2}, are devoted to~\eqref{1.item.3}.

In this survey we endeavour to revisit the most influential results and discuss the new developments in this field, while also emphasizing the analogies and pointing to the differences
in the methods used in these rigidity proofs. Our ultimate goal is to give a broad exposure to the emerging unifying theory of this subject, and to provide a unique meta-mathematical framework capable of describing, using a common language, the concept of rigidity of Borel quotients across different categories.

\subsubsection*{Summary}
This manuscript was written by four authors, and each has brought their view of the subject as well as some idiosyncrasies. It is composed of twelve sections. Each section is as self-contained and autonomous as possible (accordingly, some redundancies can be noted here and there), yet most sections rely on \S\ref{S.Intro}. One can safely skip any number of sections as there is only a minimal amount of references to other sections. The survey is also sprinkled with conjectures and open problems.

In \S\ref{S.Intro} we present the abstract rigidity problem.~\S\ref{S.Abel} serves as a warm-up. Here we present some basic examples of quotient structures from the worlds of Boolean algebras and of topological spaces, in the form of \v{C}ech--Stone remainders. In \S\ref{3a.BDF} we introduce \cstar-algebras as well as the framework which eventually motivated and ignited the study of rigidity questions in the noncommutative setting. \S\ref{S.Ulam} is devoted to Ulam-stability and to the problem of obtaining algebraically trivial liftings from topologically trivial ones.\footnote{An isomorphism is called topologically trivial if it has a Borel lifting, see Definition~\ref{Def.Trivial}.} This topic, as interesting and as necessary for understanding quotient rigidity, has little to do with logic and readers interested only in logical aspects may want to skip it. \S\ref{S.Independence} and \S\ref{S.Independence2} are the main body of this survey, and they portray two faces of the same coin. The former describes various constructions of isomorphisms of Borel quotient structures which are not topologically trivial
, under set-theoretic assumptions such as CH and some of its weakenings. The latter, on the other hand, presents rigidity results, either obtained via forcing or as a consequence of Forcing Axioms, and instances where all automorphisms of certain quotient structures are topologically trivial. In \S\ref{S.endo} we explore rigidity results for maps more general than isomorphisms and automorphisms, such as endomorphisms of the Calkin algebra. \S\ref{S.large} is devoted to large quotients of the form $\cP(\kappa)/\Fin$ and $\cQ(\ell_2(\kappa))$ for uncountable $\kappa$, while \S\ref{S.Roe} focuses on rigidity results on uniform Roe and Higson coronas, \cstar-algebras arising from coarse metric spaces. In \S\ref{S.other} we provide a quick overview of a selection of topics loosely related to the contents of this survey. Finally \S\ref{S.Absoluteness} is reserved for remarks of metamathematical nature on the rigidity problems discussed in the rest of the survey.

\subsection*{Acknowledgments} We would like to thank Jim Broadbent, Juris Stepr\= ans, and Jacek Tryba for pointing to inaccuracies and typos in the original draft of this survey.
We are also thankful to the anonymous referee for their valuable suggestions on the exposition of the material in this manuscript.

\subsection*{Funding}
IF was supported by NSERC.
AVa was supported by the Deutsche Forschungsgemeinschaft (DFG, German Research Foundation) under Germany’s Excellence Strategy EXC 2044 –390685587, Mathematics M\"unster: Dynamics–Geometry–Structure, through SFB 1442, by the ERC Advanced Grant 834267 - AMAREC, and by the European Union’s Horizon 2020 research and innovation program under the Marie Sk\l odowska-Curie grant agreement No. 891709. AVi was supported by the Institut Universitaire de France.

\tableofcontents

 \section{The general rigidity question}\label{S.Intro}
 \enumtwo 
In this section we introduce a unifying framework for describing all the categories and rigidity instances considered in the rest of the survey. Readers interested in less abstract results are encouraged to skip ahead and refer to this section as needed. Two critically
important notions are that of \emph{topologically trivial} and \emph{algebraically trivial} automorphisms (respectively Definition \ref{Def.Trivial} and \ref{Def.AlgebraicallyTrivial}),
before which we isolate the class of quotients we aim to study, with a definition that generalises both the Boolean algebra $\cP(\bbN)/\Fin$ and the Calkin algebra $\cQ(H)$.
\begin{definition}\label{Def.Borel}
A \emph{Borel structure} $\cM$ in a signature $\calL$ is a structure whose universe~$M$ is a Polish space and such that the interpretations of all functions and relations are Borel (a general study of Borel structures was initiated by H. Friedman, see~\cite{Ste:Borel}). A \emph{Borel quotient structure} is obtained from $\cM$ by specifying a congruence $E$ on the universe $M$ that is a Borel equivalence relation; it is denoted $\cM/E$.\footnote{An equivalence relation on a Polish space $M$ is said to be \emph{Borel} if it is Borel when identified with a subset of $M^2$.} The quotient map is denoted~$\pi_E$. 
\end{definition}

Suppose that $\Phi\colon \cM/E\to \cN/F$ is a homomorphism between Borel quotient structures. A \emph{lifting} of $\Phi$ is a function $\Phi_*\colon \cM\to \cN$ such that the diagram on Fig.~\ref{Fig.lifting} commutes. 

\begin{figure}[h]
\begin{tikzpicture}
 \matrix[row sep=1cm,column sep=1.5cm] 
 {
& & \node (M1) {$\cM$}; & \node (M2) {$\cN$};&
\\
& & \node (Q1) {$\cM/E$}; & \node (Q2) {$\cN/F$} ;
\\
};
\draw (M1) edge [->] node [above] {$\Phi_*$} (M2);
\draw (Q1) edge [->] node [above] {$\Phi$} (Q2);
\draw (M1) edge [->] node [left] {$\pi_E$} (Q1);
\draw (M2) edge [->] node [left] {$\pi_F$} (Q2);
\end{tikzpicture}
\caption{A lifting $\Phi_*$ of $\Phi$.}\label{Fig.lifting}
\end{figure}

The Axiom of Choice (assumed throughout) implies that every homomorphism $\Phi$ between quotient structures has a lifting. Imposing the existence of a lifting with additional requirements results in more interesting assertions. 

\begin{definition} \label{Def.Trivial} An isomorphism $\Phi$ as in Fig.~\ref{Fig.lifting} is \emph{topologically trivial} if it has a Borel-measurable lifting. 
\end{definition}

As pointed out by many (not least the referee), a better terminology would be `Borel liftable’. Regrettably, this terminology has already taken root and it may be too late to change it now. The choice of Borel-measurable instead of (for example) continuous may appear to be arbitrary, in some situations the existence of a Borel-measurable lifting already implies the existence of a continuous one.

The (even more desirable) notion of an \emph{algebraically trivial} isomorphism is a bit trickier to define.
The obvious attempt, to require $\Phi_*$ to be an isomorphism, is too restrictive---defining an automorphism, even topologically trivial, of $\cP(\bbN)/\Fin$ with no lifting that is an automorphism is quite easy. (It is defined by its lifting, the map that sends $A\subseteq \bbN$ to $\{n+1\mid n\in A\}$.) In the case of Boolean algebras of the form $\cP(\bbN)/\cI$ for a Borel ideal $\cI$, a lifting is \emph{algebraically trivial} if it is a Boolean algebra homomorphism (this definition, and that of a topologically trivial lifting, comes from~\cite{Fa:Rigidity}). The existing theorems justify this definition as optimal, as we will see in \S\ref{S.Independence2}. However, in the case of more complicated objects, such as the Calkin algebra $\mathcal Q(H)$, it is not difficult to define a topologically trivial automorphism that cannot be lifted by a $^*$-homomorphism (see \S\ref{3a.BDF}; the obstruction is similar to that in the case of Boolean algebras---the automorphism is conjugation by a unitary with a nontrivial Fredholm index). In short, the definition of algebraically trivial depends on the category of interest. To keep a unifying approach, we import the following Gordian definition from~\cite{vignati2018rigidity}. 

\begin{definition} \label{Def.AlgebraicallyTrivial} An isomorphism $\Phi$ as in Fig.~\ref{Fig.lifting} is \emph{algebraically trivial} if it has a lifting that preserves as much of the algebraic structure as possible. 
\end{definition}

We move on to examples of trivial isomorphisms outside the abovementioned setting of Borel quotients of $\mathcal P(\bbN)$. The second part of the following example gives some justification for the current form of Definition~\ref{Def.AlgebraicallyTrivial}.

\begin{example} \label{Ex.inner} Suppose that $\cM=\cN$ is a Polish group and $E=F$ is the coset relation associated with a normal Borel subgroup $\cG\subseteq\cM$. If an automorphism~$\Phi$ of the quotient group $\cM/\cG$ is \emph{inner} (i.e., the conjugation $f\mapsto g f g^{-1}$ for some group element $g$) then $\Phi$ has a lifting that is both algebraically and topologically trivial.

In case of groups, every element defines an inner automorphism hence every inner automorphism of the quotient lifts to an inner automorphism. In more complicated categories, such as \cstar-algebras, this is no longer true. Inner automorphisms correspond to conjugation by unitary elements and not every unitary $u$ in the quotient can be lifted to a unitary. Nevertheless, if $a$ lifts $u$ then conjugation by $a$ is Borel-measurable and it preserves as much of the algebraic structure as possible as in Definition~\ref{Def.AlgebraicallyTrivial}. The appropriate definition of algebraically trivial isomorphism in this setting has to therefore accommodate for these conjugations.
\end{example}

An analogous statement to that in Example~\ref{Ex.inner} holds for \cstar-algebras (see \S\ref{3a.BDF}) and in any other category that has naturally defined inner automorphisms.\footnote{This does not obviously apply to the abstract definition of an inner automorphism studied in the context of abstract functorial classification in \cite{Ell:Towards}.} 

\begin{example}
If the signature of Borel structures $\cM$ and $\cN$ is empty and $E$, $F$ are (not necessarily Borel) equivalence relations, then the existence of an injective morphism from $\cM/E$ into $\cN/F$ with a Borel lifting is, by definition, equivalent to the assertion that the equivalence relation~$E$ is Borel-reducible to $F$ (e.g.,~\cite{Hj:Borel}). 
\end{example}


The question of whether it is possible to modify a topologically trivial lifting to an algebraically trivial one will be treated in \S\ref{S.Ulam}. 
Our focus in \S\ref{S.Independence}~and~\S\ref{S.Independence2} will be on the other side of the coin, Question~\ref{Q.main} from the introduction: 	Under what assumptions is it true that every isomorphism (homomorphism) between Borel quotient structures has a Borel lifting?
The remaining part of this survey revolves around the following meta-conjecture related to this question. 

\begin{metaconj} \label{Meta.1} For Borel quotient structures $\cM/E$ and $\cN/F$ we have the following statements. 
\enumone
\begin{enumerate}
 \item The Continuum Hypothesis (CH) implies that $\cM/E$ has $2^{\fc}$ automorphisms (and therefore $2^{\fc}$ topologically nontrivial automorphisms). It also implies that $\cM/E$ and $\cN/F$ are isomorphic, unless there is an obvious obstruction for this.\footnote{A reader familiar with Woodin's $\Sigma^2_1$ absoluteness theorem will recognise its relevance; more on this in \S\ref{S.Absoluteness}.} 
 \item \label{item2:conjecture} Forcing Axioms imply that all automorphisms of $\cM/E$ are topologically trivial. They also imply that every isomorphism between $\cM/E$ and $\cN/F$ is topologically trivial.\footnote{A heuristic argument for the assertion that there are no nontrivial isomorphisms has been given by K. P. Hart: If there were one, we would have thought of it. All work on Conjecture~\ref{Meta.1}.\ref{item2:conjecture} can be viewed as partial results towards proving Hart’s apothegm.}
 \end{enumerate}
\end{metaconj}

The results by Rudin and Shelah in \cite{Ru} and \cite{Sh:Proper} and by Phillips--Weaver and the first author in \cite{PhWe:Calkin} and \cite{ Fa:All} that were discussed in the introduction, precisely demonstrate that both instances of Conjecture Template \ref{Meta.1} turn into theorems for $\cP(\bbN)/\Fin$ and $\cQ(H)$. More recently, it was shown in \cite{de2023trivial} that \ref{item2:conjecture} can be extended to reduced products of fields, linear orderings, trees, and sufficiently random graphs. 
The present survey covers a remarkable number of Borel quotient structures that follow the same pattern, and we find the possibility that these results are instances of a general theorem taunting, to say the least. 

The following easy example from~\cite{Fa:Liftings} shows that there are situations in which the second part of \META{} fails, and this should not come as a surprise. 

\begin{example} \label{Ex.F2} Consider the group $G:=(\bbZ/2\bbZ)^\bbN$ with respect to the product topology. If $E$ is the coset equivalence relation associated with a subgroup of $G$, then the isomorphism type of the quotient $G/E$ depends only on its cardinality, $\kappa$. Moreover, if $\kappa$ is infinite then $G/E$ has $2^\kappa$ automorphisms. In particular, if $E$ is associated to the subgroup\footnote{$\forall^\infty$ stands for the quantifier `for all but finitely many'.} $\{a\in G \mid (\forall^\infty n)a_n=0\}$ (or any other countable subgroup), then $G/E$ has an automorphism with no Borel lifting. 

The reason for the first assertion is that $G/E$ is a vector space over the two-elements field $F_2$. The second follows from the fact that $2^{\fc}$ is strictly greater than $\fc$, the number of Borel functions on a Polish space. 

A model-theorist will see an alternative explanation: The theory of $G$ is $\aleph_0$-stable (thus all of its uncountable models are saturated), admits elimination of quantifiers, and is preserved under quotients. An even better explanation is given below in Theorem~\ref{T.dividing-line} below, taken from \cite{debondt2023saturation}. 
\end{example} 

The following incomparably deeper example, giving a scenario (still within the confines of group theory) in which the first part of \META{} fails, comes from~\cite{truss1996recovering}. 
 
\begin{example}\label{Ex.AlCoMac} 
 Consider $S_\infty$, the group of all permutations of $\bbN$, with its unique Polish group topology (see e.g.,~\cite{Ke:Classical}). Let $H$ be the subgroup of finitely supported permutations (i.e., those $f\in S_\infty$ that fix all but finitely many $n$). Then every automorphism $\Phi$ of $S_\infty/H$ has a continuous lifting. Moreover, there exists a bijection $h$ between cofinite subsets of $\bbN$ such that~$\Phi$ is implemented by conjugation with $h$: $\Phi(f/H)=h\circ f \circ h^{-1}/H$.

Here is an alternative formulation. Consider the semigroup of bijections between cofinite subsets of $\bbN$ (they are called \emph{almost permutations} of $\bbN$), equipped with composition. The quotient of this semigroup by the finitely supported permutations is a group, and by~\cite{truss1996recovering} all automorphisms of this group are inner. 
 \end{example}

A simpler proof of Truss's theorem used in the previous example was announced in~\cite{AlCoMac}. There is unfortunately a gap in this proof; see~\cite[Remark~11.A.6]{cornulier2019near}.

Going back to Conjecture Template \ref{Meta.1}, while its first part is mainly developed under the assumption of CH (and some of weaker forms of it), the second part relies on the use of \emph{Forcing Axioms}.
{Forcing axioms} are extensions of the Baire Category Theorem, asserting that in certain compact Hausdorff spaces $K$ the intersection of $\aleph_1$ dense open subsets is dense. The strength of a forcing axiom is given by the category of compact Hausdorff spaces to which it applies. Already the case when $K$ is the unit interval contradicts $\CH$ (for every $x\in [0,1]$, the set $[0,x)\cup (x,1]$ is dense and open). From Martin's Axiom, via the Proper Forcing Axiom, to Martin's Maximum (see e.g.,~\cite{Ku:Book,Mo:PFA}), the study of forcing axioms and their applications has had, and still has, a special place in set theory.

 \section{The abelian case: Boolean algebras and \v{C}ech--Stone remainders}\label{S.Abel}
\enumtwo 

Historically the first instance of \META{} comes from the study of  Boolean algebras  $\cP(\bbN)/\Fin$ and $\cP(\bbN)/\cI$ for other analytic ideals on $\bbN$ and of \v{C}ech--Stone remainders (also known as coronas) of locally compact spaces.
In this section,  we will formulate a specific instance of \META{} (Conjecture \ref{conj:boolean}) and give a rigorous notion of \emph{algebraically trivial}
automorphism (Definition \ref{def:bool_trivial}) for these families of quotients, but we shall refrain from stating any specific independent result, whose discussion is postponed to \S\ref{S.Independence} and~\S\ref{S.Independence2}.

As the title of the section suggests, we refer to these as \emph{abelian} or \emph{commutative} quotients, implicitly hinting at their correspondence, via the Gelfand duality, with abelian \cstar-algebras. A more detailed explanation of this connection, as well as the introduction of the more general framework of noncommutative quotients, is deferred to \S \ref{3a.BDF}.
 
A good vantage point for our analysis is given by the Stone duality. The category of Boolean algebras and homomorphisms is equivalent to the category of Stone spaces (i.e. compact, totally disconnected Hausdorff spaces) and continuous maps between them. To a Stone space one associates the algebra of its clopen sets, and to a Boolean algebra one associates its Stone space, that is the space of all ultrafilters on it. 

This (contravariant) correspondence associates $\cP(\bbN)$ to $\beta\bbN$, the \v Cech--Stone compactification of $\bbN$ (taken with the discrete topology), and $\cP(\bbN)/\Fin$ to the \v Cech--Stone remainder $\beta\bbN\setminus\bbN$. While the Boolean algebra $\cP(\bbN)/\Fin$ is homogeneous (i.e., its automorphism group acts transitively on the elements other than $[\emptyset]$ and $[\bbN]$), it is not obvious whether $\beta\bbN\setminus \bbN$ is a homogeneous compact Hausdorff space. 

In his proof that $\CH$ implies that $\beta\bbN\setminus\bbN$ is not homogeneous, Rudin (\cite{Ru}) showed that, under CH, $\cP(\bbN)/\Fin$ has $2^{\fc}$ automorphisms. The family of almost permutations of $\bbN$ has size $\fc$, hence Rudin's result provided the historically first confirmation of \META. The nonhomogeneity of $\beta\bbN\setminus \bbN$ in $\ZFC$ was later proven by Frol\`ik (\cite{Frolik}), and we refer to \cite{vM:Introduction} for more on the peculiar dependence of $\beta\bbN\setminus \bbN$ on the choice of set-theoretic axioms. 

We now turn to two classes of Borel quotient structures that generalise $\mathcal P(\bbN)/\Fin$ in different directions:
\begin{itemize}
\item \S \ref{ss.borel_quotients} Quotients of the form $\mathcal P(\bbN)/\mathcal I$ where $\mathcal I$ is a Borel ideal, 
\item \S \ref{ss.remainders} \v{C}ech--Stone remainders of locally compact spaces.
\end{itemize}

\subsection{Borel ideals in $\mathcal P(\bbN)$.} \label{ss.borel_quotients}
We identify sets of natural numbers with their characteristic function in $2^\bbN$ and equip $\cP(\bbN)$ with the topology resulting from this identification with the Cantor space. Alternatively, this is the compact topology induced by the metric $d(A,B) = 1/(\min(A \Delta B) +1)$.
An ideal $\cI$ of the Boolean algebra $\cP(\bbN)$ is \emph{Borel} (\emph{analytic}, $F_\sigma$, etc.) if it is Borel (\emph{analytic}, $F_\sigma$, etc.) with respect to this topology.

Every Borel ideal $\cI$ in the Boolean algebra $\cP(\bbN)$ provides us with a textbook example of a Borel quotient structure, $\cP(\bbN)/\cI$.
Reformulating \META{} in this setting leads to the following problem, which in its most general form is still open (more on this in \S\ref{S.Borel_quotients_of_P(N)} and \S\ref{S.FABoole}).

\begin{conjecture}\label{conj:boolean}
Let $\mathcal I$ and $\mathcal J$ be a Borel ideals in $\mathcal P(\bbN)$. Then 
\begin{itemize}
\item $\CH$ implies that if there is an isomorphism between $\mathcal P(\bbN)/\mathcal I$ and $\mathcal P(\bbN)/\mathcal J$, then there is an isomorphism between them which is not topologically trivial;
\item Forcing Axioms imply that all isomorphisms between $\mathcal P(\bbN)/\mathcal I$ and $\mathcal P(\bbN)/\mathcal J$ are topologically trivial.
\end{itemize}
\end{conjecture}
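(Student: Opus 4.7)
The conjecture splits along the two standard axes discussed in the introduction, and I would attack each separately with the toolkit that has emerged for the paradigmatic case $\mathcal I = \mathcal J = \Fin$.

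For the CH half, the plan is to isolate a notion of countable saturation for $\mathcal P(\bbN)/\mathcal I$ and $\mathcal P(\bbN)/\mathcal J$ in a language rich enough to interpret the ideal structure, e.g., the Boolean language augmented with predicates for the closed approximations $\cK(\mu,r)$ supplied by Lemma~\ref{L.countablydetermined}. Given such saturation together with the hypothesis that some isomorphism exists, I would run a back-and-forth of length $\aleph_1 = \fc$ that branches at each stage; the resulting tree produces $2^{\fc}$ distinct isomorphisms. Since only $\fc$ Borel functions $\mathcal P(\bbN) \to \mathcal P(\bbN)$ are available, a pigeonhole argument extracts $2^{\fc}$ isomorphisms without Borel lifting. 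Theorem~\ref{T.Solecki} reduces the countable-saturation verification to a single-submeasure analysis in the analytic $P$-ideal case, and $F_\sigma$ ideals are handled directly via $\Fin(\mu)$. Beyond these regimes the countable-saturation step is precisely where things get murky.

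For the Forcing Axioms half, I would follow the Veli\v{c}kovi\'c--Farah blueprint. Start from a set-theoretic lift $\Phi_*$ of a given isomorphism $\Phi$ and apply $\OCA$ to a carefully chosen open coloring on pairs of potential witnesses of $\Phi$: the dichotomy should yield either a large homogeneous family on which $\Phi_*$ is already continuous, or a free combinatorial configuration. $\MA$ is then used to rule out the latter (typically by producing, inside a ccc poset, a centered family forcing a contradiction with $\Phi$ preserving Boolean operations), and to glue the countably many Borel local lifts into a single global Borel lift. The coloring has to be describable using only countably many closed approximations to $\mathcal I$ and $\mathcal J$ in the sense of Definition~\ref{Def.Approximation}, which is exactly why the current state of the art is confined to analytic $P$-ideals and $F_\sigma$ ideals.

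The main obstacle for both halves is the same structural deficit: we lack uniform combinatorial control over arbitrary Borel ideals. The entire machinery on both sides presupposes that $\mathcal I$ (and $\mathcal J$) can be approximated in a bounded number of closed layers by a countable sequence, which is the content of Question~\ref{Q.countablydetermined}; a positive answer for a given Borel complexity class would almost surely unlock both halves of the conjecture at that class, while a negative answer would force entirely new methods. A secondary obstacle, confined to the Forcing Axioms side, is the coherence problem: assembling the local continuous pieces into one global Borel function requires delicate lemmas on coherent almost-everywhere-defined maps, which so far have been proven only in restricted settings and do not obviously transfer to a general pair $(\mathcal I, \mathcal J)$ of Borel ideals.
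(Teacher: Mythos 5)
This statement is labeled \emph{Conjecture}~\ref{conj:boolean} in the paper and is not proved there — the paper explicitly presents it as the open analog of \META{} for Borel quotients of $\cP(\bbN)$, and the subsequent sections supply only partial results. Your proposal is therefore not (and honestly does not claim to be) a proof; it is a sketch of the attack that the paper itself describes, together with a candid statement of the known obstructions. As far as that goes, your account tracks the paper quite closely: on the CH side, the countable-saturation route via Solecki's characterization (Theorem~\ref{T.Solecki}) does handle $F_\sigma$ ideals and, in the metric structure $(\cP(\bbN)/\cI,d_\mu)$, generalized density ideals; on the Forcing Axioms side, the Veli\v ckovi\'c–Just–Farah blueprint built on $\OCA$, closed approximations, and $\MA$-uniformization yields the OCA Lifting Theorem (Theorem~\ref{thm:ilijasOCAideals}) for analytic P-ideals and countably generated ideals. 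You also correctly identify Question~\ref{Q.countablydetermined} (and, implicitly, Question~\ref{Q.OCAlt}) as the bottleneck for extending either half beyond $F_{\sigma\delta}$.

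A few points of calibration against the paper. On the CH half you claim that the countable-saturation step is understood for analytic $P$-ideals generally; it is not — the paper explicitly records that the existence of topologically nontrivial automorphisms of $\cP(\bbN)/\cI$ under CH is open even for analytic P-ideals that are not generalized density ideals, because saturation in the metric language is only known for the latter class (Proposition~\ref{P.GenDensity.ReducedProduct} and Theorem~\ref{ctble-sat-reduced}). Your suggestion to expand the Boolean language by predicates for the $\cK(\mu,r)$ and prove saturation there is a genuinely new idea not in the paper; it is not obvious it will work (saturation is delicate when you add predicates whose quotient interpretation is not a Borel set), but it would be worth pursuing for ideals beyond the generalized density class. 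On the Forcing Axioms half your description of the role of $\MA$ — to rule out a ``free configuration'' inside a ccc poset — is not quite how the paper's machinery uses it: the uncountable-homogeneous alternative of $\OCA$ is already refuted by a pigeonhole argument (as in Lemma~\ref{L.Vel.2.2}), and $\MA_{\aleph_1}$ enters later to pass from $\sigma$-Borel local liftings on a dense ideal of sets to a single Borel lifting on all of $\cP(\bbN)$ (the Fremlin / Kuratowski–Ulam step following Lemma~\ref{L.Just.WAT}, and the global uniformization of~\cite[\S 3.13]{Fa:AQ}). In short: your sketch is a faithful summary of the state of the art, but the statement remains a conjecture, and neither you nor the paper close the gap for general Borel ideals.
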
 
The following is the proper definition of algebraically trivial automorphisms for Borel quotients of $\cP(\bbN)$.
\begin{definition} \label{def:bool_trivial}
Let $\mathcal I$ and $\mathcal J$ be Borel ideals in $\mathcal P(\bbN)$. A homomorphism 
\[
\Phi\colon\mathcal P(\bbN)/\mathcal I\to\mathcal P(\bbN)/\mathcal J
\] 
is said to be \emph{algebraically trivial} if it has a lifting $\Phi_* \colon\mathcal P(\bbN)\to\mathcal P(\bbN)$ which is a Boolean algebra homomorphism. 
\end{definition}

Rigidity problems for such quotients can be traced back at least to~\cite[p. 38-39]{Er:Scottish}, where Erd\"os and Ulam asked whether the quotients over the two ideals, known as the \emph{asymptotic density zero ideal} $\cZ_0$ and the \emph{logarithmic density zero ideal} $\cZlog$, 
 are isomorphic, where
\begin{align}
\label{eq.Z0} \cZ_0 &= \left\{ A \subseteq \bbN \mid \lim_{n \to \infty} \frac{| A \cap n |}{n} = 0\right\},\\
\label{eq.Zlog} \cZlog &= \left\{ A \subseteq \bbN \mid \lim_{n \to \infty} \frac{\sum_{i \in A \cap n } 1 /i}{\sum_{i < n} 1/i} = 0 \right\}.
\end{align}
The unexpected (at the time) answer to this question will be discussed in \S \ref{S.Borel_quotients_of_P(N)} and \S\ref{S.FABoole}, where we will have something interesting to
say also for the classes of analytic \emph{P-ideals} and of \emph{generalised density ideals}, to which both $\cZ_0$ and $\cZ_{\log}$ belong, and which we briefly anticipate here.

\begin{definition} An ideal $\cI$ is a \emph{P-ideal} if for any countable collection $\{ A_n\}_{n \in \bbN}\subseteq \cI$, for $n\in \bbN$, there exists $A\in \cI$ such that $A_n\setminus A\in \Fin$ for all $n$. 
\end{definition}

Verifying that $\cZ_0$ and $\cZ_{\log}$ are Borel P-ideals is a nice exercise, to which the following definition provides a hint.

\begin{definition}\label{Def.Submeasure}
A \emph{submeasure} on $\bbN$ is a function $\mu\colon \cP(\bbN)\to [0,\infty]$ such that, for every $X,Y \subseteq \bbN$
\begin{enumerate}
\item $\mu(\emptyset)=0$,
\item $\mu(X)<\infty$ when $X$ is finite,
\item $\mu(X)\leq \mu(Y)$ when $X\subseteq Y$
\item $\mu(X\cup Y)\leq \mu(X)+\mu(Y)$.
\end{enumerate} 
 A submeasure is \emph{lower semicontinuous} if $\mu(X)=\lim_{n \to \infty} \mu(X\cap n)$.
\end{definition}

For a lower semicontinuous submeasure $\mu$ on~$\bbN$ let 
\[
\Fin(\mu)=\{X\subseteq \bbN\mid \mu(X)<\infty\},
\]
\[
\Exh(\mu)=\{X\subseteq \bbN\mid \lim_{n \to \infty} \mu(X\setminus n)=0\}. 
\]
The requirement that $\mu$ is finite on finite sets gives that both $\Fin(\mu)$ and $\Exh(\mu)$ contain the ideal $\Fin$. 

\begin{definition}\label{Def.GeneralisedDensity}
Let $\bbN=\bigsqcup_m I_m $ be a partition of $\bbN$ into finite sets and $\mu_m$ be a submeasure concentrated on $I_m$ for all $m$. Consider the measure $\mu(X)=\sup_m \mu_m (X\cap I_m)$. An ideal is a \emph{generalised density ideal} if it is of the form $\Exh(\mu)$ for a $\mu$ arising this way. If each $\mu_n$ is a measure, then this is a \emph{density ideal}. 
\end{definition}

Density ideals were introduced in \cite[Definition~1.13.1]{Fa:AQ}, and it is not difficult to see that both $\cZ_0$ and $\cZ_{\log}$ are density ideals (\cite[Theorem~1.13.3~(a)]{Fa:AQ}; for a fix to part (b) of this theorem see \cite{tryba2021different}). We also have the following results of Mazur (\cite{Maz:F-sigma}) and Solecki (\cite{Sol:Analytic}).

\begin{theorem}\label{T.Solecki} 
An ideal is $F_\sigma$ if and only if it is of the form $\Fin(\mu)$ for a lower semicontinuous submeasure $\mu$. An analytic ideal is a P-ideal if and only if it is of the form $\Exh(\mu)$ for a lower semicontinuous submeasure $\mu$. In this case, 
	\begin{equation*}
		\textstyle d_\mu(A,B)=\lim_n \mu((A\Delta B)\setminus n)
	\end{equation*}
	is a complete metric on $\cP(\bbN)/\cI$.
\end{theorem}

(The last line is the easy \cite[Lemma~1.2.3]{Fa:AQ}.) Note that if $\cI$ is not an $F_\sigma$ ideal, then there is a decreasing sequence of $\cI$-positive sets $A_n$ such that $\lim_n \mu(A_n)=0$. This shows a failure of the countable saturation of $\cP(\bbN)/\cI$ as a discrete structure, and we shall see why this is relevant to us in \S\ref{S.Borel_quotients_of_P(N)}.


An important problem in the framework of Borel quotients of $\cP(\bbN)$ is whether topologically trivial homomorphisms are automatically algebraically trivial.
Submeasures from Definition \ref{Def.Submeasure} will be used in \S\ref{S.Ulam} to give partial answers to Question~\ref{Q.UlamAB} below, which in its most general form is still open (see Theorem~\ref{T.idealiso} for partial results).

\begin{question}\label{Q.UlamAB}
Let $\mathcal I$ and $\mathcal J$ be Borel ideals in $\mathcal \cP(\bbN)$ containing $\Fin$. Are all topologically trivial homomorphisms $\Phi\colon\mathcal \cP(\bbN)/\mathcal I\to\mathcal \cP(\bbN)/\mathcal J$ algebraically trivial?
\end{question}

\subsection{\v{C}ech--Stone remainders} \label{ss.remainders}
Given a locally compact space $X$, its \v{C}ech--Stone compactification $\beta X$ is the `largest' compact space in which $X$ sits densely as an open set. $\beta X$ is defined as the space with the universal property that if $K$ is a compact Hausdorff space and $f\colon X\to K$ is continuous, one can find a continuous extension $\beta f\colon\beta X\to K$.\footnote{Functional analyst's definition of $\beta X$: It is the Gelfand spectrum of the \cstar-algebra $C_b(X)$ of all continuous, bounded, complex-valued functions on $X$ (see \S\ref{ss:pre} and \S\ref{ss:mcalg}).}
The compact space $\beta X\setminus X$ is the \emph{\v{C}ech--Stone remainder} (or the \emph{corona}) of~$X$. \v{C}ech--Stone remainders of zero-dimensional spaces and their homeomorphisms from a set theoretic point of view were analysed in~\cite{Fa:AQ} with the aid of Boolean algebraic methods.

We already mentioned how the Stone Duality relates $\cP(\bbN)/\Fin$ to $\beta \bbN \setminus \bbN$.
To study homeomorphisms of \v{C}ech--Stone remainders of other spaces, we need to take a closer look at the duality between homeomorphisms of remainders and homomorphisms of quotient algebras. 

 Fix a locally compact noncompact space $X$. When $X$ is zero-dimensional, autohomeomorphisms of $\beta X\setminus X$ are completely determined by the image of clopen subsets. By identifying clopen sets with their characteristic functions, one endows the Boolean algebra of clopen subsets of $X$, $\Clop(X)$, with the topology of pointwise convergence. If $X$ is in addition second countable, this is a Polish topology. Since the Boolean algebra $\Clop(\beta X\setminus X)$ is isomorphic to the quotient of $\Clop(X)$ by the ideal of compact-open sets, $\Clop_c(X)$, a continuous map from $\beta X\setminus X$ to $\beta Y\setminus Y$ corresponds to a Boolean algebra homomorphism from the Borel quotient structure $\Clop(Y)/\Clop_c(Y)$ to $\Clop(X)/\Clop_c(X)$. As homeomorphisms of \v{C}ech--Stone remainders correspond to isomorphisms of Boolean algebra quotients, we have a notion of topologically trivial homeomorphism (where the induced isomorphism admits a Borel lifting), and one of algebraically trivial (admitting a lifting which is a Boolean algebra homomorphism). 

For non zero-dimensional spaces, one cannot rely on any Boolean algebraic underlying structure, even though the remainder of $\mathbb R$ was studied using topological tools in~\cite{DoHa:Introduction} and~\cite{Ha:Cech-Stone}. To study these objects, we enter the realm of \cstar-algebras. The appropriate versions of \META{} and Question~\ref{Q.UlamAB} will be stated in \S\ref{ss:mcalg}.

\section{The origins of rigidity questions in \cstar-algebras} \label{3a.BDF}
\enumthree
The interest in rigidity phenomena in the noncommutative setting began with the study of the existence of outer automorphisms of the Calkin algebra in~\cite{PhWe:Calkin} and~\cite{Fa:All} (see \S\ref{S.Cohomology} and \S\ref{6bii.PFAAVi}), which lead to one of the deepest interplays between set theory and operator algebras to date. The methods developed therein have become a benchmark for the successive investigations on rigidity phenomena of more general massive quotients \cstar-algebras (see \S\ref{6bii.PFAAVi}).

Similar to other categories we discuss in this survey, the rigidity phenomena in the category of \cstar-algebras are deeply influenced by set-theoretic axioms. Nevertheless, the initial motivations for these studies were of purely \cstar-algebraic nature, and they originate from the influential works~\cite{BrDoFi:Unitary} and~\cite{BrDoFi}.
These motivations are discussed in \S\ref{BDF}. In \S\ref{ss:pre} we provide an introduction to the necessary material to carry on with our discussion, and we refer the reader to the textbooks~\cite{Black:Operator} and~\cite{Fa:STCstar} for a more extensive introduction to the subject and all the omitted details. Readers familiar with these topics can safely skip to \S\ref{ss:mcalg} where, in addition to recalling the classical notions of multiplier and corona algebras, we establish a suitable definition of algebraically trivial isomorphism for the category of \cstar-algebras (Definition~\ref{def:trivialmap}).



\subsection{Preliminaries} \label{ss:pre}
There are essentially two ways to present \cstar-algebras, an abstract one and a concrete one. We start with the latter.

Let $H$ be a complex Hilbert space, that is, a complex Banach space equipped with a sesquilinear inner product $\langle \cdot , \cdot \rangle$ and a norm given by $\lVert \xi \rVert := \langle \xi, \xi \rangle^{1/2}$. Note that the theory of Hilbert spaces is $\kappa$-categorical (in the context of continuous logic) for every infinite cardinal $\kappa$, indeed every infinite-dimensional Hilbert space $H$ with density character $\kappa$ is isometric to the space
\[
\ell_2(\kappa) := \{ (z_j) \in \bbC^\kappa\mid \textstyle\sum_j | z_j |^2 < \infty \}.
\]
Let $\cB(H)$ denote the set of all linear bounded operators from $H$ to itself. It is equipped with the structure $(\cB(H), \| \cdot \|, +, \{\lambda_z \}_{z \in \bbC},\cdot, ^*)$, where $+$ is the usual sum, $\lambda_z$ is the scalar multiplication by $z$, $\cdot$ is the composition of operators, $^*$ is the adjoint, and the \emph{operator norm} $\| \cdot \|$ for $a \in \cB(H)$ is defined as
\[
\| a \| := \sup_{\xi \in H} \frac{\| a(\xi) \|}{\| \xi \|} = \sup_{\| \xi \| \le 1} \|a(\xi) \|.
\]
A \emph{concrete \cstar-algebra} is a norm-closed complex subalgebra of $\cB(H)$ closed under $^*$.

Alternatively, \cstar-algebras can be defined by providing an axiomatisation in the following way. A \emph{$^*$-Banach algebra} is a tuple $(A, \| \cdot \|, +, \cdot, ^*)$ such that $(A, +, \cdot)$ is a $\bbC$-algebra, the tuple $(A, \| \cdot \|, +)$ is a complex Banach space, $^*$ is a linear involution, and the following conditions hold:
\begin{enumerate}
\item \label{i:cstar1} $(ab)^* = b^* a^*$ for all $a,b \in A$,
\item $\| ab \| \le \| a \| \| b \|$ for all $a,b \in A$,
\item \label{i:cstar3} $\| a^* \| = \| a \|$ for all $a \in A$.
\end{enumerate}
A \emph{(abstract) \cstar-algebra} is a $^*$-Banach algebra satisfying the following additional condition, known as the \emph{\cstar-equality}.
\begin{enumerate} \setcounter{enumi}{3}
\item \label{i:cstareq}$ \| a^*a \| = \| a \|^2$ for all $a \in A$.
\end{enumerate}

 Albeit seemingly mild, the \cstar-equality is a powerful condition that tightly connects the topological nature of \cstar-algebras with their algebraic properties. It implies, for instance, that in a \cstar-algebra the norm is completely determined by its algebraic structure, and therefore it is unique (\cite[Corollary 1.3.3]{Fa:STCstar}). It moreover entails that $^*$-homomorphisms between \cstar-algebras, namely linear multiplicative maps preserving the involution $^*$ (the relevant arrows in this category), are automatically continuous (\cite[Lemma 1.2.10]{Fa:STCstar}).

By the Gelfand--Naimark--Segal (GNS) construction (see~\cite[\S1.10]{Fa:STCstar}) every \cstar-algebra $A$ is isometrically isomorphic to a concrete \cstar-algebra in some $\cB(H)$. Equivalently, the GNS-construction shows that concrete \cstar-algebras are axiomatised by the conditions listed above, allowing us to identify the two definitions.


\subsubsection{Examples I: Abelian \cstar-algebras} 

One of the cornerstones in the theory of \cstar-algebras is the Gelfand Representation Theorem (\cite[Theorem 1.3.1]{Fa:STCstar}), which states that every abelian \cstar-algebra is isomorphic to one of the form $C_0(X)$, for some locally compact $X$, where
\[
C_0(X)=\{f\colon X\to\mathbb C\mid f\text{ is continuous and vanishes at infinity}\}
\]
is considered with the supremum norm and pointwise operations. The Gelfand Representation Theorem establishes a natural equivalence between the category of unital abelian \cstar-algebras and the category of compact Hausdorff spaces (e.g. \cite[Theorem~1.3.2]{Fa:STCstar}). This duality, usually referred to as the \emph{Gelfand--Naimark Duality}, motivates the leading philosophy in the subject, namely that \cstar-algebras provide the noncommutative analogue of topological spaces. Indeed, many definitions, techniques, and results in \cstar-algebras arise as adaptations of ideas that stem from topology. 

In the special case where $X$ is moreover compact zero-dimensional, the Gelfand--Naimark Duality can be combined with the Stone Duality, allowing to interpret Boolean algebras as a particular instance of abelian \cstar-algebras, as hinted at the beginning of \S\ref{S.Abel}.

If $X$ is a locally compact space, consider the \cstar-algebras
\[
C_b(X)=\{f\colon X\to\mathbb C\mid f\text{ is continuous and bounded}\}
\] 
 with the sup norm and pointwise operations. Then, $C_0(X)$ is a norm-closed ideal in $C_b(X)$; furthermore, $C_b(X)$ is isomorphic to $C(\beta X)$ and the \cstar-algebra quotient $C_b(X)/C_0(X)$ is isomorphic to $C(\beta X\setminus X)$.

\subsubsection{Examples II: General \cstar-algebras}
The following are some of the fundamental examples of noncommutative \cstar-algebras. 

\begin{enumerate}[resume]
\item The *-algebra $\cB(H)$ of all bounded operators on $H$ is a \cstar-algebra. In the case when $H$ has a finite dimension equal to $n$, it is the \cstar-algebra of all $n\times n$ matrices with complex entries $M_n(\bbC)$.
\item The set of all operators in $\cB(H)$ that have finite-dimensional range is a $\bbC$-subalgebra of $\cB(H)$ closed under $^*$. Its norm closure is the set of all \emph{compact operators}, denoted by $\cK(H)$, a \cstar-algebra that is moreover a two-sided ideal in $\cB(H)$.
\item Given a separable, infinite-dimensional Hilbert space $H$, the \emph{Calkin algebra} is the quotient $\cQ(H) := \cB(H) / \cK(H)$. Being the quotient of a \cstar-algebra by a two-sided norm-closed ideal, it is itself a \cstar-algebra (this is nontrivial---see~\cite[Lemma 2.5.2]{Fa:STCstar}) with the operations and norm induced by $\cB(H)$.
\end{enumerate}
\cstar-algebras are preserved by standard operations such as direct sums and products (with pointwise operations and the supremum norm), quotients by two-sided closed ideals, and inductive limits. Many important classes of \cstar-algebras are constructed this way. For instance, \emph{uniformly hyperfinite} (UHF) algebras are unital inductive limits of matrix algebras, and \emph{approximately finite} (AF) algebras are inductive limits of finite-dimensional \cstar-algebras.

Given the aforementioned correspondence between \cstar-algebras and algebras of operators on Hilbert spaces, much of the standard terminology used to denote classes of operators in $\cB(H)$ is extended to abstract \cstar-algebras. In particular, given an element $a$ in a \cstar-algebra $A$ we say that
\begin{enumerate}[resume]
\item $a$ is \emph{normal} if $a^*a = aa^*$,
\item $a$ is \emph{self-adjoint} if $a^* = a$,
\item $a$ is \emph{positive} if $a = b^*b$ for some $b \in A$,
\item if $A$ is unital, $a$ is a \emph{unitary} if $a^*a = aa^* = 1$,
\item if $A$ is unital, $\sigma(a) := \{ \lambda \in \bbC\mid a- \lambda 1 \text{ is invertible} \}$,\footnote{The spectrum of a normal element $a$ of a non-unital \cstar-algebra $A$ is defined as the spectrum of $a$ in the unitisation of $A$.}
is the \emph{spectrum} of $a$,
\item $a$ is a \emph{projection} if $a = a^* = a^2$, 
\item \label{3.isometry} $a$ is an \emph{isometry} if $a^*a=1$, 
\item \label{3.partial.isometry} $a$ is a \emph{partial isometry} if $a^*a$ is a projection (equivalently, if both $a^*a$ and $aa^*$ are projections). 
\end{enumerate}


\subsection{Multiplier and corona algebras} \label{ss:mcalg}
Multiplier algebras provide an abstract generalisation of $\cB(H)$ and a noncommutative analogue of the \v{C}ech--Stone compactification, while corona algebras similarly generalise the Calkin algebra and \v{C}ech--Stone remainders of topological spaces (Example~\ref{ex:coronas}).

An ideal $J$ in a \cstar-algebra $A$ is \emph{essential} if its annihilator, 
\[
J^\perp=\{a\in A\mid ab=ba=0\text{ for all }b\in J\}
\]
is trivial. For example, $C_0(X)$ is an essential ideal in $C_b(X)$, and a direct summand is an inessential ideal in a direct sum of (nontrivial) \cstar-algebras. 

There are many ways in which a non-unital \cstar-algebra $A$ embeds as an essential ideal into a unital one. In the abelian case of $A = C_0(X)$, by the Gelfand--Naimark duality these correspond to the ways a locally compact Hausdorff space $X$ can be embedded as a dense open set in a compact Hausdorff space. Among these, the \v{C}ech--Stone compactification $\beta X$ is the maximal compactification of $X$, in the sense that every other compactification $\gamma X$ is the continuous image of $\beta X$ via the continuous extension of the identity map on $X$. 

Given a \cstar-algebra $A$, its \emph{multiplier algebra} $\cM(A)$ is the unital \cstar-algebra such that when a unital \cstar-algebra $B$ contains $A$ as an essential ideal, then the identity map on $A$ extends uniquely to a $^*$-homomorphism from $B$ to $\cM(A)$ (\cite[II.7.3.1]{Black:Operator}).
The correspondence with the \v{C}ech--Stone compactification is due to the fact that when $A \cong C_0(X)$ then $\cM(C_0(X)) \cong C(\beta X)$. We refer to~\cite[II.7.3]{Black:Operator} and~\cite[\S 13]{Fa:STCstar} for a rigorous presentation and equivalent definitions of $\cM(A)$. The following one is illuminating (\cite[Corollary~13.2.2]{Fa:STCstar}).

\begin{lemma} \label{L.Idealiser}
For a non-unital \cstar-algebra $A$, and any nondegenerate\footnote{A representation is nondegenerate if $\Phi[A]^\perp=\{0\}$.} faithful representation $\Phi\colon A\to \cB(H)$ there is an isomorphism between $\cM(A)$ and the idealiser 
\[
\{b\in \cB(H)\mid b\Phi[A]\cup \Phi[A]b\subseteq \Phi[A]\}. 
\]
\end{lemma}

The multiplier algebra of a non-unital \cstar-algebra is always nonseparable in the norm topology.
It is however possible to make up for this for separable \cstar-algebras by considering the strict topology.
\begin{definition}
Let $A$ be a non-unital \cstar-algebra. The \emph{strict topology} on $\cM(A)$ is the topology
generated by the seminorms
\[
\lambda_a(b) := \lVert ab \rVert \text{ and } \rho_a(b) := \lVert ba \rVert, \ \forall a \in A, b \in \cM(A).
\]
\end{definition}

The multiplier algebra $\cM(A)$ is the completion of $A$ with respect to the uniformity associated with the strict topology
(\cite[Lemma 13.1.5]{Fa:STCstar}).
Moreover, if $A$ is separable, the strict topology is Polish on the unit ball of $\cM(A)$, giving it the structure of a standard Borel space (\cite[Lemma 13.1.8]{Fa:STCstar}). If $X$ is a locally compact second countable topological space, this is the topology on $C(\beta X)$ given by uniform convergence on compact subsets of $X$.

\begin{definition} \label{def:corona}
The \emph{corona} of a non-unital \cstar-algebra $A$ is the quotient $\cQ(A) := \cM(A) /A$.
We let $\pi_A \colon \cM(A) \to \cQ(A)$ be the quotient map.
\end{definition}

Corona algebras constitute the main family of noncommutative quotient structures considered in this survey. We list some examples that are discussed in the following sections.
 
\begin{example} \label{ex:coronas}
\enumone
\begin{enumerate}
\item Coronas are the noncommutative analogues of \v{C}ech--Stone remainders, since $\cM(C_0(X)) \cong C(\beta X)$, and thus $Q(C_0(X)) \cong C(\beta X \setminus X)$.
\item \label{ex:calkin} Given a Hilbert space $H$, the multiplier algebra of the algebra of compact operators $\cK(H)$ is $\cB(H)$. Hence, when $H$ is separable and infinite dimensional, the corona of $\cK(H)$ is the Calkin algebra $\cQ(H)$. We sometimes use the notation $\dot a$ to denote the class $\pi(a) \in \cQ(H)$ of an operator $a \in \cB(H)$.
\item \label{ex.reduced} Given a countable family of unital \cstar-algebras $(A_n)_{n \in \bbN}$, the direct sum $\bigoplus_n A_n$ is the set of all sequences $(a_n)_{n \in \bbN}$ such that $a_n \in A_n$ and $\lim_{n \to \infty} \| a_n \| = 0$. Its multiplier algebra is the direct product $\prod_n A_n$, the set of all bounded sequences whose $n$-th entry is in $A_n$. Its corona, $\prod_n A_n /\bigoplus_n A_n$, is referred to as the \emph{reduced product of $(A_{n})$ over the Fr\'echet ideal $\Fin$}, often denoted $\prod_n A_n /\bigoplus_{\Fin} A_n$.
\item\label{ex.reduced over I} Given a countable family of unital \cstar-algebras $(A_n)_{n \in \bbN}$, and an ideal $\mathcal I$ on $\bbN$, let
\[
\textstyle \bigoplus_{\mathcal I}A_{n}= \{(a_{n})\in \prod_{n}A_{n} \mid \lim_{n\rightarrow \mathcal I^+}\|a_{n}\|=0\},
\]
where
\[
\lim_{n \to \cI} \| a_n \| = \inf_{X\in \cI} \sup_{n\notin X} \|a_n \|.
 \]
Then $\bigoplus_{\mathcal{I}}A_{n}$ is a closed ideal of $\prod_{n}A_{n}$ and its multiplier algebra is again $\prod_{n}A_{n}$. The corona algebra $\prod_{n}A_{n}/\bigoplus_{\mathcal I}A_{n}$ is usually called the \emph{reduced product of $(A_{n})$ over the ideal $\mathcal I$}. 
\end{enumerate}
\end{example}

As mentioned before, if $A$ is separable the unit ball of $\mathcal M(A)$ is equipped with a natural Polish topology (the strict topology) containing the unit ball of $A$ as a Borel subset where all considered operations are Borel. This endows the unit ball of the corona of $A$ with a Borel quotient structure as in Definition~\ref{Def.Borel}. 
\begin{definition}\label{def:toptrivgeneral}
Let $A$ and $B$ be separable \cstar-algebra. A homomorphism $\Phi\colon\mathcal Q(A)\to\mathcal Q(B)$ is topologically trivial if there is a map $\Phi_*\colon\cM(A)\to\cM(B)$ that is Borel when restricted to the unit ball of $\mathcal M(A)$ considered with the strict topology, and makes the following diagram commute.
\[
\begin{tikzpicture}
 \matrix[row sep=1cm,column sep=1.5cm] 
 {
& & \node (M1) {$\cM(A)$}; & \node (M2) {$\cM(B)$};&
\\
& & \node (Q1) {$\cQ(A)$}; & \node (Q2) {$\cQ(B)$} ;
\\
};
\draw (M1) edge [->] node [above] {$\Phi_*$} (M2);
\draw (Q1) edge [->] node [above] {$\Phi$} (Q2);
\draw (M1) edge [->] node [left] {$\pi_A$} (Q1);
\draw (M2) edge [->] node [left] {$\pi_B$} (Q2);
\end{tikzpicture}
\]
\end{definition}

The following, stated in~\cite{CoFa:Automorphisms}, is an instance of \META{} in the current setting.

\begin{conjecture}\label{conj:CFgeneral}
Let $A$ and $B$ be separable non-unital \cstar-algebras. Then
\begin{itemize}
\item $\CH$ implies that if there is an isomorphism between $\mathcal Q(A)$ and $\mathcal Q(B)$ then there is an isomorphism between them which is not topologically trivial;
\item Forcing Axioms imply that all isomorphisms between $\mathcal Q(A)$ and $\mathcal Q(B)$ are topologically trivial.
\end{itemize}
\end{conjecture}

\subsubsection{Algebraically trivial automorphisms}
We contextualise the discussion in \S\ref{S.Intro} to the category of \cstar-algebras, aiming to introduce a suitable definition of algebraically trivial isomorphism (Definition~\ref{Def.AlgebraicallyTrivial}) for corona algebras. A natural notion of triviality for automorphisms of \cstar-algebras is given by their innerness.

\begin{definition} \label{def:inner}
Let $A$ be a unital \cstar-algebra. Every unitary $u \in A$ induces an automorphism $\Ad(u)$ on $A$, defined as $\Ad(u)(a) = uau^*$ for every $a \in A$. An automorphism of this form is said to be \emph{inner}. An automorphism that is not inner is called \emph{outer}.
\end{definition}


Inner automorphisms of coronas of separable \cstar-algebras (considered with the Borel structure induced by the strict topology) are topologically trivial. Adopting the notion of inner automorphisms as algebraically trivial ones is however futile in the abelian setting since the only inner automorphism of an abelian \cstar-algebra is the identity. The definition of algebraically trivial is thus given by generalising the concept of `almost permutation' (appearing in ~\cite{FaSh:Rigidity}) as follows.

\begin{definition}\label{defin:trivialtop}
Let $X$ and $Y$ be a second countable locally compact noncompact spaces. A homeomorphism $\Phi\colon\beta X\setminus X\to\beta Y\setminus Y$ is said to be \emph{algebraically trivial} if there are open sets with compact closure $K_X\subseteq X$ and $K_Y\subseteq Y$ and a homeomorphism $\Phi_*\colon X\setminus K_X\to Y\setminus K_Y$ such that $\beta\Phi_*=\Phi$ on $\beta X\setminus X$. 
\end{definition}

The following definition provides a suitable generalisation of Definition~\ref{defin:trivialtop} to all \cstar-algebras.

\begin{definition}[{\cite[Definition 2.3]{vignati2018rigidity}}] \label{def:trivialmap}
An isomorphism $\Phi \colon \cQ(A) \to \cQ(B)$ is \emph{algebraically trivial} if there exist positive elements
$a\in \cM(A)$, $b \in \cM(B)$ and an isomorphism $\Phi_* \colon \overline{aAa} \to \overline{bBb}$ such that
\enumone 
\begin{enumerate}
\item $1-a \in A$, $1 - b \in B$,
\item $\Phi_*$ extends to a strictly continuous $^*$-homomorphism $\bar \Phi_* \colon \overline{a \cM(A) a} \to \overline{b \cM(B) b}$ such that the following diagram commutes.
\[
\begin{tikzpicture}
 \matrix[row sep=1cm,column sep=1.5cm] 
 {
& & \node (M1) {$\overline{a\cM(A)a}$}; & \node (M2) {$\overline{b\cM(B)b}$};&
\\
& & \node (Q1) {$\cQ(A)$}; & \node (Q2) {$\cQ(B)$} ;
\\
};
\draw (M1) edge [->] node [above] {$\bar\Phi_*$} (M2);
\draw (Q1) edge [->] node [above] {$\Phi$} (Q2);
\draw (M1) edge [->] node [left] {$\pi_A$} (Q1);
\draw (M2) edge [->] node [left] {$\pi_B$} (Q2);
\end{tikzpicture}
\]
\end{enumerate}
\end{definition}

In many examples, the notion of algebraically trivial automorphism corresponds to the automorphisms of coronas that are constructed by hand. For abelian \cstar-algebras, an algebraically trivial isomorphism between $C(\beta X\setminus X)$ and $C(\beta Y\setminus Y)$ is dual to an algebraically trivial homeomorphism as in Definition~\ref{defin:trivialtop} (\cite[Proposition 2.7]{vignati2018rigidity}). 
Another natural description of algebraically trivial isomorphisms can be made for reduced products of unital separable \cstar-algebras that do not have central projections (see~\cite[Proposition 5.3]{vignati2018rigidity}). In case of the Calkin algebra, algebraic triviality coincides with innerness.

\begin{theorem}[{\cite[Lemma 7.2]{CoFa:Automorphisms}}] \label{thm:autocalk}
An automorphism of the Calkin algebra is inner if and only if it is algebraically trivial.
\end{theorem}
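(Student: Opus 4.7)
The plan is to prove both directions separately, with the forward implication being a direct construction and the converse requiring a reduction to projections followed by the classical fact that $^*$-isomorphisms of compact operators are spatial.

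For the easy direction, given an inner automorphism $\Phi=\Ad(\dot u)$ with $\dot u\in\cQ(H)$ unitary, I would lift $\dot u$ to a partial isometry $w\in\cB(H)$ such that $1-w^*w$ and $1-ww^*$ are finite-rank projections. This is possible because a unitary in the Calkin algebra lifts to a Fredholm operator of index zero, from which a suitable partial isometry is extracted via a finite-rank correction to the polar decomposition. Setting $a=w^*w$ and $b=ww^*$, the map $\Phi_*(x)=wxw^*$ gives a $^*$-isomorphism $a\cK(H)a\to b\cK(H)b$ that extends by the same formula to a strictly continuous $\bar\Phi_*\colon a\cB(H)a\to b\cB(H)b$, and commutativity of the diagram in Definition~\ref{def:trivialmap} reduces to $\pi(wxw^*)=\dot u\pi(x)\dot u^*$.

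For the converse, assume $\Phi$ is algebraically trivial, witnessed by positive $a,b$ and maps $\Phi_*,\bar\Phi_*$ as in Definition~\ref{def:trivialmap}. First I would reduce to the case that $a,b$ are projections of finite corank. Since $1-a\in\cK(H)$, the essential spectrum of $a$ is $\{1\}$, so $\sigma(a)$ accumulates only at $1$; in particular $0$ is isolated in (or absent from) $\sigma(a)$. Hence the support projection $p$ of $a$ has finite corank (its kernel is a finite-dimensional eigenspace of the compact operator $1-a$), and functional calculus yields $\overline{a\cK(H)a}=p\cK(H)p$ and $\overline{a\cB(H)a}=p\cB(H)p$. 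The analogous statement holds for $b$ with projection $q$, so $\Phi_*$ becomes a $^*$-isomorphism $p\cK(H)p\to q\cK(H)q$.

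Since $p\cK(H)p\cong\cK(pH)$ and $q\cK(H)q\cong\cK(qH)$ with both $pH$ and $qH$ separable infinite-dimensional, the classical theorem that every $^*$-isomorphism between algebras of compact operators is spatial yields a unitary $U\colon pH\to qH$ with $\Phi_*(x)=UxU^*$. Extending $U$ by zero on $(pH)^\perp$ gives a partial isometry $w\in\cB(H)$ with $w^*w=p$ and $ww^*=q$; since $1-p,1-q\in\cK(H)$, the image $\dot w:=\pi(w)$ is a unitary in $\cQ(H)$. To conclude $\Phi=\Ad(\dot w)$, I would show that $\bar\Phi_*(x)=wxw^*$ on all of $p\cB(H)p$: the map $\Ad(w)$ is strictly continuous on bounded sets, and $p\cK(H)p$ is strictly dense in the unit ball of $p\cB(H)p$ (approximate $x$ by $e_nxe_n$ for an approximate unit $(e_n)$ of $p\cK(H)p$), so strict continuity of $\bar\Phi_*$ together with its agreement with $\Phi_*$ on $p\cK(H)p$ forces equality. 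Finally, since $\pi(p)=1$, every $\dot x\in\cQ(H)$ lifts to some $pyp\in p\cB(H)p$, giving $\Phi(\dot x)=\pi(w(pyp)w^*)=\dot w\,\dot x\,\dot w^*$.

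The main obstacle will be the reduction from general positive $a,b$ to their support projections: verifying $\overline{a\cB(H)a}=p\cB(H)p$ relies crucially on $0$ being isolated in $\sigma(a)$, a consequence of $1-a$ being compact, so that continuous functional calculus produces $p$ inside the hereditary subalgebra. The subsequent strict-continuity step and spatial implementation are then routine once this reduction is achieved.
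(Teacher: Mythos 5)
Your proof is correct and follows the standard route: lift to a partial isometry in the easy direction, and in the converse reduce to support projections, invoke the spatiality of $^*$-isomorphisms of compact operators, and finish by strict continuity. One factual slip in the forward direction: a unitary in $\cQ(H)$ does \emph{not} in general lift to a Fredholm operator of index zero (the unilateral shift $\dot s$ has index $-1$, and every lift of $\dot s$ has index $-1$ since the Fredholm index is invariant under compact perturbation). Fortunately this does not matter for your argument: any lift $v$ of $\dot u$ is Fredholm, and the partial isometry $w$ in the polar decomposition $v=w|v|$ already has $1-w^*w=\text{proj}(\ker v)$ and $1-ww^*=\text{proj}(\ker v^*)$ finite-rank, with $\pi(w)=\dot u$ because $|v|-1\in\cK(H)$; no index-zero hypothesis and no ``finite-rank correction'' are needed. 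With that sentence fixed, the rest — the observation that $0$ is isolated in $\sigma(a)$, the passage to the support projection $p$ with $\delta p\le a\le\|a\|p$, the spatial implementation on $\cK(pH)$, and the strict-density argument identifying $\bar\Phi_*$ with $\Ad(w)$ on $p\cB(H)p$ — is sound.
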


Algebraically trivial isomorphisms are topologically trivial if $A$ and $B$ are separable (\cite[Theorem 2.5]{vignati2018rigidity}). The converse is not known (at least not unconditionally on $A$ and $B$), and it is tightly related to Ulam-stability phenomena. The following is the absolute (see \S\ref{S.Absoluteness}) portion of~\cite[Conjecture 5.1]{vignati2018rigidity}, partially answered in Theorem~\ref{thm:UlamAB} below. 

\begin{question}\label{ques:ulamgeneral}
Let $A$ and $B$ be separable \cstar-algebras. Are all topologically trivial isomorphisms between $\mathcal Q(A)$ and $\mathcal Q(B)$ algebraically trivial?
\end{question}

\subsection{The BDF Question} \label{BDF}
The interest in the existence of outer automorphisms of the Calkin algebra was sparked in the groundbreaking paper~\cite{BrDoFi}, which was the apex of a long series of efforts aiming to classify elements of $\cB(H)$ up to essential unitary equivalence (we refer to~\cite{david:ess} for an excellent survey on the subject).
\begin{definition}
Two operators $a, b \in \cB(H)$ are \emph{essentially unitarily equivalent} (or \emph{unitarily equivalent up to a compact perturbation}) if there exists a unitary $u \in \cB(H)$ such that $uau^* - b \in \cK(H)$. We abbreviate this relation with the notation $a \sim_{\cK(H)} b$.
\end{definition}

The search for an invariant able to completely classify elements of $\cB(H)$ up to essential unitary equivalence goes back to the early investigations on operator algebras by Weyl and von Neumann, where the relation $\sim_{\cK(H)}$ naturally came out in the study of the stability of pseudodifferential operators under compact perturbations. In 1909 Weyl proved in~\cite{von1909beschrankte} that every self-adjoint operator $a$ on a separable Hilbert space can be written as $a = d + c$, where $d$ is a diagonalisable operator and $c \in \cK(H)$. Weyl also observed that if $a \sim_{\cK(H)} b$ then the essential spectra of $a$ and $b$ are the same. Given an operator $a \in \cB(H)$, its \emph{essential spectrum} $\sigma_e(a)$ is the subset of $\bbC$ consisting of all the elements in the spectrum of $a$ which are either eigenvalues of infinite multiplicity or accumulation points. The set $\sigma_e(a)$ is known to be equal to $\sigma(\dot a)$, the spectrum of $\dot a$ in the Calkin algebra (\cite[Proposition~2.2.2]{higson2000analytic}).

More than 20 years after Weyl's work, in~\cite{von1935charakterisierung} von Neumann proved the converse, and much harder, implication: if two self-adjoint operators $a$ and $b$ have the same essential spectrum, then $a \sim_{\cK(H)} b$. 

In Problem 4 of his \emph{Ten Problems in Hilbert Space}~\cite{halmos10p}, Halmos asked whether every normal operator is the sum of a diagonalisable operator and a compact one, pushing towards an extension of Weyl's studies. A positive answer to his question was found shortly after by Berg~\cite{berg} and Sikonia~\cite{sikonia}, who independently generalised Weyl and von Neumann's results to normal operators. This collective effort is nowadays summarised by the so called Weyl--von Neumann--Berg--Sikonia Theorem. 
\begin{theorem}[Weyl--von Neumann--Berg--Sikonia Theorem]
Any two normal operators in $\cB(H)$ are unitarily equivalent up to compact perturbation if and only if their essential spectra are equal.
\end{theorem}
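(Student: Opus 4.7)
The plan is to prove both implications, with the forward direction immediate and the reverse proceeding in two stages: a reduction to diagonal operators via the Berg--Sikonia diagonalisation theorem, and then a combinatorial matching of eigenvalues. For the forward direction, if $u a u^{*} - b \in \cK(H)$ for some unitary $u \in \cB(H)$, then in the Calkin algebra $\pi(u)\pi(a)\pi(u)^{*} = \pi(b)$, so $\pi(a)$ and $\pi(b)$ are unitarily equivalent in $\cQ(H)$; since spectra in a unital \cstar-algebra are invariant under inner automorphisms, $\sigma(\pi(a)) = \sigma(\pi(b))$, and the identification $\sigma_e(\cdot) = \sigma(\pi(\cdot))$ recalled just before the theorem gives $\sigma_e(a) = \sigma_e(b)$.

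For the reverse direction, the first step would be to apply the generalisation of the Weyl--von Neumann theorem (the same result solving Halmos's Problem 4, attributed above to Berg and Sikonia) to decompose $a = d_a + c_a$ and $b = d_b + c_b$, where $d_a,d_b$ are diagonal normal operators with respect to some orthonormal bases and $c_a,c_b \in \cK(H)$. Since compact perturbations preserve the essential spectrum and $\sim_{\cK(H)}$ is an equivalence relation, it suffices to prove the implication for the diagonal representatives. So I am reduced to the following: if $d_a = \diag(\alpha_n)$ and $d_b = \diag(\beta_n)$ are diagonal normal operators with common essential spectrum $K$, then $d_a \sim_{\cK(H)} d_b$.

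For this diagonal case, I would prove a matching lemma: there is a bijection $\sigma \colon \bbN \to \bbN$ such that $\alpha_{\sigma(n)} - \beta_n \to 0$. Granting this, the associated permutation unitary $u_\sigma$ satisfies $u_\sigma d_a u_\sigma^{*} - d_b = \diag(\alpha_{\sigma(n)} - \beta_n) \in \cK(H)$, as required. The matching lemma itself would be built by a back-and-forth construction at successive scales $\varepsilon_k = 2^{-k}$: for each $k$, cover the compact $K$ by finitely many open $\varepsilon_k$-balls, observe that only finitely many $\alpha_n$ and $\beta_n$ can lie outside the $\varepsilon_k$-neighbourhood of $K$ (otherwise one would produce an accumulation point outside $K$, contradicting the definition of the essential spectrum), and extend the partial matching into each ball using the fact that every point of $K$, being either an accumulation point or an infinite-multiplicity eigenvalue of both $d_a$ and $d_b$, supplies as many terms of each sequence as needed to pair up. Isolated finite-multiplicity eigenvalues outside the covering balls are finite in number at each scale and get absorbed into the diagonal compact correction.

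The main obstacle is the Berg--Sikonia step itself. For a self-adjoint $a$, von Neumann's 1935 theorem produces the diagonalisation by successively isolating spectral subspaces on which $a$ is nearly scalar; extending this to a normal $a = a_1 + i a_2$ requires a single orthonormal basis of near-eigenvectors for $a_1$ and $a_2$ simultaneously, and this quasidiagonalisation step has no purely abstract substitute and is the technical heart of the result. Everything downstream is clean Calkin-algebra bookkeeping together with the combinatorial matching argument outlined above.
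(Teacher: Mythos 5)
Your proposal is the standard proof of this theorem and is essentially correct: the forward implication is pure Calkin-algebra bookkeeping, and the converse runs through quasidiagonalisation followed by a back-and-forth matching of the diagonal entries. Two small points deserve attention. First, the attribution in your final paragraph is scrambled: the statement that every self-adjoint operator is a diagonal operator plus a compact is Weyl's 1909 theorem, not von Neumann's 1935 paper. What von Neumann proved in 1935 is the converse classification for self-adjoints --- equal essential spectrum implies unitary equivalence modulo compacts --- which in your decomposition is precisely Weyl's theorem together with the matching lemma. Berg (and independently Sikonia) then proved the normal-operator quasidiagonalisation answering Halmos's Problem 4, and you are right that this is the nonabstract technical heart of the result; everything after it is elementary.

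Second, the back-and-forth sketch for the matching lemma conflates the stage of the construction with the argument index $n$. What must be shown is that $|\alpha_{\sigma(n)} - \beta_n| \to 0$ as $n \to \infty$, not merely that the tolerance decreases as the stage number increases; if an early stage (with coarse tolerance) is allowed to place a pair at a large index, the error at that index is frozen forever. The clean fix is to preselect thresholds $N_1 < N_2 < \cdots$ so that all the (finitely many) indices $n$ with $\alpha_n$ or $\beta_n$ at distance $\geq 2^{-k}$ from $K$ satisfy $n < N_k$, and then to run the exhaustive back-and-forth while insisting that whenever a pair is added at a domain index $n \geq N_k$, the matching error is $< 2^{-k}$; since every point of $K$ is an accumulation point of both sequences, each of the finitely many $2^{-k}$-clusters near $K$ contains infinitely many unused indices of each sequence, so a suitable partner is always available. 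This yields that $\{n : |\alpha_{\sigma(n)} - \beta_n| \geq 2^{-k}\}$ is finite for every $k$, which is exactly what compactness of the difference requires. With that bookkeeping, and the attribution corrected, the argument is complete.
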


This latter characterisation also applies to all compact perturbations of normal operators in $\cB(H)$.
In~\cite{BrDoFi:Unitary} Brown, Douglas and Fillmore further extended the scope of this classification to all \emph{essentially normal operators}. These are all the elements $a \in \cB(H)$ such that $aa^*- a^*a \in \cK(H)$ or, equivalently, such that $\dot a$ is normal in $\cQ(H)$.

Note that the family of essentially normal operators is strictly larger than the set of compact perturbations of normal operators. The most notorious witness of this fact (and possibly {`the most important single operator'} in $\cB(H)$,~\cite[Example I.2.4.3(ii)]{Black:Operator}) is the unilateral shift~$s$.
\begin{definition} \label{def:shift} 
Given an orthonormal basis $\{\xi_n\}_{n \in \bbN}$ of $H$, the unilateral shift $s \in \cB(H)$ is defined on the basis by $s \xi_n = \xi_{n+1}$ for every~$n \in \bbN$.
\end{definition}
The operator $s^*s - ss^*$ is a projection whose range is 1-dimensional, hence compact. Therefore $s$ is essentially normal, in particular $\dot s$ is a unitary in $\cQ(H)$. We remark that, although the definition of the unilateral shift depends on the choice of a basis, it is unique up to unitary equivalence.

The Fredholm index can be used to check that $s$ is not a compact perturbation of a normal operator. The \emph{Fredholm index} of an operator $a \in \cB(H)$ is defined as
\[
\ind(a) := \dim(\Ker(a)) - \dim(\Ker(a^*)),
\]
whenever the two quantities on the right-hand side are finite, in which case we say that $a$ is \emph{Fredholm}. Atkinson's Theorem (\cite[Theorem I.8.3.6]{Black:Operator}) states that an operator $a$ is Fredholm if and only if $\dot a$ is invertible in $\cQ(H)$. Hence the unilateral shift $s$ is Fredholm.

The Fredholm index is an invariant for the relation of essential unitary equivalence,
and direct computations show that it is always zero on Fredholm normal operators. Since $\ind(s) = -1$, it follows that $s$ cannot be a compact perturbation of a normal operator. Furthermore, the Fredholm index shows that the essential spectrum alone is not sufficiently fine to completely classify essentially normal operators up to essential unitary equivalence. Indeed there exist unitaries in $\cB(H)$ whose essential spectrum is equal to $\sigma_e(s) = \bbT$ (one example is the bilateral shift, which is defined as a shift on some orthonormal basis of $H$ indexed over $\bbZ$), but none of these operators is essentially unitarily equivalent to $s$, as their Fredholm index is equal to 0.

Brown, Douglas and Fillmore gave a full characterisation of unitary equivalence up to compact perturbation on essentially normal operators in~\cite{BrDoFi:Unitary}
by adding the Fredholm index to the invariant.
\begin{theorem}[{\cite[Theorem 11.1]{BrDoFi:Unitary}}]
Given two essentially normal operators $a,b \in \cB(H)$, then $a \sim_{\cK(H)} b$ holds if and only if $\sigma_e(a) = \sigma_e(b)$ and $\ind(a - \lambda \id) = \ind(b - \lambda \id)$ for all $\lambda \notin \sigma_e(a)$.
\end{theorem}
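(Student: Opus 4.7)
The plan is to split the biconditional into its two directions: necessity is essentially a formal consequence of the Calkin picture, while sufficiency requires the full machinery of Brown--Douglas--Fillmore $\Ext$ theory.

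For necessity, suppose $b = uau^* + c$ for some unitary $u \in \cB(H)$ and $c \in \cK(H)$. Passing to the Calkin algebra gives $\dot b = \dot u \dot a \dot u^*$, whence $\sigma_e(b) = \sigma(\dot b) = \sigma(\dot a) = \sigma_e(a)$. The Fredholm index is invariant under unitary conjugation (since $\Ker(uau^*) = u\Ker(a)$), invariant under compact perturbation (as $\ind$ is locally constant on the open set of Fredholm operators and $a-\lambda\id+tc$ stays Fredholm along the segment), and commutes with translation by scalars; thus $\ind(b - \lambda \id) = \ind(a - \lambda \id)$ for every $\lambda \notin \sigma_e(a)$.

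For sufficiency, the main idea is to reformulate the classification in terms of extensions. To any essentially normal $a \in \cB(H)$ with $\sigma_e(a) = X$, associate the $\cst$-algebra $\cE_a$ generated by $a$, $a^*$, and $\cK(H)$. Since $\dot a$ is normal with spectrum $X$, continuous functional calculus identifies $\cE_a/\cK(H)$ with $C(X)$, producing a short exact sequence
\[
0 \to \cK(H) \to \cE_a \to C(X) \to 0.
\]
A direct check shows that $a \sim_{\cK(H)} b$ is equivalent to the existence of a unitary in $\cB(H)$ intertwining these two extensions modulo $\cK(H)$, sending the canonical generator of one to that of the other. The classification problem is thereby reduced to computing the set $\Ext(X)$ of such extensions modulo strong equivalence.

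The crux is to endow $\Ext(X)$ with the structure of an abelian group via the Busby sum and then to compute it. The group structure, in particular the existence of inverses, follows from Voiculescu's theorem that any unital representation of $C(X)$ is absorbed by the universal one modulo compact perturbations. One then defines the index homomorphism $\gamma \colon \Ext(X) \to \prod_U \bbZ$, one copy of $\bbZ$ per bounded component $U$ of $\bbC \setminus X$, by $\gamma([a])_U := \ind(a - \lambda \id)$ for any $\lambda \in U$ (well-defined since $\ind$ is locally constant). Surjectivity is a direct construction using shift operators of prescribed multiplicity and essential spectrum. Injectivity is the deepest input and the main obstacle: one shows that an extension with vanishing index is \emph{split}, hence equivalent to the trivial extension $0 \to \cK(H) \to \cK(H) \oplus C(X) \to C(X) \to 0$, and then invokes the Weyl--von Neumann--Berg--Sikonia theorem to conclude that two normal operators with equal essential spectra are compactly unitarily equivalent. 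Both the group structure and the injectivity of $\gamma$ rest fundamentally on Voiculescu's noncommutative Weyl--von Neumann theorem, a substantial absorption result that is not a formal consequence of the classical one. This yields $\Ext(X) \cong \bigoplus_U \bbZ$, and the theorem follows.
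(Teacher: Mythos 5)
Your necessity argument is correct and complete. Your sufficiency argument has the right overall shape (the $\Ext(X)$ reformulation, the index homomorphism $\gamma$, surjectivity via weighted shifts), but it conceals a serious gap at exactly the point where the theorem is hard. You claim that ``both the group structure and the injectivity of $\gamma$ rest fundamentally on Voiculescu's noncommutative Weyl--von Neumann theorem.'' The first half is defensible: Voiculescu's absorption theorem, together with completely positive lifting and Stinespring dilation, does give that $\Ext(X)$ is a group. But Voiculescu's theorem does \emph{not} give injectivity of $\gamma$. Injectivity is the assertion that an essentially normal operator with vanishing Fredholm data on all bounded complementary components is a compact perturbation of a normal operator; this splitting theorem is the central and hardest content of the entire BDF classification, and it is not a consequence of absorption. (Indeed, the 1973 paper you are proving this from predates Voiculescu's theorem; it obtains the group property by a different argument and then proves injectivity by a long reduction using Mergelyan-type polynomial approximation, Mayer--Vietoris patching, and a careful analysis of the annulus case. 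Modern proofs deduce it from homotopy invariance of $\Ext$ --- itself a substantial theorem --- together with Bott periodicity or $K$-homology, again not from Voiculescu alone.) As written, your sentence ``one shows that an extension with vanishing index is split'' is a placeholder for the whole proof.

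There is also a small inconsistency at the end: you define $\gamma$ with codomain $\prod_U \bbZ$ but conclude ``$\Ext(X)\cong\bigoplus_U\bbZ$.'' For $X$ compact planar with infinitely many bounded complementary components the two groups differ and $\prod_U\bbZ$ is the correct one; $\Ext$ is a Steenrod-type homology theory that does not satisfy the continuity axiom, so one cannot identify it with a \v{C}ech group in such cases.
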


If moreover $a$ and $b$ are such that $\dot a$ and $\dot b$ are unitaries in $\cQ(H)$, then the classification is even cleaner, as the two operators are equivalent if and only if $\sigma_e(a) = \sigma_e(b)$ and $\ind(a) = \ind(b)$ (\cite[Theorem 3.1]{BrDoFi:Unitary}).

At the very end of~\cite{BrDoFi:Unitary} the authors introduce a weakening of essential unitary equivalence. Given $a,b \in \cB(H)$ we say that $a \approx b$ if and only if there exists an automorphism $\alpha \in \Aut(\cQ(H))$ such that $\alpha(\dot a) = \dot b$. When restricting to essentially normal operators, the relation $\sim_{\cK(H)}$ is obtained from $\approx$ by requiring that $\alpha$ is inner\footnote{In the Calkin algebra it is possible to define two notions of unitary equivalence. Two elements $a,b \in \cQ(H)$ are \emph{weakly unitarily equivalent} if there is a unitary $v \in \cQ(H)$ such that $\Ad(v) a = b$, and they are \emph{(strongly) unitarily equivalent} if furthermore $v$ can be chosen to be equal to $\dot u$, for some unitary $u \in \cB(H)$. We will not stress too much this distinction since for normal elements these two relations coincide.}. In~\cite{BrDoFi:Unitary} it is asked whether $\approx$ and $\sim_{\cK(H)}$ coincide on such operators. Note that $a \approx b$ implies $\sigma_e(a) = \sigma_e(b)$, thus the two relations are different only if there is an automorphism of $\cQ(H)$ which moves an element of $\cQ(H)$ to another one with a different Fredholm index. Since a unitary of Fredholm index $n\neq 0$ has an $m$-th root if and only if $m$ divides $n$, every automorphism of $\cQ(H)$ induces a group automorphism of $\bbZ$ (the range of the Fredholm indices) and this question boils down to the following.

\begin{question} \label{ques:bdf}
Is there an automorphism of the Calkin algebra that moves the unilateral shift $\dot s$ to its adjoint $\dot s^*$?
\end{question}

Since inner automorphisms preserve the Fredholm index, an automorphism witnessing a positive answer to Question~\ref{ques:bdf} cannot be inner. As a test question, in~\cite[\S 1.6(ii)]{BrDoFi} the authors asked whether outer automorphisms of the Calkin algebra exist at all.

It is unlikely that Brown, Douglas and Fillmore expected that the answer to their question might involve set theory, although a few years after Shelah started a subject that would eventually lead to the resolution of the second problem (\S\ref{S.Shelah}). The question of the existence of outer automorphisms of the Calkin algebra will be thoroughly discussed in \S\ref{S.Cohomology} and \S\ref{6bii.PFAAVi}. 

We remark that it is still not known whether Question~\ref{ques:bdf} has a positive answer consistently with $\ZFC$. Indeed, the restriction of every known outer automorphism of $\cQ(H)$ to a separable \cstar-subalgebra is implemented by a unitary, and therefore preserves the Fredholm index.

The methods developed by Brown, Douglas and Fillmore in~\cite{BrDoFi:Unitary} had much deeper implications than simply classifying normal operators in $\cQ(H)$ up to unitary equivalence. Their work led to the introduction of
a new \cstar-algebraic invariant $\Ext$, which is defined as follows
(we refer to~\cite[Chapter VII]{blackadar1998k} for a textbook on the topic).

\begin{definition} \label{def:ext}
A \emph{unital extension} of a given unital \cstar-algebra $A$ is a unital injective $^*$-homomorphism $\Phi\colon A \to \cQ(H)$. Two extensions $\Phi_1, \Phi_2$ are \emph{unitarily equivalent} if there is a unitary $u \in \cB(H)$ such that $\Ad(\dot u) \circ \Phi_1 = \Phi_2$. We define $\Ext(A)$ as the set of all unital extensions of $A$ modulo unitary equivalence.
Exploiting the fact that $\cQ(H) \oplus \cQ(H) \subseteq M_2(\cQ(H)) \cong \cQ(H)$, one defines the direct sum of two extensions, unique up to unitary equivalence, thus endowing $\Ext(A)$ with a structure of an abelian semigroup.
\end{definition}

With the language of extensions, the classification problem we are discussing can be investigated from a much broader perspective. Note first that, given an essentially normal element $a \in \cB(H)$, by the Gelfand--Naimark Duality the unital \cstar-algebra generated by $\dot a$ in $\cQ(H)$ is isomorphic to the abelian \cstar-algebra $C(\sigma_e(a))$. The isomorphism sends the identity map on $\sigma_e(a)$ to $\dot a$, and can be thought as a unital extension of $C(\sigma_e(a))$. Given two essentially normal operators $a,b \in \cB(H)$ such that $X := \sigma_e(a) = \sigma_e(b)$, asking whether $a \sim_{\cK(H)} b$ translates into verifying that the two unital extensions of $C(X)$ by $\dot a$ and $\dot b$ defined above are unitarily equivalent. 
Hence, the classification of essentially normal operators up to $\sim_{\cK(H)}$ is reduced to the study of $\Ext(C(X))$ (usually simply denoted by $\Ext(X)$), in the particular case where $X$ is a compact subset of $\mathbb{C}$.

In ~\cite{BrDoFi} the authors extend their study to $\Ext(X)$ for $X$ a general compact metrisable space, which by the Gelfand--Naimark Duality corresponds to the analysis of $\Ext$ for separable, abelian, unital \cstar-algebras. They prove that $\Ext(X)$ is always an abelian group, and that it in this setting $\Ext$ is a homotopy invariant, covariant functor, which is moreover a $K$-homology. Their work effectively led to the birth of a brand new subject combining \cstar-algebras and algebraic topology, known as extension theory (standard references for this subject are~\cite{blackadar1998k} and \cite{higson2000analytic}).

Note that the group $\Aut(\cQ(H))$ acts by composition on $\Ext(A)$, for every unital separable \cstar-algebra $A$. From this perspective, Question~\ref{ques:bdf} interrogates about the triviality of the action of $\Aut(\cQ(H))$ on $\Ext(\bbT)$, since $\sigma_e(s) =\sigma_e(s^*) = \bbT$. More specifically, the function that maps each (class of an) extension $\varphi \in \Ext(\bbT)$ to the Fredholm index $\ind(\varphi(\id_\bbT))$, is an isomorphism between $\Ext(\bbT)$ and $\bbZ$. The extensions of $C(\bbT)$ which are mapped by this isomorphism to $-1$ and $1$ are exactly those that send $\id_\bbT$ to (something which is unitarily equivalent to) $\dot s$ and $\dot s^*$, respectively. Since the only nontrivial automorphism of the group $\bbZ$ is the one moving $1$ to $-1$, understanding whether the action of $\Aut(\cQ(H))$ on $\Ext(\bbT)$ is trivial reduces to answering Question~\ref{ques:bdf}. We conclude this section with a more general problem suggested by Nigel Higson.

\begin{question} \label{ques:ext}
Are there unital, separable \cstar-algebras $A$ such that, consistently with ZFC, $\Aut(\cQ(H))$ acts nontrivially on $\Ext(A)$?
\end{question}

In the case when $A = C(\bbT)$, Higson's question reduces to Question~\ref{ques:bdf}. In general, every inner automorphism acts trivially on $\Ext(A)$, and both questions are $\Sigma^2_1$ statements hence Woodin's theorem (see \S\ref{S.Absoluteness}) suggests that CH provides an optimal setting for a possible positive answer to either of the two questions.

\subsection{Sakai's Question}\label{S.Sakai}

In this section we introduce an old open problem which originated from a series of papers due to Sakai (\cite{sakai1968derivationsI},~\cite{sakai1971derivationsII} and~\cite{sakai1971derivations}), and which has been abundantly overlooked over the last decades. Similarly to some other topics discussed in the current section, this question has a purely \cstar-algebraic flavour and was motivated by the investigations on inner derivations of simple \cstar-algebras contained in Sakai's aforementioned works.

\begin{question} \label{ques:sakai}
Let $A$ and $B$ be two separable, simple non-unital \cstar-algebras, and suppose that $\cQ(A) \cong \cQ(B)$. Are $A$ and $B$ isomorphic?
\end{question}

Both simplicity and separability are necessary in Question~\ref{ques:sakai}. The former is due to the elementary example $B = A \oplus C$, where $A$ is any non-unital \cstar-algebra and $C$ is any unital \cstar-algebra, which gives $\cQ(A) \cong \cQ(B)$. 

For what concerns separability, in~\cite{sakai1971derivations} Sakai shows that there exist multiple nonseparable simple \cstar-algebras whose corona is equal to any prescribed finite-dimensional \cstar-algebra (see also~\cite{ghasemi2016extension} for examples of non-simple nonseparable \cstar-algebras whose corona is one-dimensional).

Question~\ref{ques:sakai} was attributed to Sakai and was explicitly stated for the first time in Elliott's work~\cite{Ell:Derivations}, where (a strengthening of) the following theorem is proved. Recall that a \cstar-algebra is UHF if it can be written as the direct limit of matrix algebras.
\begin{theorem}[{\cite[Theorem 1]{Ell:Derivations}}]
Let $A$ and $B$ be separable unital UHF algebras. If $\cQ(A\otimes\mathcal K(H)) \cong \cQ(B\otimes \mathcal K(H))$ then $A\otimes\mathcal K(H) \cong B\otimes\mathcal K(H)$.
\end{theorem}

The methods used to prove this result involve the study of the connected components of the unitary group of $\cQ(A)$, which are shown to be directly related to the $K_0$-group $K_0(A)$, which in turn characterises $A\otimes\mathcal K(H)$ up to isomorphism. We refer to~\cite[Chapter 1]{rorstor} for further details and proper context on this foundational classification result, and to \cite{white2023abstract} for an up-to-date overview of the Elliott classification program. 
 In \cite[Corollary~D]{farah2022calkin} it was proven that $\cQ(H)$ is not isomorphic to $\cQ(A\otimes\cK(H))$ for any Elliott-classifiable \cstar-algebra $A$, thus giving support to the conjecture that $\cQ(H)$ is not isomorphic to $\cQ(A)$ for a simple $A$ not isomorphic to the algebra of compact operators.  This result was extended in \cite[Corollary~5.3]{farah2024coronas}  where it was proven that there are $2^{\aleph_0}$ non-isomorphic coronas of simple separable \cstar-algebras.

We remark that the analogue of Question~\ref{ques:sakai} for multiplier algebras has been fully answered in~\cite{brown:MA}, where it is proved that if $A$ and $B$ are separable \cstar-algebras such that $\cM(A) \cong \cM(B)$, then $A \cong B$.

It is not clear whether set theory could play a role in further progress on Question~\ref{ques:sakai}. Nevertheless, this problem highlights how little we know about the general structure theory of corona algebras. The paper~\cite{sakai1971derivations} ends with a series of open problems aiming at a better understanding on this matter; we include a sample. 

\begin{question}[{\cite[Problems 5-7]{sakai1971derivations}}]\label{Q.prescribedCalkin}
Given any prescribed unital abelian, or simple, or even arbitrary \cstar-algebra $C$, does there exist a simple \cstar-algebra $A$ such that $\cQ(A) \cong C$?
\end{question}

For an arbitrary \cstar-algebra $C$ there exists a \cstar-algebra $A$ such that $\cQ(A)\cong C$ (see \cite[Proposition 3.2]{farah2023obstructions}), but it is not known whether such~$A$ can be chosen to be simple. 
As we said, it is possible to have $\cQ(A)\cong \bbC$ for a simple~$A$ (\cite[Theorem~1]{sakai1971derivations}), and also for an approximately finite (but not simple)~$A$ (\cite{ghasemi2016extension}). Since for a finite-dimensional \cstar-algebra $F$ and any $A$ we have $F\otimes \cQ(A)\cong \cQ(F\otimes A)$, the answer to Question~\ref{Q.prescribedCalkin} is positive for a finite-dimensional $C$. Also, for every compact metric space $K$ there is a separable Banach space~$X$ such that the `Calkin algebra' $\cB(X)/\cK(X)$ is isomorphic to $C(K)$ (\cite{motakis2021separable}; the case when $K$ is a singleton was a major open problem until~\cite{argyros2011hereditarily}).

\section{ Ulam-stability}\label{S.Ulam}
\enumthree
The present section is of central importance to the subject of corona rigidity, but it has little to do with logic. Readers interested only in logic may want to skip this section and refer to it as necessary. 
In (\cite[\S VI.1]{Ul:Problems}) Ulam asked a series of questions about stability of mathematical theorems and solutions to functional equations. In particular, he asked whether `approximate' morphisms in certain categories (such as metric groups or vector spaces) are close to morphisms. The subjects inspired by Ulam's questions are independently studied and related results have been proved in numerous contexts (see the introduction of~\cite{BPRX.UlamBook} for an overview). As we shall see in this chapter, these stability-type questions naturally arise in the study of the rigidity of Borel quotients. In fact, Ulam-stability questions are often tightly connected to whether topologically trivial isomorphisms between quotient structures are algebraically trivial.

We first review the connection of Ulam-stability to rigidity in the categories of Boolean algebras and reduced products of finite structures, where the `asymptotically additive liftings' act as a bridge between the topological and algebraic trivialities. Later, we briefly discuss Ulam-stability in the categories of \cstar-algebras, its effect on the rigidity of maps between their reduced products, and its connection with the Kadison--Kastler conjecture.

\subsection{Approximate homomorphisms}
The following is probably the most studied context for Ulam-stability. 
For $\varepsilon>0$, a function $f\colon G_1\to G_2$ between groups such that $G_2$ is equipped with a metric $d$ is said to be an \emph{$\varepsilon$-homomorphism} if 
\begin{equation}\label{Eq.approximate}
d(f(gh),f(g)f(h))<\varepsilon
\end{equation}
 for all $g$ and $h$ in $G_1$. The question is, under what conditions one can conclude that $f$ is near a true group homomorphism? 

The following convention clearly does not cover \cstar-algebras, to which we will return later. 

\begin{convention}\label{Conv.G}
Throughout this section $\cG$ denotes a category of finite algebraic structures in a fixed finite signature equipped with a metric, and of a uniformly bounded diameter. 
 \end{convention}

\begin{definition}\label{Def.UlamStable} Suppose that $\cG$ is as in Convention~\ref{Conv.G}. An \emph{$\varepsilon$-ho\-mo\-mor\-phism} between structures in~$\cG$ is defined by requiring the analog of \eqref{Eq.approximate} to hold for every function symbol. The category $\cG$ is said to be \emph{Ulam-stable} if for every $\varepsilon>0$ there exists $\delta>0$ such that every $\delta$-homomorphism can be uniformly $\varepsilon$-approximated by a homomorphism. 
\end{definition}

There are numerous variations of this definition---one can talk about a specific class of maps being stable, or specify different classes for the structures in the domain and those in the range (note that the former need not be metric structures).

\begin{example}\label{Example4.3}
 The category of all finite subgroups of the unit circle, with the induced metric, is not Ulam-stable. Define $f_n\colon\bbZ/(2n+1)\bbZ\to \bbZ/(2n)\bbZ$ by 
 \[
 f_n\left(e^{i 2\pi \frac{k}{2n+1}}\right)=e^{i\pi \frac{k}{n}}
 \]
 for $0\leq k<2n+1$. Since for all $k,k'<2n+1$ the difference (modulo $2n$) between $[k+k']_{2n+1}$ and $[k+k']_{2n}$ is at most $1$, we have that
 \[
\textstyle \left| f_n\left(e^{i 2\pi \frac{k}{2n+1}}\right) f_n\left(e^{i 2\pi \frac{k'}{2n+1}}\right)- f_n\left(e^{i 2\pi \frac{k+k'}{2n+1}}\right)\right| =|1-e^{i2\pi\frac{1}{2n+1}}|<\frac {2\pi}{2n+1}, 
 \]
 hence $f_n$ is a $2\pi/(2n+1)$-homomorphism. Since every homomorphism between these two groups is trivial, $f_n$ cannot be 1-approximated by a homomorphism. 
\end{example}

A clever way to remedy Example~\ref{Example4.3} is to allow passing to a larger group in the range. This approach is taken in \cite{gowers2017inverse}, where it was shown that an approximate representation of a finite group $\Gamma$ in $M_n(\bbC)$ (with respect to the Hilbert--Schmidt norm) is near a representation of $\Gamma$ in a larger matrix algebra in which $M_n(\bbC)$ embeds unitally; see also \cite{de2019operator} for a von Neumann-algebraic adaptation of the Gowers--Hatami result. Also, Kazhdan (\cite{Kazhdan.Repr}) showed that approximate representations of amenable groups (where the range is the unitary group on an infinite dimensional Hilbert space) are uniformly close to representations, while the same fails for the free group $\mathbb F_2$, or any group which contains it (\cite{burger2013ulam}).

\subsection{From Borel to asymptotically additive}
 Ulam-stability is directly connected with the existence of liftings for homomorphisms between reduced products, the definition of which we quickly recall. 
 
 \begin{definition}\label{Def.Reduced}
 Fix $\cG$ as in Convention~\ref{Conv.G}. If $(A_n,d_n)$, for $n\in \bbN$, is a sequence of metric structures in $\cG$ and $\cI$ is an ideal on $\bbN$, the reduced product $\prod_n A_n/\bigoplus_\cI A_n$ is the quotient of the product $\prod_n A_n$ modulo the congruence that identifies $(x_n)$ and $(y_n)$ if and only if 
 \[
\lim_{n \to \cI} d_n(x_n, y_n) = \inf_{X\in \cI} \sup_{n\notin X} d_n(x_n,y_n)=0.
 \]
 \end{definition}

The following convention will help to simplify the notation (see \eqref{Eq.AI} below). 
 
 \begin{convention}\label{Conv.0}
In addition to Convention~\ref{Conv.G}, we assume that the signature of $\cG$ contains a binary operation $+$ and a constant $0$, interpreted as the neutral element for $+$ in every element of $\cG$. 
 \end{convention}

In the case of groups, or rings, $+$ and $0$ have the obvious interpretation. In Boolean algebras, $+$ will stand for $\cup$. 

For a fixed sequence of structures $A_n$ the following notation will come handy. For $I\subseteq \bbN$, we write $A_I=\prod_{n\in I} A_n$ (thus $A_{\bbN}=\prod_n A_n$) and consider the natural projection map $\pi_I\colon A_{\bbN}\to A_I$. We identify $A_I$ with
 \begin{equation}\label{Eq.AI}
 \{a\in A_{\bbN}\mid a_n=0\text{ for all }n\notin I\}. 
 \end{equation}
This convention identifies a quotient $A_I$ of $A_\bbN$ with a subalgebra thereof. 

\begin{definition} \label{Def.AsympAdd} A function 
$
\Phi_*\colon \prod_n A_n\to \prod_n B_n
$
 is \emph{asymptotically additive} if there are partitions $\bbN=\bigsqcup_n I(n)$ and $\bbN=\bigsqcup_n J(n)$ into finite intervals and maps $f_n\colon A_{I(n)}\to B_{J(n)}$ such that (identifying $a\in \prod_n A_n$ with the sequence $(a_n)$ such that $a_n\in A_{I(n)}$ and similarly for the range) $\Phi_*((a_n))=(f_n(a_n))_n$. 
\end{definition}

In the following lemma $\cP(\bbN)$ is identified with $\prod_\bbN\{0,1\}$. 

\begin{lemma}[{\cite[Theorem 1.5.2]{Fa:AQ}}]\label{L.P-ideal.aa} If $\cI$ is an analytic P-ideal, then every topologically trivial homomorphism $\Phi\colon \cP(\bbN)\to \cP(\bbN)/\cI$ has an asymptotically additive lifting. 
\end{lemma}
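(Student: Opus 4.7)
The strategy proceeds in three steps: represent the ideal via a submeasure, upgrade the Borel lifting to a continuous one, and then carry out a fusion argument to construct the required partitions. By the Mazur--Solecki theorem (Theorem~\ref{T.Solecki}) we may write $\cI=\Exh(\mu)$ for a lower semicontinuous submeasure $\mu$, translating membership in $\cI$ into the quantitative condition $\mu(X\setminus n)\to 0$. One can further assume the Borel lifting $\Phi_*$ is continuous: a Borel map has the Baire property and is continuous on a comeager $G_\delta$, and modifications off this set can be absorbed by $\cI$ via the Jalali-Naini--Talagrand characterization of meager analytic P-ideals (here the P-ideal hypothesis is essential). Continuity on the compact space $\cP(\bbN)$ then upgrades to uniform continuity: there is a modulus $k\colon\bbN\to\bbN$ such that $\Phi_*(C)\cap\ell$ depends only on $C\cap k(\ell)$.

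The heart of the proof is a recursive construction of finite intervals $I(n)=[k_n,k_{n+1})$ and $J(n)=[\ell_n,\ell_{n+1})$ with the following two properties: (a) $k_{n+1}\geq k(\ell_{n+1})$, which forces $\Phi_*(A\setminus k_{n+1})\cap\ell_{n+1}=\Phi_*(\emptyset)\cap\ell_{n+1}$ for every $A$ and thereby neutralizes the ``future'' tail of $A$; and (b) for every $A_0\subseteq[0,k_n)$ and every $A_1\subseteq I(n)$, both $\mu(\Phi_*(A_0)\cap[\ell_n,\infty))$ and the $\mu$-mass on $[\ell_n,\infty)$ of the homomorphism slack $\Phi_*(A_0\cup A_1)\triangle\Phi_*(A_0)\triangle\Phi_*(A_1)$ are below $2^{-n}$. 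The latter amounts to finitely many conditions on $\ell_n$, each achievable because each relevant set is a $\Phi_*$-image of a finite set, hence lies in $\cI$ and has vanishing $\mu$-tail. Standard bookkeeping, interleaving the choices of $\ell_n$ and $k_{n+1}$, realizes both (a) and (b) simultaneously.

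Now define $f_n\colon\cP(I(n))\to\cP(J(n))$ by $f_n(C)=\Phi_*(C)\cap J(n)$ and set $\Phi'_*(A)=\bigsqcup_n f_n(A\cap I(n))$; this map is asymptotically additive by construction. For any $A$, decompose $A=A_{<n}\sqcup A_{I(n)}\sqcup A_{>n}$ with $A_{<n}=A\cap[0,k_n)$, $A_{I(n)}=A\cap I(n)$, $A_{>n}=A\setminus k_{n+1}$. Property (a) gives $\Phi_*(A)\cap J(n)=\Phi_*(A_{<n}\cup A_{I(n)})\cap J(n)$, and the homomorphism property of $\Phi$ together with (b) then shows the latter agrees with $\Phi_*(A_{I(n)})\cap J(n)=f_n(A\cap I(n))$ up to a set of $\mu$-mass less than $2^{-n+1}$. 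Summing these errors from $n$ onwards yields $\mu((\Phi_*(A)\triangle\Phi'_*(A))\setminus\ell_n)\to 0$, so $\Phi_*(A)\triangle\Phi'_*(A)\in\Exh(\mu)=\cI$ and $\Phi'_*$ is the sought asymptotically additive lifting of~$\Phi$.

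\textbf{Main obstacle.} The crux is the recursion in the middle step: at each stage $n$ one must simultaneously control the contribution to $J(n)$ of the prefix $A\cap[0,k_n)$ and of the tail $A\setminus k_{n+1}$, for every $A$. Prefixes are finite, so their $\Phi_*$-images lie in $\cI$ and the tail $\mu$-mass can be made smaller than $2^{-n}$ by choosing $\ell_n$ large; this is the reason the quantitative description $\cI=\Exh(\mu)$ is needed. Tails are where continuity of $\Phi_*$ enters crucially: taking $k_{n+1}\ge k(\ell_{n+1})$ collapses their contribution to that of $\Phi_*(\emptyset)$, which is again in $\cI$. Interleaving the two constraints (and absorbing the homomorphism slack, which is in $\cI$ but depends on the finite data $(A_0,A_1)$) is the delicate bookkeeping point, but it is feasible because the function $k(\cdot)$ takes only finite values and all relevant ``error sets'' lie in $\cI$.
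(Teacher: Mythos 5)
Your overall architecture and the ingredients you invoke are the right ones: Solecki's representation $\cI=\Exh(\mu)$, the role of $\mu$-tails in measuring the error between $\Phi_*$ and a block-local map, and the necessity of the P-ideal hypothesis. But there is a genuine gap in the passage from a Borel lifting to a continuous one, which is in fact the heart of the proof rather than a preliminary reduction. You write that a Borel $\Phi_*$ is continuous on a comeager $G_\delta$ set $G$ and that ``modifications off $G$ can be absorbed by $\cI$ via Jalali-Naini--Talagrand,'' but neither half of this survives scrutiny. A point $A\notin G$ need not be $=^*$-close, or $\cI$-close, to any point of $G$, so there is no way to redefine $\Phi_*(A)$ that both agrees with $\Phi_*$ modulo $\cI$ and extends $\Phi_*\restriction G$ to a continuous map on all of $\cP(\bbN)$ (there is not even a continuous retraction of $\cP(\bbN)$ onto a dense $G_\delta$, since that would force $G$ to be closed). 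The Jalali-Naini--Talagrand theorem is a structure theorem about meager ideals --- it yields an interval partition no single $\cI$-set can engulf --- and supplies no such redefinition. The correct device, which the paper itself flags immediately after the statement of this lemma, is Just's \emph{method of stabilizers}, extracted from Veli\v{c}kovi\'c's even/odd splitting for $\cI=\Fin$: a fusion constructs intervals $I_n$ and generic configurations $g_n\subseteq I_n$ so that for every $A$ both interpolants $A_{\mathrm{even}}$ and $A_{\mathrm{odd}}$, obtained by freezing alternate blocks $A\cap I_n$ to $g_n$, land inside $G$; the homomorphism identity $\Phi(A)=\Phi(A_{\mathrm{even}})\triangle\Phi(A_{\mathrm{odd}})\triangle\Phi(g)$ then rewrites $\Phi_*(A)$ modulo $\cI$ as a continuous, and after further block bookkeeping asymptotically additive, function of $A$. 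Your step~3 presupposes a continuity modulus $k$ that only this missing argument produces, so the gap propagates.

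A secondary problem is a circularity in your step~3 bookkeeping: condition~(a) requires $\ell_{n+1}$ to be fixed before $k_{n+1}$ (so that $k_{n+1}\geq k(\ell_{n+1})$), while condition~(b) at index $n$ quantifies over $A_1\subseteq I(n)=[k_n,k_{n+1})$ with the tail measured from $\ell_n<\ell_{n+1}$, so $k_{n+1}$ must also be known before $\ell_n$. The remark about ``interleaving the choices'' does not resolve the cycle as written; in the stabilizer proof the genericity choices and the $\mu$-estimates are woven into a single fusion, which is precisely why one cannot cleanly separate ``upgrade to continuity'' from ``upgrade to asymptotic additivity'' in the way you propose.
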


In the case when $\cI=\Fin$, Lemma~\ref{L.P-ideal.aa} was extracted from Shelah's proof (\S\ref{S.Shelah}) in~\cite{Ve:Definable}. Its proof and the proof of the following (taken from \cite[\S 4]{Fa:Liftings}) use the method of stabilisers, isolated by Just in~\cite{Just:Modification}. 

\begin{lemma}\label{L.AsympAdd} 
Suppose that $\cG$ is a category of Boolean algebras or groups as in Convention~\ref{Conv.G} and $A_n$ and $B_n$ belong to $\cG$. Let $\cI$ and $\cJ$ be ideals in~$\cP(\bbN)$, both containing $\Fin$. If 
\[
\textstyle \Phi\colon \prod_n A_n/\bigoplus_{\cI} A_n\to \prod_n B_n/\bigoplus_{\cJ} B_n
\]
 is a topologically trivial homomorphism, then $\Phi$ has an asymptotically additive lifting. 
\end{lemma}

In the proof, the assumption that all $A_n$ are finite is needed in order to use the classical characterisation of comeager subsets of the product $\prod_n A_n$ (see e.g., \cite[Theorem~9.9.1]{Fa:STCstar}). 
 The analog of Lemma~\ref{L.AsympAdd} is true in some other categories (see~\S\ref{S.fields-etc}).

\begin{problem}\label{P.AsympAdd} Find new instances of categories $\cG$ and ideals $\cI$, $\cJ$ under which the analog of Lemma~\ref{L.AsympAdd} holds. 
\end{problem}

Every progress on Problem~\ref{P.AsympAdd} connects Ulam-stability to the question when topologically trivial homomorphisms are algebraically trivial for certain quotient structures. In the following,~$\cG$ is a class of structures as in Convention~\ref{Conv.G}. In addition, we assume that $\cG$ is closed under taking finite products.

\begin{proposition} \label{P.Ulam} For $\cG$ as in the previous paragraph, the following are equivalent.
\enumone
\begin{enumerate}
\item\label{P.Ulam.1} The class $\cG$ is Ulam-stable. 
\item\label{P.Ulam.2} If a homomorphism between reduced products of structures in~$\cG$ over ideals that include $\Fin$ has an asymptotically additive lifting then it is algebraically trivial. 
\end{enumerate}
\end{proposition}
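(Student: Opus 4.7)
The plan is to establish the two implications separately. Throughout, we rely on the assumption that $\cG$ is closed under finite products, so that the block-structures $A_{I(n)}$ and $B_{J(n)}$ appearing in Definition~\ref{Def.AsympAdd} are themselves objects of $\cG$, together with Convention~\ref{Conv.0} to provide a distinguished neutral element used to extend finite-support data to the whole index set.

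For $(1) \Rightarrow (2)$, I start with an asymptotically additive lifting $\Phi_{*} = (f_n)_{n}$ of a homomorphism $\Phi$, with partitions $\bbN = \bigsqcup_n I(n) = \bigsqcup_n J(n)$ into finite intervals. The first observation is that, for every operation symbol $\star$ of the signature and every $\varepsilon > 0$, the set
\[
S_{\star}(\varepsilon) := \{ n : \sup_{x,y \in A_{I(n)}} d(f_n(x \star y), f_n(x) \star f_n(y)) \geq \varepsilon \}
\]
belongs to $\cJ$. Otherwise, coordinate-wise witnesses $(x_n, y_n)$ chosen on a non-$\cJ$-set extend via the neutral element to sequences $\bar x, \bar y \in \prod_n A_n$ whose images show that $\Phi_{*}(\bar x \star \bar y) - \Phi_{*}(\bar x) \star \Phi_{*}(\bar y)$ is not in $\bigoplus_{\cJ} B_n$, contradicting that $\Phi$ is a homomorphism. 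Iterating over a nullsequence $\varepsilon_{k} \to 0$ and over the finitely many operation symbols shows that the pointwise homomorphism defect $\delta_n$ of $f_n$ satisfies $\{n : \delta_n \geq \varepsilon\} \in \cJ$ for every $\varepsilon > 0$.

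Ulam-stability furnishes, for each $\varepsilon_{k}$, a stability modulus $\delta_{k}$; outside a $\cJ$-small set $X_k$ we have $\delta_n \leq \delta_k$, so $f_n$ is $\varepsilon_{k}$-approximable by an honest homomorphism. A standard diagonalization produces homomorphisms $g_n\colon A_{I(n)} \to B_{J(n)}$ (defined to be the zero homomorphism on the remaining $\cJ$-small collection of indices) with $\sup_{a} d(g_n(a), f_n(a)) \to 0$ outside $\cJ$-sets. The coordinate-block map $\Psi_{*} := (g_n)_{n}$ is asymptotically additive with homomorphism components, hence a genuine homomorphism of the full products, and differs from $\Phi_{*}$ by an element of $\bigoplus_{\cJ} B_n$, so $\Psi_{*}$ is an algebraically trivial lifting of $\Phi$.

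For $(2) \Rightarrow (1)$, I argue by contrapositive. Assuming $\cG$ is not Ulam-stable, fix $\varepsilon_{0} > 0$ and, for each $n$, a $(1/n)$-homomorphism $f_n\colon A_n \to B_n$ between objects of $\cG$ such that no homomorphism is within $\varepsilon_{0}$ of $f_n$. With $\cI = \cJ = \Fin$ and the singleton partitions $I(n) = J(n) = \{n\}$, the formula $\Phi_{*}((a_n)) := (f_n(a_n))$ is asymptotically additive, and since the coordinate-wise homomorphism defect is at most $1/n \to 0$, $\Phi_{*}$ descends to a well-defined homomorphism $\Phi$ between the $\Fin$-reduced products. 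If (2) held, $\Phi$ would admit an algebraically trivial lifting $\Psi_{*} = (g_n)_{n}$ with homomorphism components; after passing to a common refinement of the two finite-interval partitions (which remains a partition into finite intervals, and whose blocks still belong to $\cG$ by closure under finite products), we may assume $g_n\colon A_n \to B_n$ is a homomorphism, and $\Psi_{*} - \Phi_{*} \in \bigoplus_{\Fin} B_n$ gives $\sup_{a} d(g_n(a), f_n(a)) \to 0$, contradicting the non-$\varepsilon_{0}$-approximability of $f_n$.

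The main obstacle I expect is conceptual rather than computational, and lies in the reverse direction: one needs the notion of ``algebraically trivial'' in this setting to encompass not only the finitary algebraic structure but also the asymptotic block structure of the lifting. If one allowed a completely arbitrary homomorphism $\prod_n A_n \to \prod_n B_n$ to witness algebraic triviality, pathological coordinate-mixings compatible only with finitary operations could decouple the lifting from the asymptotic additivity of $\Phi_{*}$, and the extraction of coordinate-wise approximating homomorphisms would fail. The natural reading in which an algebraically trivial lifting is itself (or is asymptotically equivalent to) an asymptotically additive homomorphism is the one that makes the equivalence true and the proof transparent.
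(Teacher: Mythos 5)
Your proof follows the same two-implication outline as the paper's (the paper gives only a sketch); the pigeonhole on the defect sets, the coordinate-wise application of Ulam-stability, and the contrapositive construction from a witness to non-stability all match.

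Two remarks. In $(1)\Rightarrow(2)$ you place the defect sets in the codomain ideal $\cJ$, and this is what the argument needs: the correction $\Psi_*-\Phi_*$ is supported (uniformly in the argument) on a $\cJ$-small set, so $\Psi_*$ still lifts $\Phi$. The paper's sketch writes $\cI$ at the corresponding point, but the constraint on the defect is read off from $\Phi$ preserving operations modulo the \emph{codomain} congruence, so $\cJ$ is the ideal that actually enters; your version is the correct one.

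Your closing paragraph correctly flags the one point where the sketch requires interpretation. The paper's sentence that $(a_n)\mapsto(f_n(a_n))$ lifts an algebraically nontrivial homomorphism is asserted without proof, and as you observe, if algebraic triviality merely requires \emph{some} homomorphism lifting $\Psi_*\colon\prod_n A_n\to\prod_n B_n$, then $\Psi_*$ need not respect the coordinate blocks. The pointwise constraint $\Psi_*(a)-\Phi_*(a)\in\bigoplus_{\Fin}B_n$ for every $a$ does not by itself yield coordinate-wise homomorphisms $g_n$ uniformly close to $f_n$: the witness $a_n$ to a large discrepancy in coordinate $n$ depends on $n$, and a single sequence need not realize infinitely many discrepancies simultaneously once $\Psi_*$ mixes coordinates. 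Reading algebraic triviality here as the existence of an asymptotically additive homomorphism lifting, as you propose, makes the equivalence clean and is consistent with how the proposition is used downstream (the algebraically trivial liftings it produces are asymptotically additive). So your argument captures the intended content, and you have correctly isolated the single delicate point that the paper's sketch leaves implicit.
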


\begin{proof}[A sketch of a proof]~\ref{P.Ulam.2} implies~\ref{P.Ulam.1}: Assume~\ref{P.Ulam.1} fails. Fix $\varepsilon>0$ such that for every $n\geq 1$ there is a $1/n$-homomorphism $f_n\colon A_n\to B_n$ that cannot be uniformly $\varepsilon$-approximated by a homomorphism. Then $(a_n)\mapsto (f_n(a_n))$ is a lifting of an algebraically nontrivial homomorphism from $\prod_n A_n/\bigoplus_{\Fin} A_n$ into $\prod_n B_n/\bigoplus_{\Fin} B_n$. 

\ref{P.Ulam.1} implies~\ref{P.Ulam.2}:
Fix an asymptotically additive lifting of a homomorphism $\Phi\colon \prod_n A_n/\bigoplus_\cI A_n\to \prod_n B_n/\bigoplus_\cJ B_n$. Since $\cG$ is closed under finite products, we can assume $I(n)=J(n)=\{n\}$ (see Definition~\ref{Def.AsympAdd}) for all $n$. Thus there are $f_n\colon A_n\to B_n$ such that $\Phi_*((a_n))=(f_n(a_n))$ lifts $\Phi$. Since the signature is finite and the structures are finite, a pigeonhole argument shows that for every $m\geq 1$ the set 
\[
X_m=\{n\mid f_n\text{ is not a $1/m$-homomorphism}\}
\]
 belongs to $\cI$. Choose a sequence $(\varepsilon_m)$ such that every $1/m$-homo\-mor\-phism can be $\varepsilon_m$-approximated by a true homomorphism and $\lim_m \varepsilon_m=0$. Let $X_0=\bbN$ and for $j\in X_n\setminus X_{n+1}$ replace $f_n$ with a true homomorphism $g_n$ that $\varepsilon_m$-approximates it (for $n$ that does not belong to $\bigcup_n X_n$, if any, $f_n$ is already a homomorphism). The asymptotically additive lift given by the sequence $(g_n)$ is a true homomorphism, as required. 
\end{proof}

\subsection{Boolean algebras}\label{Ulam-Boolean-subsection}
A submeasure (Definition~\ref{Def.Submeasure}) on a set $I$ is \emph{strictly positive} if $\varphi(X)>0$ for every nonempty $X\subseteq I$. In this case, the Boolean algebra $\cP(I)$ is equipped with the distance $d_\varphi(X,Y)=\varphi(X\Delta Y)$. Of interest to us is the following. 

\begin{proposition} \label{P.GenDensity.ReducedProduct}
If $\cI$ is a generalised density ideal (Definition~\ref{Def.GeneralisedDensity}), then $\cP(\bbN)/\cI$ is isomorphic to the reduced product $\prod_n (\cP(I_n),d_n)/\bigoplus_{\Fin}(\cP(I_n),d_n)$ for some finite sets $I_n$.
\end{proposition}

\begin{proof} 
Fix a partition $\bbN=\bigsqcup_n I_n$ into intervals and submeasures $\varphi_n$ that concentrate on each $n$ such that $\cI=\Exh(\sup_n \varphi_n)$. (This is possible thanks to Definition~\ref{Def.GeneralisedDensity}.) Identify $X\subseteq \bbN$ with the element $(X\cap I_n)_n$ of $\prod_n (\cP(I_n), d_{\varphi_n})$. Then $X\in \cI$ if and only if $\varphi_n(X\cap I_n)\to 0$ as $n\to \infty$. 
\end{proof}

Lemma~\ref{L.AsympAdd}, Proposition~\ref{P.Ulam}, and Proposition~\ref{P.GenDensity.ReducedProduct} together imply that, for homomorphisms between quotients over analytic P-ideals (and especially for generalised density ideals), the equivalence of algebraic triviality and topological triviality hinges on Ulam-stability of approximate homomorphisms between metric Boolean algebras of the form $(\cP(\bbN), d_\varphi)$ where $d_\varphi=\varphi(X\Delta Y)$ for a strictly positive submeasure $\varphi$ on $\bbN$. The failure of Ulam-stability for such class was proved in~\cite{Fa:Approximate}. 

It makes therefore sense to isolate the following property.
\begin{definition}An ideal $\cI$ on $\bbN$ has the \emph{Radon--Nikodym property} if every topologically trivial Boolean algebra homomorphism $\Phi\colon \cP(\bbN)\to \cP(\bbN)/\cI$ is algebraically trivial (see \cite[\S 1.4 and \S 1.9]{Fa:AQ}).\footnote{The kernel of $\Phi$ is possibly nontrivial.} 
\end{definition}

A submeasure is said to be \emph{nonpathological} if it is the supremum of measures dominated by it.\footnote{This is not the standard definition, according to which no submeasure on a finite set is pathological. Pathological submeasures have been intensively studied in connection to Maharam's and von Neumann's problems on characterisation of measure algebras, in a work that culminated in~\cite{Tal:Maharam}.} As suggested by the adjective, pathological submeasures are not easy to come by, and for the naturally occurring $F_\sigma$ and analytic P-ideals, the submeasure $\varphi$ associated to them (see~Theorem~\ref{T.Solecki}) can usually be chosen to be nonpathological. Such ideals are called \emph{nonpathological}. In~\cite{Fa:Approximate} and \cite{Fa:ApproximateII} it was proved that the approximate Boolean algebra homomorphisms and group homomorphisms with respect to nonpathological submeasures are Ulam-stable. 
Thus Lemma~\ref{L.P-ideal.aa} and Proposition~\ref{P.Ulam} together imply that nonpathological analytic P-ideals have the Radon--Nikodym property. The other parts of the following partial answer to Question~\ref{Q.UlamAB} are proved using similar ideas. 

\begin{theorem} \label{T.idealiso} 
The following ideals have the Radon--Nikodym property.
\enumone 
\begin{enumerate}
\item $($\cite[Theorem~1.9.1]{Fa:AQ}$)$ Nonpathological analytic P-ideals. 
\item $($\cite{KanRe:New}$)$ Nonpathological $F_\sigma$ ideals. 
\item $($\cite{KanRe:New}$)$ \label{3.T.idealiso} For every countable additively indecomposable ordinal $\alpha$, the ideal $\{X\subseteq \alpha\mid $ the order type of $X$ is less than $\alpha\}$. 
\item $($\cite{KanRe:New}$)$ \label{4.T.idealiso} For every countable multiplicatively indecomposable ordinal $\alpha$, the ideal $\{X\subseteq \alpha\mid X$ does not contain a closed (in the order topology) copy of $\alpha\}$. 
\item $($\cite[Theorem~12.1]{Fa:Luzin}$)$ The ideals $\NWD(\bbQ)$ and $\NULL(\bbQ)$, of subsets of $\bbQ$ whose closures are nowhere dense and Lebesgue null, respectively and the Weyl ideal $\cZ_W$. 
\end{enumerate}
Therefore every topologically trivial homomorphism between analytic quotients $\cP(\bbN)/\cI$ and $\cP(\bbN)/\cJ$ such that $\cJ$ belongs to one of these classes is algebraically trivial. 
\end{theorem}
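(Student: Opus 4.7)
The plan is to treat all five items uniformly via the framework of Lemma~\ref{L.P-ideal.aa} and Proposition~\ref{P.Ulam}: for each $\cJ$, I would first extract an asymptotically additive lifting of a topologically trivial homomorphism $\Phi\colon\cP(\bbN)\to\cP(\bbN)/\cJ$, then invoke Ulam-stability of approximate Boolean-algebra homomorphisms with respect to the ambient metric to convert that lifting into a genuine Boolean algebra homomorphism. The final assertion of the theorem would then follow by a short factoring step.

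For Step~1, in case~(1) Lemma~\ref{L.P-ideal.aa} supplies the lifting directly. In case~(2), I would write $\cJ=\Fin(\varphi)=\bigcup_m\cK(\varphi,m)$ via Theorem~\ref{T.Solecki} and run a stabilizer-style argument in the spirit of Just that exploits the closed approximations $\cK(\varphi,m)$ rather than the P-ideal property. For the ordinal ideals~(3) and~(4), I would refine this construction using a canonical partition of $\alpha$ into intervals; the indecomposability of $\alpha$ ensures compatibility of the partition with $\cJ$ and produces blocks small enough that approximate additivity survives the rearrangement. For~(5), $\NULL(\bbQ)$ and $\cZ_W$ arise from honest measures (hence nonpathological submeasures) and reduce to case~(1) via Theorem~\ref{T.Solecki}, while $\NWD(\bbQ)$ admits a tailored combinatorial partition argument using a levelwise enumeration of a countable base of $\bbQ$.

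For Step~2, once a lifting $\Phi_*((a_n))=(f_n(a_n))$ of asymptotically additive form is in hand, Proposition~\ref{P.Ulam} reduces algebraic triviality to showing that the $f_n$, which are $\varepsilon_n$-homomorphisms with $\varepsilon_n\to 0$ between finite Boolean algebras equipped with the relevant submeasure metric, can be uniformly corrected to true homomorphisms. For~(1) and~(2) this is the Ulam-stability theorem for nonpathological submeasures of~\cite{Fa:Approximate} and its $F_\sigma$ analog in~\cite{KanRe:New}; nonpathology is exactly what lets one average over the dominated measures to extract a genuine homomorphism. Items~(3), (4), and the three ideals in~(5) each require a stability statement tailored to the specific metric, proved by analogous averaging or combinatorial arguments in~\cite{KanRe:New} and~\cite{Fa:Luzin}. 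For the propagation step, given a topologically trivial $\Psi\colon\cP(\bbN)/\cI\to\cP(\bbN)/\cJ$, the composition $\Psi\circ\pi_\cI$ is topologically trivial, so applying the Radon--Nikodym property of $\cJ$ yields a Boolean algebra homomorphism lifting $\Phi_*\colon\cP(\bbN)\to\cP(\bbN)$; since $\pi_\cJ\circ\Phi_*$ vanishes on $\cI$, one has $\Phi_*[\cI]\subseteq\cJ$ and $\Phi_*$ descends to an algebraically trivial lifting of $\Psi$.

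The main obstacle is Step~2: the counterexample in~\cite{Fa:Approximate} shows that approximate homomorphisms with respect to pathological submeasures need not be close to genuine ones, so the Radon--Nikodym property depends essentially on the nonpathology hypothesis in~(1) and~(2). For items~(3)--(5), the difficulty shifts to identifying the correct metric representation of each ideal so that one of the existing Ulam-stability templates applies; these cases appear to require case-by-case adaptation rather than a single uniform argument, and this is where I expect most of the technical work to concentrate.
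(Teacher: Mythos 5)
Your reconstruction of the framework is the right one for item~(1) and for the concluding factoring step, and both match what the paper says: for a nonpathological analytic P-ideal, Lemma~\ref{L.P-ideal.aa} furnishes an asymptotically additive lifting, the Ulam-stability of approximate Boolean-algebra homomorphisms with respect to nonpathological submeasures from~\cite{Fa:Approximate} upgrades it via (the idea of) Proposition~\ref{P.Ulam}, and the ``Therefore'' clause is precisely the observation that $\Psi\circ\pi_\cI$ is topologically trivial, hence has a Boolean-algebra-homomorphism lifting $\Phi_*$, which automatically satisfies $\Phi_*[\cI]\subseteq\cJ$ and thus lifts $\Psi$. That much is correct.

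There is, however, a concrete error in your treatment of item~(5). You claim that $\NULL(\bbQ)$ and the Weyl ideal $\cZ_W$ ``arise from honest measures (hence nonpathological submeasures) and reduce to case~(1) via Theorem~\ref{T.Solecki}.'' They do not. Neither of these is an analytic P-ideal, so Theorem~\ref{T.Solecki} does not represent them as $\Exh(\mu)$ and Lemma~\ref{L.P-ideal.aa} is simply unavailable. For $\NULL(\bbQ)$ the failure of the P-ideal property is easy to see: enumerate $\bbQ\cap[0,1]$ as $(q_n)$, let $X_n$ be a sequence in $\bbQ$ converging to $q_n$; each $\overline{X_n}$ is countable hence null, but any $X$ almost containing every $X_n$ has $\overline X\supseteq[0,1]$. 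The paper groups $\NWD(\bbQ)$, $\NULL(\bbQ)$ and $\cZ_W$ together precisely because they fall outside both the analytic P-ideal and the $F_\sigma$ cases and are instead handled in~\cite{Fa:Luzin} via the multi-approximation machinery (the theme of Question~\ref{Q.countablydetermined}: representing the ideal by sequences of closed approximations $\cK_m$ with $\cI=\bigcap_m(\cK_m^d\ucup\Fin)$), not by a reduction to the P-ideal case. More broadly, for items~(2)--(5) the extraction of a usable lifting in Step~1 cannot be carried out by Lemma~\ref{L.P-ideal.aa} at all; your closed-approximation sketch for~(2) is in the right spirit, but the same caveat applies to~(3)--(5), where the lifting-extraction step needs the tailored arguments of~\cite{KanRe:New} and~\cite{Fa:Luzin} rather than a partition-of-$\alpha$ variant of the P-ideal proof.
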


The first four items in Theorem~\ref{T.idealiso} apply even when these structures are considered as just groups, with the appropriate notion of Radon--Nikodym property; see~\cite{KanRe:New} and~\cite{KanRe:Ulam} for additional information. Analytic, and even $F_\sigma$, ideals without the Radon--Nikodym property exist by \cite[Theorem~1.9.5]{Fa:AQ}, but the following is open.

\begin{question}\label{Q.P-ideal.iso} Is every topologically trivial Boolean algebra isomorphism between quotients of analytic P-ideals algebraically trivial? What about group isomorphisms? 
\end{question}

 A version of this question for analytic P-ideals appears in \cite[Question~1.14.3]{Fa:AQ}, and its reformulation in terms of Ulam-stability for approximate isomorphisms is \cite[Question~1.14.4]{Fa:AQ}. 

\subsection{\cstar-algebras}
We now describe the situation in the case of \cstar-algebras and how Ulam-stability phenomena affect the relation between topologically trivial and algebraically trivial automorphisms of corona \cstar-algebras (see Question~\ref{ques:ulamgeneral}).

For Banach space-based structures, such as \cstar-algebras, an \emph{$\varepsilon$-$^*$-homo\-morphism} is a map between their unit balls which satisfies the analog of \eqref{Eq.approximate} for the algebraic operations and for multiplication by scalars of modulus at most $1$.\footnote{This is not the only way to define an $\varepsilon$-homomorphism between Banach algebras; we will return to this point in the proof of Theorem~\ref{T.Semrl}.} By $A_1$ we denote the unit ball of a \cstar-algebra $A$.

\begin{definition} \label{def:ehomo}
A function $f\colon A_1\to B_1$ between unit balls of \cstar-algebras is an \emph{$\varepsilon$-$^*$-homomorphism} if for all $x$ and $y$ in $A_1$ and every $\lambda\in \bbC$ with $|\lambda|\leq 1$ each of the following has norm at most $\varepsilon$: 
\begin{align*}
f(a+b)-f(a)-f(b), \ f(ab)-f(a)f(b),\ f(\lambda a)-\lambda f(a),\ f(a^*)-f(a)^*. 
\end{align*}
We abuse notation, and say that a map between \cstar-algebras is an $\varepsilon$-$^*$-homomorphism if its restriction to the unit ball is such.
\end{definition}

In the following we deviate from the convention established in Definition~\ref{Def.UlamStable} and consider the stability of approximate morphisms between specified `source objects' and `target objects'. This is done partly in order to efficiently handle the Calkin algebra and partly out of despair for the current lamentable lack of sharper results. 

\begin{definition}
A pair $(\cB,\cC)$ of classes of \cstar-algebras is \emph{Ulam-stable} if for every $\varepsilon>0$ there is $\delta>0$ such that if $A\in \cB$ and $B\in \cC$, then every $\delta$-$^*$-homomorphism $\varphi\colon A\to B$ can be $\varepsilon$-approximated by a $^*$-homomorphism, uniformly on the unit ball of $A$. A class of \cstar-algebras $\cB$ is \emph{Ulam-stable} if the pair $(\cB,\cB)$ is.
\end{definition}

Apart from the following two results, the question of Ulam-stability for other classes of \cstar-algebras is wide open. 

\begin{theorem}[{\cite[Theorem A]{mckenney2018ulam}}]\label{FD-Ulam}
The pair (finite-dimensional \cstar-algebras, \cstar-algebras) is Ulam-stable.
\end{theorem}

\begin{theorem}[{\cite{Semrl.Pert}}]\label{T.Semrl}
The class of separable abelian \cstar-algebras is Ulam-stable.
\end{theorem}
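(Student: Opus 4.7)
The plan is to leverage Gelfand duality: every separable unital abelian \cstar-algebra is of the form $C(X)$ for $X$ a compact metrizable space, and a unital $^*$-homomorphism $C(X) \to C(Y)$ corresponds to a continuous map $Y \to X$. I would first treat the unital case and reduce the general case by extending $\varphi\colon C_0(X) \to C_0(Y)$ to the unitizations $\tilde\varphi\colon C(X^+) \to C(Y^+)$ by sending $1 \mapsto 1$ (a routine check shows this is an $O(\delta)$-$^*$-homomorphism). The strategy in the unital case, given a $\delta$-$^*$-homomorphism $\varphi\colon C(X) \to C(Y)$, is to produce a continuous $f\colon Y \to X$ and set $\psi(a) := a\circ f$.

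For each $y \in Y$, consider the functional $\omega_y\colon C(X) \to \bbC$ defined by $\omega_y(a) := \varphi(a)(y)$. It has norm at most one and
\[
|\omega_y(ab) - \omega_y(a)\omega_y(b)| \le \|\varphi(ab) - \varphi(a)\varphi(b)\| \le \delta
\]
for $a,b$ in the unit ball of $C(X)$, i.e., $\omega_y$ is $\delta$-multiplicative. The heart of the argument is the classical AMNM (approximately multiplicative-near-multiplicative) property of $C(X)$, due to Jarosz and others: for every $\varepsilon_1 > 0$ there is $\delta_1 > 0$ such that each $\delta_1$-multiplicative functional of norm at most one on $C(X)$ is within $\varepsilon_1$ of a true character, namely of some evaluation $\chi_x$ with $x \in X$ or of the zero functional. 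Since Urysohn's lemma gives $\|\chi_x - \chi_{x'}\| = 2$ for $x \ne x'$, the nearest character is unique when $\varepsilon_1 < 1$; approximate unitality (which one may assume, as $\varphi(1_X)$ is approximately a projection close to $1_Y$ in the unital case) ensures the nearest character to $\omega_y$ is some $\chi_{f(y)}$ with $f(y) \in X$, giving a map $f\colon Y \to X$.

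Continuity of $f$ follows from soft weak-$*$ arguments: the map $y\mapsto \omega_y$ is weak-$*$ continuous into $C(X)^*$ because $y \mapsto \varphi(a)(y)$ is continuous for each fixed $a$, and the characters of $C(X)$ form a weak-$*$ compact set homeomorphic to $X$. If $y_n \to y$ and $\chi_{f(y_{n_k})} \to \nu$ weak-$*$ along any subsequence, then $\nu$ is a character and, by weak-$*$ lower semicontinuity of the norm on bounded sets, $\|\nu - \omega_y\| \le \varepsilon_1$; uniqueness forces $\nu = \chi_{f(y)}$, so $f(y_n) \to f(y)$. Then $\psi(a)(y) := a(f(y))$ is a genuine $^*$-homomorphism and $|\psi(a)(y) - \varphi(a)(y)| = |\chi_{f(y)}(a) - \omega_y(a)| \le \varepsilon_1$ uniformly on the unit ball, yielding the desired approximation after choosing $\varepsilon_1 = \varepsilon$ and $\delta = \delta_1$. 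For the non-unital case, running this on $\tilde\varphi$ produces $f\colon Y^+ \to X^+$; since $\varphi(a) \in C_0(Y)$ forces $|a(f(\infty_{Y^+}))| \le \varepsilon_1$ for every $a$ in the unit ball of $C_0(X)$, Urysohn pins $f(\infty_{Y^+}) = \infty_{X^+}$, so $\psi$ restricts to a map $C_0(X) \to C_0(Y)$.

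The main obstacle is the AMNM input itself: the fact that approximately multiplicative functionals on $C(X)$ are norm-close to exact ones is nontrivial and relies on properties of Jensen measures or on Choquet-boundary techniques, rather than on any elementary computation. Moreover, the quantitative bound $\delta_1 = \delta_1(\varepsilon_1)$ must be uniform across \emph{all} compact metrizable $X$, which is precisely what Jarosz's theorem guarantees; without this uniformity, one would only obtain Ulam-stability for a fixed pair of algebras. Once AMNM is in hand, the remaining pieces — the Gelfand translation, the weak-$*$ compactness of the character space, and the reduction to the unital case — are formal.
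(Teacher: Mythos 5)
Your argument is correct in outline, but it takes a genuinely different route from the paper's. The paper's proof is purely a translation: it records that \v{S}emrl's theorem from~\cite{Semrl.Pert} gives the conclusion for maps satisfying the scaled estimates in~\eqref{Eq.epsilon.Semrl}, and then observes that the positively homogeneous extension $f(x)=\|x\|\varphi(x/\|x\|)$ converts any $\varepsilon$-$^*$-homomorphism in the sense of Definition~\ref{def:ehom} into a $4\varepsilon$-homomorphism in \v{S}emrl's sense at the price of an $\varepsilon$-perturbation on the unit ball; all the hard analysis is delegated to the cited reference. You instead reconstruct the substance of that reference through Gelfand duality: unitize, pass to the functionals $\omega_y$, invoke the uniform AMNM property of $C(X)$ (Johnson, Jarosz) to locate a nearby character $\chi_{f(y)}$, prove continuity of $f$ via weak-$*$ compactness (using separability so that the dual ball is metrizable), and set $\psi(a)=a\circ f$. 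Your route is longer but more transparent: it exposes what the paper's one-line citation hides, namely that the entire content of the theorem is the fact that $C(X)$ is AMNM \emph{uniformly over all compact metrizable $X$}.

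Two small points should be stated more carefully, though neither is a real gap. First, the clause ``\,$\varphi(1_X)$ is approximately a projection close to $1_Y$ in the unital case\,'' is not literally true: a $\delta$-$^*$-homomorphism between unital algebras need not be approximately unital (the zero map is already an example), and in general one must allow the nearest point to $\omega_y$ to be the zero functional, with $\psi$ vanishing on the resulting clopen set. What actually rescues your argument is that the reduction passes through the forced unitizations, where $\tilde\varphi(1)=1$ holds exactly, hence $\omega_y(1)=1$ and the zero alternative is excluded; say this instead of appealing to approximate unitality. Second, the ``routine check'' that $\tilde\varphi$ is an $O(\delta)$-$^*$-homomorphism is slightly less routine than it sounds: elements of the unit ball of $C(X^+)$ decompose as $a+\lambda 1$ with $|\lambda|\leq 1$ but only $\|a\|\leq 2$, so $\varphi$ is being evaluated outside the ball where Definition~\ref{def:ehom} controls it, and one needs precisely the positively homogeneous extension trick the paper uses (which explains why that trick appears in the paper's proof as well). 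Both points deserve an explicit sentence.
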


\begin{proof}In \cite{Semrl.Pert} an $\varepsilon$-homomorphism between \cstar-algebras 
was defined by requiring 
\begin{align}
\begin{split}\label{Eq.epsilon.Semrl}
\|f(x+y)-f(x)-f(y)\|&\leq \varepsilon(\|x\|+\|y\|),\\
\|f(xy)-f(x)f(y)\|&\leq \varepsilon(\|x\|\|y\|),\\
\|f(x^*)-f(x)^*\|&\leq \varepsilon\|x\|
\end{split}
\end{align}
for all $x$ and $y$ in the domain, and proved that for such a function $f$ between abelian \cstar-algebras there is a $^*$-homomorphism $\Phi$ such that $\|\Phi-f\|\leq ((4+\frac 32 (1+\pi))\varepsilon +1-\sqrt {1-4\varepsilon})\|f\|$ (\cite[Theorem~5.1]{Semrl.Pert}). 

A simple computation (regrettably absent from \cite{vignati2018rigidity}) shows that if $\varphi\colon A_1\to B_1$ is an $\varepsilon$-$^*$-homomorphism in the sense of Definition
\ref{def:ehomo}, then $f\colon A\to B$ defined by $f(x)=\|x\|\varphi(x/\|x\|)$ for $x\neq 0$ and $f(0)=0$ is a $4\varepsilon$-homomorphism in the sense of \eqref{Eq.epsilon.Semrl} and that $\|f(x)-\varphi(x)\|\leq \varepsilon$ for all $x\in A_1$. The thesis follows by connecting the dots.
\end{proof} 


The proof of the following uses the Gelfand--Naimark duality, the topological definition of algebraic triviality (Definition~\ref{defin:trivialtop}), a weakening of Lemma~\ref{L.AsympAdd}, and Theorem~\ref{T.Semrl}.

\begin{theorem}[{\cite{vignati2018rigidity}}]\label{thm:UlamAB}
All topologically trivial automorphisms of coronas of abelian \cstar-algebras are algebraically trivial. 
\end{theorem}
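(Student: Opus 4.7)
The plan is to invoke Gelfand--Naimark duality to recast the problem topologically and then produce the required continuous map piece by piece. Write $A=C_0(X)$ for a second countable locally compact noncompact Hausdorff space $X$, so that $\cM(A)=C_b(X)$ and $\cQ(A)=C(X^*)$ with $X^*:=\beta X\setminus X$. A topologically trivial automorphism $\Phi$ of $\cQ(A)$ comes equipped with a Borel (in the strict topology) lifting $\Phi_*\colon C_b(X)\to C_b(X)$ satisfying the commutative diagram of Definition~\ref{def:toptrivgeneral}. The target is to produce compact sets $K,K'\subseteq X$ and a homeomorphism $\sigma\colon X\setminus K'\to X\setminus K$ whose pullback induces $\Phi$; by Definition~\ref{defin:trivialtop} combined with the equivalence established in \cite[Proposition~2.7]{vignati2018rigidity}, this is precisely algebraic triviality in the sense of Definition~\ref{def:trivialmap}.

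First I would fix an exhaustion $X=\bigcup_n K_n$ by compacta with $K_n\subseteq\mathrm{int}(K_{n+1})$, obtaining a decomposition of $X\setminus K_0$ into compact ``annuli'' $X_n:=K_{n+1}\setminus\mathrm{int}(K_n)$. Adapting Just's stabilizer method from the proof of Lemma~\ref{L.AsympAdd} to the present second countable abelian \cstar-algebraic setting (the ``weakening of Lemma~\ref{L.AsympAdd}'' flagged in the text), I would replace $\Phi_*$ by an asymptotically additive lifting in the sense of Definition~\ref{Def.AsympAdd}: namely, a sequence of maps $\varphi_n\colon C(X_{I(n)})\to C(X_{J(n)})$ for finite blocks $I(n),J(n)\subseteq\bbN$ whose concatenation lifts $\Phi$ modulo $A$. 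Because any fixed-size failure of multiplicativity, linearity, preservation of scalar multiplication, or preservation of the involution on $\varphi_n$ for infinitely many $n$ would persist in $\cQ(A)$, each $\varphi_n$ is automatically a $\delta_n$-$^*$-homomorphism in the sense of Definition~\ref{def:ehom} with $\delta_n\to 0$.

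Theorem~\ref{T.Semrl} now lets me replace each $\varphi_n$ by a genuine $^*$-homomorphism $\psi_n$ with $\|\psi_n-\varphi_n\|\to 0$. By Gelfand--Naimark duality $\psi_n$ is the pullback by a continuous map $\sigma_n\colon X_{J(n)}\to X_{I(n)}$; concatenating the $\sigma_n$ yields a proper continuous map $\sigma\colon X\setminus K\to X\setminus K'$, and running the same procedure on $\Phi^{-1}$ produces $\sigma'$ such that both $\sigma\circ\sigma'$ and $\sigma'\circ\sigma$ induce the identity on $C(X^*)$ and therefore agree with the identity off a compact set, so after enlarging $K$ and $K'$ the map $\sigma$ becomes a homeomorphism between cocompact subsets. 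The vanishing of $\|\psi_n-\varphi_n\|$ guarantees that the automorphism of $C(X^*)$ induced by $\sigma$ coincides with $\Phi$. The main obstacle I anticipate is the gluing step: an asymptotically additive lift is only defined piecewise on the compact annuli $X_n$, so I would need partitions of unity subordinate to $\{\mathrm{int}(K_{n+1})\setminus K_{n-1}\}_n$ and enough freedom in choosing the blocks $I(n),J(n)$ to ensure that the $\sigma_n$ fit together into a globally continuous and proper $\sigma$ rather than merely a bounded Borel assignment.
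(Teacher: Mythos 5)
Your overall strategy matches exactly what the paper attributes to the proof in~\cite{vignati2018rigidity}: Gelfand--Naimark duality, the topological definition of algebraic triviality (Definition~\ref{defin:trivialtop}) and its equivalence with Definition~\ref{def:trivialmap}, a weakening of Lemma~\ref{L.AsympAdd} to produce piecewise approximate local liftings, and Theorem~\ref{T.Semrl} to upgrade the pieces to genuine $^*$-homomorphisms. You have identified the correct ingredients in the correct order.

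The gap you flag in your final paragraph, however, is real and is not a cosmetic gluing detail---it is where the proof actually lives. The annuli $X_n=K_{n+1}\setminus\mathrm{int}(K_n)$ are not disjoint: consecutive ones share the boundary $\partial K_{n+1}$, so $C_b(X)$ is a \emph{proper} subalgebra of $\prod_n C(X_n)$ and the formalism of Definition~\ref{Def.AsympAdd} does not literally apply. (Even your pigeonhole argument for $\delta_n\to 0$ requires witnesses $a_n,b_n$ to be compatible on shared boundaries, which you must arrange, e.g.\ by damping near $\partial K_{n+1}$.) More seriously, the maps $\sigma_n\colon X_{J(n)}\to X_{I(n)}$ dual to your $\psi_n$ have no reason to agree on shared boundaries: Theorem~\ref{T.Semrl} produces an $\varepsilon$-close $^*$-homomorphism but gives no control over boundary values, so two $^*$-homomorphisms each $\varepsilon$-approximating the restriction of $\Phi_*$ can have dual maps disagreeing by $O(\varepsilon)$ at a boundary point, and a union of such maps is then not continuous. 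Partitions of unity cannot repair this, because a pointwise convex combination of two $^*$-homomorphisms is not a $^*$-homomorphism and hence does not dualize to a continuous map of spaces. What the ``weakening of Lemma~\ref{L.AsympAdd}'' has to deliver is \emph{two interleaved} families of local liftings, defined on overlapping doubled annuli in the spirit of $\cF[\bfE]=\mathcal D[\bfE^{\even}]+\mathcal D[\bfE^{\odd}]$ from \S\ref{S.Strat}, so that the two corresponding families of dual maps agree within a tolerance $\varepsilon_n\to 0$ on an \emph{open} region of overlap rather than on a nowhere dense boundary; only then can one interpolate to a single genuinely continuous, proper $\sigma$. Supplying this interleaving and the subsequent interpolation is the heart of the argument and is precisely what your proposal still lacks.
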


\subsection{Reduced products of \cstar-algebras} In the noncommutative case, the route towards proving algebraic triviality from Ulam-stability is somewhat oblique (and in fact, not yet completely understood in the general case), unless the algebras are reduced products. 
A unital \cstar-algebra $A$ does not have central projections if it cannot be written as $A=B\oplus C$ where $B$ and $C$ are nonzero \cstar-algebras; this assumption is used only to mitigate notational issues. The reduced product $\prod_n A_n/\bigoplus_n A_n$ was defined in Example~\ref{ex:coronas}~\ref{ex.reduced}. 

\begin{theorem}[{\cite[Theorem 3.16]{mckenney2018forcing}}]\label{T.Ulam}
Let $\cB$ and $\mathcal C$ be classes of separable unital \cstar-algebras. Suppose that algebras in $\cB$ have no central projections, and that~$\cC$ is closed under finite products. The following are equivalent:
\begin{itemize}
\item The pair $(\cB,\cC)$ is Ulam-stable. 
\item If $A_n\in \cB$ and $B_n\in \cC$ for all $n$, and 
\[
\varphi\colon\prod_n A_n/\bigoplus_n A_n\to\prod_n B_n/\bigoplus_nB_n
\] 
is a topologically trivial $^*$-homomorphism, then it is algebraically trivial. 
\end{itemize}
\end{theorem}

The proof of Theorem~\ref{T.Ulam} roughly follows the line of the proofs of Lemma~\ref{L.AsympAdd}---Proposition~\ref{P.Ulam}, with some technical wrinkles. First, one applies Lemma~\ref{L.AsympAdd} to the projections in the center of $\prod_nA_n/\bigoplus_nA_n$, which is isomorphic to $\mathcal P(\bbN)/\Fin$, to get an asymptotically additive lift, and then one runs similar arguments as in Proposition~\ref{P.Ulam}.


\begin{definition} \label{Def.MX}
For an infinite $X\subseteq \bbN$ let 
\[
\textstyle \cM_X=\prod_{n\in X} M_n(\bbC)/\bigoplus_{n\in X} M_n(\bbC).
\] 
 \end{definition}

\begin{corollary} \label{cor:Ulamredprod}
Every topologically trivial $^*$-homomorphism between \cstar-algebras of the form~$\cM_X$ is algebraically trivial. 

Every topologically trivial $^*$-homomorphism from a \cstar-algebra of the form $\cM_X$ into the corona of a separable \cstar-algebra is algebraically trivial.
\end{corollary}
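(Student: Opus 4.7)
The first statement follows directly from Theorem \ref{T.Ulam}. The plan is to take $\cB=\{M_n(\bbC):n\geq 1\}$, whose members are simple and therefore have no nontrivial central projections, and to take $\cC$ to be the class of all finite-dimensional \cstar-algebras, which is obviously closed under finite products. By Theorem \ref{FD-Ulam} the pair (finite-dimensional, all \cstar-algebras) is Ulam-stable, hence so is $(\cB,\cC)$. Enumerating $X$ in increasing order, any infinite $\cM_X$ has the form $\prod_k A_k/\bigoplus_k A_k$ with $A_k\in \cB$, so Theorem \ref{T.Ulam} applies and yields the first claim.

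For the second statement, $\cQ(B)$ is not itself a reduced product, so Theorem \ref{T.Ulam} cannot be invoked directly; the plan is instead to mimic its proof scheme, exploiting the rich central structure of $\cM_X$. Given a topologically trivial $\varphi\colon\cM_X\to\cQ(B)$ with Borel lift $\Phi_*$, I would first restrict $\varphi$ to the center $Z(\cM_X)\cong \cP(X)/\Fin$, which maps into the projections of $\cQ(B)$. Adapting Lemma \ref{L.P-ideal.aa} to this noncommutative setting, one would obtain a partition $X=\bigsqcup_k E_k$ into finite sets and mutually orthogonal projections $p_k\in \cM(B)$ with $\pi_B(p_k)=\varphi(1_{E_k})$. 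For each $k$, the restriction of $\Phi_*$ to the block $\bigoplus_{n\in E_k}M_n(\bbC)$ is then a Borel lift of a $^*$-homomorphism into $p_k\cQ(B) p_k$; by Theorem \ref{FD-Ulam} one can perturb it, inside $p_k\cM(B) p_k$ and modulo $B$, to a true $^*$-homomorphism $\psi_k\colon\bigoplus_{n\in E_k}M_n(\bbC)\to p_k\cM(B) p_k$ lifting $\varphi$ on that block. Finally, the orthogonality of the $p_k$ together with separability of $B$ should guarantee that $\psi=\sum_k\psi_k$ converges strictly to a strictly continuous $^*$-homomorphism with $\pi_B\circ\psi=\varphi$, verifying Definition \ref{def:trivialmap}.

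The main technical obstacle is this first step: Lemma \ref{L.P-ideal.aa} is stated for quotients of $\cP(\bbN)$, whereas here one needs an asymptotically additive lift of a Boolean algebra homomorphism into the projection lattice of an arbitrary corona of a separable \cstar-algebra. The extension will rely on the standard fact that projections in $\cQ(B)$ lift to projections in $\cM(B)$ via functional calculus, the Borel character of such a lifting with respect to the strict topology, and an adaptation of Just's stabilizer argument. Once this step is secured, the remaining assembly should be routine: the errors produced by Theorem \ref{FD-Ulam} can be arranged to be summable modulo $B$, and pairwise orthogonality of the $p_k$ together with separability of $B$ make the net of finite partial sums of $\sum_k \psi_k(\cdot)$ strictly Cauchy.
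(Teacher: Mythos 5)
Your argument matches the paper's: part one is indeed a direct application of Theorems~\ref{FD-Ulam} and~\ref{T.Ulam}, with your choice of $\cC$ as the class of all finite-dimensional \cstar-algebras correctly supplying closure under finite products. For part two the paper's entire proof is the remark that ``the product structure is used only in the domain,'' which is precisely the idea you spell out by restricting to the center, extracting an asymptotically additive block decomposition with mutually orthogonal projection lifts, and running the Ulam-stability argument block by block; the noncommutative adaptation of Lemma~\ref{L.P-ideal.aa} that you flag as the main obstacle is exactly the content the paper leaves implicit.
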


\begin{proof} The first part is a consequence of Theorem~\ref{FD-Ulam} and Theorem~\ref{T.Ulam}. Proof of the second part involves similar ideas and the observation that the product structure is used only in the domain. 
\end{proof}

Corollary~\ref{cor:Ulamredprod} applies only to reduced products, and $\cQ(H)$ isn't one; see \S\ref{S.Strat} for a remedy. 

We conclude with a very general question and two of its very specific instances. Each one of these questions corresponds to a particular instance of the lifting problem for $*$-homomorphisms between quotient \cstar-algebras.

\begin{question} Is the category of all separable \cstar-algebras Ulam-stable? 
\end{question}

\begin{question}
Is the category of \cstar-algebras of the form $C(X, M_n(\bbC))$, where $X$ is compact and metrisable, Ulam-stable?
\end{question}

\begin{question}
Is the pair (separable abelian \cstar-algebras, all \cstar-algebras) Ulam-stable?
\end{question}

\subsection{The Kadison--Kastler conjecture} Since our main interest is in isomorphisms, it is natural to consider the following notion (\cite[Definition 1.2]{mckenney2018ulam}). Recall that by $A_1$ we denote the unit ball of a \cstar-algebra $A$.

\begin{definition}
Let $A$ and $B$ be \cstar-algebras, $\varepsilon>0$. A map $\Phi\colon A\to B$ is an \emph{$\varepsilon$-$^*$-isomorphism} if the following holds. 
\begin{itemize}
\item It is an $\varepsilon$-$^*$-homomorphism;
\item It is $\varepsilon$-injective: if $a\in A_1$, then $|\norm{\Phi(a)}-\norm{a}|<\varepsilon$;
\item It is $\varepsilon$-surjective: $\Phi[A_1]$ is $\varepsilon$-dense in $B_1$.
\end{itemize}
 If there is an $\varepsilon$-$^*$-isomorphism $A\to B$, $A$ and $B$ are said to be $\varepsilon$-$^*$-isomorphic.

A \cstar-algebra $A$ is said to be \emph{stable under approximate isomorphisms} if there is $\varepsilon>0$ such that $A\cong B$ whenever $B$ is $\varepsilon$-$^*$-isomorphic to $A$. 
\end{definition}

 Ulam-stability of \cstar-algebras is related to the Kadison--Kastler perturbation theory. 
In~\cite{kadison1972perturbations}, Kadison and Kastler initiated the study of perturbations of algebras of operators. If $A$ and $B$ are two \cstar-algebras sitting in the same $\mathcal B(H)$, the \emph{Kadison--Kastler distance} is the Hausdorff distance between their unit balls, that is
\[
d_{\textrm{KK}}(A,B)=\max\{\sup_{a\in A_1}\inf_{b\in B_1}\norm{a-b},\sup_{b\in B_1}\inf_{a\in A_1}\norm{a-b}\}.
\]
Kadison and Kastler asked what properties are preserved at a small $d_{\textrm{KK}}$-distance. They implicitly conjectured that, given a \cstar-algebras $A$, there exists $\varepsilon>0$ such that if $d_{\textrm{KK}}(A,B)<\varepsilon$ then $A$ and $B$ are isomorphic. (A stronger version of this conjecture asks for the \cstar-algebras to be conjugated by a unitary close to the identity.)

We say that $A$ satisfies the Kadison--Kastler conjecture if such an $\varepsilon$ exists. A class $\cC$ of \cstar-algebras is said to be \emph{Kadison--Kastler stable} (or KK stable) if each algebra in $\cC$ satisfies the Kadison--Kastler conjecture and the same $\varepsilon$ witnesses stability uniformly on elements of $\mathcal C$. The class of nonseparable \cstar-algebras is not KK stable, and there is even a nonseparable \cstar-algebra that does not satisfy the Kadison--Kastler conjecture (\cite{choi1983completely}, see also \cite[\S 14.4]{Fa:STCstar}). The class of unital separable amenable\footnote{Amenable, or nuclear, \cstar-algebras form the most studied and arguably the most important class of \cstar-algebras.} \cstar-algebras is KK stable (\cite{CSSWW:Perturbations}), but it is not known whether the class of all separable unital \cstar-algebras is KK stable. 

Fix $\varepsilon>0$, and suppose that $A$ and $B$ are two separable \cstar-algebras such that $d_{\textrm{KK}}(A,B)<\varepsilon$. If to each element of $A_1$ we associate an element of $B_1$ at distance $<\varepsilon$, we obtain an $\varepsilon$-$^*$-isomorphism between $A$ and $B$. Therefore, if $A$ is stable under approximate isomorphisms, $A$ satisfies the Kadison--Kastler conjecture. Similarly, if a class of \cstar-algebras is Ulam-stable, it is KK stable. 
We do not know whether either of the two implications above can be reversed.

\section{Independence results, I. Nontrivial isomorphisms}\label{S.Independence}
\enumthree 
In this section we analyse constructions of isomorphisms of Borel quotient structures which are not topologically trivial, therefore giving partial answers to Question~\ref{Q.main}. To do this, we often rely on additional set theoretic assumptions such as the Continuum Hypothesis or its weakenings. 

The section is structured as follows: \S\ref{S.CHModelTheory} and \S\ref{S.6.2} are devoted to model theoretic methods, in particular the concept of countable (i.e. $\aleph_1$-)saturation, and how they are applied to prove the existence of topologically nontrivial automorphisms from $\CH$. In \S\ref{S.6aii} we investigate the failure of countable saturation for the Calkin algebra from a few angles, and mention a weaker version of countable saturation called the degree-1 saturation that is satisfied by the corona of every separable \cstar-algebra. In \S\ref{S.Strat} we show how the Calkin algebra can be stratified into a direct limit of Banach subspaces. 
\S\ref{S.Cohomology} uses \S\ref{S.Strat} to describe how $\CH$ implies the existence of topologically nontrivial automorphisms of a large class of corona \cstar-algebras to which model theoretic methods do not apply. In \S\ref{S.CHRemainders} we describe how to construct nontrivial homeomorphisms of \v{C}ech--Stone remainders of a large class of topological spaces, including all manifolds of nonzero dimension. Finally, in \S\ref{S.OtherModels}, we discuss some models of the negation of $\CH$ in which nontrivial automorphisms exist.

\subsection{Saturation}\label{S.CHModelTheory}
If there is no obvious isomorphism between two structures, then a standard method for constructing one uses `back-and-forth' methods. These techniques generally consist of an inductive process building an increasing sequence (transfinite, if need be) of partial isomorphisms\footnote{A partial isomorphism, as any function, is identified with its graph, thus `increasing sequence of partial isomorphisms’ has a precise and intended meaning.} between the objects involved, and whose domains become larger and larger as the induction goes on.
These constructions often require a considerable amount of bookkeeping to keep track of the partial maps defined at each step, especially if the structures are nonseparable. 
Nevertheless, a theorem of Keisler states that if two structures are saturated and have the same cardinality, then a back-and-forth system between them exists if and only if they are elementarily equivalent (Theorem~\ref{T.Keisler} below). This readily extends to metric structures of same density character. Since many quotients such as ultraproducts and some corona algebras are saturated under $\CH$, Keisler’s theorem often reduces the question of isomorphism to the question of elementary equivalence. Analogous back-and-forth methods can be used to prove, again under CH, the existence of topologically nontrivial isomorphisms.

In this subsection we review the effects of countable saturation on the isomorphism questions for coronas of \cstar-algebras under the assumption of $\CH$. The non-expert reader may refer to~\cite{BYBHU} for all basic definitions and for an introduction to the model theory of metric structures, and to~\cite{FaHaSh:Model2} for a reference focusing specifically on operator algebras.

\subsubsection{$\sigma$-complete back-and-forth systems} Each isomorphism between metric structures of density character $\aleph_1$ corresponds to a \emph{$\sigma$-complete back-and-forth system} (see~\cite[Proposition 16.6.1]{Fa:STCstar}). 
If $A$ and $B$ are metric structures, a $\sigma$-complete back-and-forth system between $A$ and $B$ is a poset $\bbF$ such that each element $p$ of $\bbF$ denotes a partial isomorphism $\Phi^p: A^p \to B^p$ for $A^p\subseteq A$ and $B^p\subseteq B$, with the following properties:
\begin{itemize}
 \item $p\leq q$ if $A^p \subseteq A^q$ and $B^p \subseteq B^q$, and $\Phi^q$ extends $\Phi^p$.
 \item For every $p\in \bbF$ and $a\in A$ and $b\in B$ there exists $q\geq p $ in $\bbF$ such that $a\in A^q$ and $b\in B^q$.
 \item $\bbF$ is $\sigma$-complete: For every increasing sequence $\{p_n\colon n\in \bbN\}$ in $\bbF$ the partial isomorphism $\overline{\bigcup_n\Phi^{p_n}}: \overline{\bigcup_n A^{p_n}} \to \overline{\bigcup_n B^{p_n}}$ belongs to $\bbF$.
\end{itemize}
We assume that the reader is familiar with the notion of a type and saturation, but the following is stated explicitly because we somewhat depart from the standard model-theoretic terminology. 
\begin{definition} A structure is \emph{countably saturated} if every consistent type over a countable set is realised. 
\end{definition}
Thus `countably saturated' is what is usually called `$\aleph_1$-saturated' and should not be confused with the notion of `countably saturated' as defined in e.g.,  \cite[\S 2.3]{ChaKe} (such models are countable by definition, and  countably saturated models in our sense are necessarily uncountable). The reason for this change of terminology is the same as the reason why countable additivity of Lebesgue measure is rarely referred to as $\aleph_1$-additivity. If the language is countable, then countable saturation is equivalent to countable compactness, i.e., the requirement that every countable  consistent type is  realised. 

If two structures are countably saturated and elementarily equivalent, then there is a $\sigma$-complete back-and-forth system between them (see~\cite[Theorem 16.6.4]{Fa:STCstar}). This implies a special case of the following theorem due to Keisler (e.g.,~\cite[Theorem 5.1.13]{ChaKe}). 

\begin{theorem}\label{T.Keisler}
 Countably saturated structures of density character $\aleph_1$ are isomorphic if and only if they are elementarily equivalent.
 \end{theorem}
 
Since infinite-dimensional countably saturated \cstar-algebras have density character at least $\mathfrak{c}$, if $\CH$ is assumed then two elementarily equivalent countably saturated \cstar-algebras of density character not greater than $\fc$ are isomorphic.

The countable saturation of metric structures of density character $\aleph_1$ also has fundamental effects on the automorphism groups, since it allows the construction of $2^{\aleph_1}$ distinct $\sigma$-complete back-and-forth systems, giving rise to $2^{\aleph_1}$ distinct automorphisms of the structure (see~\cite[Theorem 16.6.3]{Fa:STCstar}).

\begin{thm}\label{ctble-sat-auto}
Every countably saturated structure of density character $\aleph_1$ has $2^{\aleph_1}$ automorphisms.
\end{thm}

Unless $\CH$ holds, the theory of such structure has to be stable, for the following reason. If the theory of a countably saturated structure $M$ is unstable, then by the order property (which is equivalent to unstability) there are a countable subset $X$ of $M^n$ for some $n$ and a definable relation $\rho$ on $X$ such that $(X,\rho)$ is isomorphic to $(\mathbb Q,<)$. Since every gap in $(X,\rho)$ is described by a countable type, it is filled, and therefore the size of $M$ is at least $\fc$. On the other hand, such models exist. In general, if the theory of~$M$ is $\aleph_1$-categorical  then all uncountable models of its theory are saturated (this is a feature of the proof of Morley's theorem, \cite[Corollary~7.1.8]{ChaKe}). 

Given any reasonable definition of `trivial' automorphism, the conclusion of Theorem~\ref{ctble-sat-auto} is often paired with a cardinality argument to show that, under CH, these structures have many nontrivial automorphisms. First we review a few applications of this theorem to commutative coronas via the Stone and Gelfand dualities.

\subsubsection{Borel quotients of $\mathcal P(\bbN)$}\label{S.Borel_quotients_of_P(N)}

As already pointed out in \S\ref{S.Intro} and \S\ref{S.Abel}, in~\cite{Ru} Rudin used a back-and-forth argument to show that there are $2^{\mathfrak c}$ homeomorphisms of $\beta\bbN\setminus\bbN$ under $\CH$, and therefore $2^{\mathfrak c}$ automorphisms of the Boolean algebra $\mathcal P(\bbN)/\Fin$. This can indeed be considered as an instance of Theorem~\ref{ctble-sat-auto} since $\mathcal P(\bbN)/\Fin$ is countably saturated as a Boolean algebra (this is a consequence of~\cite{JonOl:Almost}). More generally, it was shown in~\cite{JustKr} that, for any $F_\sigma$ ideal $\mathcal I$ containing $\Fin$, the Boolean algebra $\mathcal P(\bbN)/\mathcal I$ is countably saturated. Since there are only $\mathfrak c$ many topologically trivial automorphisms, and all atomless Boolean algebras are elementarily equivalent, we have the following. 

\begin{corollary}\label{thm:Rudin}
Assume $\CH$. Then $\cP(\bbN)/\cI$ is isomorphic to $\cP(\bbN)/\Fin$ for every $F_\sigma$ ideal $\cI\supseteq \Fin$. Each of these quotients has $2^{\mathfrak c}$ topologically nontrivial automorphisms. 
\end{corollary}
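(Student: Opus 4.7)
The plan is to exploit two model-theoretic facts already stated in the excerpt: (i) two countably saturated elementarily equivalent structures of density character $\aleph_1$ admit a $\sigma$-complete back-and-forth system and are hence isomorphic, and (ii) Theorem~\ref{ctble-sat-auto}, which produces $2^{\aleph_1}$ automorphisms of any countably saturated structure of density character $\aleph_1$. Since $\cP(\bbN)/\cI$ is countably saturated for every $F_\sigma$ ideal $\cI\supseteq\Fin$ (as quoted from~\cite{JustKr}), and since $\CH$ makes its density character $\fc$ equal to $\aleph_1$, all that remains is to verify elementary equivalence and to count topologically trivial automorphisms.

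For elementary equivalence, I would first check that $\cP(\bbN)/\cI$ is atomless. An atom in the quotient would be the class of some infinite $X\notin\cI$ such that the restriction $\cI\rs\cP(X)$ is a maximal ideal, equivalently, its dual $\{Y\subseteq X\mid X\setminus Y\in\cI\}$ is a nonprincipal ultrafilter on $X$. Since $\cI$ is $F_\sigma$, this ultrafilter would be an analytic subset of $2^X$; however no nonprincipal ultrafilter on a countable set has the Baire property, by the standard Sierpi\'nski argument using the self-homeomorphism $A\mapsto X\setminus A$, whereas every analytic set does. Hence $\cP(\bbN)/\cI$ is atomless, and since the theory of atomless Boolean algebras is complete (indeed admits elimination of quantifiers, as recalled in the abstract), one concludes $\cP(\bbN)/\cI\equiv\cP(\bbN)/\Fin$. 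Applying (i) to these two countably saturated structures of density character $\aleph_1$ produces the desired isomorphism, settling the first assertion.

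For the second assertion, (ii) gives $2^{\aleph_1}=2^{\fc}$ automorphisms of $\cP(\bbN)/\cI$. To conclude that $2^{\fc}$ of them are topologically nontrivial, I would observe that a topologically trivial automorphism is coded by a Borel lifting $\cP(\bbN)\to\cP(\bbN)$, and the collection of Borel-measurable maps between two fixed Polish spaces has cardinality $\fc$ (each is coded by a Borel subset of $\cP(\bbN)^2$, of which there are $\fc$). Cardinality arithmetic then yields $2^{\fc}$ topologically nontrivial automorphisms. The only even mildly delicate ingredient is the atomlessness verification above; everything else is a matter of assembling the previously quoted results.
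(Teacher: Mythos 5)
Your proof is correct and follows the same route as the paper's: countable saturation of $\cP(\bbN)/\cI$ from \cite{JustKr}, completeness of the theory of atomless Boolean algebras, the back-and-forth count of $2^{\aleph_1}$ automorphisms, and a cardinality comparison with the $\fc$-many Borel liftings. The only addition is your explicit verification of atomlessness via the Baire property of $F_\sigma$ filters, which the paper leaves implicit but which is indeed the right justification.
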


 The situation with ideals which are not $F_\sigma$ is more interesting. 
As pointed out in \S\ref{ss.borel_quotients}, Erd\" os and Ulam asked whether the quotients over the density ideals $\cZ_0$ and $\cZ_{\log}$ (\eqref{eq.Z0} and \eqref{eq.Zlog}) are isomorphic. 
 These quotients, as well as the quotients over all analytic P-ideals, are equipped with a complete metric (Theorem~\ref{T.Solecki}). If an ideal is not $F_\sigma$, then one can take a strictly decreasing Cauchy sequence which witnesses that the quotient is not countably saturated as a classical (discrete) structure. This presents a serious obstacle to constructing an isomorphism between such quotients by a back-and-forth method. 

However, in~\cite{JustKr} CH was used to prove that the quotients over $\cZ_0$, $\cZ_{\log}$, and even all so-called EU-ideals, are isomorphic. The intricate back-and-forth construction produced an isometric isomorphism. It took a couple of decades before this heroic achievement was put into a proper context: 
 All quotients of $\mathcal P(\bbN)$ by generalised density ideals (Definition~\ref{Def.GeneralisedDensity}) are countably saturated metric structures by Proposition~\ref{P.GenDensity.ReducedProduct} and Theorem~\ref{ctble-sat-reduced}. This fact was used in \cite[Theorem~2]{Fa:CH} to prove that under CH there are only two non-isomorphic quotients over dense density ideals, and it implies the following. 

\begin{corollary}
Assume $\CH$ and let $\mathcal I\subseteq\mathcal P(\bbN)$ be a generalised density ideal. Then there are $2^{\mathfrak c}$ topologically nontrivial automorphisms of $\mathcal P(\bbN)/\mathcal I$.
\end{corollary} 

In general, we do not know whether $\CH$ implies the existence of topologically nontrivial automorphisms for every quotient of the form $\mathcal P(\bbN)/\mathcal I$, where~$\mathcal I$ is an analytic P-ideal containing $\Fin$. A vaguely related result, that there are $\fc$ many non-isomorphic quotients of $\cP(\bbN)$ over Borel ideals, appears in~\cite{Oliver_2004}.

\subsubsection{Zero-dimensional remainders} In~\cite{Pa:Universal} Parovi\v{c}enko characterised, under $\CH$, $\beta\bbN\setminus\bbN$ as the only space sharing certain topological properties (such properties define a Parovi\v{c}enko space, in today's terminology). These topological properties are indeed shared by all spaces of the form $\beta X\setminus X$, where $X$ is a second countable locally compact noncompact zero-dimensional space\footnote{It was proved in~\cite{van1978parovivcenko} that $\beta\bbN\setminus\bbN$ being unique among Parovi\v{c}enko spaces is equivalent to $\CH$.}. Through Stone duality, Parovi\v{c}enko's theorem essentially states that the Boolean algebras of clopen sets of those remainders are atomless and countably saturated. Since the theory of atomless Boolean algebras is complete, all of its models are elementarily equivalent. With the definition of algebraically trivial homeomorphisms (Definition~\ref{defin:trivialtop}) in mind and the fact that topologically trivial automorphisms of coronas of abelian \cstar-algebras are dual to algebraically trivial homeomorphisms (\cite[Proposition 2.7]{vignati2018rigidity}), this implies the following.
 
 \begin{corollary}\label{cor:Parov}
Assume $\CH$. Let $X$ be a second countable locally compact noncompact zero-dimensional space. Then $\beta X\setminus X$ is homeomorphic to $\beta\bbN\setminus \bbN$ and it has $2^{\mathfrak c}$ algebraically nontrivial autohomeomorphisms.
\end{corollary}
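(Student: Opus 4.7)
The plan is to use Stone duality to reduce Corollary~\ref{cor:Parov} to a statement about Boolean algebras, so that the model-theoretic machinery summarised earlier in the section applies. Let $\mathfrak{B}(X) := \Clop(X)/\Clop_c(X)$, which is canonically isomorphic to $\Clop(\beta X \setminus X)$. Under Stone duality, autohomeomorphisms of $\beta X \setminus X$ correspond bijectively to automorphisms of $\mathfrak{B}(X)$, and algebraically trivial autohomeomorphisms in the sense of Definition~\ref{defin:trivialtop} correspond precisely to automorphisms of $\mathfrak{B}(X)$ induced by homeomorphisms between cocompact clopen subsets of $X$.

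First I would verify that $\mathfrak{B}(X)$ is atomless. This is immediate: any noncompact clopen subset of $X$ is itself second countable, locally compact, noncompact and zero-dimensional, and using a countable clopen base of $X$ it can be partitioned into two noncompact clopen pieces.

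The main technical step is to verify that $\mathfrak{B}(X)$ is countably saturated as a Boolean algebra; this is essentially Parovi\v{c}enko's theorem rephrased model-theoretically. Since $X$ is second countable and locally compact, it is $\sigma$-compact, so $\Clop_c(X)$ admits an increasing countable cofinal family $\{K_n : n \in \bbN\}$ with $\bigcup_n K_n = X$. The classical back-and-forth argument showing countable saturation of $\cP(\bbN)/\Fin$ (finding pseudointersections of countable towers modulo finite) then adapts essentially verbatim with $\{K_n\}$ playing the role of the initial segments of $\bbN$. I expect this saturation verification, although conceptually routine, to be the main technical point of the proof; everything afterwards is soft model theory and cardinal counting.

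With atomlessness and countable saturation established, the theory of atomless Boolean algebras admits elimination of quantifiers and is complete, so $\mathfrak{B}(X) \equiv \cP(\bbN)/\Fin$. Both structures have density character $\aleph_1 = \fc$ under $\CH$: the upper bound $\fc$ is trivial since $|\Clop(X)| \le \fc$, and an infinite countably saturated atomless Boolean algebra has cardinality at least $\fc$. By the standard fact that any two elementarily equivalent countably saturated structures of density character $\aleph_1$ are isomorphic via a $\sigma$-complete back-and-forth system, I conclude $\mathfrak{B}(X) \cong \cP(\bbN)/\Fin$, which dualises via Stone duality to $\beta X \setminus X \cong \beta\bbN \setminus \bbN$. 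Theorem~\ref{ctble-sat-auto} applied to $\mathfrak{B}(X)$ then produces $2^{\aleph_1} = 2^{\fc}$ automorphisms, hence $2^{\fc}$ autohomeomorphisms of $\beta X \setminus X$. Finally, since $X$ is second countable, there are at most $\fc$ continuous maps between subsets of $X$, so at most $\fc$ algebraically trivial autohomeomorphisms of $\beta X \setminus X$ exist, leaving $2^{\fc}$ algebraically nontrivial ones.
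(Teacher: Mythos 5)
Your proposal is correct and follows essentially the same route as the paper: Stone duality reduces the claim to showing that $\Clop(\beta X\setminus X)\cong\Clop(X)/\Clop_c(X)$ is an atomless countably saturated Boolean algebra (Parovi\v{c}enko's theorem), after which $\CH$ and the $\sigma$-complete back-and-forth machinery of \S\ref{S.CHModelTheory} give both the isomorphism with $\cP(\bbN)/\Fin$ and the $2^{\fc}$ automorphisms, and your cardinality count of the algebraically trivial ones finishes. One small suggestion on the step you flag as the main technical point: rather than asserting the $\cP(\bbN)/\Fin$ saturation argument goes through ``essentially verbatim'' (the blocks are now Boolean algebras, not singletons), it is cleaner to observe that $\Clop(X)/\Clop_c(X)\cong\prod_n\Clop(K_{n+1}\setminus K_n)/\bigoplus_{\Fin}\Clop(K_{n+1}\setminus K_n)$ and invoke Theorem~\ref{ctble-sat-reduced} directly.
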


There is no obvious obstruction preventing the statement above from holding even outside the zero-dimensional framework, where Stone duality is lacking. 

\begin{conjecture} \label{conj:homeo}
Assume CH and let $X$ be a locally compact noncompact Polish space. Then $\beta X \setminus X$ has $2^\mathfrak{c}$
(algebraically nontrivial) autohomeomorphisms.
\end{conjecture}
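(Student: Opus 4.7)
My plan is to reduce the question to a saturation property of the corona \cstar-algebra and invoke Theorem~\ref{ctble-sat-auto}. First I would translate via the Gelfand--Naimark duality: autohomeomorphisms of $\beta X \setminus X$ are in canonical bijection with automorphisms of the corona $\mathcal{Q}(C_0(X)) \cong C(\beta X \setminus X)$. Since $X$ is Polish (hence second countable) and locally compact, $C_0(X)$ is separable, and Definition~\ref{defin:trivialtop} together with the separability of $X$ gives that there are at most $\mathfrak c$ algebraically trivial autohomeomorphisms of $\beta X\setminus X$: each is determined by a homeomorphism between open subsets of $X$ with compact complement, and any continuous map on a separable metric space is determined by its values on a countable dense set, so the cardinality bound is $\mathfrak{c}^{\aleph_0}=\mathfrak{c}$. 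Consequently it suffices to produce $2^{\mathfrak c}$ autohomeomorphisms of $\beta X \setminus X$ in total; any subcollection of cardinality $2^{\mathfrak c}$ will contain $2^{\mathfrak c}$ algebraically nontrivial members.

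Next I would verify that $C(\beta X \setminus X)$ has density character $\aleph_1$ under CH. The upper bound $\leq\mathfrak{c}$ follows from the fact that $C(\beta X\setminus X)$ is a quotient of $C_b(X)=C(\beta X)$, which has cardinality at most $\mathfrak c$ because $X$ is separable. The lower bound uses noncompactness: $\beta X \setminus X$ is nonmetrizable, hence $C(\beta X\setminus X)$ is nonseparable, so its density character is at least $\aleph_1$, and under CH this equals $\mathfrak c$. Once we also know that $C(\beta X \setminus X)$ is countably saturated as a metric \cstar-algebra, Theorem~\ref{ctble-sat-auto} immediately yields $2^{\aleph_1}=2^{\mathfrak c}$ automorphisms.

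The heart of the plan is therefore to establish countable saturation of $C(\beta X \setminus X)$ in ZFC. I would proceed in two stages. The extreme cases are essentially known: for zero-dimensional $X$ it is the content of the Parovi\v{c}enko-type results underlying Corollary~\ref{cor:Parov}, and when $X$ admits a decomposition $X=\bigsqcup_n L_n$ into clopen compact pieces the corona is the reduced product $\prod_n C(L_n)/\bigoplus_n C(L_n)$, which is countably saturated over $\Fin$ (see \cite[Theorem 16.4.2]{Fa:STCstar}). For the general, possibly connected, case I would choose, using local compactness and noncompactness, pairwise disjoint compact sets $K_n\subseteq X$ with nonempty interior and escaping every compact set, and set $Y=\bigsqcup_n K_n$; this $Y$ is closed and $C^*$-embedded in $X$, and $C(\beta Y \setminus Y) \cong \prod_n C(K_n)/\bigoplus_n C(K_n)$ is countably saturated. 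I would then upgrade this to countable saturation of the ambient $C(\beta X \setminus X)$ using the countable degree-$1$ saturation of coronas of separable \cstar-algebras (Farah--Hart, discussed in \S\ref{S.6aii}) together with abelianness, which drastically constrains the form of types one must realize.

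The main obstacle is precisely this last step: bridging the gap between degree-$1$ saturation (known for all coronas of separable \cstar-algebras) and full countable saturation of $C(\beta X \setminus X)$ in the connected case, where the corona cannot be presented as a reduced product. A fallback route would sidestep this by performing the back-and-forth construction of $2^{\mathfrak c}$ automorphisms entirely inside $C(\beta Y \setminus Y)$ and then extending each resulting automorphism to $C(\beta X \setminus X)$ by patching with the identity on a suitable complementary piece; making that patching work demands fine control over the behaviour of $\beta X \setminus X$ near the closure of $Y$, which is likely where topological hypotheses on $X$ (such as being a manifold, as developed in \S\ref{S.CHRemainders}) become essential and where the general conjecture remains delicate.
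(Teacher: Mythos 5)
This statement is labeled as a \emph{conjecture} in the paper, and the paper offers no proof of it; your ``proposal'' is therefore an attempt at an open problem, and the right comparison is not with a proof in the paper but with the surrounding discussion of why the conjecture is believed to be delicate.

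Your reductions (Gelfand--Naimark duality, the bound of $\mathfrak c$ on algebraically trivial autohomeomorphisms, nonseparability of $C(\beta X\setminus X)$, and density character $\aleph_1$ under CH) are all fine and standard. The critical gap is exactly where you flag it, but the situation is worse than you suggest: the paper explicitly states that the countable-saturation route cannot work in general. Immediately after the conjecture, Question~\ref{ques:satu} asks for which $X$ the corona $C(\beta X\setminus X)$ is countably saturated, and the text says the authors ``do not expect'' this to hold for every $X$, citing counterexamples in \cite[Proposition~2.1]{FaSh:Rigidity}. So the plan ``prove countable saturation and invoke Theorem~\ref{ctble-sat-auto}'' is known to be blocked for some $X$; degree-$1$ saturation plus abelianness cannot upgrade to full saturation in those cases, since the counterexamples \emph{are} abelian coronas. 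Saturation is known when $X$ can be exhausted by compact $K_n$ with $\sup_n|\partial K_n|<\infty$ (which covers $X=\bbR$), but this is a genuine topological restriction, not something that can be waived.

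Your fallback (pick a nice subspace $Y\subseteq X$, build automorphisms there, and patch with the identity on a complementary piece) is closer in spirit to what the paper actually does for a class of non-saturated cases: Theorem~\ref{T.CHAbel} in \S\ref{S.CHRemainders} shows that if $\fd=\aleph_1$ (a consequence of CH) and $X$ is a ``flexible'' space (in particular any positive-dimensional manifold), then $C(\beta X\setminus X)$ has $2^{\aleph_1}$ automorphisms. The mechanism there is not patching against a clopen complement (which is unavailable when $X$ is connected) but rather a coherent $\aleph_1$-chain of homeomorphisms $\tilde\Phi_\alpha$, each supported on a ``bubble'' $X_n$, whose composites along branches of the binary tree of height $\aleph_1$ diagonalize into genuine automorphisms. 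That construction crucially exploits flexibility---the ability to find disjoint compact pieces admitting small self-homeomorphisms fixing the boundary---and does not cover all locally compact noncompact Polish spaces; that is precisely the content of the conjecture remaining open. So your proposal correctly maps the terrain (saturation for some $X$, direct coherent constructions for others, a remaining general gap), but it does not close the gap, and its main route is one the paper explicitly rules out as a uniform strategy.
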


Thanks to the Gelfand--Naimark duality, one can verify Conjecture \ref{conj:homeo} on a space $X$ by studying the countable saturation of $C(\beta X \setminus X)$.
In what was Question~5.7 in an earlier version of this paper it was asked whether the corona $C(\beta X\setminus X)$ is countably saturated for every locally compact, noncompact, Polish space $X$. In~\cite{farah2023obstructions} it was proven (among other things) that $C(\beta \bbR^n\setminus \bbR^n)$ is not countably saturated unless $n=1$ (compare footnote~\ref{footnote.hart}), giving a strong negative answer to this question. In~\cite{farah2023obstructions} there are two proofs of this result, one relatively elementary and one using `definable homotopy’ (it is not difficult to see that homotopy is in general not definable in continuous logic). 

On the positive side, if $X$ is equal to a strictly increasing union of compact subsets $K_n$ which satisfy ($\partial K_n$ is the topological boundary of $K_n$) 
$\sup_n | \partial K_n | < \infty$, then $C(\beta X\setminus X)$ is countably saturated (this is a special case of \cite[Theorem~2.5]{FaSh:Rigidity}, and includes the case $X=\mathbb R$).

\subsubsection{Countable saturation of the reduced products}

 Our next task is to review some of the applications of Theorem~\ref{ctble-sat-auto} in the noncommutative setting of reduced products. Ultraproducts associated with countably incomplete ultrafilters are countably saturated (see~\cite[Theorem 6.1.1]{ChaKe}). This, in particular, is the case for the norm ultraproducts of \cstar-algebras. As for coronas of separable \cstar-algebras, they are not necessarily countably saturated. Notably, the Calkin algebra is not countably saturated (\S\ref{S.6aii}). However, corona algebras that are reduced products over $\Fin$ are countably saturated by the following result (\cite{FaSh:Rigidity}, see also {\cite[Theorem 16.5.1]{Fa:STCstar}}).

\begin{thm}\label{ctble-sat-reduced}
If $(A_n)_{n\in \mathbb N}$ is a sequence of metric structures in the same language, then the reduced product $\prod_n A_n/\bigoplus_{\Fin} A_n$ is countably saturated.
\end{thm}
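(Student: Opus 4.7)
The strategy is a diagonal construction over the coordinates, using that the ideal $\bigoplus_{\Fin} A_n$ identifies sequences whose coordinatewise distances tend to $0$, so that the class $\pi(\bar a)\in M := \prod_n A_n/\bigoplus_{\Fin} A_n$ is controlled by its tails. Let $t(\bar x)=\{\varphi_k(\bar x)=r_k:k\in\bbN\}$ be a countable satisfiable type; any parameters occurring in $t$ are handled by fixing, once and for all, coordinatewise lifts to $\prod_n A_n$ and treating those lifts as new constants.

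First I would extract approximate witnesses: satisfiability yields, for each $k\ge 1$, a tuple $\bar b^{(k)}\in M$ with $|\varphi_i^M(\bar b^{(k)})-r_i|<2^{-k}$ for all $i\le k$, and I fix coordinatewise lifts $\bar b^{(k)}_n\in A_n$. The technical heart of the proof is a \L{}o\'s-type principle for reduced products over $\Fin$: for every formula $\psi$ and every lift $\bar a\in\prod_n A_n$, if the sequence $(\psi^{A_n}(\bar a_n))_n$ converges, then its limit equals $\psi^M(\pi(\bar a))$. The atomic case is immediate from the definition of the quotient metric; the connective step passes through by uniform continuity of the connectives on bounded ranges; and the quantifier step is handled by lifting approximate infima (respectively suprema) coordinatewise and invoking the principle at lower complexity to verify that the lifted object realizes the quantifier in $M$.

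With this lemma in place, the approximate realization property of $\bar b^{(k)}$ implies that for each $k$ the set
\[
S_k:=\{n:|\varphi_i^{A_n}(\bar b^{(k)}_n)-r_i|<2^{-k+1}\text{ for all }i\le k\}
\]
is cofinite in $\bbN$. Choose $N_1<N_2<\cdots$ with $[N_k,N_{k+1})\subseteq S_k$ and assemble $\bar a\in\prod_n A_n$ by setting $\bar a_n:=\bar b^{(k)}_n$ for $n\in[N_k,N_{k+1})$. For any fixed $i$ and any $n\in[N_k,N_{k+1})$ with $k\ge i$, the construction gives $|\varphi_i^{A_n}(\bar a_n)-r_i|<2^{-k+1}$, so $\lim_n \varphi_i^{A_n}(\bar a_n)=r_i$; the \L{}o\'s principle then yields $\varphi_i^M(\pi(\bar a))=r_i$ for every $i$, and hence $\pi(\bar a)$ realizes $t$.

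The main obstacle will be the \L{}o\'s-type principle itself, especially at the connective and quantifier levels, since in continuous logic $\limsup$ does not commute with a general continuous connective (for instance, $1-\limsup_n a_n=\liminf_n(1-a_n)$). Thus the hypothesis that the coordinatewise sequence $(\psi^{A_n}(\bar a_n))_n$ converges is essential rather than cosmetic: the full $\limsup$ version of \L{}o\'s fails. The point is that the diagonal construction above is engineered precisely so that this convergence hypothesis holds for each formula $\varphi_i$ in the given type, which is all that the present application requires.
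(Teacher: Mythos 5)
Your proposed \L{}o\'s-type transfer principle is false, and the failure already occurs for quantifier-free formulas, so there is a genuine gap. Recall that in the reduced product $M=\prod_n A_n/\bigoplus_{\Fin}A_n$ a predicate $P$ is interpreted by $P^M(\pi(\bar a))=\limsup_n P^{A_n}(\bar a_n)$, with the metric interpreted the same way. Take $A_n=[0,1]$ with the usual metric and two $1$-Lipschitz unary predicates $R,S$ given by $R^{A_n}(x)=x$ and $S^{A_n}(x)=1-x$, and let $a_n=0$ for $n$ even, $a_n=1$ for $n$ odd. For $\psi:=\min(R,S)$ one has $\psi^{A_n}(a_n)=\min(a_n,1-a_n)=0$ for every $n$, so the coordinatewise sequence converges (to $0$); nevertheless
\[
\psi^M(\pi(\bar a))=\min\bigl(R^M(\pi(\bar a)),S^M(\pi(\bar a))\bigr)=\min(1,1)=1,
\]
because $R^M(\pi(\bar a))=\limsup_n a_n=1$ and $S^M(\pi(\bar a))=\limsup_n(1-a_n)=1$. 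The inductive argument you sketch breaks at the connective step: convergence of $(\psi^{A_n}(\bar a_n))_n$ does not propagate to the subformulas $R$ and $S$, whose coordinatewise evaluations oscillate, and the uniform continuity of $\min$ is of no help when the inner sequences fail to converge. The quantifier step has the same defect, since the inductive hypothesis would have to be applied to coordinatewise approximate witnesses whose evaluations need not converge.

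This is precisely why the paper's proof does not proceed via a pointwise \L{}o\'s theorem. As indicated just after the statement, the actual argument combines the metric Feferman--Vaught theorem with the proof of countable saturation of ultraproducts over countably incomplete ultrafilters. Feferman--Vaught replaces the false transfer principle: the value $\psi^M(\pi(\bar a))$ is determined not by the single coordinatewise sequence $(\psi^{A_n}(\bar a_n))_n$ but by the coordinatewise evaluations of a finite family of auxiliary formulas, assembled through the Boolean algebra $\cP(\bbN)/\Fin$; the countable saturation of $\cP(\bbN)/\Fin$ as a Boolean algebra is then what allows the diagonal choice of local witnesses to produce a realizer. Your diagonal stitching of approximate realizers is the right shape for the construction, but the key step, concluding that $\varphi_i^M(\pi(\bar a))=r_i$, requires a correct transfer mechanism, and the one you posit does not hold.
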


Recall that if $A_n$ are unital \cstar-algebras then $\prod_n A_n/\bigoplus_{\Fin} A_n$ is isomorphic to the corona of the direct-sum $\bigoplus_{\Fin} A_n$ (Example~\ref{ex:coronas}). This class includes all the \emph{asymptotic sequence algebras} $A_\infty=\ell_\infty(A)/c_0(A)$, associated to a \cstar-algebra $A$. 

 \begin{corollary}\label{cor:RP}
Assume $\CH$. If $(A_n)_{n\in \mathbb N}$ is a sequence of metric structures of density character $\leq \aleph_1$ in the same language, the reduced product $\prod_n A_n/\bigoplus_{\Fin} A_n$ has $2^{\mathfrak c}$ topologically nontrivial automorphisms. 
\end{corollary}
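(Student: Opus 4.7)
The plan is to chain Theorem~\ref{ctble-sat-reduced} with Theorem~\ref{ctble-sat-auto} and then extract the topologically nontrivial automorphisms by a cardinality count, using the fact that under $\CH$ the number of Borel liftings is strictly smaller than the number of automorphisms.

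Set $M := \prod_n A_n/\bigoplus_{\Fin}A_n$. By Theorem~\ref{ctble-sat-reduced}, $M$ is countably saturated. Under $\CH$, from the hypothesis that each $A_n$ has density character $\leq \aleph_1 = \fc$ it follows that $\prod_n A_n$ has density character at most $\aleph_1^{\aleph_0} = 2^{\aleph_0} = \aleph_1$, and hence so does $M$. Assuming $M$ is nontrivial (otherwise the conclusion is vacuous), the countable saturation forces the density character of $M$ to equal $\aleph_1$, since a nontrivial countably saturated metric structure cannot be separable: a convergent sequence without limit in a separable piece would supply a satisfiable type avoided by every element. Theorem~\ref{ctble-sat-auto} then produces $2^{\aleph_1} = 2^{\fc}$ automorphisms of $M$.

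To finish, I bound the set of topologically trivial automorphisms of $M$ by $\fc$. Each such automorphism is, by definition, induced by a Borel lifting between the underlying Borel structures associated to $\prod_n A_n$, and Borel functions between standard Borel spaces form a family of cardinality $\fc$ (each being coded by a real). Since $2^{\fc}>\fc$, subtracting leaves $2^{\fc}$ automorphisms that are topologically nontrivial, which is the desired conclusion.

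The main obstacle is a definitional nuance rather than a genuine mathematical difficulty: Definition~\ref{Def.Trivial} was phrased for Borel (hence Polish) structures, while here some $A_n$ may have density character up to $\aleph_1$. In any sensible extension of the notion to this setting, each topologically trivial automorphism is still coded by countably much data relative to the natural standard Borel structures on the bounded parts of $A_n$ and of $\prod_n A_n$, so the upper bound of $\fc$ on the number of such liftings survives. Otherwise the proof is a routine concatenation of the two saturation theorems with the elementary estimate $2^{\fc}>\fc$.
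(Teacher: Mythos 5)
Your proof is correct and follows the route the paper itself (implicitly) takes: Corollary~\ref{cor:RP} is a direct chain of Theorem~\ref{ctble-sat-reduced} (the reduced product over $\Fin$ is countably saturated) and Theorem~\ref{ctble-sat-auto} (a countably saturated structure of density character $\aleph_1$ has $2^{\aleph_1}$ automorphisms), followed by the standard count that only $\fc$ automorphisms can have Borel-coded liftings, so $2^{\fc}>\fc$ of them must be topologically nontrivial. One small quibble: the phrase ``a convergent sequence without limit in a separable piece'' is garbled (reduced products are complete), and the cleaner reason a nondegenerate $\prod_n A_n/\bigoplus_{\Fin} A_n$ cannot be separable is that an almost disjoint family of size $\fc$ in $\bbN$ yields a uniformly discrete family of $\fc$ elements in the quotient; also note that the nontriviality assumption you rightly make explicit is implicitly needed for the corollary as stated (it is not vacuous but false for a degenerate sequence $(A_n)$).
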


\subsubsection{Metric Feferman--Vaught theorem}\label{FV-subsection} Theorem~\ref{ctble-sat-reduced} is proved by combining the metric version of the classical \emph{Feferman--Vaught theorem} with the proof of the countable saturation of ultraproducts associated with countably incomplete ultrafilters (see e.g., \cite[\S 16.5]{Fa:STCstar}). 
In classical model theory Feferman and Vaught (\cite{feferman1959first}) gave an effective way to determine the satisfaction of formulas in the reduced products of models of the same language associated with any ideal over $\bbN$. Their result was generalised in~\cite{ghasemi2016reduced} to metric structures. The metric Feferman--Vaught theorem essentially states that if $(A_n)_{n\in\bbN}$ are $\mathcal L$-structures and $\mathcal I$ is an ideal on $\bbN$ then the theory of the reduced product $\prod_n A_n/\bigoplus_{\mathcal I}A_n$ can be recursively computed from the theories of $A_n$ and the theory of $\mathcal P(\bbN)/\mathcal I$ as a Boolean algebra.

The following theorem can be deduced from the metric Feferman--Vaught theorem. It follows directly from \cite[Theorem 3.3]{ghasemi2016reduced}. It applies to both continuous and discrete languages, in which case, if a sentence $\theta$ is interpreted in a structure $A$, the value $\theta^A$ belongs to $\{\text{true, false}\}$.

\begin{thm}\label{FV-coro}
Let $\mathcal I$ be an ideal on $\bbN$, and suppose that $(A_n)_{n\in \bbN}$ and $(B_n)_{n\in\bbN}$ are structures in the same language. If for every sentence $\theta$ we have $\lim_{n\to \mathcal I} \theta^{A_n} = \lim_{n\to \mathcal I}\theta^{B_n}$ then $\prod_n A_n/\bigoplus_{\mathcal I} A_n\equiv \prod_n B_n/\bigoplus_{\mathcal I} B_n$. 
Therefore, if $A_n \equiv B_n$ for all $n$ then $\prod_n A_n/\bigoplus_{\mathcal I} A_n\equiv \prod_n B_n/\bigoplus_{\mathcal I} B_n$.

\end{thm}

In fact, the results of \cite{farah2019between} provide a canonical model for the theory of the reduced power $A_\infty:=\prod_n A/\bigoplus_{\mathrm{Fin}} A$ for every structure $A$. Let $C(K, A)$ denote the structure of the language of $A$ consisting of all continuous functions from $K$ into $A$, where $K$ is the Cantor space.\footnote{Note that $C(K,A)$ is naturally identified with the inductive limit $\lim_n A_n$ where $A_0 =A$ and $A_{n+1} = A_n \oplus A_n$ for all $n$, with the connecting maps $a \to (a, a)$. This definition also makes sense if $A$ is discrete.} 

\begin{thm}\label{T.C(K,A)}
If $A$ and $(A_n)_{n\in \bbN}$ are structures of the same language such that $\lim_{n\to \infty} \theta^{A_n} = \theta^A$ for every sentence $\theta$ then $\prod_n A_n/\bigoplus_{\mathrm{Fin}} A_n\equiv C(K,A)$.
\end{thm}

\begin{proof}
An elementary embedding from $C(K,A)$ into $A_\infty$ was exhibited in~\cite[Proposition 3.5]{farah2019between}, and the general case follows from Theorem~\ref{FV-coro}.
\end{proof}

\subsubsection{The Palyutin--Palmgren approach}\label{S.PP}
An elegant alternative approach to proving countable saturation of reduced products associated with countably generated ideals that does not use the Feferman—Vaught theorem is given in \cite{palmgren}. It uses analysis of reduced products due to Palyutin (\cite{palyutin1, palyutin2}) that deserves to be better known. Palyutin defined the set of $h$-formulas of a language $\calL$ (not to be confused with Horn formulas) to be the smallest set of $\calL$-formulas that includes all atomic formulas and such that if $\varphi$ and $\psi$ are $h$-formulas then so are $\varphi\wedge\psi$, $(\forall x)\varphi$, $(\exists x)\varphi$, and $(\exists x)\varphi\wedge(\forall x)(\varphi\rightarrow\psi)$. By induction on complexity, one proves the following analog of \L o\'s's theorem for arbitrary reduced products. 

\begin{thm} \label{T.Palyutin}
	If $M_i$, for $i\in I$, are structures of the same language,~$\cI$ is an ideal on $I$, $M=\prod_i M_i/\bigoplus_\cI M_i$, $\varphi(\bar x)$ is an $h$-formula, and $\bar a=[\bar a_i]_\cI$ in $M$ is of the appropriate sort then 
	$M\models \varphi(\bar a)$ if and only if 
	\[
	 \{i\in I \mid M_i\models \neg \varphi(\bar a_i)\}\in \cI. 
	\]
\end{thm}

The key to Palmgren's proof is the observation that in every reduced power associated with $\Fin$ (or any other ideal such that the quotient Boolean algebra is atomless) every formula is equivalent to a Boolean combination of $h$-formulas. Therefore every type over such reduced product is axiomatised by a set consisting of $h$-formulas and negations of $h$-formulas. This proof was extended to prove countable saturation of reduced powers over a larger class of the so-called layered ideals in \cite[Theorem~4.1]{debondt2023saturation}.

\subsubsection{Asymptotic sequence algebras} We write $A_\infty$ for $\prod_n A/\bigoplus_{\mathrm{Fin}} A$ for any structure $A$, metric or discrete (in case of \cstar-algebra, this is the asymptotic sequence algebra). The following is an immediate consequence of Theorem~\ref{ctble-sat-reduced}, Theorem~\ref{FV-coro}, Theorem~\ref{T.C(K,A)}, and Theorem~\ref{T.Keisler}.

\begin{corollary} \label{thm:eeiso}
Assume $\CH$.
If $(A_n)_{n \in \bbN}$ and $(B_n)_{n \in \bbN}$ are structures of the same language of density character $\leq \aleph_1$ such that $A_n\equiv B_n$ for all but finitely many $n$, then $\prod_n A_n/\bigoplus_{\Fin} A_n$ and $\prod_n B_n/\bigoplus_{\Fin} B_n$ are isomorphic. 

In particular, if $A$ and $B$ are elementarily equivalent and of density character $\leq \aleph_1$, then $A_\infty\cong B_\infty$, and both structures are isomorphic to the ultrapower of $C(K,A)$ associated with any nonprincipal ultrafilter on $\bbN$, $K$ being the Cantor space. 
\end{corollary}

For discrete structures `density character' is of course a synonym for cardinality. For more see \cite[Theorem~A--F]{farah2019between}.

This brings us to the question of elementary equivalence of \cstar-algebras. 
Finite-dimensional \cstar-algebras and separable UHF-algebras are elementarily equivalent if and and only if they are isomorphic (\cite[Theorem 3]{Mitacs2012}). However, even for (separable) AF-algebras the relation of elementary equivalence is `smooth' while the isomorphism of AF-algebras is a `non-smooth' relation (it is classifiable by countable structures~\cite{FaToTo:Descriptive}). Hence, there are non-isomorphic elementarily equivalent unital AF-algebras whose asymptotic sequence algebras, under $\CH$, as well as their ultrapowers are isomorphic (see~\cite{eagle2015saturation, games} for examples of such AF-algebras).

We finish this section by another application of the metric Feferman--Vaught theorem which was used in~\cite{ghasemi2016reduced} to provide examples of `genuinely' noncommutative \cstar-algebras $A$ and $B$ such that the existence of an isomorphisms between them is independent from $\ZFC$.

\begin{example}\label{Ex.FV}
Fix a sequence $(A_n)_{n \in \bbN}$ of pairwise non-isomorphic unital separable \cstar-algebras (for example, each $A_n$ could be $M_n(\bbC)$).
 Suppose $\mathcal L$ is the language of \cstar-algebras and fix a countable uniformly dense set $\{\psi_i\mid i\in \bbN\}$ of $\mathcal L$-sentences. That is, for every $\varepsilon>0$, for every $\mathcal L$-sentence $\theta$ and for every $\mathcal L$-structure $A$ there is $\psi_i$ such that 
\[
|\psi_i^A - \theta^A|<\varepsilon.
\]
The set $\{\psi_i\mid i\in \bbN\}$ exists by~\cite[Proposition 6.6]{BYBHU}. Since the range of each $\psi_i$ is a bounded subset of $\mathbb R$, by compactness arguments we can find a sequence of natural numbers $(k(n))_{n\in \bbN}$ such that every $i$ we have $\lim_n \psi_i^{A_{k(n)}} = r_i$ for some $r_i\in \bbR$. As $\psi_i$ are uniformly dense, for every $\mathcal L$-sentence $\theta$ there is a real number $r_\theta$ such that 
\[
\lim_n \theta^{A_{k(n)}}= r_\theta. \leqno{(*)}
\]
Suppose $X=\{g(n) \mid n\in \bbN\}$ and $Y=\{h(n) \mid n\in \bbN\}$ are disjoint infinite subsets of $\{k(n) \mid n\in \bbN\}$. Let
\[
A:=\prod_n A_{g(n)}/\bigoplus_{\Fin} A_{g(n)} \quad \text{and} \quad B:=\prod_n A_{h(n)}/\bigoplus_{\Fin} A_{h(n)}
\]

For every $\mathcal L$-sentence $\theta$ from $(*)$ we have $\lim_n \theta^{A_{g(n)}}= \lim_n \theta^{A_{h(n)}}$ and therefore Theorem~\ref{FV-coro} implies that $A\equiv B$.
Since $A$ and $B$ are countably saturated (Theorem~\ref{ctble-sat-reduced}) CH implies that $A \cong B$. However, $X\cap Y= \emptyset$, which means $A_{g(n)} \not \cong A_{h(m)},$ for all $n,m\in \bbN$. This shows that any isomorphism between $A$ and $B$, constructed under CH, cannot be algebraically trivial (see Corollary~\ref{C.trivial-matrix}). 
\end{example}

\subsection{Full saturation: The dividing line}\label{S.6.2}
In Example~\ref{Ex.F2} we presented a counterexample to the second part of Conjecture~\ref{Meta.1}. The gist of this example is that the reduced product $(\prod_n \bbZ/2\bbZ)/(\bigoplus_\bbN\bbZ/2\bbZ)$ is fully saturated, regardless of whether CH holds or not. The fact that this does not apply to $\cP(\bbN)/\Fin$, the grandfather of all structures that satisfy the second part of Conjecture~\ref{Meta.1}, is a classical result of Hausdorff who in \cite{Hau:Summen} famously constructed an $(\omega_1,\omega_1)$-gap showing that $\mathcal P(\bbN)/\Fin$ is not an $\aleph_2$-saturated Boolean algebra.\footnote{Hausdorff’s motivation for constructing such gap was different: at the time it was considered as a partial result towards proving CH in ZFC.} By Theorem~\ref{ctble-sat-reduced} every reduced product associated with $\Fin$ is countably saturated. Are there other possibilities for saturation of reduced products associated with $\Fin$? 
 In Question~10.1 in the original draft of the present paper we asked a vague question along these lines, answered by \cite[Theorem~1 and Theorem~2]{debondt2023saturation} as follows (`stable' refers to the model-theoretic notion, see e.g., \cite{pillay2008introduction}). 

\begin{thm} \label{T.dividing-line} Suppose that $M$ is a reduced product over $\Fin$ of structures in the same language. Then
\begin{enumerate}
\item\label{Sat1} If its theory is stable, then $M$ is $\mathfrak c$-saturated. 
\item\label{Sat2} If its theory is not stable, then $M$ is not $\aleph_2$-saturated, and therefore, if $\CH$ fails, not saturated.
\end{enumerate}
\end{thm}

A result analogous to \ref{Sat1} for ultrapowers appears in \cite[Theorem 5.6]{FaHaSh:Model2}, and the original proof of \ref{Sat1} closely followed the proof of the latter. This proof was considerably simplified by using ideas of Palyutin and Palmgren (see \S\ref{S.PP}). 
Item \ref{Sat2} is proven by injecting $\cP(\bbN)/\Fin$ into the quotient in a gap-preserving manner. 
Its proof applies in a wider context, showing for example that if $M$ is a reduced product over the asymptotic density zero ideal $\cZ_0$ (see \S\ref{ss.borel_quotients}), or any other $\cI$ such that $\cP(\bbN)/\cI$ is not countably saturated, whose theory is not stable then $M$ is not even countably saturated (\cite[Corollary~2.3]{debondt2023saturation}).

\subsection{The failure of countable saturation}\label{S.6aii}

In hindsight, the two factors that make the construction of nontrivial automorphisms of $\cP(\bbN)/\Fin$ so easy (by today's standards) are its countable saturation and the elimination of quantifiers in atomless Boolean algebras. The latter bit can be easily dispensed with: In a structure whose theory does not admit elimination of quantifiers, one just needs to consider only partial isomorphisms that are elementary. On the other hand, saturation makes all the difference. In hindsight, it is not very surprising that, unlike `commutative' quotients, the Calkin algebra and many other coronas are not countably saturated. Known instances of the failure of countable saturation in coronas of $\sigma$-unital \cstar-algebras are associated to obstructions of K-theoretic nature. 

\subsubsection{The Fredholm index obstruction} The simplest example of the failure of countable saturation in the Calkin algebra is related to the original Brown--Douglas--Fillmore question, whether there is an automorphism of $\cQ(H)$ that sends $\dot s$ (where $s$ is the unilateral shift) to its adjoint (see Question~\ref{ques:bdf}). This is really a question about orbits of unitaries in the Calkin algebra under the action of the automorphism group by conjugation. Every unitary $ u$ in $\cQ(H)$ lifts to a partial isometry (see \ref{3.partial.isometry}) between closed subspaces of~$H$ of finite co-dimension. The difference between the co-dimensions of the domain and the range of the lift is its Fredholm index (\S\ref{BDF}). The function that sends $ u$ to the Fredholm index of its lift is a group homomorphism from $\cU(\cQ(H))$ into $(\bbZ,+)$. This implies that the type of a unitary that (for example) has $2^n$th root for all $n$, but no $3$rd root, is consistent but not realised in $\cQ(H)$. (The proof uses definability of the set of unitaries, in the sense of~\cite{BYBHU}, see also~\cite[\S 3]{Muenster} to show that in $\cQ(H)$---and in any \cstar-algebra---applying quantification over the unitary group to a definable predicate results in a definable predicate). By adding a variable for each $2^n$th root, this example provides a universal type which is consistent but not realised in $\cQ(H)$. 

\subsubsection{Homotopy} The Calkin algebra fails even to be quantifier-free saturated, as we can see in the following examples. If $A$ and $B$ are unital \cstar-algebras, unital $^*$-homomorphisms $\Phi_0$ and $\Phi_1$ from $A$ into $B$ are \emph{homotopic} if there is a unital $^*$-homomorphism $\Psi\colon A\to C([0,1],B)$ such that $\Psi(a)(0)=\Phi_0(a)$ and $\Psi(a)(1)=\Phi_1(a)$ for all $a\in A$. 

\begin{example} \label{Ex.Mn} For $n\geq 2$, all unital copies of $M_n(\bbC)$ in $\cQ(H)$ are unitarily equivalent, but there are $n$ homotopy classes of unital copies of $M_n(\bbC)$ only one of which lifts to a unital copy in $\cB(H)$. 
	When $n=2$, this can be seen by considering the matrix unit $e_{12}=\begin{bmatrix}
		0 & 1 \\ 0 & 0 
	\end{bmatrix}$. It can be lifted to a partial isometry (see \ref{3.partial.isometry}) $v$ in $\cB(H)$ such that $v^*v$ and $vv^*$ are orthogonal projections and $1-v^*v-vv^*$ has finite rank. The parity of this rank is a homotopy invariant. 
\end{example}

By iterating the construction of Example~\ref{Ex.Mn} one obtains the following, due to N.C. Phillips (\cite[Proposition~4.2]{FaHa:Countable}); recall the convention that $\dot p$ denotes a lift of $p$. 

\begin{example}\label{Ex.CAR}
There is a unital copy $A$ of the CAR algebra $\bigotimes_\bbN M_2(\bbC)$ in $\cQ(H)$ such that for every projection $p \in \cB(H)$ with $\dot p\in A$, none of the tensor factors $M_2(\bbC)$ lifts to a unital copy of $M_2(\bbC)$ in $p\cB(H) p$ (for more see~\cite{Scho:Pext}). Fix a unital copy $B$ of the CAR algebra that lifts to a unital copy of the CAR algebra in $\cB(H)$ and an isomorphism $\Phi\colon A\to B$. The type of a unitary that implements this isomorphism is, by Example~\ref{Ex.Mn}, consistent, but it is not realised in $\cQ(H)$. 
\end{example}

The Galois type of a tuple (i.e., its orbit under the action of the automorphism group) provides more information than its first-order type. Example~\ref{Ex.Mn} shows that in \cstar-algebras, the Galois type of a tuple needs to be supplemented with its homotopy type. There is an alternative way of defining the homotopy type of a tuple in a \cstar-algebra. For a definable set $X$ (such as the set of unitaries or the set of projections; see~\cite[\S 3]{Muenster}) one says that two elements of $X$ are homotopic if they can be connected by a continuous path inside $X$. This approach is used in defining K-theory groups of \cstar-algebras (see~\cite{RoLaLa:Introduction}), and we will take advantage of it in the next subsection.

\subsubsection{Countable homogeneity} In the construction of automorphisms, countable saturation can be replaced by a weaker condition. A structure $C$ is \emph{countably homogeneous} if for every isomorphism $\Phi$
 between separable elementary submodels $A$ and $B$ of $C$, a type over $A$ is realised in $C$ if and only if its $\Phi$-image is realised in $C$. Countable homogeneity implies that a structure in a countable language of density $\aleph_1$ has $2^{\aleph_1}$ automorphisms, by using $\sigma$-complete back-and-forth systems as in~\S\ref{S.CHModelTheory}. 

By~\cite[Theorem~1]{farah2017calkin}, $\cQ(H)$ is not countably homogeneous. Readers not used to continuous logic may find the fact that the offender is a quantifier-free 2-type consisting of one condition amusing. Surprisingly, in~\cite[Theorem~2]{farah2017calkin} it is shown that even the subgroup of $\cU(\cQ(H))$ consisting of the unitaries of Fredholm index 0 taken with the metric induced by the norm is not countably homogeneous. 

Another failure of countable saturation deserves mention. We will state it in the context of \cstar-algebras. If $A$ is a separable \cstar-algebra and~$\cU$ is a nonprincipal ultrafilter on $\bbN$, then both the (norm) ultrapower~$A_\cU$ and the asymptotic sequence algebra $A_\infty=\ell_\infty(A)/c_0(A)$ are countably saturated (\S\ref{S.CHModelTheory}). Let $\pi_\cU$ denote the quotient map from $A_\infty$ onto $A_\cU$. A question on the relation between $A_\infty$ and $A_\cU$, motivated by the recent progress on Elliott's classification programme of \cstar-algebras, brought the two-sorted structure $\fA=(A_\infty, A_\cU,\pi_\cU)$ into the focus (see~\cite{farah2019between}). At the hindsight, it is not too surprising that the countable saturation of~$\fA$ is equivalent to the ultrafilter $\cU$ being a P-point (\cite[Theorem~C]{farah2019between}). 

Despite all the failures of saturation considered so far, the following question is still open.
\begin{question} Is there a simple, separable \cstar-algebra $A$ such that $\cQ(A)$ is fully countably saturated, or even quantifier-free countably saturated? 
\end{question}

The remainder of this subsection is concerned with a weakening of saturation strong enough to explain the `massiveness' of coronas of $\sigma$-unital \cstar-algebras.

\subsubsection{Degree-1 saturation}\label{S.6aii.5}
Despite the failure of countable saturation, coronas of $\sigma$-unital \cstar-algebras satisfy a weaker version of countable saturation called `countable degree-1 saturation'. 
A \emph{degree-1 type} $t(\bar x)$ over a \cstar-algebra $C$ is a set of conditions of the form $\|P(\bar x)\|= r$ with noncommuting variables $\bar x=(x_0, \dots,x_{n-1})$, where $P(\bar x)$ is a $^*$-polynomial of the form 
\[
\textstyle \sum_{j<n} a_{0,j} x_j a_{1,j} + \sum_{j<n} a_{2,j} x^*_j a_{3,j}+a 
\]
with coefficients $a_{i,j}$ in $C$ and $r\in \mathbb R_+$. 
For a degree-1 type $t(\bar x)$ over $C$ the notions of \emph{satisfiablity} and \emph{realisation} in $C$ are defined similar to the general types (\S\ref{S.CHModelTheory}), replacing arbitrary conditions with degree-1 conditions. A \cstar-algebra $C$ is \emph{countably degree-1 saturated} if every satisfiable countable degree-1 type over $C$ is realised in $C$. 
\begin{thm}[\cite{FaHa:Countable}]\label{T.degree-1}
The coronas of $\sigma$-unital non-unital \cstar-algebras are countably degree-1 saturated.
\end{thm}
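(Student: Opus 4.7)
The plan is to realize any countable satisfiable degree-1 type in $\cQ(A)$ by gluing together approximate realizations using a carefully chosen approximate unit of $A$. Let $t(\bar x) = \{\|P_k(\bar x)\| = r_k : k\in\bbN\}$ be such a type, where each $P_k$ is a degree-1 $^*$-polynomial with coefficients in $\cQ(A)$; after rescaling we may assume the variables range over the unit ball. There are countably many coefficients appearing in the $P_k$'s, and I would begin by lifting each $\alpha \in \cQ(A)$ to some $\tilde\alpha \in \cM(A)$, generating a separable \cstar-subalgebra $B \subseteq \cM(A)$.

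Next I would invoke the standard construction (originally due to Arveson) of a quasicentral approximate unit: since $A$ is $\sigma$-unital, there is an approximate unit $(e_n)_{n\in\bbN}$ for $A$ consisting of positive contractions with $e_{n+1}e_n = e_n$ and $\|[e_n, b]\| \to 0$ for every $b \in B$. Set $f_n := (e_{n+1} - e_n)^{1/2}$. Then $f_n f_m = 0$ whenever $|n-m| \geq 2$; the telescoping identity $\sum_{n \leq N} f_n^2 = e_{N+1} - e_0$ gives $\sum_n f_n^2 = 1 - e_0$ strictly in $\cM(A)$, and hence $\sum_n f_n^2 \equiv 1$ modulo $A$; quasicentrality passes to the $f_n$'s via continuous functional calculus, so $\|[f_n, \tilde\alpha]\| \to 0$ for every lifted coefficient $\tilde\alpha$.

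Then I would use the satisfiability of $t$ to pick, for each $n$, a contractive tuple $\bar b^{(n)}$ in $\cQ(A)$ witnessing the first $n$ conditions of $t$ to within $1/n$, and lift it to a contractive tuple $\bar c^{(n)} \in \cM(A)_1$. Define
\[
a_j := \sum_n f_n c_j^{(n)} f_n \in \cM(A),
\]
the series converging strictly, and set $\bar a := (a_0, a_1, \ldots)$. The claim is that $\pi(\bar a)$ realizes $t$. The verification hinges on the degree-1 structure of the $P_k$'s: expanding $\tilde P_k(\bar a)$ linearly, commuting the lifted coefficients $\tilde\alpha$ past each $f_n$ at a cost of $o(1)$ via quasicentrality, and rewriting the constant term using $\tilde\alpha_k \equiv \tilde\alpha_k \sum_n f_n^2$ modulo $A$, one obtains the clean decomposition
\[
\tilde P_k(\bar a) = \sum_n f_n \tilde P_k(\bar c^{(n)}) f_n + x_k + \varepsilon_k,
\]
where $x_k \in A$ and $\|\varepsilon_k\|$ can be made arbitrarily small by strengthening the speed of quasicentrality.

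The main obstacle is the quantitative verification that $\|\pi(\tilde P_k(\bar a))\| = r_k$. The upper bound requires controlling $\|\pi(\sum_n f_n \tilde P_k(\bar c^{(n)}) f_n)\|$ by $\limsup_n \|\pi(\tilde P_k(\bar c^{(n)}))\|$; this is done by splitting the sum into the even- and odd-index blocks (whose summands have pairwise orthogonal supports thanks to $f_n f_m = 0$) and using the characterization $\|\pi(x)\|_{\cQ(A)} = \lim_N \|(1-e_N) x (1-e_N)\|$, together with the fact that $(1-e_N)f_n = f_n$ for $n$ sufficiently larger than $N$. The matching lower bound is obtained by localizing to a single index via multiplication by a cutoff like $f_n$ on each side and noting that the contributions from $m$ with $|m-n| \leq 1$ can be pushed into the small-error bucket by quasicentrality. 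The essential use of the degree-1 hypothesis appears precisely here: a higher-degree monomial would generate cross-terms of the form $f_n c^{(n)} f_n \cdot f_m c^{(m)} f_m$ with $|n-m|=1$, which do not vanish and cannot be localized to a single $\bar c^{(n)}$. The technical heart of the argument is choosing the speed of both the quasicentral approximate unit and the $1/n$-approximation of the $\bar b^{(n)}$'s so that all error series telescope to zero, which is routine once the localization is understood.
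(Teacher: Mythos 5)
Your strategy---quasicentral approximate unit, $f_n = (e_{n+1}-e_n)^{1/2}$, and the gluing $a_j = \sum_n f_n c^{(n)}_j f_n$---is essentially the one used in \cite{FaHa:Countable}, and the bookkeeping up to the decomposition $\tilde P_k(\bar a) = \sum_m f_m y_m f_m + (\text{element of } A) + (\text{small error})$, with $y_m := \tilde P_k(\bar c^{(m)})$, is sound, provided you thin out the approximate unit so that the relevant commutator series are summable. The gap is in the lower bound. Compressing $\sum_m f_m y_m f_m$ by $f_n$ on both sides leaves the cross-terms $f_n f_{n\pm 1}\,y_{n\pm 1}\,f_{n\pm 1} f_n$, and these are \emph{not} absorbed by quasicentrality: quasicentrality bounds $\|[e_m,b]\|$ for $b$ in a fixed countable set and says nothing about the size of $f_n f_{n+1}$. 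In fact $e_{n+1}e_n = e_n$ forces the $e_n$ to pairwise commute, and one then computes $f_n^2 f_{n+1}^2 = e_{n+1}(1-e_{n+1})$, so $\|f_n f_{n+1}\| = \|e_{n+1}(1-e_{n+1})\|^{1/2}$; this equals $1/2$ as soon as $1/2 \in \sigma(e_{n+1})$, which is unavoidable unless the $e_n$ are projections (the Calkin case). Worse, the inequality $\|\pi(\sum_m f_m y_m f_m)\| \geq \liminf_m \|\pi(y_m)\|$ that your lower bound would rest on is false for a general bounded sequence $(y_m)$: take $y_m$ ``off-diagonal'' relative to the $f_n$'s so that every $f_m y_m f_m$ vanishes while $\|\pi(y_m)\|$ stays equal to $1$. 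A correct lower bound has to pass through positive elements---for $y_m\geq 0$ one has $\sum_{m>N} f_m y_m f_m \geq f_M y_M f_M$ for every $M>N$, hence $\|\pi(\sum_m f_m y_m f_m)\| \geq \limsup_M \|f_M y_M f_M\|$, and one interleaves the choice of $e_{M+1}$ with that of $\bar c^{(M)}$ so that $\|f_M y_M f_M\|$ approaches $\|\pi(y_M)\|$---and reducing the degree-$1$ case to this is a substantive step, not a routine technicality.

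A secondary imprecision, in the upper bound: the completely-positive-map (or even/odd block) estimate yields $\|\pi(\sum_m f_m y_m f_m)\| \leq \limsup_m \|y_m\|_{\cM(A)}$, not $\limsup_m \|\pi(y_m)\|$. Since the multiplier norm of $y_m = \tilde P_k(\bar c^{(m)})$ is in general much larger than $r_k$, you need to replace each $c^{(m)}_j$ by $(1-e_{m-1})c^{(m)}_j(1-e_{m-1})$ (harmless, since $f_m(1-e_{m-1}) = f_m$ ensures this does not change the glued element) and arrange, again by interleaving, that the cut-down $y_m$ have multiplier norm close to their corona norm. This is presumably what you meant by ``strengthening the speed of quasicentrality,'' but it should be made explicit.
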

 The countable degree-1 saturation has been shown to unify the proofs of several properties of coronas of $\sigma$-unital \cstar-algebras (see~\cite{FaHa:Countable}).
Infinite-dimensional countably degree-1 saturated \cstar-algebras have density character at least $\mathfrak c$ and possess closure properties resembling those of von Neumann algebras. For instance, if $C$ is a countably degree-1 saturated \cstar-algebra then there are no $(\aleph_0, \aleph_0)$-gaps in $C_+$, or in operator algebraic terms $C$ has the \emph{countable Riesz separation property} (CRISP, for short). 
Additionally, every countably degree-1 saturated \cstar-algebra is an SAW$^*$-algebra: For any two orthogonal elements $a$ and $b$ in $C_+$ there exists $c$ in $C_+$ such that $ac=a$ and $bc=0$. 
The class of SAW$^*$-algebras was introduced by Pedersen in~\cite{pedersen1986saw} as noncommutative analog of sub-Stonean spaces, where it was shown that the corona algebra of any $\sigma$-unital non-unital \cstar-algebra is an SAW$^*$-algebra. SAW$^*$-algebras have CRISP (\cite{Pede:Corona}) and they are not isomorphic to a tensor product of two infinite-dimensional \cstar-algebras (\cite{Gha:SAW*}). The latter implies that, for example, the Calkin algebra is not isomorphic to the tensor product of the Calkin algebra with itself, which suggests the existence of (at least to some extent) dimension phenomena associated to the Calkin algebra (see \S\ref{dim.phenomena}). The class of SAW$^*$-algebras is much larger than the class of countably degree-1 saturated algebras since every von Neumann algebra is an SAW$^*$-algebra, while no infinite-dimensional von Neumann algebra is countably degree-1 saturated. For a more complete list of properties of countably degree-1 saturated \cstar-algebras, see~\cite[\S 15]{Fa:STCstar}.

\subsection{Stratifying $\cQ(H)$}\label{S.Strat} As hinted after Corollary~\ref{cor:Ulamredprod}, in this section we will show how $\cQ(H)$ can be presented as an inductive limit of subspaces that resemble reduced products. This stratification will be used in \S\ref{S.Cohomology} to construct an outer automorphism of $\cQ(H)$ and again in \S\ref{6bii.PFAAVi} to prove that all of its automorphisms are inner.



On $\bbN^\bbN $, the relation 
\begin{equation}\label{Eq.leq*}
f\leq^*g \text{ if and only if } (\forall^\infty n) f(n)\leq g(n)
\end{equation}
is a quasi-ordering (i.e., it is transitive and reflexive, but not antisymmetric), where $\forall^\infty$ stands for the quantifier `for all but finitely many'.
We will need a noncommutative analog of $(\bbN^\bbN,\leq^*)$. 

\begin{definition} \label{Def.Part} 
By $\Part$ we denote the set of all sequences $ \bfE=\langle E_j\colon j\in \bbN\rangle$ where $E_j=[n(j), n(j+1))$ (an interval in $\bbN$) and $n(j)$, for $j\in \bbN$, is a strictly increasing sequence in~$\bbN$.
 \end{definition}

Order $\Part$ by the only order that makes \eqref{Eq.EleqF} below hold: 
 \begin{equation}\label{Eq.Leq*}
\bfE\leq^* \bfF \text{ if and only if } (\forall^\infty n)(\exists m) E_m\subseteq F_n. 
 \end{equation}
 An easy diagonalisation argument shows that $(\Part,\leq^*)$ is $\sigma$-directed (for the aficionados of Tukey ordering, $(\Part,\leq^*)$ is cofinally equivalent to $(\bbN^\bbN,\leq^*)$ by \cite[Theorem~9.7.8]{Fa:STCstar}). 

 Let $e_n$ be an orthonormal basis for $H$. If $n\in\bbN$, $q_n$ is the rank 1 projection onto $\mathbb C e_n$. If $\bfE\in \Part$, let $q_n^{\bfE}=\sum_{i\in E_n}q_i$, and if $X\subseteq\bbN$, let $q_X^{\bfE}=\sum_{n\in X} q_n^{\bfE}$. Consider the algebra 
 \begin{equation}\label{Eq.D[E]}
\mathcal D[\bfE]=\{a\in\mathcal B(H)\mid a\text{ commutes with }q_X^{\bfE}\text{ for all }X\subseteq\bbN\}.
\end{equation}
This is a \cstar-subalgebra of $\cB(H)$ that is moreover closed in the \emph{strong operator topology}, SOT, \footnote{SOT-closed self-adjoint subalgebras of $\cB(H)$ are called \emph{von Neumann algebras}, and their relation to \cstar-algebras is analogous to the relation of descriptive set theory to combinatorial set theory.} which is the topology of pointwise convergence on the unit ball of $H$. $\mathcal D[\bfE]$ is isomorphic to $\prod M_{|E_n|}(\bbC)$, and it equals all $a$ such that $a=\sum_{n\in\bbN} q_n^{\bfE}aq_{n}^{\bfE}$.

If $\bfE=\langle E_j\colon j\in\bbN\rangle\in\Part$, let
 \begin{equation}\label{evenodd}
\bfE^{\even}=\langle E_{2j}\cup E_{2j+1}\colon j\in\bbN\rangle\text{ and }\bfE^{\odd}=\langle E_{2j+1}\cup E_{2j+2}\colon j\in\bbN\rangle.
\end{equation}
Define (see Fig.~\ref{Fig.D[E]})
\[
\cF[\bfE]=\mathcal D[\bfE^{\even}]+\mathcal D[\bfE^{\odd}].
\]
Alternatively, fix $\bfE\in\Part$, and consider the finite-to-one function $f_\bfE\in\bbN^\bbN$ that collapses $E_n$ to $\{n\}$. Then 
 \begin{multline}\label{Eq.F[E]}
 \cF[\bfE]=\{a\in \cB(H)\mid (\forall m,n) f_\bfE(m)\geq f_\bfE(n)+2\\ 
 \text{ implies } q_n a(1-q_m)=(1-q_m)aq_n=0\}. 
 \end{multline}
This is a Banach subspace of $\cB(H)$, but not a subalgebra. 

 \begin{figure}[h]
 \begin{tikzpicture}[scale=.5, font=\scriptsize]
 \draw (0,0) -- (9,0);
 \draw (0,0) -- (0,-9);
 \draw[fill=white,draw=none] (0,0) rectangle (1,-1);
 \draw (0,0) rectangle (1,-1);
 \draw (1,-1) rectangle (3,-3);
 \draw (3,-3) rectangle (6,-6);
 \draw (6,-6) -- (6,-9);
 \draw (6,-6) -- (9,-6);
 \draw[dashed] (0,0) rectangle (.3,-.3);
 \draw[dashed] (.3,-.3) rectangle (1.7,-1.7);
 \draw[dashed] (1.7,-1.7) rectangle (4.5,-4.5);
 \draw[dashed] (4.5,-4.5) rectangle (8,-8);
 \draw[dashed] (8,-8) -- (8,-9);
 \draw[dashed] (8,-8) -- (9,-8);
 \foreach \x / \y in {1/.3,2/1,3/1.7,4/3,5/4.5,6/6, 7/8}
 {
 \node[anchor=east] at (0,-\y) {$n(\x)$};
 \draw (-.1,-\y) -- (0,-\y);
 }
 \foreach \z in {.3,.5,.7}
 \node at (6+\z,-6-\z) {$\cdot$};
 \end{tikzpicture}
 \caption{\label{Fig.D[E]} With $E_j=[n(j),n(j+1))$, for every $a$ in $\cF[\bfE]$ its support (i.e., the set $\{\{m,n\}: q_naq_m\neq 0\}$) is included in the union of the solid line square and the dashed line square regions.}
 \end{figure}
It can be verified that 
\begin{equation}\label{Eq.EleqF}
\bfE\leq^* \bfF \text{ if and only if }\pi[\cF[\bfE]]\subseteq\pi[\cF[\bfF]].
\end{equation}

A diagonal argument provides the desired stratification.

\begin{lemma}[{\cite[Proposition 17.1.2]{Fa:STCstar}}]\label{lemma:strat}
For every $a\in \cB(H)$ there is $\bfE\in \Part$ such that $\pi(a)\in \pi[\cF[\bfE]]$. Therefore $\textstyle \mathcal Q(H)=\bigcup_{\bfE\in\Part}\pi[\cF[\bfE]]$.
\end{lemma}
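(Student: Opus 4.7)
The plan is to construct, for each $a\in\cB(H)$, a partition $\bfE\in\Part$ so that the ``off-tri-block-diagonal'' part of $a$ with respect to $\bfE$ is compact while the ``tri-block-diagonal'' part manifestly lies in $\cF[\bfE]$; the equality $\cQ(H)=\bigcup_{\bfE\in\Part}\pi[\cF[\bfE]]$ then follows immediately. Throughout, write $p_m=\sum_{i<m}q_i$. I build the endpoints $0=n(0)<n(1)<\cdots$ of $\bfE$ recursively: given $n(j)$, the operators $ap_{n(j)}$ and $p_{n(j)}a$ have finite rank, so both $\|(1-p_m)ap_{n(j)}\|$ and $\|p_{n(j)}a(1-p_m)\|$ tend to $0$ in norm as $m\to\infty$ (norm and SOT agree on a finite-dimensional subspace). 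Choose $n(j+1)>n(j)$ large enough that both quantities at $m=n(j+1)$ are below $2^{-j-1}$, and set $E_j=[n(j),n(j+1))$ and $s_j=q_{E_j}^{\bfE}=p_{n(j+1)}-p_{n(j)}$.

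Decompose $a=b+c$ as SOT-convergent sums
\[
b=\sum_j s_j a s_j+\sum_j\bigl(s_j a s_{j+1}+s_{j+1}as_j\bigr),\qquad c=\sum_{|j-k|\geq 2}s_j a s_k.
\]
Grouping by parity, the blocks in $b$ indexed by $(2k,2k)$, $(2k+1,2k+1)$, $(2k,2k+1)$ and $(2k+1,2k)$ assemble into an element of $\cD[\bfE^{\even}]$, while those indexed by $(2k+1,2k+2)$ and $(2k+2,2k+1)$ assemble into an element of $\cD[\bfE^{\odd}]$; both algebras are SOT-closed, so $b\in\cF[\bfE]$.

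To see $c\in\cK(H)$, by symmetry it suffices to handle the upper part $c_U=\sum_{k\geq j+2}s_j a s_k=\sum_j s_j a(1-p_{n(j+2)})$. The partial sums $c_U^{(N)}:=\sum_{j<N}s_j a(1-p_{n(j+2)})$ have finite rank. The construction, together with $1-p_{n(j+2)}\leq 1-p_{n(j+1)}$, gives
\[
\|s_j a(1-p_{n(j+2)})\|\leq \|p_{n(j+1)}a(1-p_{n(j+2)})\|+\|p_{n(j)}a(1-p_{n(j+1)})\|<2^{-j}.
\]
Because the ranges $s_jH$ are pairwise orthogonal, for every unit vector $x$,
\[
\|(c_U-c_U^{(N)})x\|^2=\sum_{j\geq N}\|s_j a(1-p_{n(j+2)})x\|^2\leq \sum_{j\geq N}4^{-j},
\]
which tends to $0$ as $N\to\infty$. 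Thus $c_U^{(N)}\to c_U$ in norm and $c_U$ is compact; symmetrically the lower part is compact, so $c\in\cK(H)$. Hence $\pi(a)=\pi(b)\in\pi[\cF[\bfE]]$.

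The only step that is not pure bookkeeping is the promotion of the pointwise norm estimate on each ``wall'' $s_j a(1-p_{n(j+2)})$ to the norm convergence $c_U^{(N)}\to c_U$; this depends crucially on the mutual orthogonality of the ranges $s_jH$, which allows the squared tails to be summed directly instead of merely yielding SOT convergence.
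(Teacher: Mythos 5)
Your proof is correct, and it follows the same diagonal-argument strategy the paper alludes to (``A diagonal argument provides the desired stratification'') and that is carried out in \cite[Proposition 17.1.2]{Fa:STCstar}: recursively choose the endpoints $n(j)$ so that the tails of $ap_{n(j)}$ and $p_{n(j)}a$ decay summably, observe that the tri-block-diagonal part of $a$ splits by parity into $\cD[\bfE^{\even}]+\cD[\bfE^{\odd}]$, and check that the far off-diagonal remainder is a norm limit of finite-rank operators. The bookkeeping is sound, including the key step you flag at the end: passing from the wall estimates $\|s_ja(1-p_{n(j+2)})\|<2^{-j}$ to norm convergence of $c_U^{(N)}\to c_U$ via orthogonality of the ranges $s_jH$ (and the lower triangle by applying the same to $a^*$ and taking adjoints).
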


\subsection{Cohomology and automorphisms}\label{S.Cohomology}

At the first glance, the outline of the original construction of an outer automorphism of the Calkin algebra (\cite{PhWe:Calkin}) using $\CH$ is unsurprising. The automorphism is constructed recursively in $\aleph_1$ steps, assuring that its restriction to every separable \cstar-subalgebra is implemented by a unitary. (In fact, the construction produces~$2^{\aleph_1}$ distinct outer automorphisms, but this is not surprising either.) At a closer glance, this is one of the subtlest $\CH$ constructions known to the authors. As could be expected, the subtlety of this approach shows at the limit stages. At the first limit stage, one has an increasing sequence $A_n$, for $n\in \bbN$, of separable \cstar-subalgebras of $\cQ(H)$ and unitaries $u_n$ such that for $m<n$ the actions of $u_m$ and $u_n$ on $A_m$ agree. In order to continue the construction, one needs to choose a single unitary $u$ that implements the action of $u_n$ on $A_n$ for all $n$. The construction of two non-unitarily equivalent unital copies of the CAR algebra in Example~\ref{Ex.Mn} and Example~\ref{Ex.CAR} shows that this is in general impossible. The latter example gives a sequence of unitaries $u_n$ whose actions on a unital copy of the CAR algebra in $\cQ(H)$ cohere, but the Fredholm indices of $u_n$ converge to $\infty$ and $u_n$ cannot be uniformised by a single unitary. In~\cite{PhWe:Calkin}, the unitaries implementing partial isomorphisms are assumed to be homotopic. The construction of a unitary extending the paths constructed up to that point uses difficult ideas developed in relation to Kasparov's KK theory. 

A simpler---to a logician---construction of an outer automorphism of $\cQ(H)$ was described in~\cite[\S 1]{Fa:All} and extended to coronas of some other $\sigma$-unital \cstar-algebras in~\cite{CoFa:Automorphisms} (see~\cite[\S 17.1]{Fa:STCstar}). 

\begin{theorem} \label{T.d=aleph1} If $\fd=\aleph_1$, then the Calkin algebra has $2^{\aleph_1}$ automorphisms. Therefore if in addition $2^{\aleph_0}<2^{\aleph_1}$, then it has outer automorphisms. 
\end{theorem}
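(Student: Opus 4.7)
The strategy is to mimic Rudin's CH back-and-forth for $\cP(\bbN)/\Fin$ on an $\omega_1$-indexed stratification of $\cQ(H)$, making two substitutions. First, the role of full countable saturation---which $\cQ(H)$ famously lacks (\S\ref{S.6aii})---is taken by the countable degree-1 saturation of $\cQ(H)$ (Theorem~\ref{T.degree-1}) together with the countable saturation (Theorem~\ref{ctble-sat-reduced}) of each of the reduced products of matrix algebras $\pi[\cD[\bfE^{\even}]]$ and $\pi[\cD[\bfE^{\odd}]]$ composing the stratification of Lemma~\ref{lemma:strat}. Second, the role of CH is taken by $\fd=\aleph_1$: since $(\Part,\leq^*)$ is cofinally equivalent to $(\bbN^\bbN,\leq^*)$, this hypothesis provides a $\leq^*$-increasing cofinal sequence $\langle\bfE_\alpha\mid\alpha<\omega_1\rangle$ in $\Part$, and $\sigma$-directedness of $\Part$ lets us arrange $\bigcup_{\alpha<\lambda}\pi[\cF[\bfE_\alpha]]\subseteq\pi[\cF[\bfE_\lambda]]$ at countable limits, so that by \eqref{Eq.EleqF} and Lemma~\ref{lemma:strat} we have $\cQ(H)=\bigcup_{\alpha<\omega_1}\pi[\cF[\bfE_\alpha]]$.

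Construct, by recursion along the binary tree $2^{<\omega_1}$, a coherent system of partial $^*$-automorphisms $\Phi_s$ of $\cQ(H)$ indexed by $s\in 2^{<\omega_1}$, with $\dom(\Phi_s)\supseteq\pi[\cF[\bfE_{|s|}]]$ and $\Phi_t\supseteq\Phi_s$ whenever $s\subseteq t$. At a successor stage $\alpha+1$, extend $\Phi_s$ to $\pi[\cF[\bfE_{\alpha+1}]]$ by a back-and-forth performed on the two countably saturated reduced products $\pi[\cD[\bfE_{\alpha+1}^{\even/\odd}]]$, using countable degree-1 saturation of $\cQ(H)$ to realize the resulting types inside $\cQ(H)$ itself rather than in some abstract ultrapower. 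The abundance of automorphisms of each of these countably saturated reduced products (cf.\ Theorem~\ref{ctble-sat-auto}) provides two incompatible extensions $\Phi_{s\frown 0}$, $\Phi_{s\frown 1}$ disagreeing on a designated element chosen in advance. At a countable limit $\lambda$ along a branch $b\in 2^\lambda$, take $\Phi_b=\overline{\bigcup_{\alpha<\lambda}\Phi_{b\restriction\alpha}}$, whose domain contains $\bigcup_{\alpha<\lambda}\pi[\cF[\bfE_\alpha]]=\pi[\cF[\bfE_\lambda]]$ by the arrangement of the chain, so the induction continues.

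For each branch $b\in 2^{\omega_1}$ the union $\Phi_b=\bigcup_{\alpha<\omega_1}\Phi_{b\restriction\alpha}$ is an automorphism of $\cQ(H)$, and the stage-wise branching yields $2^{\aleph_1}$ pairwise distinct automorphisms. The outerness assertion then follows by counting: $\cQ(H)$ has at most $2^{\aleph_0}$ unitaries and thus at most $2^{\aleph_0}$ inner automorphisms, so $2^{\aleph_0}<2^{\aleph_1}$ forces some $\Phi_b$ to be outer. The hard step is the limit stage: coherent partial $^*$-isomorphisms between separable subalgebras can fail to uniformize to a single automorphism because of K-theoretic (Fredholm index and homotopy) obstructions, as illustrated by Examples~\ref{Ex.Mn}--\ref{Ex.CAR}---precisely the difficulty that forced the Kasparov-theoretic machinery in~\cite{PhWe:Calkin}. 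Stratifying via the $\cF[\bfE]$'s, namely sums of two block-diagonal reduced products of matrix algebras, rather than via arbitrary separable subalgebras, is designed to bypass these obstructions: each $\pi[\cD[\bfE^{\even/\odd}]]$ is countably saturated, so partial isomorphisms extend freely, and coherence at the limit becomes a matter of bookkeeping rather than of solving a K-theoretic extension problem.
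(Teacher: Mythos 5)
Your proposal takes the Phillips--Weaver back-and-forth route rather than the coherent-family-of-unitaries construction that the paper actually uses, and it does not close the gap that the paper explicitly flags as the reason the back-and-forth is hard. The key phrase you need to defend is that ``coherence at the limit becomes a matter of bookkeeping rather than of solving a K-theoretic extension problem.'' This is not justified, and as stated it is not true. Degree-1 saturation only realizes types consisting of conditions $\|P(\bar x)\|=r$ for degree-1 $^*$-polynomials $P$; the consistency conditions arising when one tries to match up partial isomorphisms in a back-and-forth are nowhere near degree-1, so Theorem~\ref{T.degree-1} gives you no purchase. Moreover $\cF[\bfE]$ is only a Banach subspace of $\cB(H)$, not a subalgebra, so ``partial $^*$-automorphism with domain $\supseteq\pi[\cF[\bfE]]$'' is not a well-defined notion, and a back-and-forth inside the two overlapping subalgebras $\pi[\cD[\bfE^{\even}]]$ and $\pi[\cD[\bfE^{\odd}]]$ does not by itself produce a coherent map on their sum, let alone on $\cQ(H)$. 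The limit-stage identity $\bigcup_{\alpha<\lambda}\pi[\cF[\bfE_\alpha]]=\pi[\cF[\bfE_\lambda]]$ is also generally false ($\Part/\leq^*$ has no least upper bounds), so the closure $\overline{\bigcup\Phi_{b\restriction\alpha}}$ does not reach $\pi[\cF[\bfE_\lambda]]$ and the recursion does not obviously continue. Most seriously, Example~\ref{Ex.CAR} -- which lives inside a block-diagonal reduced product of matrix algebras -- already shows that cohering partial isomorphisms of the $\cD[\bfE]$-strata can fail to be implemented by a single unitary; so stratifying by $\cF[\bfE]$ instead of by arbitrary separable subalgebras does \emph{not} in itself dissolve the K-theoretic obstruction you correctly identify.

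What the paper actually does is quite different and avoids the back-and-forth entirely: it constructs automorphisms that restrict to conjugation by a \emph{diagonal} unitary $u_\bfE\in\bbT^\bbN$ on each $\pi[\cF[\bfE]]$, subject to the coherence condition $u_\bfE u_\bfF^*\in\sfF_\bfE$ (Definition~\ref{Def.Coherent}); Lemma~\ref{L.coherent} then upgrades a total coherent family to an automorphism of $\cQ(H)$, and nontriviality of the family gives outerness. The two facts that drive the binary tree of height $\aleph_1$ along a $\leq^*$-cofinal chain in $\Part$ of length $\fd=\aleph_1$ are (i) every $\sfF_\bfE$ contains a perfect set, giving $2^{\aleph_0}$ incompatible successor choices, and (ii) every \emph{countable} coherent family of diagonal unitaries is trivial, by an easy diagonalization -- and it is precisely the restriction to diagonal unitaries (Fredholm index zero, living in the fixed masa, where coherence is a purely combinatorial condition on phases) that makes this diagonalization elementary and sidesteps the obstruction that forces \cite{PhWe:Calkin} into Kasparov theory. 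Your outline never introduces this restriction, so the limit step remains open. I would recommend reading the argument in \cite[\S 1]{Fa:All} or \cite[\S 17.1]{Fa:STCstar}; the structure is a $\bbT^\bbN$-valued ``incompatible coherent family'' construction in the spirit of Luzin/Hausdorff, not a saturation back-and-forth.
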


The proof relies on \S\ref{S.Strat}. Let $\bbT$ denote the multiplicative group of complex numbers of modulus 1. Identify $\bbT^\bbN$ with the unitary group of~$\ell_\infty(\bbN)$, and therefore with the group of unitaries in $\cB(H)$ diagonalised by a distinguished basis of $H$.

The main idea of the proof of Theorem~\ref{T.d=aleph1} employs the stratification of $\cQ(H)$ from \S\ref{S.Strat} and an inverse limit of quotients of the compact metric group $\bbT^\bbN$, also indexed by the poset $\Part$ (see Definition \ref{Def.Part}).

 \subsubsection{An inverse limit}
 We introduce the components of the inverse limit that will be injected into the automorphism group of $\cQ(H)$. Let 
 \[
 \sfF_\bfE=\{u\in \bbT^\bbN\mid (\forall a\in \cF[\bfE])ua-au\in \cK(H)\}. 
 \]
By parsing the definition one sees that $\sfF_\bfE$ is the set of all $u$ that satisfy (writing $S'\cap \cQ(H)$ for $\{a\in \cQ(H)\mid (\forall b\in S) ab=ba\}$): 
 \[
 \pi(u)\in \pi[\cF[\bfE]]'\cap \cQ(H). 
 \]
 In other words, $u\in \sfF_\bfE$ if and only if the restriction of $\Ad u$ to $\cF[\bfE]$ is equal to the identity modulo the compacts. 
 
 For any fixed $\bfE$, $\sfF_\bfE$ is clearly a subgroup of $\bbT^\bbN$. To see that it is nontrivial, fix an increasing sequence $r_n$ in $\bbR_+$ such that $r_{n+1}-r_n\to 0$ but $\sum_n r_n=\infty$. Then $u$ defined by $u(j)=e^{ i r_n}$ if $j\in E_n$ is a nontrivial (i.e., non-scalar) element of~$\sfF_\bfE$. By choosing different sequences $(r_n)_{n \in \bbN}$, one can prove that~$\sfF_\bfE$ contains a perfect set.


\begin{definition}\label{Def.Coherent} A family $\cF$ is a \emph{coherent family of unitaries} if its elements are pairs $(\bfE,u) \in \Part \times \bbT^\bbN$ such that 
\enumone 
\begin{enumerate}
\item The set $\cF_0=\{\bfE\mid (\exists u) (\bfE, u)\in \cF\}$ is directed in $(\Part,\leq^*)$. 
\item For every $\bfE\in \cF_0$ there is a unique $u=u_\bfE$ such that $(\bfE,u)\in \cF$. 
\item If $\bfE\leq^*\bfF$ are in $\cF_0$, then $\Ad \pi( u_\bfE) \restriction \pi[\cF[\bfE]] = \Ad \pi(u_\bfF) \restriction \pi[\cF[\bfE]]$ (equivalently, $u_\bfE u_\bfF^*\in \sfF_\bfE$). 	
\end{enumerate}
A coherent family $\cF$ is \emph{total} if $\cF_0$ is $\leq^*$-cofinal in $\Part$.
\end{definition}

The third condition in Definition~\ref{Def.Coherent} is the coherence requirement. In particular a coherent family $\cF$ naturally induces a well-defined map on $\bigcup_{\bfE \in \cF_0} \pi[\cF[\bfE]]$ (see Lemma~\ref{L.coherent} below). A coherent family of unitaries $\cF$ is \emph{trivial} if there exists a unitary $v \in \cQ(H)$ such that $\Ad v \restriction \pi[\cF[\bfE]] = \Ad \pi(u_\bfE) \restriction \pi[\cF[\bfE]]$ for all $(\bfE, u) \in \cF$.
 
 In~\cite{Fa:STCstar}, all coherent families are by definition total. Our modification of terminology was made in order to make it possible to say that an easy diagonalisation argument shows that every countable coherent family of unitaries is trivial. 
 
 Let $\sfG_\bfE=\bbT^\bbN/\sfF_\bfE$. For all $\bfE$ and $\bfF$ in $\Part$ the following three conditions are equivalent. 
 \begin{enumerate}
 	\item $\bfE\leq^*\bfF$. 
 	\item $\sfF_\bfE$ is a subgroup of $\sfF_\bfF$. 
 	\item \label{Coh.5.9} The identity map on $\bbT^\bbN$ lifts the quotient map from $\sfG_\bfF$ onto~$\sfG_\bfE$. 
 \end{enumerate}
The inverse limit of quotients of $\bbT^\bbN$, 
$
\varprojlim_{\bfE\in \Part} \sfG_\bfE, 
$ with the connecting maps as in \ref{Coh.5.9}
is the key to our construction. Its elements naturally correspond to total coherent families of unitaries. If there is a cofinal $\leq^*$-increasing family of length $\aleph_1$ in $\Part$ (i.e., if $\fd=\aleph_1$), then by using the triviality of countable coherent families and the nontriviality of $\sfF_\bfE$ for all $\bfE$, one easily produces $2^{\aleph_1}$ distinct coherent families of unitaries. An application of the following lemma completes the proof of Theorem~\ref{T.d=aleph1}. 

\begin{lemma} [{\cite[Lemma 17.1.4]{Fa:STCstar}}] \label{L.coherent} Every total coherent family $\cF$ of unitaries defines an automorphism $\Phi_\cF$ of $\cQ(H)$. This automorphism is outer if and only if the coherent family is nontrivial. 
\end{lemma}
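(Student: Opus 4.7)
The plan is to define $\Phi_\cF$ on $\cQ(H)$ via the stratification given by Lemma~\ref{lemma:strat}, and then use coherence to check well-definedness. Given $x \in \cQ(H)$, by Lemma~\ref{lemma:strat} there is some $\bfE \in \Part$ with $x \in \pi[\cF[\bfE]]$; since $\cF_0$ is $\leq^*$-cofinal, pick $\bfF \in \cF_0$ with $\bfE \leq^* \bfF$, so that $x \in \pi[\cF[\bfF]]$ by \eqref{Eq.EleqF}. Set $\Phi_\cF(x) := \Ad \pi(u_\bfF)(x)$. If $\bfF'$ is another such choice, use $\sigma$-directedness of $\cF_0$ to find $\bfF'' \in \cF_0$ above both $\bfF$ and $\bfF'$; the coherence condition yields $\Ad \pi(u_\bfF) = \Ad \pi(u_{\bfF''}) = \Ad \pi(u_{\bfF'})$ on $\pi[\cF[\bfF]] \cap \pi[\cF[\bfF']]$, so the value is unambiguous.

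Next I would verify that $\Phi_\cF$ is a $^*$-endomorphism. Linearity and preservation of the involution are immediate because on each $\pi[\cF[\bfF]]$ the map coincides with the genuine automorphism $\Ad \pi(u_\bfF)$ and $\cF[\bfF]$ is a $^*$-closed Banach subspace. Multiplicativity is the delicate point, because $\cF[\bfF]$ is not a subalgebra of $\cB(H)$, so given $x,y \in \pi[\cF[\bfF]]$ the product $xy$ need not lie in $\pi[\cF[\bfF]]$. The fix is to invoke Lemma~\ref{lemma:strat} to find $\bfG \in \Part$ with $xy \in \pi[\cF[\bfG]]$ and then, by $\sigma$-directedness and cofinality of $\cF_0$, pick $\bfF_1 \in \cF_0$ with $\bfF,\bfG \leq^* \bfF_1$. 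By coherence $\Ad \pi(u_{\bfF_1})\rs \pi[\cF[\bfF]] = \Ad \pi(u_\bfF)\rs \pi[\cF[\bfF]]$, so
\[
\Phi_\cF(xy) = \Ad\pi(u_{\bfF_1})(xy) = \Ad\pi(u_{\bfF_1})(x)\cdot\Ad\pi(u_{\bfF_1})(y) = \Phi_\cF(x)\Phi_\cF(y).
\]

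To promote $\Phi_\cF$ to an automorphism, I would argue bijectivity levelwise. Since $u_\bfF \in \bbT^\bbN$ commutes with every projection $q_X^\bfF$, and hence with every $q_X^{\bfF^{\even}}$ and $q_X^{\bfF^{\odd}}$, conjugation by $u_\bfF$ stabilizes both $\cD[\bfF^{\even}]$ and $\cD[\bfF^{\odd}]$, therefore stabilizes $\cF[\bfF]$; thus $\Ad \pi(u_\bfF)$ restricts to a bijection of $\pi[\cF[\bfF]]$ to itself. Since the subspaces $\{\pi[\cF[\bfF]] : \bfF \in \cF_0\}$ cover $\cQ(H)$ (by Lemma~\ref{lemma:strat} plus cofinality of $\cF_0$), both surjectivity and injectivity of $\Phi_\cF$ follow at once.

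For the final equivalence: if $\cF$ is trivial, witnessed by a unitary $v \in \cQ(H)$ with $\Ad v \rs \pi[\cF[\bfE]] = \Ad \pi(u_\bfE) \rs \pi[\cF[\bfE]]$ for every $(\bfE,u_\bfE) \in \cF$, then $\Ad v$ and $\Phi_\cF$ agree on each $\pi[\cF[\bfF]]$ with $\bfF \in \cF_0$, and since these cover $\cQ(H)$ we get $\Phi_\cF = \Ad v$, which is inner. Conversely, if $\Phi_\cF = \Ad v$ is inner, then for each $(\bfE,u_\bfE) \in \cF$ the very definition of $\Phi_\cF$ gives $\Ad v \rs \pi[\cF[\bfE]] = \Phi_\cF \rs \pi[\cF[\bfE]] = \Ad \pi(u_\bfE) \rs \pi[\cF[\bfE]]$, which is precisely the condition that $v$ trivializes $\cF$. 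The main technical hurdle throughout is reconciling multiplicativity with the fact that each $\cF[\bfF]$ is merely a Banach subspace, and $\sigma$-directedness of $(\Part,\leq^*)$ together with coherence is exactly what is needed to push past it.
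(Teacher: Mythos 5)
Your proof is correct and follows the same overall structure as the paper's sketch (define $\Phi_\cF$ on the stratification $\cQ(H)=\bigcup_{\bfE}\pi[\cF[\bfE]]$, use coherence plus $\sigma$-directedness of $\cF_0$ for well-definedness and multiplicativity, then argue bijectivity), but you take a genuinely different route at the bijectivity step. The paper observes that $\{(\bfE,u^*)\mid (\bfE,u)\in\cF\}$ is again a coherent family of unitaries (using that $\sfF_\bfE$ is an abelian subgroup of $\bbT^\bbN$), so the analogous map $\Psi$ is a well-defined endomorphism and clearly serves as the two-sided inverse of $\Phi_\cF$. You instead argue directly that each $u_\bfF\in\bbT^\bbN$, being diagonal, commutes with every $q^{\bfF^{\even}}_X$ and $q^{\bfF^{\odd}}_X$, so $\Ad\pi(u_\bfF)$ maps $\pi[\cF[\bfF]]$ bijectively onto itself, and cofinality of $\cF_0$ then yields global surjectivity (injectivity being free). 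Both are sound. The inverse-family argument is slicker and does not depend on the $u_\bfE$ being diagonal; your stabilization argument is more hands-on and makes explicit a structural fact (each layer is invariant under $\Ad\pi(u_\bfF)$) that the paper leaves implicit. Your treatment of the inner/trivial equivalence, which the paper compresses into ``and $\Phi_\cF$ is as required,'' is spelled out correctly and matches the definition of a trivial coherent family.
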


\begin{proof} 
Note that $\{(\bfE,u^*)\mid(\bfE,u)\in \cF\}$ is a coherent family of unitaries. By coherence, both $\Phi_\cF(a)=(\Ad \pi(u))(a)$ if $(\bfE,u)\in \cF$ and $a\in \pi[\cF[\bfE]]$ and $\Psi(a)=(\Ad \pi(u^*))(a)$ if $(\bfE,u)\in \cF$ and $a\in \pi[\cF[\bfE]]$ are well-defined endomorphisms of $\cQ(H)$. Clearly $\Psi$ is the inverse of $\Phi_\cF$, and $\Phi_\cF$ is as required. 
\end{proof}

\subsubsection{Automorphisms of other coronas}
The flexibility of the construction used to prove Theorem \ref{T.d=aleph1} was explored in~\cite{CoFa:Automorphisms}. If $A$ is a $\sigma$-unital, non-unital, \cstar-algebra that has an approximate unit $(p_n)_{n \in \bbN}$ consisting of projections, then one can consider the analog of $\cF[\bfE]$ in $\cM(A)$ instead of $\cB(H)$. 

 With this modification, the proof of Theorem~\ref{T.d=aleph1} adapts to provide a group homomorphism from $\varprojlim_{\bfE\in \Part}\sfG_\bfE$ into $\Aut(\cM(A)/A)$. Automorphisms obtained in this way are not necessarily interesting. For example, if all projections $p_n$ are central, then $\cF_A[\bfE]$ is equal to $\cM(A)$ for some $\bfE$ and all automorphisms obtained in this way are inner. However, if $A=B\otimes \cK(H)$ for some unital \cstar-algebra $B$ (such $A$ is said to be a \emph{stabilisation} of $B$), then $\cF_A[\bfE]$ is a proper subspace of $\cM(A)$ for every $\bfE$, and distinct coherent families of unitaries map to distinct automorphisms of $\cM(A)/A$. 
 

\subsection{Higher-dimensional \v{C}ech--Stone remainders}\label{S.CHRemainders}
So far we discussed how to construct topologically nontrivial automorphisms for coronas which are countably saturated (\S\ref{S.CHModelTheory}), or coronas of stabilisations of unital separable \cstar-algebras (\S\ref{S.Cohomology}).

There are though plenty of algebras to which the arguments we exposed so far do not apply. A good example is $C_0(\bbR^2)$.\footnote{\label{footnote.hart}The corona $C_0(\mathbb R)$ is countably saturated (see~\cite{FaSh:Rigidity}) and therefore it has $2^{\mathfrak c}$ automorphisms that are not topologically trivial under $\CH$. These were also constructed, via topological methods, by Yu, see~\cite{Ha:Cech-Stone}.} Its corona is not saturated (\cite[Corollary 2]{farah2023obstructions}) and, being abelian, its only inner automorphism is the identity, making
the methods of \S\ref{S.Cohomology} completely ineffective.

The following result covers this and other abelian \cstar-algebras whose coronas are not countably saturated.

\begin{theorem}[\cite{vignati2017nontrivial}]\label{T.CHAbel}
Let $X$ be a locally compact noncompact second countable manifold which is not zero-dimensional. If $\mathfrak d=\aleph_1$, then $C(\beta X\setminus X)$ has $2^{\aleph_1}$ automorphisms.
\end{theorem}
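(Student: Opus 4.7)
The plan is to mirror the architecture of Theorem~\ref{T.d=aleph1}, replacing the unitary groups used there with compact groups of shell homeomorphisms. The positive dimension of $X$ is precisely what provides the continuous local freedom that commutativity of $C(\beta X\setminus X)$ would otherwise destroy: on a zero-dimensional shell, the group of boundary-fixing self-homeomorphisms is at most countable, whereas on a positive-dimensional shell it contains nontrivial connected Lie subgroups.

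First, I would fix a compact exhaustion $X = \bigcup_n U_n$ with $\overline{U_n}\subseteq U_{n+1}$ (using local compactness and second countability), and write $L_n = \overline{U_{n+1}}\setminus U_n$ for the shells. For each $\bfE = (n(j))_j \in \Part$ define $\cF[\bfE]\subseteq C_b(X)$ in direct analogy with \eqref{Eq.F[E]}, as the closed $^*$-subspace of functions which, outside a compact set, are carried by unions of two consecutive shells $L_{n(j)}\cup L_{n(j)+1}$. A diagonal argument analogous to Lemma~\ref{lemma:strat} should give $C(\beta X\setminus X) = \bigcup_{\bfE\in \Part}\pi[\cF[\bfE]]$, together with the monotonicity $\bfE\leq^*\bfF \Longrightarrow \pi[\cF[\bfE]]\subseteq \pi[\cF[\bfF]]$ as in \eqref{Eq.EleqF}, where $\pi\colon C_b(X)\to C(\beta X\setminus X)$ is the quotient map.

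Next, I would construct the analog of $\sfF_\bfE$ and $\sfG_\bfE$. For each shell $L_n$, since $\dim X\geq 1$ one can embed a small cell in the interior of $L_n$ and obtain a nontrivial compact connected group $H_n\subseteq \Homeo(L_n)$ of homeomorphisms fixing $\partial L_n$ pointwise (for example, a circle of rotations of a small coordinate ball inside $L_n$). Let $G=\prod_n H_n$; every $g=(g_n)\in G$ glues to a homeomorphism $\hat g$ of $X$ equal to the identity on $X\setminus\bigcup_n L_n$, inducing an automorphism $\alpha_g$ of $C_b(X)$ and hence $\bar\alpha_g$ of the corona. Set $\sfF_\bfE = \{g\in G\mid \bar\alpha_g\restriction \pi[\cF[\bfE]] = \id\}$, a closed subgroup; then $\sfG_\bfE := G/\sfF_\bfE$ is nontrivial because $\cF[\bfE]$ only constrains finitely-many-shell behavior at a time, while $G$ can twist arbitrarily coarse clusters. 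Define total coherent families of elements of $G$ and their induced automorphisms of $C(\beta X\setminus X)$ exactly as in Definition~\ref{Def.Coherent} and Lemma~\ref{L.coherent}. Finally, assuming $\fd=\aleph_1$, fix a $\leq^*$-cofinal chain $(\bfE_\alpha)_{\alpha<\omega_1}$ in $\Part$ and carry out a transfinite recursion producing a binary-branching tree of coherent families: at each successor stage $\alpha\to \alpha+1$, choose two incompatible cosets in $\sfF_{\bfE_\alpha}/\sfF_{\bfE_{\alpha+1}}$ (nontrivial by construction); at countable limits, use compactness of $G$ to intersect a decreasing sequence of nonempty closed cosets. Each of the $2^{\aleph_1}$ branches yields a distinct coherent family, hence a distinct automorphism.

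The main obstacle is the second step: choosing the group $G$ so that $\sfG_\bfE$ is genuinely nontrivial for \emph{every} $\bfE\in\Part$, with the nontriviality surviving passage to the corona. Concretely one must verify that the coherence constraints imposed at limit stages do not collapse the freedom available at successor stages---this is where one needs the $H_n$ to be connected, so that asymptotically nontrivial elements exist in each $\sfF_{\bfE_\alpha}\setminus \sfF_{\bfE_{\alpha+1}}$ and the inverse system does not stabilize. The hypothesis that $X$ is a positive-dimensional manifold enters here in an essential way; a secondary care is needed in the definition of $\cF[\bfE]$ to ensure $\pi[\cF[\bfE]]$ is a norm-closed subspace containing enough functions to guarantee that distinct coherent families yield distinct automorphisms of the corona.
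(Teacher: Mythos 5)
Your high‐level combinatorics—small homeomorphisms of shells, a $\leq^*$‑cofinal scale from $\fd=\aleph_1$, and a binary tree of coherent families giving $2^{\aleph_1}$ automorphisms—agree in spirit with the paper's proof, and your use of positive dimension (to get nontrivial boundary‑fixing homeomorphisms of shells) is exactly where the manifold hypothesis enters there too. However, the proposal has a genuine gap at its technical core: the stratification $\cF[\bfE]$.

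In the Calkin algebra, $\cF[\bfE]=\cD[\bfE^\even]+\cD[\bfE^\odd]$ constrains the \emph{off‑diagonal support} of an operator: elements of $\cF[\bfE]$ may not move vectors between distant $\bfE$‑blocks. In $C_b(X)$ there is no off‑diagonal support to constrain, so a direct translation collapses. Your definition of $\cF[\bfE]$ as functions ``carried by unions of two consecutive shells outside a compact set'' does not yield a usable filtration: read literally, a function supported on two shell‑clusters vanishes off them (so the union over $\bfE$ is far from all of $C(\beta X\setminus X)$), and read loosely, every bounded function satisfies the condition (the shells cover a cocompact set), so $\cF[\bfE]=C_b(X)$ for all $\bfE$ and $\sfF_\bfE=\bigcap_\bfF\sfF_\bfF$ is independent of $\bfE$, killing the inverse system. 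Either way the key step—$\sfG_\bfE$ nontrivial and strictly growing as $\bfE$ coarsens—fails, and the ``main obstacle'' you flag is not resolvable with this $\cF[\bfE]$. The correct commutative analog, and the essential new idea in the paper's argument, is a \emph{modulus‑of‑continuity} filtration rather than a support filtration: the paper fixes functions $f_\alpha\in\bbN^\bbN$ with $nf_\alpha(n)\le f_{\alpha+1}(n)$, builds shell homeomorphisms $\varphi_{n,m}$ that move points by $1/m$, sets $\tilde\Phi_\alpha$ to be the automorphism dual to $\bigcup_n\varphi_{n,f_\alpha(n)}$, and filters $C(\beta X\setminus X)$ by subalgebras $D_\alpha$ consisting (modulo $C_0$) of functions that become $o(f_\alpha(n))$‑Lipschitz on the $n$‑th shell. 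The point is that a small homeomorphism of a shell acts nearly trivially on functions of small local Lipschitz constant but nontrivially on functions that oscillate faster; this is the correct replacement for ``small commutator with a block decomposition,'' and it is precisely what makes the coherence claim ($\tilde\Phi_\alpha\restriction D_\beta=\id$ iff $\beta\le\alpha$) both true and the engine of the branching construction. Once you swap your support‑based $\cF[\bfE]$ for a Lipschitz‑modulus filtration $D_\alpha$ indexed by a $\leq^*$‑scale, and replace ``inverse limit of $\sfG_\bfE$'' by ``composition of the $\tilde\Phi_\alpha$ along a branch with coherence at limits,'' your plan essentially becomes the paper's proof.
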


The above gives the following immediate corollary.

\begin{corollary}
Assume $\mathfrak d=\aleph_1$ and $2^{\aleph_0}<2^{\aleph_1}$. Let $X$ be a locally compact noncompact second countable manifold. Then $C(\beta X\setminus X)$ has topologically nontrivial automorphisms.
\end{corollary}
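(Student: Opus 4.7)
My plan is to adapt the cohomological construction of Section~\ref{S.Cohomology} that underlies Theorem~\ref{T.d=aleph1} to the commutative setting. The main obstacle is that in the abelian algebra $C(\beta X\setminus X)$ every inner automorphism is trivial, so the recipe of building an automorphism from a coherent family of unitaries collapses; we must replace unitaries by local self-homeomorphisms of $X$ living in a codimension-one slab of the manifold. This replacement is exactly where the hypothesis $\dim X\geq 1$ is used: in a zero-dimensional manifold the slabs would be discrete and the construction would reduce to the fully saturated situation of Corollary~\ref{cor:Parov}, which already yields $2^{\fc}$ automorphisms by pure back-and-forth.

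First I would set up the analogue of the stratification of Section~\ref{S.Strat}. Using the smooth manifold structure, exhaust $X$ by an increasing sequence of compact submanifolds-with-boundary $K_n$ whose boundaries $\partial K_n$ are disjoint smooth closed hypersurfaces. For each $\bfE=(E_n)\in\Part$, set $K_n^{\bfE}=\bigcup_{i\in E_0\cup\dots\cup E_n}K_i$ and consider the shells $A_n^{\bfE}=\overline{K_{n+1}^{\bfE}\setminus K_n^{\bfE}}$. Define the abelian counterpart of $\cF[\bfE]$ as the subspace of $C_b(X)$ whose elements, modulo $C_0(X)$, are insensitive to perturbations supported inside each double shell $A_n^{\bfE}\cup A_{n+1}^{\bfE}$. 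Under Gelfand duality this gives a $\Part$-indexed family of closed subsets of $\beta X\setminus X$ whose union is dense, so the analogue of Lemma~\ref{lemma:strat} holds.

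Next, replace the group $\sfF_{\bfE}$ of diagonal unitaries commuting with $\cF[\bfE]$ modulo compacts by the group $H_{\bfE}$ of self-homeomorphisms of $X$ that fix every $\partial K_n^{\bfE}$ pointwise and act inside each open shell $\operatorname{int}(A_n^{\bfE})$. Since each shell is a connected manifold of dimension $\geq 1$ with nonempty boundary, its group of boundary-fixing homeomorphisms contains a continuum of isotopy classes (for instance, produced by pushing a small bump along a nontrivial compactly supported vector field); so $H_{\bfE}$ is large. Each element of $H_{\bfE}$ induces, by composition, an automorphism of $C(\beta X\setminus X)$ that restricts to the identity on the image of $\cF[\bfE]$. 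Mirroring Definition~\ref{Def.Coherent}, define a \emph{coherent family of shell homeomorphisms} to be a map $\bfE\mapsto\phi_{\bfE}\in H_{\bfE}$ on a $\leq^{*}$-directed $\cF_0\subseteq\Part$ such that $\phi_{\bfE}\phi_{\bfF}^{-1}\in H_{\bfE}$ whenever $\bfE\leq^{*}\bfF$; when $\cF_0$ is $\leq^{*}$-cofinal, the family patches, via the analogue of Lemma~\ref{L.coherent}, into a well-defined automorphism of $C(\beta X\setminus X)$.

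Finally, apply $\fd=\aleph_1$: fix a $\leq^{*}$-increasing cofinal sequence $(\bfE_\alpha)_{\alpha<\aleph_1}$ in $\Part$ and build the coherent family by transfinite recursion. At each successor stage one has a continuum of choices in $H_{\bfE_{\alpha+1}}$ that are compatible with $\phi_{\bfE_{\alpha}}$; at countable limits, any increasing $\omega$-chain of coherent data can be diagonalized into a single compatible homeomorphism, exactly as one trivializes countable coherent families in the unitary case. Making a binary choice at every successor stage produces $2^{\aleph_1}$ distinct total coherent families, and distinct families yield distinct automorphisms because their defining actions on the dense union of the $\cF[\bfE]$-slabs already differ modulo $C_0(X)$. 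The main technical obstacle is the limit-stage diagonalization together with the proof that the patched automorphism is continuous on all of $C(\beta X\setminus X)$; the manifold hypothesis is crucial in supplying the smooth hypersurfaces $\partial K_n$ along which the local homeomorphisms can be glued without topological obstruction.
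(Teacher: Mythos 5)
The corollary is designed to follow from Theorem~\ref{T.CHAbel} by a short counting argument, and your proposal misses this entirely. Theorem~\ref{T.CHAbel} already produces $2^{\aleph_1}$ automorphisms of $C(\beta X\setminus X)$ whenever $\fd=\aleph_1$ (for positive-dimensional~$X$). A topologically trivial automorphism, by Definition~\ref{Def.Trivial}, has a Borel-measurable lifting, and there are only $\fc=2^{\aleph_0}$ Borel maps on a Polish space, hence at most $\fc$ topologically trivial automorphisms. The hypothesis $2^{\aleph_0}<2^{\aleph_1}$ then forces some automorphism to be topologically nontrivial. Your proposal reconstructs the content of Theorem~\ref{T.CHAbel} from scratch but never invokes $2^{\aleph_0}<2^{\aleph_1}$ and never addresses why any of the $2^{\aleph_1}$ automorphisms you build should fail to have a Borel lifting; as written it proves (a variant of) the theorem, not the corollary.

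Two further problems with the reproof itself. First, you repeatedly appeal to a smooth structure (compact submanifolds-with-boundary, smooth closed hypersurfaces, bump vector fields), but $X$ is only assumed to be a topological manifold, and not every topological manifold is smoothable; the paper's construction in Theorem~\ref{T.CHAbel} is purely topological (explicit homeomorphisms $\varphi_{n,m}$ of metric balls, rotating circles, and the general ``flexible space'' framework) precisely to avoid this. Second, your claim that the zero-dimensional case ``reduces to the fully saturated situation of Corollary~\ref{cor:Parov}'' conflates hypotheses: Corollary~\ref{cor:Parov} assumes CH, which is strictly stronger than $\fd=\aleph_1$ together with $2^{\aleph_0}<2^{\aleph_1}$. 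A zero-dimensional second countable noncompact manifold is an infinite countable discrete space, so $\beta X\setminus X\cong\beta\bbN\setminus\bbN$, and whether $\fd=\aleph_1$ plus $2^{\aleph_0}<2^{\aleph_1}$ suffices to produce nontrivial automorphisms of $\cP(\bbN)/\Fin$ is a separate (and more delicate) matter; see the discussion of~\cite{shelah2015non} in \S\ref{S.OtherModels}.

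Beyond these points, your coherent-family scheme indexed by $\Part$ is genuinely different from the paper's construction, which works instead with a single $\leq^*$-increasing cofinal $\aleph_1$-sequence of functions in $\bbN^\bbN$ (satisfying the extra growth condition \eqref{eqabelCH}), explicit homeomorphisms rotating annuli in the shells $X_n$, and a filtration $(D_\alpha)$ of $C(\beta X\setminus X)$ by subalgebras of increasingly Lipschitz functions. Your $\Part$-indexed scheme could in principle work and is closer in spirit to the Calkin-algebra argument of \S\ref{S.Cohomology}, but it would carry the full technical burden of Theorem~\ref{T.CHAbel} rather than the one-line cardinality deduction the paper intends; and its limit-stage uniformization and strict-continuity claims are asserted rather than argued.
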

\begin{proof}[A sketch of the proof of Theorem~\ref{T.CHAbel}]
Here we give a sketch of the argument in case $X=\mathbb R^2$. 
With the definition of the quasi-order $\leq^*$ on $\bbN^\bbN$ in mind (see \eqref{Eq.leq*}), by using $\mathfrak d=\aleph_1$ fix a $\leq^*$-increasing and $\leq^*$-cofinal sequence of functions $f_\alpha$, for $\alpha<\aleph_1$, in $\bbN^\bbN$. 
A further technical requirement is that, for all $\alpha<\aleph_1$ and $n\in\bbN$, 
\begin{equation}\label{eqabelCH}
nf_\alpha(n)\leq f_{\alpha+1}(n).
\end{equation}
These functions are used to define homeomorphisms of $\beta\mathbb R^2\setminus\mathbb R^2$ in the following way: Let $X_n$ be the ball around $(0,3n)\in\mathbb R^2$ of radius $1$. For $m\in \bbN$, let $\varphi_{n,m}$ be a homeomorphism of $X_n$ fixing the boundary of $X_n$ and with the property that 
\[
\sup_{x\in X_n}d(x,\phi_{n,m}(x))=\frac{1}{m}
\]
where $d$ is the usual distance on $\mathbb R^2$. Such homeomorphisms can be obtained, for example, by rotating the circle $\{x\in X_n\mid d(x,(0,3n))=\frac{1}{2}\}$. As all the $X_n$'s are disjoint and each $\varphi_{n,m}$ does not move points of $\overline{\mathbb R^2\setminus X_n}$, for every sequence $(m_n)_{n\in \bbN}$, the map $\bigcup\varphi_{n,m_n}$ is an autohomeomorphism of $\mathbb R^2$ that canonically induces a $\Phi((m_n)_n)\in \Homeo(\beta \bbR^2\setminus \bbR^2)$. 
Let 
\[
\Phi_\alpha=\Phi((f_\alpha(n))_n),
\]
and let $\tilde\Phi_\alpha$ be the automorphism of $C(\beta\bbR^2\setminus\bbR^2)$ dual to $\Phi_\alpha$ (\S\ref{ss:pre}). 

For every branch $p$ of the complete binary tree of height~$\aleph_1$, we construct an automorphism $\tilde\Psi^p$ by composing exactly those $\tilde\Phi_\alpha$ for which $p(\alpha)=1$. This composition is well-defined because of the following strong form of coherence of $\tilde\Phi_\alpha$'s. 

\begin{claim}
There is an increasing (under the inclusion) sequence $D_\alpha$, for $\alpha<\aleph_1$, of unital \cstar-subalgebras of $C(\beta\bbR^2\setminus\bbR^2)$, such that 
\[
\textstyle C(\beta\bbR^2\setminus\bbR^2)=\bigcup_{\alpha<\aleph_1}D_\alpha
\]
and the restriction of $\tilde\Phi_{\alpha}$ to $D_\beta$ is equal to the identity if and only if $\beta \leq \alpha$.
\end{claim}

The $D_\alpha$ consists of functions which are more and more Lipschitz when restricted to $X_n$ (see for details~\cite[\S2]{vignati2017nontrivial}).

Fix $p\in2^{\aleph_1}$ and let $\tilde\Psi^p_0=\mathrm{Id}$. If $p(\alpha)=1$ and $\tilde\Psi^p_\alpha$ have been defined, let 
\[
\tilde\Psi^p_{\alpha+1}=\tilde\Phi_\alpha\circ \tilde\Psi^p_{\alpha}.
\]
Note that $\tilde\Psi^p_{\alpha+1}$ and $\tilde\Psi^p_{\alpha}$ agree on $D_\alpha$. At limit stages, a diagonalisation argument shows that indeed if $\beta$ is a limit ordinal, the map 
 \[
\textstyle\tilde\Psi^p_\beta= \bigcup_{\alpha<\beta}\tilde\Psi^p_\alpha
\]
 is a well-defined automorphism of $C(\beta\mathbb R^2\setminus\mathbb R^2)$. This is the coherence we need: for $p\in 2^{\aleph_1}$, set
 \[
\textstyle \tilde\Psi^p=\bigcup_{\alpha<\aleph_1} \tilde\Psi^p_\alpha.
 \]
It is routine to check that this is an automorphism of $C(\beta\mathbb R^2\setminus\mathbb R^2)$, and that if $p\neq q$, then $ \tilde\Psi^p$ and $ \tilde\Psi^q$ are different. This concludes the argument.
\end{proof}

A variant of the proof of Theorem~\ref{T.CHAbel} applies to a class of topological spaces called \emph{flexible}, whose technical definition asks for the existence of a variation of the sets $X_n$ and the homeomorphisms $\varphi_{n,m}$ given as above. All manifolds of nonzero topological dimension are flexible, but the converse is not true. Furthermore, by tensoring with a unital separable \cstar-algebra $A$, the above argument gives the existence of topologically nontrivial automorphisms of coronas of algebras of the form $C_0(X,A)$, for a flexible space $X$ and a unital \cstar-algebra $A$. 

\subsection{Other models for non-rigidity}\label{S.OtherModels}

Due to their complexity, the questions of the existence of nontrivial automorphisms of $\cP(\bbN)/\Fin$, the Calkin algebra and other quotient structures, are open in `most' models of $\ZFC$.
 
The success of CH in constructing nontrivial automorphisms and isomorphisms of Borel quotient structures has a satisfactory metamathematical explanation. As discussed in \S\ref{S.CHModelTheory}, part of the reason comes from countable saturation, nevertheless, even in this most general setting, Woodin's $\Sigma^2_1$-absoluteness Theorem provides a strong justification for the effectiveness of $\CH$ in solving these problems (we will return to this point in \S\ref{S.Absoluteness}). 

What interests us in this section is the dual problem: In what models of the negation of the Continuum Hypothesis do some quotient structures have nontrivial automorphisms?\footnote{The reason for avoiding the related question, 
asking in what models of the negation of the Continuum Hypothesis do nontrivial isomorphisms between some quotient structures exist, is that for \cstar-algebras we do not have any answers. No example of a pair of simple, separable, \cstar-algebras whose coronas are isomorphic for a nontrivial reason is known, even when the $\CH$ is assumed (see Example~\ref{Ex.FV} for the best known result in this direction).} 

This question has been resolved for $\cP(\bbN)/\Fin$ in Cohen's model for the negation of CH, obtained by adding $\aleph_2$ Cohen reals to a model of $\CH$ (\cite{ShSte:Non-trivial}). In this generic extension, $\cP(\bbN)/\Fin$ arises as the union of an increasing $\aleph_2$-chain of the algebras $\cP(\bbN)/\Fin$ in the intermediate forcing extensions, denoted $B_\alpha$, for $\alpha<\aleph_2$. The salient observation is that for every new $X\subseteq \bbN$, both sets $L_X=\{Y\in \cP(\bbN)^V\mid Y\subseteq^* X\}$ and $R_X=\{Y\in \cP(\bbN)^V\mid Y\subseteq^* \bbN\setminus X\}$ have countable cofinalities. Therefore, if $\Phi\in \Aut(\cP(\bbN)/\Fin)$, then $\Phi[L_X]$, $\Phi[R_X]$ can be split by a (new) Cohen real $C\subseteq \bbN$. If the real is sufficiently generic, then the restriction of $\Phi$ to $B_\alpha$ can be extended by sending $[X]$ to $[C]$, and a standard bookkeeping device produces $2^{\aleph_2}$ nontrivial automorphisms of $\cP(\bbN)/\Fin$. The full saturation of $B_\alpha$ is essential for this construction, and it is not known whether there are nontrivial automorphisms of $\cP(\bbN)/\Fin$ in a model obtained by adding more than $\aleph_2$ Cohen reals to a model of $\CH$.

The idea of representing $\cP(\bbN)/\Fin$ in this way was elucidated further in~\cite{steprans1992topological} and~\cite{dow2002applications}.
It is an open problem whether analogous results hold for random reals, or any other standard model of $\mathfrak c=\aleph_2$ obtained by the countable support iteration of a single Suslin forcing (see~\cite{Zap:Descriptive} for more on such models). 

Finally, by a result of Baumgartner reproduced in~\cite[p. 130]{Ve:Definable}, the existence of an $\aleph_1$-generated P-point ultrafilter $\cU$ implies the existence of a nontrivial automorphism of $\cP(\bbN)/\Fin$ that fixes $\cU$.
On the other hand, an automorphism of $\cP(\bbN)/\Fin$ is \emph{nowhere trivial} if its restriction to $\cP(A)/\Fin$ is not trivial for any infinite $A\subseteq \bbN$ (the restriction of Baumgartner's automorphism to any set in $\cU$ is trivial).
It is possible to construct such automorphisms from CH, and in~\cite{shelah2015non} (see also \cite{shelah2025revised}) they were constructed from an assumption reminiscent of~$\fd=\aleph_1$.

\section{Independence results, II. Rigidity}
\label{S.Independence2}
\enumthree 
In this section we study the `other side of the coin' and survey the existing rigidity results for Borel quotients. There are two kinds of such results. Some of them are obtained in forcing extensions invented specifically for the occasion (\S\ref{S.Shelah}, \S\ref{S.6bi}), while the others are consequences of forcing axioms (\S\ref{S.FABoole}, \S\ref{6bii.PFAAVi}). The question of whether a given topologically trivial isomorphism is algebraically trivial, studied in \S\ref{S.Ulam}, is in reasonable categories absolute between models of ZFC (see \S\ref{S.Absoluteness}). Therefore in this section we concentrate on the question of whether all isomorphisms are topologically trivial. 

\subsection{Shelah's proof} \label{S.Shelah} This proof (first appearing in \cite[\S IV]{Sh:Proper}, then in \cite[\S IV]{Sh:PIF}, and in an improved form, allowing $\fc$ to be arbitrarily large, in 
 \cite{Just:Modification}) started the entire subject that this survey is about. Since several features of Shelah's proof still, over forty years later, resonate throughout the subject, we take the opportunity to sketch it. 

Some standard terminology: Subsets of $\bbN$ are said to be almost disjoint if they are disjoint modulo $\Fin$, and we write $a=^*b$ if the symmetric difference $a\Delta b$ is finite. 

Suppose that CH holds and $\bbP_{\aleph_2}=(\bbP_\alpha,\dot\bbQ_\alpha)_{\alpha<\aleph_2}$ is a finite-support iteration of ccc posets, each of cardinality $\aleph_1$. Suppose that $G\subseteq \bbP_{\aleph_2}$ is a generic filter. A standard reflection argument shows that if $\Phi$ is an automorphism of $\cP(\bbN)/\Fin$ in $V[G]$ then the set of $\alpha<\aleph_2$ such that the restriction $\Phi_\alpha$ of $\Phi$ to $(\cP(\bbN)/\Fin)^{V[G\cap \bbP_\alpha]} $ is an automorphism of this structure includes a \emph{club} (i.e. closed unbounded set), and that if $\Phi$ is nontrivial then the set of $\alpha$ such that $\Phi_\alpha$ is nontrivial in $V[G\cap \bbP_\alpha]$ also includes a club. 

Thus, in order to force that all automorphisms are inner, it suffices to construct an iteration so that, for every potential offender $\Phi$, its reflection $\Phi_\alpha$ is intercepted and nipped in the bud stationarily often. The interception device is $\diamondsuit_{\aleph_2}$ on cofinality $\aleph_1$ ordinals. The `nipping in the bud' technique is more exciting. 

\begin{lemma} \label{L.SaharonMainLemma} 
If $\CH$ holds and $\Phi_*$ is a lifting of a nontrivial automorphism $\Phi$ of $\cP(\bbN)/\Fin$, then
\begin{enumerate}
\item there are $A_\alpha \subseteq B_\alpha\subseteq \bbN$ for $\alpha<\aleph_1$ such that $B_\alpha$ are almost disjoint,
\item there are a forcing $\bbT=\bbT_{\bar B, \bar A}$, and a $\bbT$-name $\dot X$ for a subset of $\bbN$ such that $\bbT$ forces $\dot X\cap B_\alpha\in \{A_\alpha, B_\alpha\setminus A_\alpha\}$ for all $\alpha$,
\item \emph{for every further forcing $\dot \bbQ$ and every $\bbT*\dot\bbQ$-name $\dot Y$} it is forced that, for some $\alpha$, $\Phi_*(\dot X\cap B_\alpha)\neq^* \dot Y\cap \Phi_*(B_\alpha)$. 
\end{enumerate}
\end{lemma}

The phrase `every further forcing' requires an explanation. For this proof, Shelah introduced the notion of \emph{oracle-c.c.}---a class of particularly mild forcings with the property that every real added by an oracle-c.c. forcing exists in an intermediate forcing extension obtained by adding a Cohen real over the universe. The iteration $\bbP_{\aleph_2}$ is a finite-support iteration of oracle-c.c. forcings and thus it `only' remains to prove Lemma~\ref{L.SaharonMainLemma} with the simplification that $\dot\bbQ$ is the poset for adding a single Cohen real, but with the requirement that $\bbT_{\bar B, \bar A}$ is oracle-c.c.\footnote{`Whatever that means', an impatient reader may grumble.}

\begin{proof}[Sketch of the proof of Lemma~\ref{L.SaharonMainLemma}] 
Given $\alpha\leq \aleph_1$, almost disjoint sets $B_\alpha$, and sets $A_\beta \subseteq B_\beta\subseteq \bbN$ for $\beta<\alpha$, write $\bar B=(B_\beta: \beta<\alpha)$, and for $\gamma<\alpha$ write $\bar B\rs \gamma=(B_\beta: \beta<\gamma)$. We use a similar notation for $\bar A$ and $\bar A\rs \gamma$. Consider the forcing notion $\bbT_{\bar B, \bar A}$ defined as follows. Each condition $p$ is a partial function from a subset of $\bbN$ into $\{0,1\}$, the domain of $p$ is included modulo finite in $\bigcup_{\beta\in F(p)}B_\beta$, for a finite $F(p)\subset \alpha$, and $p^{-1}(\{1\})\cap B_\beta$ is equal modulo finite to $A_\beta$ or to $B_\beta\setminus A_\beta$ for all $\beta\in F(p)$. If $\alpha$ is countable, then so is $\bbT_{\bar B, \bar A}$. It is therefore forcing-equivalent to the forcing for adding a single Cohen real. A $\Delta$-system argument shows that even when $\alpha=\aleph_1$, this poset has ccc. It generically adds a function from $\bbN$ into $\{0,1\}$. With carefully chosen parameters, this function is the characteristic function of $\dot X$, as required.
 
Write $\bbT$ for a forcing of the form $\bbT_{\bar B, \bar A}$ if $\bar B$ and $\bar A$ are clear from the context, $\alpha(\bbT)$ for $\alpha$ which is the length of the sequence $\bar B$, and $\bbT(\gamma)$ for $\bbT_{\bar B\rs \gamma, \bar A\rs\gamma}$. The poset $\bbT(\gamma)$ is in general not a regular subordering of $\bbT$ (much of the oracle-c.c. machinery is devoted to mitigating this inconvenience; see footnote~\ref{Foot.oracle-cc}). Nevertheless, by ccc-ness, every $\bbT$-name for a real is countable (even when $\alpha$ isn't), and it is therefore a $\bbT(\gamma)$-name. 

Therefore by CH we can enumerate all forcings of the form $\bbT$ with $\alpha(\bbT)<\aleph_1$ and all names for a real in forcings $\bbT\times\bbQ$, where $\bbQ$ is the forcing for adding a single Cohen real, as $\dot Y_\alpha$, $\alpha<\aleph_1$. Assure that each name occurs cofinally often. 

One recursively finds $B_\alpha$ and $A_\alpha$, for $\alpha<\aleph_1$, assuring that, if $\dot Y_\alpha$ is a $\bbT(\alpha)\times \bbQ$-name, then $\bbT(\alpha+1)\times \bbQ$ forces that neither $\Phi_*(A_\alpha)$ nor $\Phi_*(B_\alpha\setminus A_\alpha)$ is equal to $\Phi_*(B_\alpha)\cap \dot Y_\alpha$ modulo finite. The choice of $B_\alpha$'s depends on the following :
\[
\cJ_\Phi=\{B\subseteq\bbN\mid \Phi \restriction \cP(B)/\Fin \text{ is algebraically trivial}\}. 
\]
This is an ideal, and we now know that it is equal to the ideal of all $B$ such that the restriction of $\Phi$ to $\cP(B)/\Fin$ is topologically trivial. (This fact, implicit in Shelah's proof, first appears explicitly in \cite{Ve:Definable}.) One considers the following two cases.
\begin{enumerate}
\item \label{1.J} $\cJ_\Phi$ is \emph{dense}, i.e., $\cJ_\Phi^\perp=\{C\subseteq \bbN\mid C\cap B\text{ is finite for all }B\in \cJ_\Phi\}$, is equal to $\Fin$. 
\item\label{2.J} There is an infinite set $B$ in $\cJ_\Phi^\perp$. 
\end{enumerate}
If \ref{1.J} applies, then one chooses $B_\alpha\in \cJ_\Phi$. If \ref{2.J} applies, then one carefully chooses $B_\alpha\in \cJ_\Phi^\perp$. The sets $B_\alpha$ have to be taken with an extra care, to assure that the poset $\bbT$ has the oracle-c.c..\footnote{\label{Foot.J}The construction is more complicated; for every $\alpha$ one chooses a partition $B_\alpha=C_1\cup C_2$ and $A_\alpha$ is included in one of these sets, but hiding this under the rug does not make much difference in this sketch, except at one subtle point later on.}

If this recursive construction can be completed, then $\Phi$ is `nipped in the bud' by $\bbT$, as desired. We proceed to analyse the situation in which the construction halts at stage $\alpha$. Assume for a moment that $B_\alpha$ has been fixed. For $A\subseteq B_\alpha$ write $\bbT[A]$ for $\bbT_{(\bar B^\frown B_\alpha, \bar A^\frown A)}$. 
If \ref{2.J} applies, then the set 
\[
Z=\{(A,C)\mid A\subseteq B, \forces_{\bbT[A]\times \bbQ} C=^*\dot Y_\alpha \cap \Phi_*(B_\alpha)\} 
\]
is Borel.\footnote{For the readers who enjoy reading complicated formulas: $(A,C)\in Z$ if and only if $(\forall p)(\exists q\leq p)(\exists n)(\forall r\leq q)(\forall m\geq n) r\forces m\in \dot Y\Leftrightarrow m\in C$, where $p,q$, and $r$ range over the countable poset $\bbT[A]\times \bbQ$.} Therefore $\Phi$ is topologically trivial, and algebraically trivial, on $\cP(B_\alpha)/\Fin$; contradiction.

We may therefore assume \ref{1.J}, hence $B_\alpha\in \cJ_\Phi$ as witnessed by an injection $g\colon B_\alpha\to \bbN$ such that $A\mapsto g[A]$ lifts $\Phi$ on $\cP(B)/\Fin$. Consider $\dot Y_\alpha$ as a $\bbT'=\bbT(\alpha)\times \bbQ$-name. For $p$ in $\bbT'$, the set $\dom(p)\cap B_\alpha$ is finite and (with a slight abuse of notation, harmlessly ignoring the $\bbQ$ component) for $m\in B_\alpha\setminus \dom(p)$ both $p\cup \{(m,0)\}$ and $p\cup \{(m,1)\}$ are conditions extending $p$. The key claim is that 
$(\forall p)(\exists q\leq p) (\forall^\infty m\in B_\alpha) $ 
\begin{equation}\label{Eq.pq}
q\cup \{(m,0)\}\forces g(m)\notin \dot Y_\alpha \text { and } q\cup \{(m,1)\}\forces g(m)\in \dot Y_\alpha. 
\end{equation}
Otherwise, one recursively finds a decreasing sequence of conditions in~$\bbT'$ that define $A\subseteq B_\alpha$ and whose union is a condition in $\bbT[A]$ that forces $\dot X\cap B_\alpha = A$ and $\dot Y_\alpha\cap \Phi_*(B_\alpha)=^*\Phi_*(A)$; contradiction. 

From $q$ as in \eqref{Eq.pq} one can extract a partial function $g_q$ whose restriction to $B_\alpha$ is equal to $g$ modulo finite (let's say that $g_q$ `works' on $B_\alpha$). This is true for every eligible candidate for $B_\alpha$. Since \eqref{Eq.pq} does not refer to $B_\alpha$, there is a countable set of functions, $g_q$, for $q\in \bbT'$, that between them serve as a lift of $\Phi$ on $\cP(B)$, for every $B\in \{B_\beta\mid \beta<\alpha\}^\perp\cap \cJ_\Phi$. 
The ideal $\cI=\{B_\beta\mid \beta<\alpha\}^\perp$ is included in $\cJ_\Phi$.\footnote{This is the white lie promised in footnote~\ref{Foot.J}.} As an orthogonal of a countable set, it is countably directed under $\subseteq^*$, and therefore a single function $g_q$ `works' on a cofinal set of $B\in \cI$. An easy diagonalisation argument shows that there is $k\in \bbN$ such that $g_q$ `works' on $\bbN\setminus \bigcup_{j<k} B_j$, hence this set belongs to $\cJ_\Phi$. 
Thus $\bbN$, as the union of $k+1$ sets in $\cJ_\Phi$, belongs to $\cJ_\Phi$ and $\Phi$ is trivial; contradiction. 

One can therefore recursively find $B_\alpha$ and $A_\alpha$, for $\alpha<\aleph_1$, such that the poset $\bbT$ adds $\dot X\subseteq \bbN$ but no $\dot Y\subseteq \bbN$ which satisfies \[
\Phi_*((\dot X\cap B_\alpha)=^*\Phi_*(\dot Y)\cap \Phi_*(B_\alpha)
\]
 for all $\alpha$, and this property is preserved by the remaining part of the iteration.\footnote{\label{Foot.oracle-cc} We are using the feature of oracle-c.c., that $\bbT(\alpha)\times\bbQ$ is a `sufficiently regular' subordering of $\bbT\times \bbQ$ to preserve the property of~$\dot Y_\alpha$.} 
\end{proof}

Lemma~\ref{L.SaharonMainLemma} implies that the only automorphisms that cannot be `nipped in the bud' are the trivial ones, hence we can construct a forcing extension in which all automorphisms of $\cP(\bbN)/\Fin$ are trivial. 

By using a modification of the oracle-c.c., Just proved that the assertion that all automorphisms of $\cP(\bbN)/\Fin$ are trivial is consistent with the continuum being arbitrarily large (\cite[Theorem~A]{Just:Modification}). More importantly for our story, he also proved that in his model $\cP(\bbN)/\cI$ cannot be embedded into $\cP(\bbN)/\Fin$ for any Borel ideal that is not generated by a single set over $\Fin$ (\cite[Theorem~0.6]{Just:Modification}).\footnote{Just's proof used a measurable cardinal, but this assumption has later been removed.} Just proved that in the same model every isomorphism between quotients over Borel ideals is topologically trivial on a large set (see \cite{Just:Repercussions} for a nice application). 

Having these statements hold in a model of ZFC constructed specifically for the purpose is not completely satisfactory,\footnote{This is clearly a personal opinion, but it is a very strong personal opinion.} and the question arose whether forcing axioms imply these conclusions. Which brings us to the next subsection. 

\subsection{Quotients of $\mathcal P(\bbN)$ and zero-dimensional remainders}\label{S.FABoole}
Shelah's proof outlined in \S\ref{S.Shelah} relies on creating a gap in $\cP(\bbN)/\Fin$ and then meticulously arranging for this gap not to be filled by the remaining part of the iteration. In~\cite{Ku:Gaps} Kunen introduced the method of `freezing gaps': arranging, by a ccc forcing, that a given gap cannot be filled without collapsing~$\aleph_1$. An infusion of Kunen's idea into Shelah's construction resulted in a proof that the Proper Forcing Axiom (PFA) implies all automorphisms of $\mathcal P(\bbN)/ \Fin$ are trivial (\cite{ShSte:PFA}). Around the same time, Veli\v{c}kovi\'c deduced the same conclusion from the conjunction of $\OCA$ (Definition~\ref{Def.OCA} below) and Martin's Axiom, $\MA_{\aleph_1}$ (\cite{Ve:OCA}). We take a moment to present Veli\v ckovi\'c ingenious open colouring, a variant of which appeared in every single rigidity result for quotient Boolean algebras and \cstar-algebras since. 

For a set $\cX$, let $[\cX]^2$ denote the set of unordered pairs of elements of $\cX$. If $\cX$ is a topological space, we say that $L\subseteq [\cX]^2$ is \emph{open} if $\{(x,y)\in \cX^2\mid\{x,y\}\in L\}$ is open. The following was isolated and proved to follow from $\PFA$ in \cite[\S 8]{To:Partition}. 

\begin{definition} \label{Def.OCA} $\OCA$ asserts that for every separably metrisable space $\cX$ and an open $L\subseteq [\cX]^2$ either there exists an uncountable $\cY\subseteq \cX$ such that $[\cY]^2\subseteq L$ or there is a partition $\cX=\bigcup_n \cX_n$ such that each $[\cX_n]^2$ is disjoint from $L$. 
\end{definition}

An easy, but far-reaching, observation is that if $[\cX_n]^2$ is disjoint from an open set $L$ then so is the topological closure of $\cX_n$. This is a part of the magic of $\OCA$ and its ability to turn arbitrary objects (such as liftings) into topologically trivial ones. 

Two more definitions before we can state Veli\v ckovi\'c's result. A lifting of a homomorphism $\Phi$ between analytic quotients is \emph{$\sigma$-Borel} if there are Borel functions $\theta_n\colon \cP(\bbN)\to \cP(\bbN)$, for $n\in \bbN$, such that for every $a\in \cP(\bbN)$ there is $n$ for which $\Phi([A])=[\theta_n(A)]$. A family $\cA$ of subsets of $\bbN$ is \emph{tree-like} if there is an injection $\chi\colon \bbN\to \twolo$ such that each $\chi[A]$, for $A\in \cA$, is included in a different branch, $\bfB(A)$, of $\twoo$. 

The following is an initial segment of \cite[Lemma~2.2]{Ve:OCA}. 

\begin{lemma} \label{L.Vel.2.2}
Assume $\OCA$. If $\Phi$ is an automorphism of $\cP(\bbN)/\Fin$ and $\cA$ is a tree-like family, then for all but countably many $A\in \cA$ the restriction of $\Phi$ to $\cP(A)$ has a $\sigma$-Borel lifting. 
\end{lemma}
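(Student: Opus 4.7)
The plan is to argue by contradiction using $\OCA$ applied to a carefully chosen open partition on (a subset of) the tree-like family itself, topologized via the branches. Fix an arbitrary lifting $\Phi_*\colon\cP(\bbN)\to\cP(\bbN)$ of $\Phi$ and let
\[
\cA' := \{A\in \cA \mid \Phi\restriction\cP(A)/\Fin\text{ has no $\sigma$-Borel lifting}\}.
\]
The goal is to show $\cA'$ is countable. Using the injection $\chi\colon \bbN\to\twolo$ and the branches $\bfB(A)\in\twoo$, identify $\cA$ with a subset of $\twoo$; this gives $\cA$, and hence $\cA'$, a separably metrizable topology.

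The next step is to encode the failure of a $\sigma$-Borel lifting on each $A\in\cA'$ by countable ``witness data''. For each $A$, I would enumerate the finite subsets of $A$ (via $\chi$) as a countable dense Boolean subalgebra $\cB_A\subseteq\cP(A)$ and fix, for each $A\in\cA'$, a sequence $\bar X^A=(X^A_n)_{n\in\bbN}$ in $\cB_A$ together with representatives $Y^A_n\in\Phi_*(X^A_n)$ in such a way that no Borel map $\cB_A\to\cP(\bbN)$ can hit $(Y^A_n)$ modulo finite simultaneously---the failure of a $\sigma$-Borel lifting readily delivers such local obstructions through a standard coding of Borel functions.

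With the witnesses in hand, I would apply $\OCA$ to $\cX:=\cA'$ with the open partition
\[
L:=\bigl\{\{A,B\}\in[\cX]^2\;\big|\;\exists n,k:\ A,B \text{ split below level $k$ and } Y^A_n\cap k,\,Y^B_n\cap k\text{ exhibit a prescribed incompatibility}\bigr\},
\]
where the precise incompatibility is tailored so that $L$ is open in the product topology on $\twoo\times\twoo$ (it only depends on finitely many bits of $A$ and $B$, and on $\Phi_*$ evaluated on finite sets), and so that both $\OCA$ horns are productive. In the $L$-independent alternative, $\OCA$ yields a decomposition $\cX=\bigcup_n\cX_n$ with $[\cX_n]^2\cap L=\emptyset$; $L$-independence is to be arranged so that on each $\cX_n$ the map $A\mapsto Y^A_n$ is continuous in the inherited topology on $\cX_n$, which by the choice of witnesses yields a $\sigma$-Borel lifting on every $A\in\cX_n$---contradicting $\cX_n\subseteq\cA'$. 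In the homogeneous alternative one gets an uncountable $L$-homogeneous $\cY\subseteq\cA'$, and here the tree-like structure is indispensable: since any two branches $\bfB(A),\bfB(B)$ with $A,B\in\cY$ diverge, the sets $A\cap B$ are finite, so the witnesses $X^A_n$ can be diagonally amalgamated into a single $Z\subseteq\bbN$ such that $\Phi_*(Z)$ cannot simultaneously be $=^*$ to $\bigcup_{A\in F}\Phi_*(X^A_{n_A})$ for the various finite $F\subseteq\cY$, contradicting that $\Phi_*$ is a function.

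The hard part will be designing $L$: it must be genuinely open (so pins down only finite information about the pairs), it must be sparse enough that $L$-independence forces the finite values of $\Phi_*$ on each $X^A_n$ to be a continuous function of $A$, and it must be dense enough that a homogeneous uncountable set provides enough incompatibilities to build the merged set $Z$. Getting all three properties simultaneously is the content of the lemma; once $L$ is in place, the two $\OCA$ horns close the argument.
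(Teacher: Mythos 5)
Your approach has a genuine gap, and it departs in an essential way from the paper's proof. The paper does not apply $\OCA$ to the family $\cA$ (or to the bad subfamily $\cA'$). Instead, it applies $\OCA$ to the much richer space
\[
\cX=\{(B,C)\mid C\subseteq B\subseteq A\text{ for some }A\in\cA\},
\]
topologized by identifying $(B,C)$ with $(B,C,\Phi_*(B),\Phi_*(C))\in(\twoo)^4$, and uses the concrete open coloring $\{(B,C),(B',C')\}\in L$ iff $\bfB(B)\neq\bfB(B')$, $B\cap C'=B'\cap C$, and $\Phi_*(B)\cap\Phi_*(C')\neq\Phi_*(B')\cap\Phi_*(C)$. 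The uncountable $L$-homogeneous case is excluded by forming $X=\bigcup\{B\mid(B,C)\in\cY\}$, observing $X\cap C=B$ for all $(B,C)\in\cY$ (this is where (L1) and (L2) are used, and where the tree-like structure is used to ensure $X$ is well-defined), then pigeonholing on the finite exceptional sets for $\Phi_*(X)\cap\Phi_*(C)=^*\Phi_*(B)$ to contradict (L3). The decomposition $\cX=\bigcup_n\cX_n$ into $L$-free \emph{closed} sets then yields the $\sigma$-Borel lifting directly: closed $L$-free pieces carry enough pairs $(B,C)$ inside each $\cP(A)$ to read off a Borel lifting of $\Phi$ on $\cP(A)$, and no argument by contradiction on $\cA'$ is needed.

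Your plan, by contrast, never gets off the ground because of the encoding step. You propose to witness, for each $A\in\cA'$, the failure of a $\sigma$-Borel lifting on $\cP(A)$ by a \emph{countable} sequence $(X^A_n,Y^A_n)$. No such encoding exists: a $\sigma$-Borel lifting is a countable family of Borel functions $\cP(\bbN)\to\cP(\bbN)$, of which there are continuum many, and the assertion that \emph{no} such family lifts $\Phi$ on $\cP(A)$ is a statement about all of them simultaneously; it cannot be reduced to a countable amount of data attached to $A$. This is exactly why one must quantify over all nested pairs $(B,C)$ inside the $A$'s and let $\OCA$ do the uniformization, rather than trying to localize the obstruction to each $A$ in advance. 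Relatedly, your $L$ is left entirely unspecified and all three requirements on it (openness, sparsity, density) are asserted rather than arranged; in the paper's argument the hard part is precisely that the concrete $L$ above does satisfy all three, and this hinges on the coloring living on pairs $(B,C)$ rather than on elements $A$ of the family. Finally, even in your $L$-free alternative, controlling the maps $A\mapsto Y^A_n$ only constrains $\Phi_*$ on the chosen witnesses, not on all of $\cP(A)$, so it would not produce the required lifting even if the encoding step could be repaired.
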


\begin{proof}[Sketch of a proof]
Consider the subset of the Cantor space $(\{0,1\}^\bbN)^2$ defined by $\cX=\{(B,C)\mid C\subseteq B\subseteq A\text{ for some $A\in \cA$ and $B$ is infinite$\}$}$.
As $\cA$ is almost disjoint, for every $(B,C)\in \cX$ we have $B\subseteq A$ for a unique $A\in \cA$, and we set $\bfB(B)=A$. 
Fix a lifting $\Phi_*$ of $\Phi$ and identify $(B,C)\in \cX$ with $(B,C,\Phi_*(B), \Phi_*(C),\bfB(B))$ in the Cantor space $(\twoo)^5$. Consider~$\cX$ with the subspace topology and define $L\subseteq [\cX]^2$ by $\{(B,C), (B',C')\}\in L$ if the following conditions hold.
\begin{enumerate}
\item [(L1)] $\bfB(B)\neq \bfB(B')$. 
\item [(L2)] $B\cap C'=B'\cap C$. 
\item [(L3)] $\Phi_*(B)\cap \Phi_*(C')\neq \Phi_*(B')\cap \Phi_*(C)$.	
\end{enumerate}
This set is open (the set of pairs that satisfy (L2) is not open, but the set of pairs that satisfy both (L1) and (L2) is).

Assume that $\cY\subseteq \cX$ is uncountable and $[\cY]^2\subseteq L$. By (L1) and (L2), $Z=\bigcup\{ B\mid (B,C)\in \cY\}$ satisfies $Z\cap B=C$ for all $(B,C)\in \cY$. Therefore, just like in Lemma~\ref{L.SaharonMainLemma}, $\Phi_*(Z)\cap \Phi_*(B)=^*\Phi_*(C)$ for all $(B,C)\in \cX$. By the pigeonhole principle, fix $m$ such that  $(\Phi_*(Z)\cap \Phi_*(B))\Delta \Phi_*(C)\subseteq m$ for uncountably many $(B,C)\in \cY$. By another use of the pigeonhole principle, find subsets $s$ and $t$ of $m$ and a further uncountable $\cY_1\subseteq \cY$ such that $\Phi_*(B)\cap m=s$ and $\Phi_*(C)=t$ for all $(B,C)\in \cY_1$. Then $[\cY_1]^2$ is disjoint from $L$; contradiction. 
	
By $\OCA$, $\cX$ can be covered by \emph{closed} sets $\cX_n$ whose combinatorial squares are disjoint from $L$. Fix a countable dense $D_n\subseteq \cX_n$ for each $n$. An intricate argument shows that every $B\in \cA\setminus \{\bfB(A)\mid A\in \cA\}$ can be partitioned as $B=B_0\sqcup B_1$ such that the restriction of $\Phi$ to $\cP(B_j)$, for $j=0,1$ has a $\sigma$-Borel lifting. Since $\Phi$ is a Boolean algebra homomorphism, this implies that the restriction of $\Phi$ to $\cP(B)$ has a $\sigma$-Borel lifting. 
\end{proof}

In the following step of the proof that $\Phi$ is topologically trivial one shows that if $\Phi$ has a $\sigma$-Borel lifting on $\cP(A)$ and $A=\bigcup_n A_n$ then $\Phi$ has a Borel lifting (i.e., is topologically trivial) on some $\cP(A_n)$. An elegant proof of this using the Kuratowski--Ulam Theorem is due to Fremlin, given in a more general context in \cite[Lemma~3.12.3]{Fa:AQ}. This, together with Lemma~\ref{L.Vel.2.2} and $\MA$, shows that the ideal $\cJ_\Phi$ of $A\subseteq \bbN$ such that $\Phi$ is topologically trivial on $\cP(A)$ is large (e.g., it is nonmeager, but the details are not relevant for our purposes). 


Once again, it was Just who broadened the scope of the subject by considering ideals other than $\Fin$. In \cite{Just:WAT} he introduced the notions of a closed approximation and a $\cK$-approximation to an ideal. 

\begin{definition} \label{Def.Approximation}
Some $\cK\subseteq \cP(\bbN)$ is \emph{hereditary} if $A\in \cK$ implies $B\in \cK$ for all $B\subseteq A$. For subsets $\cA$ and $\cB$ of $\cP(\bbN)$ let 
\[
\cA\ucup \cB=\{A\cup B\mid A\in \cA, B\in \cB\}
\]
 A hereditary $\cK\subseteq \cP(\bbN)$ is an \emph{approximation} to an ideal $\cI$ if 
$\cK\ucup \Fin\supseteq \cI$. 
For $d\geq 1$ write $\cK^d=\cK\ucup \dots\ucup \cK$ ($d$ times).
\end{definition}

\begin{definition} \label{Def.K-approx}
 If $\Phi\colon \cP(\bbN)\to \cP(\bbN)/\cJ$ is a Boolean algebra homomorphism\footnote{Since no conditions are imposed on $\ker(\Phi)$, this covers the case when $\Phi$ is an automorphism of $\cP(\bbN)/\cJ$.} and $\cK\subseteq \cP(\bbN)$, then some $\theta\colon \cP(\bbN)\to \cP(\bbN)$ is a \emph{$\cK$-approximation to} $\Phi$ if $\theta(a)\Delta \Phi_*(a)\in \cK\ucup \Fin$ for all $a\in \cP(\bbN)$. If $\theta$ is Baire-measurable, then $\Phi$ is called \emph{$\cK$-sharp} and if $\theta$ is $\sigma$-Borel then $\Phi$ is called \emph{$\cK$-precise}. 
 \end{definition}

If $\mu$ is a lower semicountinuous submeasure (Definition \ref{Def.Submeasure}) on $\bbN$ 
and $0\leq r<\infty$, let
\[
\cK(\mu,r)= \{A\subseteq \bbN\mid \mu(A)\leq r\}.
\]
These give closed approximations to both $\Fin(\mu)$ and $\Exh(\mu)$. The utility of the following easy lemma will become clear in a few paragraphs. 

\begin{lemma} \label{L.countablydetermined} 
If $\mu$ is a lower semicontinuous submeasure on $\bbN$ and $d\geq 1$. Then
$\Fin(\mu)=\bigcup_{m\in\bbN}\cK(\mu,m)$ and $\Exh(\mu)=\bigcap_m (\cK(\mu,{1}/{m})^d\ucup \Fin)$.
\end{lemma}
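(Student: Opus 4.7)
The plan is to handle the two equalities separately, as they are both direct unpackings of the definitions, with the only nontrivial ingredient being subadditivity of the submeasure (lower semicontinuity will not be needed here).

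For the first equation, I would simply note that $\Fin(\mu)=\{X\mid\mu(X)<\infty\}$ and that, since $\mu$ takes values in $[0,\infty]$, the condition $\mu(X)<\infty$ is equivalent to $\mu(X)\leq m$ for some natural number $m$. This unpacks to the claim $\Fin(\mu)=\bigcup_m \cK(\mu,m)$.

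For the second equation, I would prove the two inclusions. First, the ``$\subseteq$'' direction: given $X\in\Exh(\mu)$, i.e., $\lim_n\mu(X\setminus n)=0$, fix $m$ and choose $n$ with $\mu(X\setminus n)\leq 1/m$. Then $X=(X\cap n)\cup(X\setminus n)$ with $X\cap n\in\Fin$ and $X\setminus n\in\cK(\mu,1/m)$. Since $\mu(\emptyset)=0$, we have $\cK(\mu,1/m)\subseteq \cK(\mu,1/m)^d$ (write $A=A\cup\emptyset\cup\dots\cup\emptyset$), so $X\in\cK(\mu,1/m)^d\ucup\Fin$ for every $m$, as required.

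For the ``$\supseteq$'' direction, suppose $X\in\bigcap_m(\cK(\mu,1/m)^d\ucup\Fin)$. For each $m$, write $X=A_1^{(m)}\cup\dots\cup A_d^{(m)}\cup F^{(m)}$ with $\mu(A_i^{(m)})\leq 1/m$ and $F^{(m)}$ finite. For any $n>\max F^{(m)}$, we have $X\setminus n\subseteq A_1^{(m)}\cup\dots\cup A_d^{(m)}$, so by monotonicity and subadditivity
\[
\mu(X\setminus n)\leq \sum_{i=1}^d\mu(A_i^{(m)})\leq \frac{d}{m}.
\]
Hence $\limsup_n\mu(X\setminus n)\leq d/m$ for every $m$, which forces $\lim_n\mu(X\setminus n)=0$, i.e., $X\in\Exh(\mu)$. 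Neither direction presents a real obstacle; the only substantive point is that the constant $d$ on the right-hand side is harmless because the bound $d/m$ can be made arbitrarily small by varying $m$, which is precisely what makes the intersection over $m$ collapse back to $\Exh(\mu)$ regardless of the choice of $d\geq 1$.
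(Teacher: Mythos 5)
Your proof is correct and complete. The paper presents this lemma without proof, explicitly calling it ``easy,'' so there is no paper argument to compare against; your unpacking of the definitions is exactly what is intended. Your observation that lower semicontinuity of $\mu$ is not actually used is also right---that hypothesis is carried along from the context of Theorem~\ref{T.Solecki}, where $\Fin(\mu)$ and $\Exh(\mu)$ arise from lower semicontinuous submeasures, but the two identities here require only monotonicity, subadditivity, and $\mu(\emptyset)=0$.
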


 The following is a special case of \cite[Theorem~11]{Just:WAT} (Just considered $\Phi$ which is a lattice embedding and obtained a slightly weaker conclusion). 

\begin{lemma} \label{L.Just.WAT}
		Assume $\OCA$. If $\cK$ is a closed approximation to an ideal $\cJ$, $\Phi\colon \cP(\bbN)\to \cP(\bbN)/\cJ$ is a Boolean algebra homomorphism, and $\cA$ is a tree-like family, then for all but countably many $A\in \cA$ the restriction of $\Phi$ to $\cP(A)$ is $\cK\ucup\cK$-precise. 
\end{lemma}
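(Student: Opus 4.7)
The plan is to follow closely Veli\v ckovi\'c's OCA argument from the proof of Lemma~\ref{L.Vel.2.2}, with (L3) relaxed to detect the error $\cK^2\ucup\Fin$ (where $\cK^2:=\cK\ucup\cK$) and the conclusion correspondingly weakened from $\sigma$-Borel lifting to $\sigma$-Borel approximation modulo $\cK^2\ucup\Fin$.

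Fix a lifting $\Phi_*\colon\cP(\bbN)\to\cP(\bbN)$ of $\Phi$ and set
\[
\cX=\{(B,C)\in(\twoo)^2 : C\subseteq B\subseteq A\text{ for some }A\in\cA\},
\]
identified with a subspace of $(\twoo)^4$ via $(B,C)\mapsto(B,C,\Phi_*(B),\Phi_*(C))$. With $D(B,C,B',C'):=(\Phi_*(B)\cap\Phi_*(C'))\,\Delta\,(\Phi_*(B')\cap\Phi_*(C))$, define $L\subseteq[\cX]^2$ by declaring $\{(B,C),(B',C')\}\in L$ iff (L1) $\bfB(B)\neq\bfB(B')$, (L2) $B\cap C'=B'\cap C$, and (L3) $D\notin\cK^2\ucup\Fin$. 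Because $\cK$ is closed and hereditary, $\cK^2$ is closed in $\cP(\bbN)$ (the continuous image of the compact $\cK\times\cK$ under union), so the set $\{X:X\cap[n,\infty)\notin\cK^2\}$ is open for each $n$; heredity lets (L3) be rewritten as a countable union of such open conditions, and the tree-like argument from Lemma~\ref{L.Vel.2.2} shows that (L1)$\wedge$(L2) is open. Therefore $L$ is open.

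Apply $\OCA$ to $(\cX,L)$. If some uncountable $\cY\subseteq\cX$ is $L$-homogeneous, then putting $X:=\bigcup\{B:(B,C)\in\cY\}$, the argument of Lemma~\ref{L.Vel.2.2} combined with (L1)--(L2) gives $X\cap C=^*B$, hence $\Phi_*(X)\cap\Phi_*(C)\,\Delta\,\Phi_*(B)\in\cJ\subseteq\cK\ucup\Fin$ for every $(B,C)\in\cY$. Writing each such symmetric difference as $K_{BC}\cup F_{BC}$ with $K_{BC}\in\cK$ and $F_{BC}$ finite, a two-step pigeonhole---first on a common finite window $M$ containing $F_{BC}$ for uncountably many pairs, then on the finite traces $\Phi_*(B)\cap M$ and $\Phi_*(C)\cap M$---passes to an uncountable $\cY_1\subseteq\cY$ on which subtracting two such identities yields $D\in\cK^2\ucup\Fin$, contradicting (L3). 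Therefore $\cX=\bigcup_n\cX_n$ with $[\cX_n]^2\cap L=\emptyset$; since $L$ is open we may take each $\cX_n$ closed, and fix a countable dense $D_n\subseteq\cX_n$. From the homogeneity of each $\cX_n$ one runs the Fremlin--Kuratowski--Ulam extraction of \cite[Lemma~3.12.3]{Fa:AQ} adapted by Just in the proof of his Theorem~11 in~\cite{Just:WAT}: for every $A\in\cA$ outside a countable exceptional set there is a countable family of Borel functions $\theta_k\colon\cP(\bbN)\to\cP(\bbN)$---built from Boolean combinations involving the fixed sets $\Phi_*(B_0),\Phi_*(C_0)$ with $(B_0,C_0)$ ranging over $\bigcup_n D_n$---such that every $a\subseteq A$ satisfies $\theta_k(a)\,\Delta\,\Phi_*(a)\in\cK^2\ucup\Fin$ for some $k$; the countable exceptional set consists of those $A$ whose branch $\bfB(A)$ is not densely approached by the $\cX_n$'s.

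The main obstacle is the last extraction step. Since $\cK$ is only closed and hereditary, $\cK^2$ is \emph{not} closed under finite unions, so combining Borel approximations coming from distinct reference points $(B_0,C_0)\in D_n$ risks pushing the cumulative error out of $\cK^2\ucup\Fin$ and into $\cK^{2d}\ucup\Fin$ for $d\ge 2$. Controlling this accumulation is the crux of Just's argument: the tree-like hypothesis on $\cA$ forces overlaps of distinct reference branches to live on a common finite set, which absorbs the excess into $\Fin$ and keeps the final error within $\cK^2\ucup\Fin$. This is precisely why the conclusion is stated as $\cK^2$-, rather than $\cK$-, preciseness, and why the closed-approximation assumption on $\cK$ is used in an essential way.
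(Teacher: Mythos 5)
The paper's $(L_\cK3)$ requires $(\Phi_*(B)\cap\Phi_*(C'))\Delta(\Phi_*(B')\cap\Phi_*(C))\notin\cK$, with no $\ucup\Fin$, and this choice is not cosmetic. Your $(L3)$ requires the symmetric difference $D$ to lie outside $\cK^2\ucup\Fin$, and that condition is \emph{not} open. Since $\cK$ is hereditary, $D\in\cK^2\ucup\Fin$ if and only if $D\cap[n,\infty)\in\cK^2$ for \emph{some} $n$; negating gives $D\cap[n,\infty)\notin\cK^2$ for \emph{all} $n$, a countable \emph{intersection} of open conditions, i.e., $G_\delta$. Your sentence ``heredity lets $(L3)$ be rewritten as a countable union of such open conditions'' has the quantifier backwards, and this is exactly where the argument breaks: $\OCA$ is a statement about \emph{open} colorings, and you cannot apply it to a partition whose positive part is only $G_\delta$.

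Compare with Lemma~\ref{L.Vel.2.2}, where $\cJ=\Fin$ and the natural closed approximation is $\cK=\{\emptyset\}$: there too $(L3)$ is $D\neq\emptyset$, i.e., $D\notin\cK$ rather than $D\notin\cK\ucup\Fin$, even though the homomorphism identity only gives $\delta_{BC}=(\Phi_*(X)\cap\Phi_*(C))\Delta\Phi_*(B)\in\Fin$. The finite contribution is never written into the coloring; it is absorbed afterwards by the two pigeonhole steps (a common finite window $M$ for the finite residuals, and constant traces $S,T$ on $M$), which force $D\cap M=\emptyset$ and hence $D$ to land in the closed set. The same template is what the paper intends here: the symmetric difference $D$ lies in $K_{BC}\cup K_{B'C'}\cup F$, the pigeonhole absorbs $F\subseteq M$ and empties $D\cap M$, and the \emph{closed} condition in $(L3)$ (the paper writes $\cK$; $\cK\ucup\cK$, being a continuous image of the compact $\cK\times\cK$ and hence closed, would also make $(L3)$ open and give the contradiction $D\in\cK\ucup\cK$) does the rest. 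The gain of keeping $(L3)$ closed is exactly what buys the closedness of the $\cX_n$ in the second alternative, which you then rely on when taking dense countable $D_n$'s; so the $\ucup\Fin$ you inserted undermines both halves of the $\OCA$ dichotomy. The last paragraph of your proposal honestly flags the error-accumulation issue in the extraction step, but because the coloring you feed to $\OCA$ is not legitimate, the extraction step never gets off the ground in your version.
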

\begin{proof}[Sketch of a proof] 
 With $\cX$ as in the proof of Lemma~\ref{L.Vel.2.2}, define $L_\cK\subseteq [\cX]^2$ by letting $\{(B,C), (B',C')\}\in L_\cK$ if the following conditions hold
\begin{enumerate}
\item [($L_\cK1$)] $\bfB(B)\neq \bfB(B')$. 
\item [($L_\cK2$)] $B\cap C'=B'\cap C$. 
\item [($L_\cK3$)] $(\Phi_*(B)\cap \Phi_*(C'))\Delta (\Phi_*(B')\cap \Phi_*(C))\notin \cK$.\end{enumerate}
Then $L_\cK$ is open, and an analog of the proof of Lemma~\ref{L.Vel.2.2} yields the desired $\sigma$-Borel $\cK\ucup\cK$-approximation on $\cP(A)$ for all but countably many $A\in \cA$. 
\end{proof}

 Solecki's characterisation of analytic P-ideals (Theorem \ref{T.Solecki}) associates a sequence of closed approximations to every analytic P-ideal
 by Lemma~\ref{L.countablydetermined}. Together with a meticulous uniformisation of locally defined liftings (\cite[\S 3.13]{Fa:AQ}) one obtains the following (see~\cite[Theorem 3.3.5]{Fa:AQ}). 

\begin{theorem}[OCA Lifting Theorem]\label{thm:ilijasOCAideals}
Assume $\OCA$ and $\MA$  and let $\Phi\colon \cP(\bbN)\to \cP(\bbN)/\cJ$ be a homomorphism such that $\cJ\supseteq \Fin$ is countably generated or an analytic P-ideal. 
 Then $\Phi$ is the direct sum of a topologically trivial homomorphisms and one with a nonmeager kernel. In particular, every isomorphism between a quotient over an analytic ideal and $\cP(\bbN)/\cJ$ is topologically trivial.\footnote{It is well-known that every proper analytic ideal that includes $\Fin$ is meager.} 
\end{theorem}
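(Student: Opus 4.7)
The plan is to reduce the problem to the setting of closed approximations, then apply the machinery of Lemma~\ref{L.Just.WAT} along a tree-like family, and finally glue the resulting local liftings into a single Borel lifting on a large set using $\MA_{\aleph_1}$. The first step is to produce a countable sequence $(\cK_m)_{m\in\bbN}$ of closed hereditary approximations of $\cJ$ sufficient to detect membership in $\cJ$. In the analytic P-ideal case, Theorem~\ref{T.Solecki} writes $\cJ=\Exh(\mu)$ for a lower semicontinuous submeasure $\mu$, and Lemma~\ref{L.countablydetermined} gives $\cK_m=\cK(\mu,1/m)$ with $\cJ=\bigcap_m(\cK_m\ucup \Fin)$ (taking $d=1$ suffices for our purposes). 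In the countably generated case, if $\cJ$ is generated over $\Fin$ by $(J_m)$, we set $\cK_m=\cP(\bigcup_{i\le m}J_i)$, and then $\cJ=\bigcup_m(\cK_m\ucup \Fin)$; the crucial property in both cases is that $X\in\cJ$ iff $X$ is approximated by members of $\cK_m$ modulo $\Fin$ appropriately as $m$ varies.

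Next, fix a lifting $\Phi_*\colon \cP(\bbN)\to\cP(\bbN)$ of $\Phi$ and an uncountable tree-like family $\cA$ (for instance obtained from an almost disjoint family on $\bbN$ via the usual bijection $\bbN\leftrightarrow\twolo$). Applying Lemma~\ref{L.Just.WAT} to $\cK_m$ for each $m$ yields a countable exceptional set $\cA_m\subseteq\cA$; discarding $\bigcup_m\cA_m$, we are left with an uncountable tree-like subfamily $\cA'$ such that for every $A\in\cA'$ and every $m$, the restriction $\Phi\upharpoonright\cP(A)$ admits a $\sigma$-Borel $(\cK_m\ucup \cK_m)$-approximation $\theta_m^A$. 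By the choice of $(\cK_m)$, the collection $(\theta_m^A)_{m\in\bbN}$ together approximates $\Phi_*$ modulo $\cJ$, so combining them yields a $\sigma$-Borel map on $\cP(A)$ that lifts $\Phi\upharpoonright \cP(A)$ modulo $\cJ$. A Kuratowski--Ulam argument (the Fremlin lemma alluded to after Lemma~\ref{L.Vel.2.2}) then upgrades such a $\sigma$-Borel lifting to a genuine Borel lifting on $\cP(A')$ for some infinite $A'\subseteq A$: since $\cP(A)$ is covered by countably many Borel pieces on each of which one of the branches of the $\sigma$-Borel lifting works, one of these pieces has to be nonmeager in a subalgebra $\cP(A')$.

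The heart of the argument is now to glue these locally defined Borel liftings into one global Borel lifting on a large set. Let $\cJ_\Phi=\{A\subseteq\bbN: \Phi\upharpoonright\cP(A)\text{ is topologically trivial}\}$; the previous step shows $\cJ_\Phi^\perp$ cannot contain an uncountable tree-like family, hence $\cJ_\Phi$ is countably generated in the appropriate sense, and in fact has an uncountable directed tail. Using $\MA_{\aleph_1}$ one constructs a single Borel lifting that simultaneously agrees, modulo $\cJ$, with the chosen local Borel liftings on an $\aleph_1$-directed cofinal subset of $\cJ_\Phi$ (following the uniformization scheme of~\cite[\S3.13]{Fa:AQ}); this yields a single $A^*\in\cJ_\Phi$ with $\bbN\setminus A^*$ also handled by the decomposition, and the restriction $\Phi\upharpoonright \cP(A^*)$ is topologically trivial while $\Phi\upharpoonright\cP(\bbN\setminus A^*)$ has nonmeager kernel. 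Splitting $\Phi$ along the partition $\bbN=A^*\sqcup(\bbN\setminus A^*)$ gives the claimed direct sum decomposition. The final clause is immediate: if $\Phi$ is an isomorphism between analytic quotients then its kernel is a proper analytic ideal containing $\Fin$, hence meager, so the nonmeager-kernel summand must be trivial and $\Phi$ itself is topologically trivial.

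The main obstacle is the global uniformization in the third paragraph: Lemma~\ref{L.Just.WAT} only yields approximations (Baire/$\sigma$-Borel) that depend on the particular $A$ chosen from the tree-like family, and even the passage from $\sigma$-Borel to Borel is local. Assembling these into a single Borel lifting modulo $\cJ$ defined on a cofinal portion of $\cP(\bbN)$ requires $\MA_{\aleph_1}$ to defeat the nonuniformity, and this is where most of the technical work in~\cite{Fa:AQ} is spent; all other steps are essentially bookkeeping around the $\OCA$ dichotomy already built into Lemma~\ref{L.Just.WAT}.
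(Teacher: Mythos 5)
Your approach mirrors the paper's: closed approximations from Lemma~\ref{L.countablydetermined}, local $\sigma$-Borel approximations from Lemma~\ref{L.Just.WAT}, and $\MA_{\aleph_1}$ uniformization from~\cite[\S 3.13]{Fa:AQ}. Two steps nonetheless need correction. First, in the second paragraph the claim that the family $(\theta_m^A)_{m\in\bbN}$ ``combines'' into a single $\sigma$-Borel lifting of $\Phi\restriction\cP(A)$ modulo $\cJ$ is not a formality in the analytic P-ideal case: there $\cJ=\bigcap_m(\cK_m\ucup\cK_m\ucup\Fin)$ and each $\theta_m^A$ is only an approximation relative to its own $\cK_m$; assembling one function that works for all $m$ simultaneously requires a genuine compatibility argument. (In the countably generated case this step is free, since $\cK_m\ucup\cK_m\subseteq\cJ$ already, so any single $\theta_m^A$ lifts $\Phi\restriction\cP(A)$.) Your final paragraph locates the hard part only in the dependence on $A$ and the passage from $\sigma$-Borel to Borel, overlooking this dependence on $m$, which is likewise part of the deferred material in~\cite[\S 3.13]{Fa:AQ}.

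Second, in the third paragraph the deduction that $\cJ_\Phi$ is ``countably generated in the appropriate sense'' is wrong: if $\cJ_\Phi^\perp$ contains no uncountable tree-like family then $\cJ_\Phi^\perp=\Fin$, i.e.\ $\cJ_\Phi$ is dense, whereas any proper countably generated ideal over $\Fin$ always has an infinite set in its orthogonal, so $\cJ_\Phi$ could only be countably generated if it were all of $\cP(\bbN)$. The property the paper actually states, and the one you should extract, is that $\cJ_\Phi$ is nonmeager. Moreover you never argue that the kernel of $\Phi$ is nonmeager on the complementary summand, which is the content of the direct-sum conclusion; that too is handled by the uniformization machinery you cite. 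Apart from these points the outline is faithful to the paper's.
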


As a consequence, thanks to Ulam-stability and the Radon--Nykodim property (see Theorem~\ref{T.idealiso}), we obtain the following:

\begin{corollary}\label{C.OCAMA}
Assume $\OCA$ and $\MA$. Let $\mathcal I$ and $\mathcal J$ be Borel ideals in $\mathcal P(\bbN)$ such that $\mathcal J$ be either a nonpathological $P$-ideal or countably generated. Then
\begin{itemize}
\item all automorphisms of $\mathcal P(\bbN)/\mathcal J$ are algebraically trivial, 
\item $\mathcal P(\bbN)/\mathcal I$ and $\mathcal P(\bbN)/\mathcal J$ are isomorphic if and only if $\mathcal I$ and $\mathcal J$ are Rudin--Keisler isomorphic, 
\end{itemize}
\end{corollary}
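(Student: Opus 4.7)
The corollary combines Theorem~\ref{thm:ilijasOCAideals} (the $\OCA$ Lifting Theorem) with the Radon--Nikodym property of \S\ref{S.Ulam}. First note that $\cJ$ has the Radon--Nikodym property in both allowed cases: nonpathological analytic P-ideals are covered by Theorem~\ref{T.idealiso}(1), and every countably generated ideal is a nonpathological $F_\sigma$ ideal, hence handled by Theorem~\ref{T.idealiso}(2).

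For the first bullet, fix $\Phi \in \Aut(\cP(\bbN)/\cJ)$ and form $\tilde\Phi := \Phi \circ \pi_\cJ \colon \cP(\bbN) \to \cP(\bbN)/\cJ$, whose kernel equals $\cJ$ by injectivity of $\Phi$. Theorem~\ref{thm:ilijasOCAideals} splits $\tilde\Phi$ as the direct sum of a topologically trivial summand on some $\cP(A)$ and a summand on $\cP(\bbN \setminus A)$ with nonmeager kernel. Since proper analytic ideals containing $\Fin$ are meager, the nonmeager-kernel summand must vanish, forcing $\bbN \setminus A \in \cJ = \ker(\tilde\Phi)$. Thus $\tilde\Phi$ is topologically trivial, and the Radon--Nikodym property then promotes it to an algebraically trivial (Boolean algebra homomorphism) lift $\Phi_* \colon \cP(\bbN) \to \cP(\bbN)$ of $\Phi$.

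For the second bullet, the ``if'' direction is routine: an almost-bijection $f \colon \bbN \setminus F \to \bbN \setminus G$ with $A \in \cI \Leftrightarrow f[A] \in \cJ$ induces the isomorphism $[A]_\cI \mapsto [f[A]]_\cJ$. Conversely, given an isomorphism $\Phi$, the same argument as above (with $\cI$ now in the role of $\ker(\tilde\Phi)$) produces a Boolean algebra homomorphism lift $\Phi_*$; by standard results on topologically trivial liftings, $\Phi_*$ can be arranged to be $\sigma$-continuous, hence of the form $\Phi_*(A) = \bigsqcup_{n \in A} S_n$ for pairwise disjoint sets $S_n \subseteq \bbN$ with $\bbN \setminus \bigsqcup_n S_n \in \cJ$. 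A pigeonhole argument, exploiting $\Fin \subseteq \cI \cap \cJ$ and bijectivity of $\Phi$ between the two atomless Boolean algebras, forces all but finitely many $S_n$ to be singletons whose union is cofinite in $\bbN$. Picking $f(n) \in S_n$ on the singleton-indices yields the desired almost-bijection $f\colon \bbN \setminus F \to \bbN \setminus G$ witnessing the Rudin--Keisler isomorphism.

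The main obstacle is concentrated in Theorem~\ref{thm:ilijasOCAideals} itself, which is the only serious use of $\OCA$ and $\MA_{\aleph_1}$. Granted that theorem and the Radon--Nikodym property, the present corollary reduces to elementary bookkeeping, the most delicate point being the pigeonhole argument ruling out infinitely many non-singleton fibres $S_n$ in the second bullet.
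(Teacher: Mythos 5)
Your overall route --- the OCA Lifting Theorem for topological triviality, followed by the Radon--Nikodym property from Theorem~\ref{T.idealiso} for algebraic triviality --- matches what the paper's lead-in indicates, and the argument for the first bullet, together with the reduction of countably generated ideals to the nonpathological $F_\sigma$ case, is sound.

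There is a gap in the final step of the second bullet. You assert that a pigeonhole argument forces all but finitely many fibers $S_n$ to be singletons whose union is cofinite in $\bbN$. Neither clause is true in general, even after the lift has been made completely additive. Take $\cJ$ to be the (countably generated, hence allowed) ideal on $\bbN$ generated over $\Fin$ by the odd numbers, let $\cI = \Fin$, and take the completely additive lift $\Phi_*(A) = h^{-1}[A]$ with $h(m) = \lfloor m/2\rfloor$, so that $S_n = \{2n, 2n+1\}$ for every $n$. One checks directly that $[A]_\cI \mapsto [h^{-1}[A]]_\cJ$ is an isomorphism from $\cP(\bbN)/\Fin$ onto $\cP(\bbN)/\cJ$ (surjectivity holds because the odd numbers are absorbed by $\cJ$), yet every fiber has two elements and the union of any selection of one point per fiber is co-infinite. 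What you actually need, and what is true, is that one may choose $s_n \in S_n$ so that $\bigcup_n(S_n \setminus \{s_n\}) \in \cJ$; then $n \mapsto s_n$, restricted to the set of $n$ with $S_n \neq \emptyset$ (whose complement lies in $\cI$), is the required Rudin--Keisler bijection between sets whose complements lie in $\cI$ and $\cJ$ respectively. Producing such a selection is the genuine content of this step; it is not a pigeonhole count and it uses the structure of $\cJ$ essentially (in the example above, the point is precisely that the discarded odd numbers fall into $\cJ$). The precise argument is in \cite{Fa:AQ}, and your write-up should reproduce or cite it rather than appeal to a heuristic. For the same reason, the exceptional sets $F$ and $G$ in your ``if'' direction should be allowed to lie in $\cI$ and $\cJ$ respectively rather than required to be finite, to match the definition of Rudin--Keisler isomorphism of ideals.
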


As pointed out in \S\ref{S.Abel}, all analytic P-ideals are $F_{\sigma\delta}$, but there are Borel ideals of arbitrarily high complexity (e.g., the ideals from Theorem~\ref{T.idealiso} \ref{3.T.idealiso} and \ref{4.T.idealiso} are Borel, with arbitrarily high Borel complexity). 

This is a good moment to state a revised version of \cite[Question~15.2]{Fa:Luzin} (the original question asked for $d=3204$ in place of $d=80$;  compare \cite[Theorem~1.4]{farah2024biba} and \cite[Theorem~10.4]{Fa:Luzin}). 
\begin{question} \label{Q.countablydetermined} Suppose that $\cI$ is an $F_{\sigma\delta}$ ideal on $\bbN$. Are there closed approximations $\cK_m$, for $m\in \bbN$, such that $\cI=\bigcap_m (\cK_m^d\ucup \Fin)$ for every $d\geq 1$? What about the cases $d=1$ and $d=80$? 
\end{question}

An ideal for which the answer to Question~\ref{Q.countablydetermined} is positive is said to be $80$-\emph{determined by closed approximations}. By \cite[Theorem~1.4]{farah2024biba} (see \cite[Theorem~10.4]{Fa:Luzin}), $\OCA$ and $\MAsigmalinked$  together imply that for such $\cI$ and any analytic ideal $\cJ$, every isomorphism between $\cP(\bbN)/\cI$ and $\cP(\bbN)/\cJ$ is topologically trivial. This class includes the ideals $\NWD(\bbQ)$, $\NULL(\bbQ)$, and the Weyl ideal $\cZ_W$ (see Theorem~\ref{T.idealiso}). 

\begin{question} \label{Q.OCAlt} Does $\OCA+\MA$ imply the analog of OCA Lifting Theorem for all analytic ideals, or at least that every isomorphism between their quotients is topologically trivial? 
\end{question}

We don't even know whether Question~\ref{Q.OCAlt} has a positive answer for  all~$F_{\sigma\delta}$ ideals (see however Theorem~\ref{thm:ConsRigIdeals}). After two decades with virtually no progress, in \cite{farah2024biba} the conclusions of Theorem~\ref{thm:ilijasOCAideals} and Corollary~\ref{C.OCAMA} have been strengthened to include all ideals  that satisfy the property from Question~\ref{Q.countablydetermined} with $d=80$ and their assumptions have been weakened to $\OCA$ and $\MAsigmalinked$ (see~\S\ref{S.OCAsharp}).

\subsubsection{\v{C}ech--Stone remainders of zero-dimensional spaces}
The following was proved by combining the ideas from \cite[\S 4]{Fa:AQ} and a stratification of the clopen algebra of $\beta X$ with copies of $\cP(\bbN)$. 

\begin{theorem}[\cite{FaMcK:Homeomorphisms}] \label{T.all}\label{cor:FAzerodim}
Assume $\OCA$ and $\MA_{\aleph_1}$. Let $X$ and $Y$ be second countable locally compact zero-dimensional topological spaces. Then all automorphisms of $C(\beta X\setminus X)$ are topologically trivial and $\beta X\setminus X$ and $\beta Y\setminus Y$ are homeomorphic if and only if $X$ and $Y$ have co-compact homeomorphic subsets. 
\end{theorem}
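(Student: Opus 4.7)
The plan is to reduce, via Stone duality, to a Boolean algebra lifting problem to which an OCA-based argument of the type of Theorem~\ref{thm:ilijasOCAideals} applies. Since $X$ is zero-dimensional, the natural identification $\Clop(\beta X\setminus X)\cong\Clop(X)/\Clop_c(X)$ turns autohomeomorphisms of $\beta X\setminus X$ into automorphisms of this Boolean algebra quotient, with both triviality notions translating cleanly. By second countability and local compactness, partition $X=\bigsqcup_n K_n$ into compact clopen pieces so that
\[
\Clop(X)/\Clop_c(X)\;\cong\;\textstyle\prod_n\Clop(K_n)\big/\bigoplus_n\Clop(K_n),
\]
a reduced product over $\Fin$ of countable Boolean algebras. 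Partition $Y=\bigsqcup_n L_n$ analogously.

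The core of the argument is an OCA Lifting Theorem in this reduced-product setting. For an arbitrary Boolean algebra homomorphism $\Phi$ between the two quotients, fix a set-theoretic lifting $\Phi_*$. Mimicking Lemmas~\ref{L.Vel.2.2} and~\ref{L.Just.WAT}, for a tree-like family $\cA\subseteq\prod_n\Clop(K_n)$ consider the Polish space $\cX$ of compatible pairs $(B,C)$ with $C\subseteq B\in\cA$, coupled with their $\Phi_*$-images. Define an open coloring $L\subseteq[\cX]^2$ whose edges are pairs $\{(B,C),(B',C')\}$ with $\bfB(B)\neq\bfB(B')$, $B\cap C'=B'\cap C$, and $\Phi_*(B)\cap\Phi_*(C')\neq\Phi_*(B')\cap\Phi_*(C)$ modulo $\Clop_c(Y)$. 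An uncountable $L$-homogeneous set would produce a Shelah-style gap that $\Phi_*$ cannot bridge consistently, contradicting that $\Phi_*$ lifts a homomorphism. $\OCA$ then delivers the countable-cover alternative, from which one extracts $\sigma$-Borel lifts of $\Phi$ on cofinally many $A\in\cA$. A Fremlin--Kuratowski--Ulam uniformization (cf.~\cite[Lemma~3.12.3]{Fa:AQ}) combined with $\MA_{\aleph_1}$ patches these local lifts into a single Borel lifting of $\Phi$, establishing topological triviality.

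With topological triviality in hand, apply Lemma~\ref{L.AsympAdd} to pass to an asymptotically additive lifting $(f_m)$, where $f_m\colon\prod_{n\in I_m}\Clop(K_n)\to\prod_{n\in J_m}\Clop(L_n)$ and the blocks $I_m,J_m$ are finite. Since the defects of $\Phi_*$ from being a Boolean algebra homomorphism lie in $\Clop_c(Y)$, a pigeonhole/uniformization argument along the block partition refines this data, after discarding finitely many blocks, to a genuine Boolean algebra isomorphism $\Clop(X\setminus\bigsqcup_{n<N}K_n)\to\Clop(Y\setminus\bigsqcup_{n<N}L_n)$ lifting $\Phi$. Dualizing, this is a homeomorphism between cocompact subsets $X\setminus K\cong Y\setminus L$, which simultaneously witnesses the algebraic triviality statement (upon taking $Y=X$) and the ``$\Rightarrow$'' direction of the second clause. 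The converse ``$\Leftarrow$'' is routine: a homeomorphism $X\setminus K\cong Y\setminus L$ of cocompact subsets extends via $\beta$ to a homeomorphism $\beta X\setminus X=\beta(X\setminus K)\setminus(X\setminus K)\cong\beta(Y\setminus L)\setminus (Y\setminus L)=\beta Y\setminus Y$. The main obstacle is the OCA step: although $\Fin$ is the simplest possible ideal (with trivial closed approximations in the sense of Lemma~\ref{L.countablydetermined}, which keeps the ``ideal side'' of the argument clean), the coordinatewise bookkeeping needed to run the Veli\v{c}kovi\'c--Just gap analysis through the product structure $\prod_n\Clop(K_n)$---where each factor can be an arbitrary countable Boolean algebra---requires careful construction of the tree-like family and the coloring $L$ to interact correctly with the product decomposition on both sides.
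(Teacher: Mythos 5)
Your high-level plan is right (Stone duality, reduce to a lifting problem over Boolean quotients, apply $\OCA$ to obtain a Borel lifting, upgrade to an algebraic lifting by a Ulam-type argument, then dualize to a cocompact homeomorphism), and the closing ``$\Leftarrow$'' observation is correct. However, there is a concrete gap in the way you push the argument through the reduced product, and the route is genuinely different from the one the survey attributes to~\cite{FaMcK:Homeomorphisms}.

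The main problem is your invocation of Lemma~\ref{L.AsympAdd}. That lemma is proved only under Convention~\ref{Conv.G}, which requires the factors $A_n$ to be \emph{finite} structures, and the survey explicitly says that the finiteness hypothesis is what makes Just's stabilizer argument work (it supplies the classical description of comeager subsets of $\prod_n A_n$) and records the extension beyond finite categories as an open problem (Problem~\ref{P.AsympAdd}). In your decomposition $\Clop(X)/\Clop_c(X)\cong\prod_n\Clop(K_n)/\bigoplus_n\Clop(K_n)$ the pieces $\Clop(K_n)$ are only countable; if $X$ has no isolated points (say $X=2^{\bbN}\times\bbN$), then every compact clopen $K_n$ is a Cantor set and every $\Clop(K_n)$ is a countably infinite atomless Boolean algebra, so Lemma~\ref{L.AsympAdd} simply does not apply and nothing in the paper gives you the asymptotically additive lifting you need. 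There is also a subtler difficulty in the $\OCA$ step itself: the contradiction in Lemma~\ref{L.Vel.2.2} is extracted by forming $X=\bigcup\{B:(B,C)\in\cY\}$ and exploiting that this supremum exists in $\cP(\bbN)$; a countable union of clopen subsets of a Cantor set need not be clopen, so the analogous supremum need not exist in $\Clop(X)$, and your tree-like family and coloring would have to be arranged specifically to keep such suprema inside the algebra.

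The approach the survey describes for~\cite{FaMcK:Homeomorphisms} is organized precisely to stay in the finite-structure regime where the existing machinery for $\cP(\bbN)/\Fin$ is available: one stratifies $\Clop(X)\cong\Clop(\beta X)$ by a $\sigma$-directed family of complete atomic subalgebras isomorphic to $\cP(\bbN)$, one for each partition of $X$ into compact clopen pieces (each such stratum is $\prod_n\{0,1\}$, a reduced product of two-element algebras, so Lemmas~\ref{L.Vel.2.2} and~\ref{L.AsympAdd} apply verbatim), and then coherently uniformizes the Borel liftings over the directed family of strata --- a patching step in the spirit of the coherent-family uniformization in the Calkin algebra proof of \S\ref{6bii.PFAAVi}. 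If you want to salvage a one-shot reduced-product argument, you would first need to prove a version of Lemma~\ref{L.AsympAdd} valid for reduced products of countable (not merely finite) Boolean algebras, which at present is open.
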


Related results along the lines of the OCA lifting theorem will be discussed in the context of general endomorphisms in \S\ref{S.endo}. 
Theorem~\ref{T.all} is as far as one could get by sticking to Boolean algebras in the rigidity results for \v Cech--Stone remainders. We move on to \cstar-algebras. 

\subsection{Coronas and Forcing Axioms}
\label{6bii.PFAAVi}
This section is dedicated to a sketch of the proof that Forcing Axioms imply all isomorphisms of coronas of separable \cstar-algebras are topologically trivial. We start by studying the first noncommutative example: the Calkin algebra.
\subsubsection{The Calkin algebra}
\begin{theorem}[{\cite[Theorem~1]{Fa:All}}]\label{T.Calkin}
Assume $\OCA$. Then all automorphisms of $\mathcal Q(H)$ are inner.
\end{theorem}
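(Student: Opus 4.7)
The plan is to mirror in the noncommutative setting the strategy behind the OCA Lifting Theorem for $\cP(\bbN)/\Fin$ (Lemma~\ref{L.Vel.2.2}, Theorem~\ref{thm:ilijasOCAideals}), using the stratification $\cQ(H)=\bigcup_{\bfE\in\Part}\pi[\cF[\bfE]]$ from Lemma~\ref{lemma:strat} in place of tree-like families. Fix an automorphism $\Phi$ of $\cQ(H)$ and a (not necessarily Borel) lifting $\Phi_*\colon\cB(H)\to\cB(H)$. The goal is to produce a single unitary $v\in\cB(H)$ with $\Phi=\Ad\pi(v)$, which I would do in two stages: first a \emph{local} stage implementing $\Phi$ by a unitary $u_\bfE$ on each $\pi[\cF[\bfE]]$, then a \emph{global} coherence-to-triviality stage yielding $v$.

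For the local stage, observe that $\cD[\bfE^{\even}]$ and $\cD[\bfE^{\odd}]$ are isomorphic to direct products of matrix algebras, so that $\pi[\cD[\bfE^{\even}]]$ and $\pi[\cD[\bfE^{\odd}]]$ are of the form $\cM_X$ in the notation of Definition~\ref{Def.MX}. Provided one can show that $\Phi\restriction\pi[\cF[\bfE]]$ is topologically trivial, Corollary~\ref{cor:Ulamredprod} upgrades its restriction to each of these two reduced-product subalgebras to an algebraically trivial $^*$-homomorphism into $\cQ(H)$. Via Theorem~\ref{thm:autocalk} (and its easy partial-isometry variant) each such restriction is implemented by a unitary in $\cB(H)$, and a straightforward gluing argument on the slot-overlaps of $\bfE^{\even}$ and $\bfE^{\odd}$ produces a single unitary $u_\bfE\in\cB(H)$ with $\Ad\pi(u_\bfE)\restriction\pi[\cF[\bfE]]=\Phi\restriction\pi[\cF[\bfE]]$. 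The first appeal to $\OCA$ is precisely the proof that $\Phi\restriction\pi[\cF[\bfE]]$ is topologically trivial: I would set up an open coloring on the Polish space of partial-isometry lifts of finitely-supported elements of $\cF[\bfE]$ that is the noncommutative analogue of the coloring (L1)--(L3) in Lemma~\ref{L.Vel.2.2}, declaring two such lifts adjacent if $\Phi_*$ sends them to operators whose difference on a prescribed finite slot has norm bounded away from zero. An uncountable homogeneous set, after a $\Delta$-system and pigeonhole, produces two inequivalent $\Phi$-images of a single element of $\cQ(H)$; the countable closed-cover alternative yields a $\sigma$-Borel approximation that can be promoted to a Borel lift by the Fremlin-style Kuratowski--Ulam uniformization recorded after Lemma~\ref{L.Vel.2.2}.

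Coherence of $\{(\bfE,u_\bfE)\mid\bfE\in\Part\}$ in the sense of Definition~\ref{Def.Coherent} is then automatic: if $\bfE\leq^*\bfF$ then both $\Ad\pi(u_\bfE)$ and $\Ad\pi(u_\bfF)$ restrict to $\Phi$ on $\pi[\cF[\bfE]]$, which forces $u_\bfE u_\bfF^*\in\sfF_\bfE$. By Lemma~\ref{L.coherent} the theorem reduces to proving that, under $\OCA$, every total coherent family of unitaries is trivial — equivalently, that the natural map from the unitary group of $\cB(H)$ to $\varprojlim_{\bfE\in\Part}\sfG_\bfE$ is surjective.

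This last reduction is the step I expect to be the main obstacle. Indeed, \S\ref{S.Cohomology} showed that under $\fd=\aleph_1$ the inverse limit admits $2^{\aleph_1}$ nontrivial elements, so $\OCA$ must genuinely collapse it. My plan is to run a second Velickovic--Todorcevic coloring, now on the Polish space of countable $\leq^*$-approximations $(\bfE_n,u_{\bfE_n})_{n\in\bbN}$ to elements of the inverse limit, declaring two such approximations adjacent if no common refinement of them is uniformizable by a scalar adjustment coming from $\bbT^\bbN$. An uncountable homogeneous set would yield a strictly $\leq^*$-increasing $\aleph_1$-chain in $\Part$ carrying a non-uniformizable unitary cocycle, which a direct combinatorial argument in the spirit of \cite[\S 4]{Fa:AQ} rules out since the would-be Fredholm-index obstructions cancel on the relevant slots; the countable closed-cover alternative, combined with the $\sigma$-completeness of coherence and an $\MA$-style bookkeeping to pick a cofinal branch through $(\Part,\leq^*)$, patches the local uniformizations into the required $v\in\cB(H)$. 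Feeding the trivialized coherent family $\{(\bfE,u_\bfE v^*)\}$ back into Lemma~\ref{L.coherent} gives $\Phi=\Ad\pi(v)$, completing the proof.
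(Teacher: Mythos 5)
Your architecture---stratify $\cQ(H)$ by $\pi[\cF[\bfE]]$, establish local triviality from an $\OCA$ coloring plus Ulam-stability, extract a coherent family of unitaries, and uniformize it with a second $\OCA$ application---matches the paper's plan, but the proof cannot close as written because the \emph{Isometry Trick} (Lemma~\ref{lemma:partialiso}) is entirely absent. The $\OCAi$ coloring the paper actually runs (\cite[Lemma~17.6.3]{Fa:Errata}, sketched in \S\ref{6bii.PFAAVi}) delivers only a $\sigma$-Borel $\varepsilon$-approximation to $\Phi$ on $\calD_X[\bfE]$ for \emph{some} infinite $X\subseteq\bbN$; Kuratowski--Ulam, Baire category, and Jankov--von~Neumann then give a continuous $3\varepsilon$-approximation on some still smaller $\calD_Z[\bfE]$. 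There is no direct route from a single open coloring to a Borel lift on all of $\cF[\bfE]$: the second alternative of $\OCA$ controls the lift only after shrinking to a subset of the coordinates, the same phenomenon that in Lemma~\ref{L.Vel.2.2} forces the reduction to tree-like families. The Isometry Trick is exactly what bootstraps the local result---every $\calD[\bfF]$ embeds into $\calD_X[\bfE]$ (for $\bfE$ with $|E_n|\to\infty$) via $\Ad v$ for an isometry $v$, and the implementing unitary is transported back. Skipping it leaves your local stage stuck on a subsequence of the blocks, so the unitaries $u_\bfE$ you need never materialize.

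Your final step compounds this by invoking ``$\MA$-style bookkeeping to pick a cofinal branch through $(\Part,\leq^*)$.'' First, $(\Part,\leq^*)$ has cofinality $\fd$ and in general admits no cofinal chain, so there is no branch to pick; the paper's trivialization of the coherent family (\cite[Theorem~17.8.2]{Fa:STCstar}) uses $\sigma$-directedness of $\Part$ together with the fact that $\OCA$ bounds every $\aleph_1$-sized subset of $\Part$, and it uniformizes the whole family with no branch in sight. Second, and more to the point, the theorem is stated from $\OCA$ alone; the paper remarks explicitly (after Lemma~\ref{lemma:partialiso} and again in \S\ref{S.other}) that the Isometry Trick is precisely why $\MA$ is not needed for $\cQ(H)$, in contrast to the Boolean case and to general coronas (Theorem~\ref{T.mainPFA}). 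Reaching for $\MA$ here is a symptom of the missing Isometry Trick, not a repair. Two further gaps: Theorem~\ref{thm:autocalk} characterizes innerness for \emph{automorphisms} of $\cQ(H)$ and does not apply to $\Phi\restriction\pi[\calD[\bfE^{\even}]]$, which is only an injective $^*$-homomorphism into $\cQ(H)$, so a $^*$-homomorphism lift from Corollary~\ref{cor:Ulamredprod} does not by itself produce a unitary $u_\bfE$; in the paper the unitary emerges only through the full chain of reductions together with the Johnson--Parrott theorem at the atomic masa. And your second coloring (``adjacency when no common refinement is uniformizable by a scalar adjustment from $\bbT^\bbN$'') is far too vague to evaluate, yet that uniformization is the technically heaviest part of the argument and the one you would need to carry out in detail.
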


We will sketch the proof from \cite[\S 17]{Fa:STCstar}, based on \cite{Fa:All} (see also \cite{Fa:Errata}). Fix an automorphism $\Phi$ of $\cQ(H)$. By using the stratification of $\cQ(H)$ from~\S\ref{S.Strat}, an initial segment of the proof is concerned with the restrictions of $\Phi$ to reduced products of the form $\calD[\bfE]$. We quickly recap the key ideas from~\S\ref{S.Strat} while slipping in a few additional definitions. Recall that $\Part$ is the poset of partitions $ \bfE=\langle E_j\colon j\in \bbN\rangle$ of~$\bbN$ into finite intervals, ordered by $\bfE\leq^*\bfF$ if $(\forall^\infty n)(\exists m) E_m\subseteq F_n$. Fixing a basis of $H$ once and for all, to $\bfE\in \Part$ one associates a decomposition of $H$ into a direct sum of finite-dimensional subspaces $H=\bigoplus_n H_n$. Denote the projection to $H_n$ by $q^{\bfE}_n$ and let, for $X\subseteq\bbN$, $q_X^{\bfE}=\sum_{n\in X}q_n^{\bfE}$. 
Let
 \begin{equation*}
\mathcal D[\bfE]=\{a\in\mathcal B(H)\mid a\text{ commutes with }q_X^{\bfE}\text{ for all }X\subseteq\bbN\}.
\end{equation*}
Let $\bfE^{\even}$, $\bfE^{\odd}$, and $\cF[\bfE]$ be as in \S\ref{S.Strat}. 
The following are the main steps of the proof, obtained for all $\bfE$ simultaneously. 
\begin{enumerate}
 \item \label{Auto.1} $\Phi$ is topologically trivial on $\calD_X[\bfE]$ for some infinite~$X$. 
 \item \label{Auto.1.5} $\Phi$ is algebraically trivial on $\calD_X[\bfE]$ for some infinite~$X$. 
 \item \label{Auto.2} $\Phi$ is algebraically trivial on $\calD[\bfE]$. 
 \item \label{Auto.2.5} $\Phi$ is algebraically trivial on $\cF[\bfE]$. 
 \item \label{Auto.3} A certain coherent family of unitaries (Definition~\ref{Def.Coherent}) is trivial. 
\end{enumerate}
We will first sketch the implications from \ref{Auto.1} to \ref{Auto.3}, and then how $\OCA$ implies \ref{Auto.1}.

The implication from \ref{Auto.1} to \ref{Auto.1.5} uses the methods from \S \ref{S.Ulam}. In order to use the analog of Lemma~\ref{L.AsympAdd} for finite-dimensional \cstar-algebras, one needs to replace the matrix algebra components of $\calD[\bfE]$ with finite sets. Identify $\calD[\bfE]$ with $\prod_n M_{k(n)}(\bbC)$ (with $k(n)=|E_n|$) and choose a finite $2^{-n}$-dense $\sfD(n)\subseteq M_{k(n)}(\bbC)$ (it is convenient to impose additional properties on $\sfD_n$, see \cite[Definition~17.4.2]{Fa:STCstar}). Then $\sfD[\bfE]=\prod_n \sfD(n)$ is a \emph{discretisation of $\calD[\bfE]$}. Since for $a\in (\calD[\bfE])_1$ there is $a'\in \sfD[\bfE]$ such that $a-a'$ is compact, topological triviality on $\sfD[\bfE]$ implies topological triviality on $\sfD[\bfE]$. By the Ulam-stability (Lemma~\ref{FD-Ulam} and the analog of Proposition~\ref{P.Ulam}), \ref{Auto.1.5} follows. (See \cite[\S 17.4]{Fa:STCstar} for the details.)

We move into the fun part---the middle of the proof, \ref{Auto.1.5} $\Rightarrow$ \ref{Auto.2}---leaving the naughty bits for later. One of the main causes of the difference in behaviour between the commutative and the noncommutative settings is that for every nonzero projection $p\in \cQ(H)$ some isometry $v$ satisfies $vv^*=p$ (this is an instance of the \emph{Murray--von Neumann equivalence}, the noncommutative analog of equinumerosity). Suppose $\bfE$ and $X$ satisfy $\sup_{n\in X} |E_n|=\infty$. For every $\bfF$ there is an injection $g\colon \bbN\to \bbN$ such that $|F_n|\leq |E_{g(n)}|$ for all $n$. Thus there is a partial isometry (see \ref{3.partial.isometry}) $v_n$ such that $v_n^*v_n=q^{\bfF}_n$ and $v_n v_n^*\leq q^{\bfE}_{g(n)}$. Then $\sum_n v_n$ SOT-converges to an isometry $v$ such that $\Ad v$ is an injective $^*$-homomorphism from $\calD[\bfF]$ into $\calD_X[\bfE]$. This, and Lemma~\ref{lemma:partialiso}, are the reasons why the present proof, unlike those in \S\ref{S.FABoole}, does not require $\MA$.

\begin{lemma}[The Isometry Trick] \label{lemma:partialiso} Suppose that $A$ and $B$ are \cstar-subalgebras of $\cQ(H)$ and $v$ is an isometry such that $(\Ad v)[A]\subseteq B$. If $\Phi\in \Aut(\cQ(H))$ is implemented by a unitary $u$ on $B$, then it is implemented by any lifting of $\Phi (\pi(v^*))\pi(uv)$ on $A$. 
\end{lemma}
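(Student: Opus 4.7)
The plan is a direct computation exploiting only that $\Phi$ is a $^*$-automorphism, $v$ is an isometry (so $\pi(v)$ is an isometry in $\cQ(H)$), and $u$ implements $\Phi$ on $B$. First I would set $w:=\Phi(\pi(v))$; since $\Phi$ is a $^*$-homomorphism, $w^*w=\Phi(\pi(v)^*\pi(v))=\Phi(1)=1$, so $w$ is an isometry in $\cQ(H)$, and $w^*=\Phi(\pi(v^*))$.

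Next, fix $a\in A$. By hypothesis, $b:=\pi(v)\,a\,\pi(v)^*\in B$, so
\[
\Phi(b)=\pi(u)\,b\,\pi(u)^*=\pi(uv)\,a\,\pi(v^*u^*).
\]
On the other hand, $\Phi$ being a homomorphism gives $\Phi(b)=w\,\Phi(a)\,w^*$. Equating and multiplying on the left by $w^*$ and on the right by $w$, the isometry relation $w^*w=1$ collapses the left-hand side to $\Phi(a)$, yielding
\[
\Phi(a)=w^*\pi(uv)\,a\,\pi(v^*u^*)w = c\,a\,c^*,\quad\text{where } c:=\Phi(\pi(v^*))\,\pi(uv).
\]
Since this holds for every $a\in A$, any lift $\tilde c\in\cB(H)$ of $c$ implements $\Phi$ on $A$ via $\Ad\tilde c$.

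There is essentially no obstacle: the proof rides entirely on the identity $w^*w=1$, which is automatic from $v^*v=1$ and the fact that $\Phi$ is a $^*$-homomorphism, together with the hypothesis $(\Ad\pi(v))[A]\subseteq B$ ensuring we may apply the $u$-implementation to $\pi(v)\,a\,\pi(v)^*$. Note in particular that $c$ need not be an isometry (indeed $cc^*=\Phi(\pi(v^*))\pi(uvv^*u^*)\Phi(\pi(v))$, which reduces to $1$ only when $\pi(v)$ is unitary), but this is irrelevant: the displayed formula $\Phi(a)=cac^*$ is valid as stated on $A$, and this is all that is required.
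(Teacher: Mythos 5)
Your proof is correct and is essentially the paper's own argument: the paper expands $\Phi(\pi(b))$ directly using $b=v^*vbv^*v$ and the implementation on $B$, while you isolate $w=\Phi(\pi(v))$ and conjugate the two expressions for $\Phi(\pi(v)a\pi(v)^*)$ by $w^*$, which is the same computation in a slightly different order. One small but worthwhile point: you implicitly (and correctly) read the lemma's conclusion as implementing $\Phi$ on $A$ rather than on $B$ as printed; with the hypothesis $(\Ad\pi(v))[A]\subseteq B$ the argument only yields $\Phi(a)=cac^*$ for $a\in A$, so the ``on $B$'' in the statement (and the ``Fix $b\in B$'' in the paper's proof) appears to be a typo for ``on $A$'' (respectively ``$b\in A$''), consistent with how the lemma is applied to transfer implementation from $\calD_X[\bfE]$ back to $\calD[\bfF]$.
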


\begin{proof} Fix $a\in A$. Since $v^*v=1$, we have $a=v^*vav^*v$, hence 
\[
\Phi(\pi(a))=\Phi(\pi(v^*))\Phi(\pi(vav^*))\Phi(\pi(v))
=\Phi(\pi(v^*)) \pi(u vav^* u^*)\Phi(\pi(v))
\]
which is equal to $(\Ad \Phi(\pi(v^*))\pi(uv))\pi(a)$, as required. 
\end{proof}

Thus in order to prove \ref{Auto.1.5} $\Rightarrow$ \ref{Auto.2} it suffices to start from $\bfE$ such that $\lim_{n\to \infty} |E_n|=\infty$. Moreover, the proof of Lemma~\ref{lemma:partialiso} is flexible enough to transfer other properties of $\Phi$, such as topological triviality, and the existence of an $\varepsilon$-approximation (see below) from $\calD[\bfE]$ to $\calD[\bfF]$ (see \cite[Lemma~17.5.3]{Fa:STCstar}). 

\ref{Auto.2} $\Rightarrow$ \ref{Auto.2.5} is easy (\cite[Proposition~17.5.8]{Fa:Errata}).

Still postponing the technicalities, we handle \ref{Auto.2.5} $\Rightarrow$ \ref{Auto.3}. Let $\bfF$ be such that each $F_n$ is a singleton. Then $A=\calD[\bfF]$ is a maximal abelian \cstar-subalgebra of $\cB(H)$ (this is the \emph{atomic masa} of $\cB(H)$; see \cite[\S 12.3]{Fa:STCstar}) isomorphic to $\ell_\infty$. Let $u$ be the unitary that implements $\Phi$ on $\calD[\bfF]$. Then $\Phi_1=\Ad \pi(u^*) \circ \Phi$ is an inner automorphism if and only~$\Phi$ is, and its restriction to $A$ is equal to the identity. Also, for every $\bfE$, if $v$ is a unitary that implements $\Phi$ on $\cF[\bfE]$, then $(\Ad u^* v)(a)-a$ is compact for all $a\in A$. By the Johnson--Parrot Theorem (\cite[Theorem~12.3.2]{Fa:STCstar}), there is $v_0\in A$ such that $v_0- u^*v$ is compact. A standard perturbation argument gives a unitary $v_\bfE\in A$ such that $v_\bfE-v_0$ is compact. 

This shows that, as in Lemma~\ref{L.coherent}, $\Phi_1$ is induced by a coherent family of unitaries $\cF_\Phi=\{(\bfE, u_\bfE)\}$. The poset $\Part$ is $\sigma$-directed and every subset of cardinality $\aleph_1$ is bounded (this is a consequence of \cite[Theorem~9.7.8]{Fa:STCstar} and the analogous facts for $(\bbN^\bbN,\leq^*)$), and uniformisation of coherent families with such an index set is something that $\OCA$ is really good at. A somewhat elaborate, yet fun, argument shows that $\cF_\Phi$ is trivial (\cite[Theorem~17.8.2]{Fa:STCstar}). The trivializing unitary implements $\Phi_1$ on $\cQ(H)$, and therefore~$\Phi$ is inner. 

We procrastinate for another moment to point out that, together with~\S\ref{S.Cohomology}, this shows that the assertion `every automorphism of $\cQ(H)$ induced by a coherent family of unitaries is inner' is independent from ZFC.


It `only' remains to prove that $\OCA$ implies \ref{Auto.1}. Fix $\bfE$ such that $\lim_n |E_n|=\infty$ and a discretisation $\sfD[\bfE]=\prod_n \sfD(n)$ of $\calD[\bfE]$. Fix $\varepsilon>0$. A function $\theta\colon \bfD_X[\bfE]\to \cB(H)$ is an \emph{$\varepsilon$-approximation} to $\Phi$ if 
\[
\|\pi(\Phi_*(a)-\theta(a))\|\leq \varepsilon
\]
for all $a$ in $\bfD_X[\bfE]$. 
Approximations can be topologically trivial, Borel, or $\sigma$-Borel, with the natural definitions. 
 We will need a self-strengthening of $\OCA$. For distinct $x$ and $y$ in $\twoo$, $\Delta(x,y)=\min\{n\mid x(n)\neq y(n)\}$. 

\begin{definition} $\OCAi$ asserts that for every separably metrisable space~$\cX$ and a decreasing sequence of open sets $L^n\subseteq [\cX]^2$, for $n\in \bbN$ one of the following alternatives applies. 
\begin{enumerate}
\item [(1)] There exist an uncountable $T\subseteq \twoo$ and a continuous $f\colon T\to \cX$ such that
$
\{f(x), f(y)\}\in L^{\Delta(x,y)}
$
for distinct $x$ and $y$ in $T$. 

\item [(2)] There is a partition $\cX=\bigcup_n \cX_n$ such that $[\cX_n]^2$ is disjoint from $L^n$ for all $n$. 
\end{enumerate}
\end{definition}

Moore proved that $\OCA$ implies $\OCAi$ (\cite{moore2021some}, see also \cite[Theorem~8.6.6]{Fa:STCstar}). 
Here is another descendant of Lemma~\ref{L.Vel.2.2}, via Lemma~\ref{L.Just.WAT} and \cite[Proposition~3.12.1]{Fa:AQ}.

\begin{lemma} [{\cite[Lemma~17.6.3]{Fa:Errata}}] For every $\bfE$ and every $\varepsilon>0$, there exists an infinite $X$ such that $\Phi$ has a $\sigma$-Borel $\varepsilon$-approximation on $\calD_X[\bfE]$. 
\end{lemma}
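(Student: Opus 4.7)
The plan is to adapt the OCA-based proofs of Veli\v ckovi\'c (Lemma~\ref{L.Vel.2.2}) and Just (Lemma~\ref{L.Just.WAT}) to the noncommutative setting of $\calD[\bfE]\cong\prod_n M_{|E_n|}(\bbC)$, working inside a discretization $\sfD[\bfE]=\prod_n \sfD(n)$, and replacing the Boolean ``gap-filling'' step with the countable degree-$1$ saturation of $\cQ(H)$ (Theorem~\ref{T.degree-1}).

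First I would fix a Borel lifting $\Phi_*\colon (\cB(H))_1\to(\cB(H))_1$ of $\Phi$ and a tree-like family $\{Y_t: t\in\twoo\}$ of infinite subsets of $\bbN$, say via an injection $\chi\colon\bbN\to\twolo$ so that two distinct branches produce $Y_t$'s whose supports diverge after a finite initial segment. The relevant Polish space is
\[
 \cX=\{(a,b)\mid a\in(\sfD[\bfE])_1,\ \supp(a)\subseteq Y_t\text{ for some }t,\ b=q^{\bfE}_Z\,a\,q^{\bfE}_Z,\ Z\subseteq\supp(a)\},
\]
identified with the quadruple $(a,b,\Phi_*(a),\Phi_*(b))\in (\cB(H)_1)^4$ under the strict topology. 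Declare $\{(a,b),(a',b')\}\in L$ iff (L1) the supports of $a$ and $a'$ lie on distinct branches of the tree; (L2) the noncommutative compatibility $ab'=a'b$ holds (which is a closed condition); and (L3) $\|\pi(\Phi_*(a)\Phi_*(b')-\Phi_*(a')\Phi_*(b))\|>\varepsilon$, which is an open condition in the strict topology. The combination is an open set once (L1) is imposed. Apply $\OCAi$ to a decreasing sequence $L^n$ obtained by tightening the threshold in (L3) to $\varepsilon+2^{-n}$.

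If the first $\OCAi$ alternative held, there would be an uncountable $\cY\subseteq\cX$ with $[\cY]^2\subseteq L$. The compatibility conditions (L2) for $(a,b),(a',b')$ ranging over $\cY$ translate to a countable (after a pigeonhole reduction to a fixed collision pattern on the tree) family of degree-$1$ conditions of the form ``$x\cdot\pi(b)=\pi(a)$''; by Theorem~\ref{T.degree-1} an $x\in\cQ(H)$ realizing them exists on some uncountable $\cY_0\subseteq\cY$. Applying $\Phi$ and using a further pigeonhole reduction to arrange that $\Phi(x)$ is approximated to within $\varepsilon/4$ by a fixed representative on an uncountable $\cY_1\subseteq\cY_0$, one obtains $\|\pi(\Phi_*(a)\Phi_*(b')-\Phi_*(a')\Phi_*(b))\|\leq \varepsilon/2$ for all distinct pairs in $\cY_1$, contradicting (L3). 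Hence the second alternative must hold: $\cX=\bigcup_n \cX_n$ with $[\overline{\cX_n}]^2\cap L=\emptyset$ (passage to closure is legitimate because $L$ is open).

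Finally, on each closed $\overline{\cX_n}$, the absence of $L$-edges together with a countable dense subset $D_n\subseteq \overline{\cX_n}$ yields a Borel map $\theta_n$ on the first-coordinate projection of $\overline{\cX_n}$ such that $\|\pi(\Phi_*(a)-\theta_n(a))\|\leq\varepsilon$; the compatibility condition (L2) propagates approximate values of $\Phi_*$ from $D_n$ to all of $\overline{\cX_n}$ by a Borel rule. A Kuratowski--Ulam style Baire-category argument on the tree-like family $\{Y_t\}$, analogous to the uniformization step of \cite[\S 3.13]{Fa:AQ}, then selects a single infinite $X\subseteq\bbN$ on which the countable family $(\theta_n)_n$ is a $\sigma$-Borel $\varepsilon$-approximation to $\Phi$ on $\calD_X[\bfE]$. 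The main obstacle I anticipate is the homogeneous-alternative step: building the amalgamating element $x$ from an uncountable family of noncommutative degree-$1$ conditions and then controlling $\Phi(x)$ uniformly on an uncountable subfamily is the genuinely noncommutative substitute for Veli\v ckovi\'c's trivial Boolean union $X=\bigcup\{B:(B,C)\in\cY\}$, and calibrating the $\varepsilon$-slack in (L3) against the unavoidable ``amalgamation error'' in (L2) will demand a careful choice of thresholds together with repeated use of countable saturation.
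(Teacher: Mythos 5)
Your overall architecture matches the paper's: fix a lifting, set up a Polish space of pairs encoding $\Phi_*$, color it, apply $\OCAi$, and read a $\sigma$-Borel $\varepsilon$-approximation off the countable closed cover in the second alternative. But there are two substantive gaps. First, the amalgamation in the first $\OCAi$ alternative cannot be done by countable degree-$1$ saturation. The conditions you want to realize, ``$x\cdot\pi(b)=\pi(a)$'' for $(a,b)\in\cY$, form an \emph{uncountable} type (one condition per element of the uncountable homogeneous set), and no pigeonhole over finitely many ``collision patterns'' reduces this to a countable type whose realization would serve: you would only control $x$ on a countable subfamily, which is not enough to contradict (L3). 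The paper's proof sidesteps this entirely by an explicit, coordinatewise gluing: one sets $c_j=p_{\{j\}}a$ for any $(Y,a)$ in the range with $j\in Y$, and the compatibility condition (M2) $p_Za=p_Yb$ (multiplied by $p_{\{j\}}$ for $j\in Y\cap Z$) makes $c$ well-defined; this is precisely the noncommutative analogue of Veli\v ckovi\'c's $X=\bigcup\{B:(B,C)\in\cY\}$ and needs no saturation at all. You correctly identify this step as the crux, but the mechanism you propose does not close it.

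Second, your decreasing sequence $L^n$ is obtained by tightening the threshold to $\varepsilon+2^{-n}$, which does not capture the interaction between the index $n$ in $\OCAi$ and the branching parameter $\Delta(x,y)$. The paper's $M^{\varepsilon,n}$ instead keeps the threshold fixed and truncates with $p_{[n,\infty)}$: the condition asks that $\|p_{[n,\infty)}(\Phi_*(a)q_Z-q_Y\Phi_*(b))\|$ (and its twin) exceed $\varepsilon/2$. This is what makes the contradiction run: because $\|\pi(x)\|=\lim_n\|p_{[n,\infty)}x\|$, one may choose a single $n$ past which the compactness errors coming from (M2) are small on an uncountable subfamily; then passing to $T_1$ with $\Delta(x,y)>n$ for all distinct $x,y\in T_1$ --- the step that genuinely uses $\OCAi$ rather than $\OCA$ --- forces the two halves of (M$^n$3) to fail, contradicting homogeneity. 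Shrinking $\varepsilon$ does not tie the $n$th coloring to the $n$th splitting level of the tree, so this contradiction does not go through in your version. Finally, a minor bookkeeping point: the Kuratowski--Ulam uniformization you append at the end of your argument belongs to the next step of the paper's proof (from $\sigma$-Borel to continuous approximations), not to the present lemma, whose conclusion is already just the existence of a $\sigma$-Borel $\varepsilon$-approximation on some $\calD_X[\bfE]$.
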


\begin{proof} We first fix some notation. Fix a discretisation $\sfD[\bfE]$. Since $\bfE$ is fixed, let $q_Y=q_Y^\mathbf E$, for $Y\subseteq\bbN$. We also fix a second basis for $H$, and let $r_{n}$ be the canonical projection onto the $n$-th element of the such basis, and $r_X$, for $X\subseteq\bbN$, be equal to $\sum_{n\in X}r_n$. Further, fix an injection $\chi\colon \bbN\to \twolo$ and a lifting $\Phi_*$ of $\Phi$ such that $p_Y:=\Phi_*(q_Y)$ is a projection for every $Y\subseteq \bbN$ and that $\|\Phi_*(a)\|\leq 1$ whenever $\|a\|\leq 1$. 

The space 
\[
\cX=\{(Y, a)\mid Y\subseteq \bbN \text{ is infinite, } a\in (\calD_Y[\bfE])_1\}
\]
is equipped with a separable metric topology, obtained by identifying $(Y,a)$ with $(Y,a,q_Y, \Phi_*(a))$. As before, $\bfB(Y)$ denotes the unique branch in $\twoo$ containing $\chi[Y]$. For $n\geq 1$ define a subset of $ [\cX]^2$ by $\{(Y,a), (Z,b)\}\in M^{\varepsilon,n}$ if the following conditions hold. 
\begin{enumerate}
\item [(M1)] $\bfB(Y)\not =\bfB(Z)$. 
\item [(M2)] $q_Z a=q_Y b$. 
\item [(M$^n$3)] $\max(\|r_{[n,\infty)} (\Phi_*(a) p_Z -p_Y \Phi_*(b))\|,\|r_{[n,\infty)} (p_Z \Phi_*(a)- \Phi_*(b)p_Y)\|)>\varepsilon/2$. 
\end{enumerate}
(Note that $\|\pi(a)\|=\lim_{n\to \infty} \|r_{[n,\infty)} a\|$.)
Then $M^{\varepsilon, n}$, for $n\in \bbN$, is a decreasing sequence of open subsets of $[\cX]^2$.

The first alternative given by $\OCAi$ is that there is an uncountable $T\subseteq \twoo$ and a continuous $f\colon T\to \cX$ such that $\{f(x), f(y)\}\in M^{\varepsilon, \Delta(x,y)}$ for all distinct $x$ and $y$ in $T$. Define $c=(c_j)\in \calD[\bfE]$ by $c_j=q_{\{j\}} a$,\footnote{We would normally write $p_j$, but in this situation there is a danger of confusion.} if $(Y,a)$ is in the range of $f$ and $j\in Y$. Then $c$ is well-defined, and by (M1) and (M2) both $p_Y \Phi_*(c)-\Phi_*(a)$ and $\Phi_*(c) p_Y-\Phi_*(a)$ are compact for all $(Y,a)$ in the range of $f$. Fix $n$ such that for an uncountable $T_1\subseteq T$ we have $\|r_{[n,\infty)} (p_Y \Phi_*(c)-\Phi_*(a))\|<\varepsilon/2$ and $\|r_{[n,\infty)} (\Phi_*(c) p_Y-\Phi_*(a))\|<\varepsilon/4$ for all $(Y,a)$ in $f[T_1]$. We may assume $\Delta(x,y)>n$ for all distinct $x$ and $y$ in $T_1$ (and this is the step in the proof that requires $\OCAi$ instead of $\OCA$). Then distinct $(Y,a)$ and $(Z,b)$ in the range of $T_1$ satisfy
\[
r_{[n,\infty)} \Phi_*(a) p_Z \approx^{\varepsilon/4} r_{[n,\infty)} p_Y \Phi_*(c) p_Z\approx^{\varepsilon/4} r_{[n,\infty)} p_Y \Phi_*(b)
\]
and therefore the first half of the condition (M$^n$3) fails. By swapping $(Y,a)$ and $(Z,b)$, we see that the second half fails as well, and therefore $\{(Y,a), (Z,b)\}\notin M^n$ although their $f$-preimages $x$ and $y$ satisfy $\Delta(x,y)>n$; contradiction.

The second alternative of $\OCAi$ gives a sequence of closed subsets of $\cX$ which can be used to find a $\sigma$-Borel $\varepsilon$-approximation to $\Phi$ on $\calD_X[\bfE]$ for an infinite $X\subseteq \bbN$. 
\end{proof}

A Kuratowski--Ulam argument and a $\sigma$-Borel $\varepsilon$-approximation on $\calD_X[\bfE]$ are used to find a C-measurable $3\varepsilon$-approximation on $\calD_Y[\bfE]$ for an infinite $Y\subseteq X$ (\cite[Lemma~17.7.2]{Fa:STCstar}). A Baire category argument produces an infinite $Z\subseteq Y$ and a continuous $3\varepsilon$-approximation on $\calD_Z[\bfE]$. By the analog of the Isometry Trick (Lemma~\ref{lemma:partialiso}) $\Phi$ has a continuous $3\varepsilon$-approximation on $\calD[\bfE]$ for all $\varepsilon>0$, and the Jankov--von Neumann Uniformisation Theorem can be used to prove that $\Phi$ is topologically trivial on $\calD[\bfE]$ (\cite[Lemma~17.4.5]{Fa:STCstar}), completing the proof of \ref{Auto.1.5}.

\begin{question} Is it relatively consistent with ZFC that there exists an automorphism $\Phi$ of $\cQ(H)$ whose restriction to some $\calD[\bfE]$ is not implemented by a unitary? 
\end{question}

If $\Phi$ sends the atomic masa to a masa unitarily equivalent to it, then $\Phi$ cannot send the unilateral shift to its adjoint (see footnote \ref{footnote:Truss}), thus a negative answer implies a negative answer to Question~\ref{ques:bdf}. However, an adaptation of the construction from \cite{PhWe:Calkin} may provide a positive answer.

\subsubsection{More coronas}
Theorem~\ref{T.Calkin} inspired the second part of Conjecture~\ref{conj:CFgeneral}, asserting that under forcing axioms every isomorphism between coronas of separable \cstar-algebras is topologically trivial. Building on \cite{McK:Reduced} and \cite{mckenney2018forcing}, and a noncommutative version of the OCA Lifting Theorem from the latter reference in particular, this conjecture was confirmed in \cite{vignati2018rigidity}. 

\begin{theorem}[{\cite[Theorem B]{vignati2018rigidity}}]\label{T.mainPFA}
Assume $\OCA+\MA_{\aleph_1}$. Let $A$ and $B$ be separable \cstar-algebras. Then all isomorphisms between $\mathcal Q(A)$ and $\mathcal Q(B)$ are topologically trivial.
\end{theorem}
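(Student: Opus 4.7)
The plan is to follow closely the strategy used to prove Theorem~\ref{T.Calkin}, adapting each step from the pair $\cK(H)\subset\cB(H)$ to an arbitrary pair of separable \cstar-algebras $A$ and $B$. Fix an isomorphism $\Phi\colon \cQ(A)\to \cQ(B)$ and an arbitrary lifting $\Phi_*\colon (\cM(A))_1\to (\cM(B))_1$. Using a countable approximate unit $(e_n)$ in $A$ consisting of positive contractions (quasi-central, if convenient), I would build an analog of the poset $\Part$: to each $\bfE\in\Part$ associate a sequence of positive elements $a_n^\bfE$ obtained from differences of blocks of the approximate unit, and a subspace $\cF_A[\bfE]\subseteq\cM(A)$ mimicking \eqref{Eq.F[E]}. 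This should yield a stratification $\cM(A)=\bigcup_{\bfE\in\Part}\cF_A[\bfE]$ analogous to Lemma~\ref{lemma:strat}, together with a discretization $\sfD_A[\bfE]$ whose purpose is to replace the separable \cstar-algebras sitting in $\cF_A[\bfE]$ by finite $\varepsilon$-dense subsets, allowing Ulam-stability tools to be applied.

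For each $\bfE$, the first milestone is to show that $\Phi$ admits a topologically trivial lifting on a sub-block $\cF_A[\bfE\rs X]$ for some infinite $X\subseteq\bbN$. This is where $\OCA$ enters, through a noncommutative analog of the OCA Lifting Theorem~\ref{thm:ilijasOCAideals} developed in \cite{mckenney2018forcing}: for every $\varepsilon>0$ I would define an open coloring on pairs in the Polish space $\cX=\{(Y,a)\mid Y\subseteq\bbN\text{ infinite},\ a\in(\cF_A[\bfE\rs Y])_1\}$, designed so that an uncountable $\OCAi$-homogeneous set produces an incoherent `sum' of approximations contradicting $\Phi$ being a homomorphism. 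The second alternative of $\OCAi$ then yields $\sigma$-Borel $\varepsilon$-approximations to $\Phi_*$ on sub-blocks, which are promoted to continuous $3\varepsilon$-approximations by a Kuratowski--Ulam/Baire category step, and finally to an honest topologically trivial lifting on some $\cF_A[\bfE\rs X]$ via Jankov--von Neumann uniformization (as in \cite[Lemma~17.4.5]{Fa:STCstar}). The conclusion is transferred from $\cF_A[\bfE\rs X]$ to all of $\cF_A[\bfE]$ by a version of the Isometry Trick (Lemma~\ref{lemma:partialiso}), formulated in terms of norm-full positive elements and the factorization property of multiplier algebras so as to avoid relying on projections in $A$.

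The last step is to paste these local topologically trivial liftings into a single Borel-measurable lifting defined on all of $\cM(A)$. Since $(\Part,\leq^*)$ is $\sigma$-directed and $\Phi$ respects the stratification modulo $A$, after restricting to an $\omega_1$-cofinal chain in $\Part$ one obtains a coherent family of locally topologically trivial liftings. This is precisely the setting where $\MA_{\aleph_1}$ (together with the gap-type combinatorics analogous to those in \S\ref{S.FABoole}) produces a single Borel function that simultaneously uniformizes all of them. The hard part, to my mind, is the first milestone: setting up the open coloring so that the $\OCAi$ dichotomy genuinely trivializes the noncommutative lifting $\Phi_*$, since the coloring must simultaneously detect failures of $\sigma$-Borel-ness of a candidate approximation and failures of the $^*$-algebraic compatibility of $\Phi_*$, and must do so without relying on the rich projection structure of $\cB(H)$ which was so conveniently available in the atomic masa argument of the Calkin case. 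The discretization $\sfD_A[\bfE]$ and Theorem~\ref{FD-Ulam}/Theorem~\ref{T.Semrl}-style  Ulam-stability are what let us replace that projection structure by a combinatorial skeleton on which the OCA analysis can actually bite.
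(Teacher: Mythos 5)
Your proposal is in the right spirit and correctly identifies most of the ingredients (the $\Part$-indexed stratification of $\cM(A)$ built from an approximate unit, discretization, the $\OCAi$-driven lifting of $\Phi$ to $\sigma$-Borel then continuous approximations on sub-blocks, and a final coherence/uniformization step), but there is a concrete gap at the very place you flag as hard, and it concerns where $\MA_{\aleph_1}$ enters. You propose to transfer the local lifting from $\cF_A[\bfE\rs X]$ to $\cF_A[\bfE]$ by a reformulated Isometry Trick ``in terms of norm-full positive elements and the factorization property of multiplier algebras.'' This is exactly the step that breaks: in the Calkin algebra the Isometry Trick works because of Murray--von Neumann equivalence in $\cQ(H)$ (every nonzero projection is equivalent to $1$), and there is no way to rescue it in a general corona --- $\cQ(A)$ may have no nontrivial projections at all, and even when it does, there need not be an isometry carrying a full block into a sub-block. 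The actual proof of Theorem~\ref{T.mainPFA} uses $\MA_{\aleph_1}$ precisely to compensate for this: the role of $\MA$ is to replace the block-to-subblock transfer that the Isometry Trick provides for free in $\cQ(H)$, not (as in your last paragraph) to paste the local liftings into a single Borel function at the end. That final pasting is still an $\OCA$ argument, via the $\sigma$-directedness of $(\Part,\leq^*)$ and the fact that under $\OCA$ every $\aleph_1$-sized subset of $\Part$ is bounded --- in particular your ``$\omega_1$-cofinal chain in $\Part$'' does not exist under $\OCA$ (that would require $\fd=\aleph_1$, which $\OCA$ refutes).

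There is a second, subtler inaccuracy: you describe the discretization $\sfD_A[\bfE]$ as replacing separable \cstar-algebras by finite $\varepsilon$-dense subsets so that Theorem~\ref{FD-Ulam}/Theorem~\ref{T.Semrl}-style Ulam stability can bite. In the general corona the building blocks of the stratification are not reduced products of matrix algebras or of abelian \cstar-algebras, but reduced products of finite-dimensional Banach spaces (with no useful algebraic structure), so neither of those Ulam-stability theorems is directly applicable. What is actually used is the lifting theorem for reduced products of finite-dimensional metric spaces from \cite[Lemma~5.10]{mckenney2018forcing}, combined with the material on algebraic triviality in \cite{vignati2018rigidity}, and the passage from topological to algebraic triviality is handled separately (as in \S\ref{S.Ulam}) rather than being folded into the $\OCA$ step.
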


The outline of the proof of Theorem~\ref{T.mainPFA} follows the proof of Theorem~\ref{T.Calkin}, but is much more difficult. First, even though one can stratify a general corona $\mathcal Q(A)$ (similarly to \S\ref{S.Strat}), the building blocks consist not of products of matrix algebras, but of products of finite-dimensional Banach spaces instead. The appropriate version of the `$\OCA$ lifting theorem' for reduced products of finite-dimensional spaces appears in~\cite[Lemma~5.10]{mckenney2018forcing}. Secondly, the assumptions of the \emph{isometry trick} do not apply in most coronas (Lemma \ref{lemma:partialiso}). The latter obstruction is the reason why we need to assume Martin's Axiom in Theorem~\ref{T.mainPFA}.

The reader interested in the proof of Theorem~\ref{T.mainPFA} can find the details in ~\cite{vignati2018rigidity} and \cite{mckenney2018forcing}.

As a consequence of Theorem~\ref{T.mainPFA} and Ulam-stability (\S\ref{S.Ulam}, in particular Theorem~\ref{thm:UlamAB}), we have the following extension of Corollary~\ref{cor:FAzerodim} to the non zero-dimensional setting.

\begin{corollary}
Assume $\OCA+\MA_{\aleph_1}$, and let $X$ and $Y$ be second countable locally compact topological spaces. Then:
\begin{itemize}
\item all automorphisms of $C(\beta X\setminus X)$ are algebraically trivial;
\item if $\beta X\setminus X$ and $\beta Y\setminus Y$ are homeomorphic then $X$ and $Y$ are homeomorphic modulo open sets with compact closure.
\end{itemize}
\end{corollary}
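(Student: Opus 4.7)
The plan is to deduce this corollary as a bridge between the two main preceding results: Theorem~\ref{T.mainPFA}, which provides topological triviality under $\OCA+\MA_{\aleph_1}$, and Theorem~\ref{thm:UlamAB}, which upgrades topological triviality to algebraic triviality in the abelian setting. The Gelfand--Naimark duality (\S\ref{Ex.Abelian}) will serve as the translation device between the topological statements about \v{C}ech--Stone remainders and the \cstar-algebraic statements about abelian coronas. We may harmlessly assume that $X$ and $Y$ are both noncompact, since otherwise the corresponding remainder is empty and there is nothing to prove.

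First I would make the basic identifications. Since $X$ is second countable locally compact, $C_0(X)$ is a separable \cstar-algebra, and $\cM(C_0(X))\cong C(\beta X)$, so that $\cQ(C_0(X))\cong C(\beta X\setminus X)$. For the first bullet, given any automorphism $\Phi$ of $C(\beta X\setminus X)$, the induced automorphism of $\cQ(C_0(X))$ is topologically trivial by Theorem~\ref{T.mainPFA} (applied with $A=B=C_0(X)$). Theorem~\ref{thm:UlamAB}, which relies on the  Ulam-stability of abelian \cstar-algebras (Theorem~\ref{T.Semrl}) via the scheme of Lemma~\ref{L.AsympAdd} and Proposition~\ref{P.Ulam}, then promotes $\Phi$ to an algebraically trivial automorphism.

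For the second bullet, suppose $\Phi\colon\beta X\setminus X\to\beta Y\setminus Y$ is a homeomorphism. By Gelfand duality its pullback $\Phi^{*}\colon C(\beta Y\setminus Y)\to C(\beta X\setminus X)$ is a \cstar-algebra isomorphism between the abelian coronas $\cQ(C_0(Y))$ and $\cQ(C_0(X))$. Theorem~\ref{T.mainPFA} again yields topological triviality of $\Phi^{*}$. The arguments behind Theorem~\ref{thm:UlamAB} are insensitive to the asymmetry between domain and codomain (the key tool, Šemrl's perturbation theorem, applies to $\varepsilon$-$^*$-homomorphisms between distinct separable abelian \cstar-algebras), so the same reasoning gives that $\Phi^{*}$ is algebraically trivial in the sense of Definition~\ref{def:trivialmap}. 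Finally, \cite[Proposition 2.7]{vignati2018rigidity} asserts that, in the abelian setting, an algebraically trivial \cstar-isomorphism between $\cQ(C_0(X))$ and $\cQ(C_0(Y))$ is dual to an algebraically trivial homeomorphism in the sense of Definition~\ref{defin:trivialtop}. Unpacking this definition, there are open sets $K_X\subseteq X$ and $K_Y\subseteq Y$ with compact closure, and a homeomorphism $X\setminus K_X\to Y\setminus K_Y$; in other words, $X$ and $Y$ are homeomorphic modulo compact-open sets.

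The whole argument is essentially a concatenation of previously established results, so there is no substantive obstacle; the only thing to be careful about is to verify that Theorem~\ref{thm:UlamAB}, which is stated for automorphisms, extends to isomorphisms between (possibly distinct) abelian coronas. This is immediate from inspecting its proof, since both the application of Lemma~\ref{L.AsympAdd} and the invocation of Šemrl's stability theorem (Theorem~\ref{T.Semrl}) are phrased for maps between arbitrary separable abelian \cstar-algebras rather than requiring that source and target coincide.
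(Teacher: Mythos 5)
Your proof is correct and follows exactly the route the paper indicates: apply Theorem~\ref{T.mainPFA} to obtain topological triviality, then Theorem~\ref{thm:UlamAB} (via Gelfand duality and Definition~\ref{defin:trivialtop}) to upgrade to algebraic triviality and translate back into a homeomorphism modulo compact-open sets. You are also right to flag that Theorem~\ref{thm:UlamAB} is stated only for automorphisms and to observe that its proof (Šemrl's stability together with the asymptotically-additive lifting machinery) does not use that the source and target coincide; the paper glosses over this point by simply citing ``Ulam-stability, in particular Theorem~\ref{thm:UlamAB}.''
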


As a corollary of this and Corollary~\ref{cor:Parov}, there are abelian separable \cstar-algebras $A$ and $B$ such that the assertion $\cQ(A)\cong \cQ(B)$ is independent from ZFC. Since $M_n(\cQ(A))\cong \cQ(M_n(A))$ for all $n$, by tensoring $A$ and $B$ by $M_n(\bbC)$ for some $n$ one obtains a pair of noncommutative \cstar-algebras with this property. 

The following is what remains of Question~\ref{ques:sakai} after \S\ref{S.Independence} and \S\ref{S.Independence2}.
\begin{question} Are there simple, separable \cstar-algebras $A$ and $B$ such that the assertion $\cQ(A)\cong \cQ(B)$ is independent from ZFC? 
\end{question}

A \cstar-algebra is \emph{subhomogeneous} if it is isomorphic to a subalgebra of $M_n(\ell_\infty(\kappa))$ for some $n\in \bbN$ and cardinal $\kappa$.\footnote{This is equivalent to the standard definition.} It is not difficult to see that if $A$ is subhomogeneous, then so is $\cQ(A)$, but not vice versa.

\begin{corollary}\label{C.trivial-matrix} There are separable \cstar-algebras $A$ and $B$ whose coronas are not subhomogeneous and such that the assertion `the coronas $\cQ(A)$ and $\cQ(B)$ are isomorphic' is independent from ZFC. 
\end{corollary}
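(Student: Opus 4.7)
The plan is to stabilize the abelian witnesses produced just before the statement. Pick two second countable, locally compact, noncompact, zero-dimensional Polish spaces $X$ and $Y$ for which no co-compact subset of $X$ is homeomorphic to a co-compact subset of $Y$---for concreteness $X = \bbN$ and $Y = 2^\bbN \setminus \{\ast\}$, since every co-compact subset of the former has isolated points while every co-compact subset of the latter has none. Set $A_0 = C_0(X)$, $B_0 = C_0(Y)$ and $A = A_0 \otimes \cK$, $B = B_0 \otimes \cK$; both are separable and non-unital. Non-subhomogeneity of $\cQ(A)$ (and symmetrically of $\cQ(B)$) is immediate: identifying $A$ with $C_0(X, \cK)$ and $\cM(A)$ with the algebra of strictly continuous bounded functions $X \to \cB(H)$, the assignment $T \mapsto (x \mapsto T)$ is a unital embedding $\cB(H) \hookrightarrow \cM(A)$ that, since $X$ is noncompact, descends to a unital embedding $\cB(H) \hookrightarrow \cQ(A)$; as $\cB(H)$ is not subhomogeneous, neither is any unital superalgebra of it.

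Under $\OCA + \MA_{\aleph_1}$, suppose $\Phi \colon \cQ(A) \to \cQ(B)$ is an isomorphism. Theorem~\ref{T.mainPFA} makes $\Phi$ topologically trivial. A direct calculation shows that the center of $\cM(A)$ equals $C_b(X) \cdot I_H = \cM(A_0) \cdot I_H$, whose image in $\cQ(A)$ is injective (it meets $A$ trivially because $X$ is noncompact) and is precisely the center of $\cQ(A)$; canonically this center is $\cQ(A_0) = C(\beta X \setminus X)$, and analogously for $B$. Restricting a Borel lift $\Phi_* \colon \cM(A) \to \cM(B)$ of $\Phi$ to the Borel subset $\cM(A_0) \subseteq \cM(A)$ yields a Borel lift of the induced isomorphism $\cQ(A_0) \to \cQ(B_0)$, making it topologically trivial. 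By Theorem~\ref{thm:UlamAB} this isomorphism is algebraically trivial, whence Corollary~\ref{cor:FAzerodim} (equivalently Theorem~\ref{T.all}) forces $X$ and $Y$ to have homeomorphic co-compact subsets, contradicting the choice of $X$ and $Y$.

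Under $\CH$ I want $\cQ(A) \cong \cQ(B)$. Zero-dimensionality and second countability allow us to partition $X = \bigsqcup_n K_n^X$ and $Y = \bigsqcup_n K_n^Y$ into clopen compact pieces, so that $A \cong \bigoplus_n C(K_n^X, \cK)$ and $\cM(A) \cong \prod_n C(K_n^X, \cB(H))$. Using the Parovi\v{c}enko isomorphism of the abelian coronas $C(\beta X \setminus X) \cong C(\beta Y \setminus Y)$ from Corollary~\ref{cor:Parov}, one can match the partitions so that $(C(K_n^X, \cK))$ and $(C(K_n^Y, \cK))$ have the same asymptotic first-order theory, and then the metric Feferman--Vaught Theorem~\ref{FV-coro} yields $\cQ(A) \equiv \cQ(B)$. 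It remains to verify countable saturation of both coronas, after which Theorem~\ref{thm:eeiso} together with density $\aleph_1 = \fc$ produces the desired isomorphism.

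The main obstacle is this last saturation step: $\cQ(A)$ is not literally a reduced product in the sense of Example~\ref{ex:coronas}~\ref{ex.reduced}, because we quotient $\prod_n C(K_n^X, \cB(H))$ by the strictly smaller ideal $\bigoplus_n C(K_n^X, \cK)$ rather than by $\bigoplus_n C(K_n^X, \cB(H))$. The plan is to analyze the short exact sequence
\[
0 \to \textstyle\bigoplus_n C(K_n^X, \cQ(H)) \to \cQ(A) \to \prod_n C(K_n^X, \cB(H)) \big/ \bigoplus_n C(K_n^X, \cB(H)) \to 0,
\]
using Theorem~\ref{ctble-sat-reduced} to saturate the reduced product on the right, Theorem~\ref{T.degree-1} to start with degree-$1$ saturation of $\cQ(A)$, and then promoting these to full countable saturation by lifting types through the sequence. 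A fallback, should this promotion prove stubborn, is to run under $\CH$ a direct $\sigma$-complete back-and-forth between the density-$\aleph_1$ algebras $\cQ(A)$ and $\cQ(B)$, anchored on the Parovi\v{c}enko isomorphism of the centers and invoking countable degree-$1$ saturation to extend partial matchings at each step.
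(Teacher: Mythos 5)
Your approach (stabilized abelian algebras) is genuinely different from the paper's, which takes $A=\bigoplus_{n\in Y}M_n(\bbC)$ and $B=\bigoplus_{n\in Z}M_n(\bbC)$ for disjoint infinite $Y,Z\subseteq X$ chosen as in Example~\ref{Ex.FV}, applies Theorem~\ref{ctble-sat-reduced} and Feferman--Vaught for the CH direction, and Theorem~\ref{T.mainPFA} plus Ulam-stability of finite-dimensional algebras for the other. Unfortunately your route contains a fatal miscomputation.

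You correctly observe that $\cZ(\cM(A))=C_b(X)\cdot I_H$ meets $A=C_0(X,\cK)$ trivially (a constant-scalar function lies in $C_0(X,\cK)$ only if the scalar is $0$), so the induced map $C_b(X)\to\cQ(A)$ is \emph{injective}. But then its image is a copy of $C_b(X)\cong C(\beta X)$, not of $\cQ(A_0)=C_b(X)/C_0(X)\cong C(\beta X\setminus X)$ as you assert; the two claims in the same sentence contradict each other. One can further check that this injective copy of $C_b(X)$ is in fact all of $\cZ(\cQ(A))$: if $[a]$ is central in $\cQ(A)$, then for each $x$ one gets $[a(x),T]\in\cK$ for all $T\in\cB(H)$, hence $a(x)\in\bbC I_H+\cK$, and the decay condition forces the compact part into $C_0(X,\cK)=A$. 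So $\cZ(\cQ(A))\cong C(\beta X)$. (Compare: for the paper's $\cM_Y$ the fibers $M_n(\bbC)$ are \emph{unital}, so $\ell_\infty(Y)\cap\bigoplus M_n(\bbC)=c_0(Y)$ and the center really is $\ell_\infty/c_0$. For $\cK$, which is non-unital, no such cancellation occurs --- that is exactly the source of your error.) The consequence is decisive: with $X=\bbN$ and $Y=2^\bbN\setminus\{\ast\}$, the space $\beta\bbN$ has $\bbN$ as its set of isolated points, while $\beta Y$ has none (isolated points of $\beta Y$ would lie in the dense open $Y$, which has none). Therefore $C(\beta X)\not\cong C(\beta Y)$ in ZFC, any isomorphism $\cQ(A)\to\cQ(B)$ would have to carry center to center, and so $\cQ(A)\not\cong\cQ(B)$ \emph{provably in ZFC}. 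Your $A$ and $B$ do not witness independence; the statement is simply false for them.

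There is also a second, independent gap which you acknowledge but do not close: the CH direction. $\cQ(A)$ is \emph{not} a reduced product in the sense of Theorem~\ref{ctble-sat-reduced} because one quotients $\prod_n C(K_n^X,\cB(H))$ by $\bigoplus_n C(K_n^X,\cK)$, not by $\bigoplus_n C(K_n^X,\cB(H))$, so countable saturation is not available off the shelf. Your proposed short exact sequence does not help: saturation is not stable under extensions, and the promotion of degree-$1$ saturation (Theorem~\ref{T.degree-1}) to full countable saturation is essentially hopeless --- the failure of saturation of the Calkin algebra in \S\ref{S.6aii} is already a warning that degree-$1$ saturation is much weaker. The paper's choice of $\cM_Y$, $\cM_Z$ sidesteps all of these problems: these are honest reduced products over $\Fin$ (so countably saturated by Theorem~\ref{ctble-sat-reduced}), have center $\cP(\bbN)/\Fin$ (which poses no ZFC obstruction), are elementarily equivalent by Example~\ref{Ex.FV}, and under $\OCA+\MA_{\aleph_1}$ any isomorphism between them is algebraically trivial by Theorem~\ref{T.mainPFA} together with Theorem~\ref{FD-Ulam}/Corollary~\ref{cor:Ulamredprod}, which is impossible when $Y\cap Z=\emptyset$.
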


\begin{proof} As in Example~\ref{Ex.FV}, fix an infinite $X\subseteq \bbN$ such that $\Th(M_{n}(\bbC))$, for $n\in X$, converge (in the logic, i.e., weak*, topology). Let $Y$ and $Z$ be disjoint and infinite subsets of $X$. By Example~\ref{Ex.FV}, CH implies that the coronas $\cM_Y$ and $\cM_Z$ (see Definition~\ref{Def.MX}), are isomorphic. However, Theorem~\ref{T.mainPFA} implies that every isomorphism between such algebras is topologically trivial, and therefore algebraically trivial (see~\cite{Gha:FDD}).
\end{proof}

\subsection{Rigidity in other categories} \label{S.fields-etc} 
In Theorem~\ref{T.dividing-line} we saw that a dividing line for the first part of Conjecture \ref{Meta.1} is the stability of the theory of the reduced product. We do not have a clean dividing line for the second part of this conjecture, but it has been confirmed in several categories. So far we have seen that the second part of this conjecture holds for quotient Boolean algebras $\cP(\bbN)/\cI$ for certain analytic ideals $\cI$ as well as for coronas of separable \cstar-algebras. The case of semilattices $\cP(\bbN)/\cI$ was considered in \cite{Just:WAT}, \cite{Just:Nowhere}, and \cite{Fa:Liftings}. 
Rigidity results for some other categories of discrete structures  appear in~\cite{de2023trivial}. The following example, suggested to us by Zo\' e Chatzidakis,  is suggestive. 
If $F=\prod_n F_n/\cI$ is a reduced product of fields, then the idempotents in $F$ form a copy of $\cP(\bbN)/\cI$, with the multiplication and addition (on disjoint sets) interpreted as the Boolean algebraic intersection and union. Thus we have a definable copy of the Boolean algebra $\cP(\bbN)/\cI$, and to an isomorphism $\Phi$ between such reduced products one can associate an isomorphism $\alpha_\Phi$ between quotients of the form $\cP(\bbN)/\cI$. Triviality of the latter is the first step in a proof (using $\OCA$ and $\MA$) that~$\Phi$ is (with a natural definition) trivial. 
This observation was formalised as `theory recognizing coordinates' in  \cite[Definition~2.6]{de2023trivial}, giving  rigidity in reduced products of structures in some other categories (linear orderings, trees, sufficiently random graphs) in which there is no obvious `global' copy of $\cP(\bbN)/\cI$. 
 These rigidity results came as a surprise (even to the authors) and they also led to more optimal proofs that all automorphisms of $\cP(\bbN)/\Fin$ are trivial and of the OCA Lifting Theorem (see \S\ref{S.OCAsharp}). 
In \cite[Theorem~1]{farah2025coordinate} it was proved that a category (not necessarily axiomatisable)  of countable structures recognises coordinates (and therefore $\OCA$ and $\MA$ imply that all automorphisms of reduced products over $\Fin$ of its elements are trivial) if and only if the formula $x=x'\rightarrow y=y'$ is equivalent to an $h$-formula (see \S\ref{S.PP}). One consequence is that a category recognises coordinates if and only if its theory does. This paper, true to its title, contains other surprises on the theory of recognizing coordinates.  

The continuous version of the main rigidity result of \cite{de2023trivial} appears in \cite{de2024metric} and it was applied to universal sofic groups and Higson coronas, respectively, in \cite{de2024automorphism} and \cite{V.Higson}.

\subsection{Other models for rigidity} 
\label{S.6bi}
\label{6bi}
We continue the narrative of \S\ref{S.Shelah} by reviewing some of the consistency results regarding the rigidity questions for the Borel quotients of the Boolean algebra $\mathcal P(\bbN)$ and their generalisations to noncommutative \cstar-algebras. We will briefly discuss some iterated forcing models and, more specifically, the forcing extension used in~\cite{FaSh:Trivial}, in which all the isomorphisms between the Borel quotients of $\mathcal P(\bbN)$ are topologically trivial. Then we review the main theorem of~\cite{Gha:FDD} which uses a slightly different forcing to generalise the result of~\cite{FaSh:Trivial} to the Borel quotients of direct products of sequences of matrix algebras. 
As we shall see, the results of~\cite{FaSh:Trivial} and~\cite{Gha:FDD} are more peculiar when noticing that in these generic extensions the Calkin algebra has outer automorphisms.

\subsubsection{$\mathcal P(\bbN)/ \Fin$ vs. the Calkin algebra}\label{S.FinvsCalkin} Another major progress in the rigidity of the isomorphisms of Borel quotients of $\mathcal P(\bbN)$ was made in~\cite{FaSh:Trivial}, where the following theorem was proved.

\begin{thm}[{\cite{FaSh:Trivial}}]\label{thm:ConsRigIdeals}
Assume there is a measurable cardinal. There is a forcing extension in which every isomorphism between quotient Boolean algebras $\mathcal P(\bbN)/ \mathcal I$ and
$\mathcal P(\bbN)/ \mathcal J$ over Borel ideals $\mathcal I$ and $\mathcal J$ has a continuous lifting, and yet the Calkin algebra has outer automorphisms.
\end{thm}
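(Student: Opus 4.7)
The proof will combine an elaborate forcing construction in the spirit of Shelah's oracle-cc iteration (sketched in \S\ref{S.Shelah}) with the cohomological construction of outer automorphisms of $\cQ(H)$ from $\fd=\aleph_1$ (Theorem~\ref{T.d=aleph1}). The plan is to start from a ground model $V$ of $\CH$ containing a measurable cardinal, and build a finite-support iteration $\bbP_{\omega_2}=(\bbP_\alpha,\dot\bbQ_\alpha)_{\alpha<\omega_2}$ of ccc forcings of size $\aleph_1$, each being an oracle-c.c.\ forcing of the form $\bbT_{\bar B,\bar A}$ adapted to a Borel ideal, so that in the extension $V[G]$ two simultaneous conclusions hold: every isomorphism between Borel quotients is continuously liftable, and $\fd=\aleph_1$ together with $2^{\aleph_0}<2^{\aleph_1}$.

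First I would extend Shelah's nipping-in-the-bud mechanism to handle arbitrary Borel ideals. Using a $\diamondsuit_{\omega_2}$-sequence on cofinality-$\omega_1$ ordinals, at each stage $\alpha$ of the iteration I would guess a triple $(\dot\cI_\alpha,\dot\cJ_\alpha,\dot\Phi_\alpha)$ consisting of $\bbP_\alpha$-names for Borel codes of two ideals and a potential isomorphism $\dot\Phi_\alpha\colon \cP(\bbN)/\dot\cI_\alpha\to\cP(\bbN)/\dot\cJ_\alpha$ without a continuous lifting. I would then force with the analog of $\bbT_{\bar B,\bar A}$ that adds a subset $\dot X\subseteq\bbN$ whose image under $\dot\Phi_\alpha$ is not equivalent modulo $\dot\cJ_\alpha$ to any set definable in terms of the parameters available at stage $\alpha$. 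This requires proving a version of Lemma~\ref{L.SaharonMainLemma} relative to Borel ideals, splitting (as in the original proof) into the cases where the `trivialization ideal' $\cJ_\Phi$ is dense or admits an orthogonal set. The measurable cardinal enters here in order to carry out the reflection: given any counterexample $(\cI,\cJ,\Phi)$ in $V[G]$, one wants the reflection $\Phi_\alpha$ to $V[G\cap\bbP_\alpha]$ to still be a counterexample stationarily often, and this demands absoluteness of Borel-coded statements between intermediate models — an absoluteness whose strength grows with the Borel complexity of $\cI$ and $\cJ$, and which the existence of $0^\sharp$ (a consequence of a measurable) supplies via indiscernibles, in the style of Just's use of large cardinals in \cite{Just:Modification}.

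Second, I would verify that the entire iteration preserves $\fd=\aleph_1$. Oracle-c.c.\ iterations are carefully tailored so that new reals at each stage are Cohen-generic over suitably chosen ground models, and, more importantly, can be arranged so that every unbounded family in $(\bbN^\bbN,\leq^*)$ from $V$ remains unbounded in $V[G]$. Since $V$ satisfies $\CH$, a $\leq^*$-dominating family of size $\aleph_1$ exists in $V$ and remains dominating in $V[G]$, so $\fd=\aleph_1$ in $V[G]$; together with $2^{\aleph_0}=\aleph_2$ (arranged by the length of the iteration) this gives $2^{\aleph_0}<2^{\aleph_1}$. Applying Theorem~\ref{T.d=aleph1} in $V[G]$, the Calkin algebra has $2^{\aleph_1}>2^{\aleph_0}$ automorphisms; since only $2^{\aleph_0}$ of these can be inner, outer automorphisms exist.

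The main obstacle will be the first step: extending the delicate combinatorial machinery of Shelah's main lemma to arbitrary Borel ideals while keeping the forcings oracle-c.c.\ and while keeping them compatible with the preservation requirement needed for $\fd=\aleph_1$. The construction of the almost-disjoint families $B_\alpha$ and the sets $A_\alpha\subseteq B_\alpha$ must be done relative to the approximating closed sets $\cK$ of the target ideal $\cJ$ (in the sense of Definition~\ref{Def.Approximation} and Lemma~\ref{L.countablydetermined}), and the proof that no $\bbT\ast\dot\bbQ$-name $\dot Y$ can satisfy $\Phi_\ast(\dot X\cap B_\alpha)=^\cJ \dot Y\cap\Phi_\ast(B_\alpha)$ requires a refined $\cK$-precise analysis in the style of Lemma~\ref{L.Just.WAT}, adapted to the forcing setting. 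This is where the measurable cardinal is genuinely needed, since without strong absoluteness the guessed name $\dot\Phi_\alpha$ may not remain a counterexample after the tail of the iteration adds further reals of high Borel complexity to the relevant ideals.
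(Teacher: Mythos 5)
Your plan diverges from the actual route in \cite{FaSh:Trivial}, and one of the divergences is fatal. The paper's own brief description of the proof (just after the theorem statement) says the extension is a \emph{countable support} iteration of length $\fc^+$ of \emph{creature forcings} and \emph{random reals}, and that it is crucial that these forcings are proper, Suslin, and $\bbN^\bbN$\emph{-bounding}. You instead propose a finite-support iteration $\bbP_{\omega_2}$ of ccc, oracle-c.c.\ forcings of the Shelah type $\bbT_{\bar B,\bar A}$. The problem is your claim that such an iteration preserves $\fd=\aleph_1$. You argue that oracle-c.c.\ iterations ``can be arranged so that every unbounded family in $(\bbN^\bbN,\leq^*)$ from $V$ remains unbounded in $V[G]$'' and infer from this that a ground-model dominating family of size $\aleph_1$ remains dominating. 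That inference is wrong: preserving unbounded families preserves $\mathfrak b=\aleph_1$, not $\fd=\aleph_1$. To keep a ground-model family dominating you need the iteration to add \emph{no} real that escapes the ground-model reals $\leq^*$-wise, i.e.\ you need $\bbN^\bbN$-bounding. Finite-support ccc iterations of length $\geq\omega_1$ are never $\bbN^\bbN$-bounding (Cohen reals appear at limit stages of cofinality $\omega$), and a Cohen real over $V$ is $\leq^*$-above no ground-model real, so the ground-model dominating family fails to dominate it. In the Cohen model, and in any Shelah-style oracle-c.c.\ iteration over a CH model of length $\omega_2$, one has $\fd=\fc=\aleph_2$, not $\fd=\aleph_1$. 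The whole point of replacing oracle-c.c.\ with creature forcings and random reals is that the latter are proper and $\bbN^\bbN$-bounding, so countable-support iterating them preserves $\fd=\aleph_1$ and lets the Calkin algebra acquire outer automorphisms by Theorem~\ref{T.d=aleph1}. So your step 2 does not go through, and without it the conclusion about outer automorphisms collapses.

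A second, less catastrophic but still genuine concern is that you want to run the $\bbT_{\bar B,\bar A}$ machinery of \S\ref{S.Shelah} for \emph{arbitrary} Borel ideals by merely swapping $=^*$ for $=^\cJ$ and grafting on the $\cK$-precise analysis of Lemma~\ref{L.Just.WAT}. That mechanism was designed for $\Fin$ and (after Just) for ideals admitting a closed approximation structure; for ideals of high Borel complexity the ``nipping-in-the-bud'' poset, and in particular the case split along the density of $\cJ_\Phi$, is not known to go through in the oracle-c.c.\ framework. The actual proof sidesteps this by using creature forcings (a non-ccc technology with built-in norm conditions that can be tuned to arbitrary Borel codes) to make $\cJ_\Phi$ dense, and random reals to perform the uniformization $\bbN\in\cJ_\Phi$; the Suslinness and $\bbN^\bbN$-bounding of these forcings are what make the required definability and absoluteness arguments (which is where the measurable cardinal enters, though your guess of $0^\sharp$ is plausible but not something you can simply assert) available at all. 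Your outline reads the statement as a routine generalization of Shelah's $\Fin$ argument, but the result genuinely requires changing the class of forcings, not just the ideal-parameters in the Shelah poset.
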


The forcing extension satisfies $\mathfrak d = \aleph_1$ and $2^{\aleph_0}<2^{\aleph_1}$, therefore outer automorphisms of the Calkin algebra exist by Theorem~\ref{T.d=aleph1}. For the same reason, in this model, \v{C}ech--Stone remainders of non zero-dimensional noncompact manifolds have nontrivial autohomeomorphisms.

The general idea is as in \S\ref{S.Shelah}, with a few twists. The forcing extension used in~\cite{FaSh:Trivial} is a countable support iteration of length $\mathfrak c^+$ of creature forcings (see~\cite{roslanowski1999norms}) and random reals (see~\cite[\S 3.2]{BarJu:Book}). The creature forcings assure that the ideal of sets on which $\Phi$ is topologically trivial is dense while the random reals facilitate the uniformisation, namely that $\bbN$ is in this ideal. 
 Several definability and absoluteness results from descriptive set theory have been used along the way in the proof. In particular, it is crucial that these forcings are proper, \emph{Suslin}\footnote{A forcing notion is Suslin if its underlying set is an analytic set of reals and both $\leq$ and $\bot$ are analytic relations.} and \emph{$\bbN^\bbN$-bounding}. The reflection argument used here is analogous to the one described at the beginning of \S\ref{S.Shelah}.

The results of~\cite{FaSh:Trivial} were generalised in~\cite{Gha:FDD} to the noncommutative setting of the reduced products of the matrix algebras. Moreover, in the forcing extension used in~\cite{Gha:FDD} the creature forcings are replaced with a `groupwise Silver forcing'.

\subsubsection{Groupwise Silver forcings} They are defined similar to the \emph{infinitely equal} forcing notion $\mathbb E$ (see~\cite[\S 7.4.C]{BarJu:Book}).
Let ${\bf I} = (I_n)$ be a partition of $\bbN$ into nonempty finite intervals and $G_n$ be a finite set (of finite rank operators, in the case of~\cite{Gha:FDD}) for each $n\in\bbN$. Endow the set $\prod_{n}G_n$ with the product topology. For each $n$, let $F_n=\prod_{i\in I_{n}}G_{i}$ and $F=\prod_{n\in\bbN}F_n$.
Define the \emph{groupwise Silver forcing} $\mathbb S_F$ associated with $F$ to be the following forcing notion: A condition $p\in \mathbb S_F$ is a function from $M\subseteq \bbN$ into $\bigcup_{n=0}^\infty F_n$, such that $\bbN \setminus M$ is infinite and $p(n)\in F_{n}$. A condition $p$ is stronger than $q$ if $p$ extends $q$. Each condition $p$ can be identified with $[p]$, the set of all its extensions to $\bbN$, as a compact subset of $F$. 
Groupwise Silver forcings, as well as the random forcing, fall in the scope of `forcing with ideals'. These forcing notions and their countable support iterations were investigated by Zapletal in~\cite{Zap:Descriptive} and~\cite{Zap:Forcing}.
Some of the properties of these forcings were applied to a countable support iteration of Silver and random forcings in~\cite{Gha:FDD} in order to prove that consistently every isomorphism between the Borel quotients of products of matrix algebras (and in particular Borel quotients of $\mathcal P(\bbN)$) is topologically trivial.

\subsubsection{Reduced products of matrix algebras}\label{S.Reduced} For an ideal $\mathcal I$ on $\bbN$ consider the reduced product (Example~\ref{ex:coronas})
\[
\textstyle M_\mathcal I := \prod_{n}M_{n}(\bbC)/\bigoplus_{\mathcal I} M_{n}(\bbC).
\]
This is a generalisation of the algebras $\cM_X$ introduced in Definition~\ref{Def.MX}. 

\begin{thm}[\cite{Gha:FDD}]\label{thm-FDD} Assume there exists a measurable cardinal. There is a forcing extension in which the following assertions hold:
\begin{enumerate}
\item[(1)] for all Borel ideals $\mathcal I$ and $\mathcal J$ on $\bbN$ every isomorphism between $ M_\mathcal I$ and $ M_\mathcal J$ has a continuous lifting,
 \item[(2)] for all analytic P-ideals $\mathcal I$ and $\mathcal J$ on $\mathbb{N}$, $ M_\mathcal I$ and $ M_\mathcal J$ are isomorphic if and only if $\cI=\cJ$, and every isomorphism is trivial and equal to the identity on the center, 
 \item[(3)] the Calkin algebra has outer automorphisms.
\end{enumerate}
In particular, all automorphisms of coronas of the form $\cM_X$ are inner while the Calkin algebra has outer automorphisms. 
\end{thm}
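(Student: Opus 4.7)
The plan is to start from a ground model $V$ satisfying $\mathrm{GCH}$ and containing a measurable cardinal $\kappa$, and to force over $V$ with a countable-support iteration $\bbP = \langle \bbP_\alpha, \dot\bbQ_\alpha : \alpha < \fc^+ \rangle$ of length $(2^{\aleph_0})^+$, where the iterands alternate between groupwise Silver forcings $\bbS_F$ (with carefully chosen $F$) and the random real forcing. The whole iteration will be designed so as to be proper, Suslin, $\bbN^\bbN$-bounding, and to preserve non-meager sets of reals. The $\bbN^\bbN$-bounding property is what will ultimately deliver $\fd = \aleph_1$ in $V[G]$, while the iteration length and $\mathrm{GCH}$ in $V$ will give $2^{\aleph_0} = \aleph_2 < \aleph_3 = 2^{\aleph_1}$; combined with Theorem~\ref{T.d=aleph1} this yields the outer automorphisms of $\cQ(H)$ required by~(3).

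For~(1), I would run the standard reflection-and-catching scheme from \S\ref{S.Shelah}. Given an isomorphism $\Phi\colon M_\cI \to M_\cJ$ in $V[G]$ between reduced products over Borel ideals, a coding/reflection argument (facilitated by the measurable, which gives a strong enough $\diamondsuit$-style guessing principle on the stationary set of cofinality-$\aleph_1$ ordinals below $\fc^+$) shows that for a club of $\alpha < \fc^+$ the restriction $\Phi_\alpha$ lifts to an isomorphism in $V[G_\alpha]$. The key technical lemma, in the spirit of Lemma~\ref{L.SaharonMainLemma} and of the noncommutative ``$\OCA$ lifting'' results of \cite{mckenney2018forcing}, states that if $\Phi_\alpha$ has no continuous lifting, then one can design a groupwise Silver forcing $\bbS_{F}$ (where $F = \prod_n \prod_{i \in I_n} G_i$ and each $G_i$ is a finite $\varepsilon$-net inside the unit ball of a finite-dimensional matrix block) together with a random real, which together generically add a gap that cannot be filled without destroying the very putative lift of $\Phi$. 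The groupwise Silver part creates the coherent obstruction inside $\prod_n M_n(\bbC)$, and the random real uniformizes it on a non-meager set of $\bbN$; the Suslin and $\bbN^\bbN$-bounding properties, together with Zapletal's framework in~\cite{Zap:Descriptive,Zap:Forcing}, ensure the obstruction is preserved through the remaining iteration. Bookkeeping the $\fc^+$ nominee isomorphisms with the $\diamondsuit$-sequence then ensures every potential topologically nontrivial isomorphism is nipped in the bud.

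For~(2), once continuity of liftings is in place, Ulam-stability (Theorem~\ref{FD-Ulam} and the resulting Corollary~\ref{cor:Ulamredprod}) upgrades continuous liftings of $^*$-homomorphisms between reduced products of matrix algebras to \emph{algebraically trivial} liftings, i.e.\ asymptotically additive $^*$-homomorphisms. Such a lifting descends to an isomorphism on the central subalgebras, which by Gelfand duality corresponds to a Boolean isomorphism $\cP(\bbN)/\cI \to \cP(\bbN)/\cJ$ with a continuous lifting. By the ``analytic-P-ideal'' case of the OCA lifting philosophy (or, in the present forcing setup, by an analogous Shelah--Fuchino-style argument as in~\cite{FaSh:Trivial}), any such continuously liftable Boolean isomorphism between quotients by analytic P-ideals is induced by an almost permutation of $\bbN$ that maps $\cI$ to $\cJ$. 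Modulo a finite modification one can force $\cI = \cJ$ and the induced central automorphism to be the identity, which, combined with the fact that inner automorphisms act trivially on the center, yields the identity-on-center conclusion.

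The main obstacle is the tension between rigidity and the outer-Calkin requirement: every forcing one throws into the iteration to kill a nontrivial isomorphism of some $M_\cI$ risks adding a dominating real (thereby destroying $\fd = \aleph_1$ and the route to outer automorphisms of $\cQ(H)$ via Theorem~\ref{T.d=aleph1}). The delicate point is thus to verify $\bbN^\bbN$-bounding, properness, and preservation of $\aleph_1$ for a countable-support iteration of groupwise Silver and random forcings \emph{simultaneously} with the killing lemma -- that is, to verify that the groupwise Silver forcings chosen to kill a candidate lift of $\Phi$ are Suslin $\bbN^\bbN$-bounding proper forcings in the sense of~\cite{Zap:Descriptive}. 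This is where the measurable cardinal enters essentially, providing the absoluteness and definability machinery needed to argue that an obstruction coded at stage $\alpha$ remains an obstruction at stage $\fc^+$; once this absoluteness is in place the rest of the argument is a variant of \cite{FaSh:Trivial} adapted to the noncommutative framework of products of matrix algebras.
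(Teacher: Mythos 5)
Your overall plan matches the paper's discussion of \cite{Gha:FDD}: a countable-support iteration of groupwise Silver forcings and random reals, proper, Suslin and $\bbN^\bbN$-bounding in the sense of Zapletal, with the measurable cardinal furnishing the definability and absoluteness results, and a Shelah-style reflect-and-kill bookkeeping. But the cardinal arithmetic you give for item (3) is wrong. A countable-support iteration of length $\omega_2$ of proper forcings over a $\CH$-model is $\aleph_2$-cc and has size $\aleph_2$; if the ground model satisfies full $\mathrm{GCH}$, counting nice names gives $2^{\aleph_1}\le\aleph_2^{\aleph_1}=\aleph_2=2^{\aleph_0}$ in the extension, so $2^{\aleph_0}=2^{\aleph_1}$. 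Theorem~\ref{T.d=aleph1} then yields $2^{\aleph_1}$ automorphisms of $\cQ(H)$ with no guarantee that any are outer, and (3) is lost. One needs the ground model to satisfy $\CH$ together with $2^{\aleph_1}>\aleph_2$ (e.g.\ prearrange $2^{\aleph_1}=\aleph_3$) so that $2^{\aleph_0}=\aleph_2<2^{\aleph_1}$ survives the iteration. (Also, the measurable does not by itself supply a $\diamondsuit$-style guessing principle; the bookkeeping is standard, and the measurable's role is the absoluteness you mention at the end.)

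The more substantive gap is in your treatment of (2). After using Theorem~\ref{FD-Ulam} to get an asymptotically additive $^*$-homomorphism, you pass to the centers and appeal to the Boolean-algebra OCA/Radon--Nikodym machinery for analytic P-ideals to conclude the central isomorphism is induced by an almost permutation. That machinery is only available for \emph{nonpathological} analytic P-ideals (Theorem~\ref{T.idealiso}); for pathological submeasures Boolean Ulam-stability actually fails (\cite{Fa:Approximate}), and for general analytic P-ideals the question is open (Question~\ref{Q.P-ideal.iso}). The paper is explicit that (2) holds for \emph{all} analytic P-ideals precisely because the Ulam-stability used is that of finite-dimensional $\mathrm{C}^*$-algebras (Theorem~\ref{FD-Ulam}), \emph{not} any Boolean-algebra stability, and that the Boolean analogue is not known. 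The correct argument has to stay with the matrix structure: the asymptotically additive $^*$-homomorphism and its approximate inverse between the blocks $M_n(\bbC)$ are constrained by the blocks' rigid dimension data, which forces the induced block correspondence to be (essentially) the identity modulo the ideal, hence $\cI=\cJ$ and the induced central map is the identity. Passing to the Boolean reduct replaces a known stability fact with an open one, so your step would not go through.
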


The main reason why (2) of Theorem~\ref{thm-FDD} is true for \emph{all} analytic P-ideals is that the class of all finite-dimensional \cstar-algebras is Ulam-stable (Theorem~\ref{FD-Ulam}). It is not known whether all approximate homomorphisms between finite Boolean algebras with respect to arbitrary submeasures are stable (see \S\ref{S.Ulam}).

Theorem~\ref{thm-FDD} gives an alternative proof of Corollary~\ref{C.trivial-matrix}, modulo the existence of a measurable cardinal. 
We do not know whether the conclusion of Theorem~\ref{thm-FDD} (1) and (2) follows from forcing axioms. 


\subsubsection{Large continuum}
All models of $\PFA$ and  all known models of $\OCA$ satisfy $\fc=\aleph_2$, and so do all models described in earlier sections. 
Models of $\ZFC$ with arbitrarily large continuum in which all automorphisms of $\cP(\bbN)/\Fin$ are trivial were obtained in \cite[p. 327]{Just:Modification} (using a modification of oracle-cc) and then again in \cite{dow2025automorphisms}, this time together with the $\MA$.

\section{Endomorphisms} \label{S.endo}
\enumtwo 
In the previous parts of the survey we considered notions of topological and
algebraic triviality mainly focusing on isomorphisms or automorphisms of
quotient structures. In the current section we extend the scope of our analysis to more general maps, in various particular cases.

An example where the rigidity phenomena verified under Forcing Axioms for homeomorphisms extend to other classes of functions, arises in the context of continuous surjections between \v{C}ech--Stone remainders of locally compact, second countable, spaces. In~\cite{DoHa:Images}, Dow and Hart proved that, if $\beta X \setminus X$ is a continuous image of $\beta \bbN \setminus \bbN$, then $X$ is the union of $\bbN$ with a compact set, when $\OCA$ is assumed. Their result was later extended in~\cite[Theorem E]{vignati2018rigidity} as follows.

\begin{theorem}\label{T.7.1}
Assume $\OCA$ and $\MA_{\aleph_1}$, and let $X$ and $Y$ be locally compact, second countable spaces. If $\beta X \setminus X$ is continuous image of $\beta Y \setminus Y$, then there are $Z \subseteq Y$ and $K \subseteq X$ such that $\beta Z \setminus Z$ is clopen in $\beta Y \setminus Y$, $K$ is compact, and $X \setminus K$ is a continuous image of $Z$.
\end{theorem}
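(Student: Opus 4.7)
\smallskip

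The plan is to dualize via Gelfand--Naimark and apply the rigidity machinery developed for $\OCA+\MA_{\aleph_1}$ to the resulting injective $^*$-homomorphism between coronas, then read off the geometric conclusion from algebraic triviality.

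First, I would translate the hypothesis. A continuous surjection $\Phi\colon\beta Y\setminus Y\to\beta X\setminus X$ dualizes to an injective unital $^*$-homomorphism $\Lambda\colon C(\beta X\setminus X)\to C(\beta Y\setminus Y)$, i.e.\ $\Lambda\colon\cQ(C_0(X))\to\cQ(C_0(Y))$, to which we apply the extension of the rigidity theorem of Theorem~\ref{T.mainPFA} to (injective) $^*$-homomorphisms established in \cite{mckenney2018forcing,vignati2018rigidity}; thus $\Lambda$ admits a Borel lifting. In the abelian setting, the  Ulam-stability of separable abelian \cstar-algebras (Theorem~\ref{T.Semrl}), combined with the passage from topological to algebraic triviality in Theorem~\ref{thm:UlamAB} and its variants for non-isomorphism $^*$-homomorphisms, promotes the Borel lift to an algebraically trivial one: there exist positive $a\in C_b(X)$, $b\in C_b(Y)$ with $1-a\in C_0(X)$, $1-b\in C_0(Y)$, and a strictly continuous $^*$-homomorphism $\bar\Lambda_*\colon\overline{aC_b(X)a}\to\overline{bC_b(Y)b}$ restricting to a $^*$-homomorphism between the ideals $\overline{aC_0(X)a}$ and $\overline{bC_0(Y)b}$ and making the quotient square of Definition~\ref{def:trivialmap} commute.

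Second, I would interpret this data geometrically. Since $a\to 1$ at infinity, the set $L_X=\{a\leq 1/2\}$ is compact; the open cozero set $\{a>0\}$ differs from $X\setminus L_X$ by a relatively compact piece, so after absorbing it into $L_X$ we may assume $U_X:=\{a>0\}=X\setminus L_X$, and similarly $U_Y=Y\setminus L_Y$ for a compact $L_Y\subseteq Y$. Then $\overline{aC_0(X)a}\cong C_0(U_X)$ and $\overline{bC_0(Y)b}\cong C_0(U_Y)$, and $\bar\Lambda_*$ is dual, by Gelfand--Naimark, to a continuous proper map $\varphi\colon U_Y\to U_X$. The canonical identifications $\beta U_X\setminus U_X\cong\beta X\setminus X$ and $\beta U_Y\setminus U_Y\cong\beta Y\setminus Y$ (valid because $L_X,L_Y$ are compact) together with commutativity of the diagram witness that the restriction of $\beta\varphi$ to the remainders coincides with $\Phi$.

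Third, I would extract $K$ and $Z$. Since $\varphi$ is a proper (hence closed) map and $\Phi$ is surjective on remainders, the complement $C:=U_X\setminus\varphi(U_Y)$ is closed in $U_X$ and must be relatively compact in $X$: any non-compact tail would produce points in $\beta X\setminus X$ outside $\Phi(\beta Y\setminus Y)$. Set $K:=L_X\cup\overline{C}$, which is compact in $X$, and $Z:=\varphi^{-1}(X\setminus K)$, which is open in $U_Y$. Since $\varphi$ is proper and $K\cap U_X$ is contained in the compact set $\overline{C}\cap U_X$, the set $U_Y\setminus Z=\varphi^{-1}(K\cap U_X)$ is compact, hence $Y\setminus Z=L_Y\cup(U_Y\setminus Z)$ is compact as well. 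Thus $\beta Z\setminus Z$ is canonically identified with all of $\beta Y\setminus Y$, so it is trivially clopen in $\beta Y\setminus Y$, and $\varphi|_Z\colon Z\to X\setminus K$ is a continuous surjection by construction.

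The main obstacle is the first step: the rigidity result of Theorem~\ref{T.mainPFA} is stated for isomorphisms, and one must check that the OCA-lifting technology for reduced products of finite-dimensional Banach spaces in \cite{mckenney2018forcing} applies equally to injective (not surjective) $^*$-homomorphisms between coronas of separable abelian \cstar-algebras. Granting this, and the corresponding passage to algebraic triviality via Theorem~\ref{T.Semrl}, the topological extraction of $Z$ and $K$ is routine.
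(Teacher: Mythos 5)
The obstacle you flag at the end is not a routine verification but a genuine, irreparable gap: Theorem~\ref{T.mainPFA} does not extend to all injective unital $^*$-homomorphisms between abelian coronas, and cannot, even consistently. Take $Y=\bbN\sqcup\bbN$ and $X=\bbN$. Fix a point $p\in\beta\bbN\setminus\bbN$ and let $\Phi\colon(\beta\bbN\setminus\bbN)\sqcup(\beta\bbN\setminus\bbN)\to\beta\bbN\setminus\bbN$ be the identity on the first summand and constantly $p$ on the second. This is a continuous surjection, and its dual $\Lambda\colon\ell_\infty/c_0\to(\ell_\infty/c_0)\oplus(\ell_\infty/c_0)$, sending $[f]$ to $([f],\lim_{n\to p}f(n)\cdot\mathbf 1)$, is an injective unital $^*$-homomorphism. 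A Borel lifting $\Lambda_*$ would, via its second coordinate, yield a Borel map $g\colon\ell_\infty\to\ell_\infty$ with $\lim_m g(f)(m)=\lim_{n\to p}f(n)$ for every $f\in\ell_\infty$; evaluating at characteristic functions, the set $\{A\subseteq\bbN\mid\lim_m g(\chi_A)(m)=1\}$ would be a Borel nonprincipal ultrafilter on $\bbN$, and no such ultrafilter exists. So $\Lambda$ is topologically nontrivial outright, in particular under $\OCA+\MA_{\aleph_1}$.

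This example also exposes that your steps 2 and 3 are aiming at a false strengthening of the theorem. You conclude that $Y\setminus Z$ is compact, hence $\beta Z\setminus Z=\beta Y\setminus Y$. But in the example, $Z$ must exclude the second copy of $\bbN$ (the part $\Phi$ collapses), so $Y\setminus Z$ is infinite discrete, and $\beta Z\setminus Z$ is a \emph{proper} clopen subset of $\beta Y\setminus Y$. This is precisely why the theorem is stated with ``clopen'' rather than ``all of $\beta Y\setminus Y$''. The missing core of the argument is a preliminary localization: one must first use $\OCA$ to isolate the maximal clopen piece of $\beta Y\setminus Y$ on which $\Lambda$ can be made topologically trivial, discard its complement, and only then invoke Ulam-stability (Theorem~\ref{T.Semrl}) and Gelfand duality to produce the proper map and extract $Z$ and $K$. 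Without that step, the dualization you describe is simply unavailable.
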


A stronger conclusion applies to a more restricted class of spaces.\footnote{An analogous result holds for powers of \v Cech--Stone remainders, but this is not a good moment to digress; see the subsection `Dimension phenomena' in \S\ref{S.other}.} 

\begin{theorem}{\cite[Theorem~4.9.1]{Fa:AQ}}\label{T.7.2}
Assume $\OCA$ and $\MA_{\aleph_1}$, and let $X$ and $Y$ be locally compact, second countable, zero-dimensional spaces. If in addition $X$ is countable, then for every continuous $f\colon \beta X \setminus X\to \beta Y\setminus Y$ there are a clopen partition $U\sqcup V =\beta X$ and a continuous $g\colon V\to \beta Y$ such that $f[U]$ is nowhere dense in $\beta Y\setminus Y$ and $f=g\rs (\beta X \setminus X)$ 
\end{theorem}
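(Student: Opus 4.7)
The plan is to dualize $f$ to a Boolean algebra homomorphism and then apply a version of the OCA Lifting Theorem~\ref{thm:ilijasOCAideals} (or rather its generalization to analytic quotients in both coordinates, along the lines of \cite[\S 4.9]{Fa:AQ}). By Stone duality and zero-dimensionality, the clopen algebras of $\beta X\setminus X$ and $\beta Y\setminus Y$ are canonically $\Clop(X)/\Clop_c(X)$ and $\Clop(Y)/\Clop_c(Y)$, and $f$ corresponds to a unital Boolean algebra homomorphism
\[
\Phi \colon \Clop(Y)/\Clop_c(Y) \to \Clop(X)/\Clop_c(X)
\]
given by pullback on clopen sets. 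Since $X$ is countable, identify it with $\bbN$ so that the codomain is $\cP(\bbN)/\cI$ for a Borel ideal $\cI \supseteq \Fin$, namely the ideal of compact clopen subsets of~$X$.

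Next, I would invoke the lifting theorem to decompose $\Phi$, under $\OCA + \MA_{\aleph_1}$, as a direct sum $\Phi = \Phi_0 \oplus \Phi_1$ corresponding to a pair of complementary projections in $\Clop(X)/\Clop_c(X)$, equivalently to a clopen partition $X = U_0 \sqcup V_0$ (modulo~$\cI$), such that $\Phi_1$ admits a Boolean-algebra lifting $\Clop(Y) \to \Clop(V_0)$ which is continuous with respect to the natural Polish topologies, while $\ker \Phi_0$ is nonmeager in $\Clop(Y)$. This lifting theorem would be proved in the spirit of Lemma~\ref{L.Vel.2.2} and Lemma~\ref{L.Just.WAT}: define an open coloring on a space of coherent pairs where a set-theoretic lift $\Phi_*$ fails to cohere; use $\OCA$ to rule out the uncountable homogeneous alternative; cover the remaining space by countably many closed pieces on each of which a $\sigma$-Borel lift exists; use Fremlin's Kuratowski--Ulam-style uniformization to extract Borel lifts on a dense family of tree-like supports; and finally use $\MA_{\aleph_1}$ to stitch these local lifts into one continuous Boolean-algebra lifting on~$V_0$.

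Finally I would dualize back. Let $U, V \subseteq \beta X$ be the clopen subsets extending $U_0, V_0$. The continuous Boolean-algebra lifting of $\Phi_1$ is, by Stone duality, a continuous map $g \colon V \to \beta Y$ whose restriction to $V \cap (\beta X \setminus X)$ agrees with $f$, giving the second conclusion of the theorem. The nonmeagerness of $\ker \Phi_0$, combined with a Baire category argument, yields a dense family of clopen subsets of $\beta Y \setminus Y$ each disjoint from $f[U \cap (\beta X \setminus X)]$, so $f[U \cap (\beta X \setminus X)]$ is nowhere dense, which is the first conclusion.

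The main obstacle will be Step~2: Theorem~\ref{thm:ilijasOCAideals} as stated is about homomorphisms whose \emph{domain} is $\cP(\bbN)$, whereas here the domain $\Clop(Y)/\Clop_c(Y)$ is an analytic quotient of a Polish Boolean algebra that need not be of that form (since $Y$ is not assumed countable). Extending the open-coloring and $\sigma$-Borel approximation arguments to this broader setting requires exploiting the countable basis of compact open subsets of $Y$ to reduce statements about $\Clop(Y)$ to statements about a suitable countable dense subalgebra, so that tree-like families and $\OCA$ can still be deployed. The fact that $X$ is countable is precisely what keeps the codomain within the scope of the analytic-quotient lifting machinery of~\cite{Fa:AQ} and makes this reduction feasible.
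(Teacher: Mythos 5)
Your overall strategy---dualize $f$ to a Boolean homomorphism $\Phi\colon\Clop(Y)/\Clop_c(Y)\to\Clop(X)/\Clop_c(X)$ and then invoke an OCA lifting theorem---is in the spirit of~\cite[\S 4]{Fa:AQ}, but your Step~2 contains a genuine error. You assert that since $X$ is countable one may ``identify it with $\bbN$ so that the codomain is $\cP(\bbN)/\cI$.'' This fails whenever $X$ is not discrete: $\Clop(X)$ is then a \emph{proper} Borel subalgebra of $\cP(X)$, so identifying $X$ with $\bbN$ only realizes the codomain as $B/I$ for a Borel subalgebra $B\subsetneq\cP(\bbN)$ and a Borel ideal $I\subseteq B$, not as a quotient of $\cP(\bbN)$ itself. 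The theorem does not assume $X$ discrete: take $X$ to be the ordinal $\omega^2$ with its order topology, a countable, locally compact, second countable, zero-dimensional, noncompact, non-discrete space in which, for instance, $\{\omega\}$ is closed but not clopen. Theorem~\ref{thm:ilijasOCAideals} is about homomorphisms into $\cP(\bbN)/\cJ$ and does not cover quotients $B/I$ of proper subalgebras. You correctly flag the analogous obstacle on the domain side (that $\Clop(Y)/\Clop_c(Y)$ is not of the form $\cP(\bbN)/\cJ$), but you overlook that the codomain suffers the very same defect, and the identification you propose is not how the countability of $X$ is exploited in the cited argument.

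There are two further gaps in the dualization step. First, the OCA Lifting Theorem produces a \emph{topologically} trivial summand, i.e.\ a Borel-measurable lifting, whereas to recover a continuous $g\colon V\to\beta Y$ via Stone duality you need an \emph{algebraically} trivial lifting---a Boolean homomorphism; passing from the former to the latter is the Radon--Nikodym/Ulam-stability step (Theorem~\ref{T.idealiso}) and has to be verified for the ideal at hand rather than assumed. Second, this Boolean lift must take values in the subalgebra $\Clop(X)$ and not merely in $\cP(X)$---otherwise Stone duality yields a continuous map out of $\beta X_{\mathrm{disc}}$ rather than out of a clopen piece of $\beta X$---and nothing in the lifting machinery as you invoke it forces the values to lie in $\Clop(X)$.
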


This theorem was stated and proved for arbitrary finite powers of $\beta X\setminus X$ and $\beta Y\setminus Y$. Soon afterwards it was discovered that  the above implies the full version of the theorem because continuous maps between \v Cech--Stone remainders of locally compact second countable spaces respect coordinates  (see \S\ref{dim.phenomena}). By these results, Definition~\ref{def:wep} and Theorem~\ref{thm:wep} below are equivalent to their analogues stated for arbitrary finite powers.

The conclusion of Theorem~\ref{T.7.2} was called \emph{weak Extension Principle} in~\cite{Fa:AQ}. This principle was extended beyond the zero-dimensional setting in \cite{VY:wep}. Its (slightly) more intricate definition is due to the fact that clopen sets in $\beta X\setminus X$ do not necessarily come from clopen sets in $\beta X$ when $X$ is not zero-dimensional.

\begin{definition}\label{def:wep}
Let $X$ and $Y$ be locally compact noncompact second countable topological spaces. We say that $X$ and $Y$ satisfy the \emph{weak Extension Principle} if the following happens:

For every continuous map $F\colon \beta X\setminus X \to \beta Y\setminus Y$ there exists a partition into clopen sets $\beta X\setminus X =U\cup V$, an open set with compact closure $V_X\subseteq X$ such that $F[U]$ is nowhere dense in $\beta Y\setminus Y$, and a continuous function $G\colon \beta(X\setminus V_X)\to \beta Y$ which restricts to $F$ on $V$.

The weak Extension Principle is the statement ``All pairs of  locally compact noncompact second countable topological spaces satisfy the weak Extension Principle".
\end{definition}

The following is the main result of \cite{VY:wep}.

\begin{theorem}\label{thm:wep}
Assume $\OCA$ and $\MA_{\aleph_1}$. Then the weak Extension Principle holds.
\end{theorem}

The conclusions of Theorem~\ref{T.7.2} and Definition~\ref{def:wep} look much better with $U=\emptyset$, and in~\cite[\S 4.11]{Fa:AQ} it was all but conjectured that this ``Strong Extension Principle'' follows from forcing axioms. Alas, the following example shows that we cannot take $U=\emptyset$. 


Focusing on the case $X = Y = \bbN$, we say that a subset $A \subset \beta \bbN \setminus \bbN$ homeomorphic to $\beta \bbN \setminus \bbN$ is a \emph{nontrivial copy of $\beta \bbN \setminus \bbN$} if it is not equal to $\overline B \setminus B$ for any countable $B \subset \beta \bbN$. By the Gelfand--Naimark Duality, this corresponds to a surjective unital $^*$-homomorphism
$\Phi: \ell^\infty/c_0 \to \ell^\infty /c_0$ that it is not induced by a unital $^*$-homomorphism from $\ell^\infty $ into itself, that is a \emph{algebraically nontrivial} map. While Forcing Axioms forbid the existence of algebraically nontrivial homeomorphisms of $\beta \bbN \setminus \bbN$ (this was thoroughly discussed in \S\ref{S.Abel} and \S\ref{S.6bi}), the existence of nontrivial copies of $\beta \bbN \setminus \bbN$ on the other hand follows from $\ZFC$ alone.

\begin{theorem}[{\cite{dow2014non}}]
There exists a nontrivial copy of $\beta \bbN \setminus \bbN$ inside $\beta \bbN \setminus \bbN$.
\end{theorem}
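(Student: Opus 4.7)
The plan is to pass through Stone duality and construct the nontrivial copy dually, as a surjective Boolean algebra homomorphism $\Phi\colon \cP(\bbN)/\Fin \to B$ onto some quotient $B$ isomorphic to $\cP(\bbN)/\Fin$, where the kernel of $\Phi$ is not of the special form that corresponds to trivial copies. Recall that a copy $A = \overline{B_0}\setminus B_0$ for a countable $B_0\subseteq\beta\bbN$ dualizes to a surjection that factors through an almost-injection $\bbN\to\beta\bbN$; so the non-triviality condition becomes a combinatorial statement about the kernel of $\Phi$, namely that no such factorisation exists.

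First I would fix, by the Fichtenholz--Kantorovich theorem (available in ZFC), an independent family $\{X_\alpha\colon\alpha<\fc\}\subseteq\cP(\bbN)$ of size continuum and use it to realise the free Boolean algebra on $\fc$ generators as a subalgebra of $\cP(\bbN)/\Fin$. Next I would build, by a transfinite recursion (but of length $\fc$ rather than $\aleph_1$, so as to avoid CH), a tower of countable Boolean subalgebras $\{\mathcal{B}_\alpha\colon\alpha<\fc\}$ of $\cP(\bbN)/\Fin$ together with partial Boolean embeddings $\Psi_\alpha\colon\mathcal{B}_\alpha\hookrightarrow\cP(\bbN)/\Fin$ whose direct limit is a $\sigma$-complete embedding $\Psi\colon\cP(\bbN)/\Fin\hookrightarrow\cP(\bbN)/\Fin$. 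At each successor stage I would use the independent family to split freshly introduced elements, guaranteeing that $\Psi$ hits a dense enough image so that the dual map $\Phi$ onto $\Psi[\cP(\bbN)/\Fin]$ is the required surjection and the corresponding copy $A\subseteq\beta\bbN\setminus\bbN$ is a closed nowhere dense $P$-set.

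The non-triviality of $A$ is then ensured at limit stages by a diagonalisation. At stage $\alpha$, enumerate all potential witnesses to triviality up to that point — namely, countable subsets $B_0\subseteq\beta\bbN$ coded so far and the induced Boolean algebra maps $\cP(\bbN)/\Fin\to\cP(B_0)/\Fin$ — and choose the next piece of $\Psi$ so as to contradict at least one such potential factorisation of $\Phi$. Since each countable $B_0\subseteq\beta\bbN$ is determined by $\aleph_0$ many clopen slices of $\beta\bbN\setminus\bbN$, and there are $2^{\fc}$ such $B_0$ in total, the bookkeeping must enumerate the coded $B_0$'s in a way compatible with the length $\fc$ of the recursion; this is possible because every concrete $B_0$ relevant to the construction up to stage $\alpha$ involves only elements of $\bigcup_{\beta<\alpha}\mathcal{B}_\beta$, which has cardinality $|\alpha|+\aleph_0<\fc$.

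The main obstacle will be executing this bookkeeping in ZFC alone. A back-and-forth à la Parovi\v cenko produces an isomorphism of $\cP(\bbN)/\Fin$ with itself (as in \S\ref{S.CHModelTheory}) but needs $\CH$ to have enough room to realise all countable types; dropping $\CH$, we no longer have that $\cP(\bbN)/\Fin$ is $\fc$-saturated, and partial embeddings may fail to extend. The key technical point will be that we are not trying to produce an \emph{automorphism} but merely an \emph{embedding with image a $P$-set}, so one only needs to extend partial embeddings into a fixed target subalgebra that we are simultaneously growing. The independent family provides enough combinatorial freedom to realise the needed types one by one, while the diagonalisation against trivial witnesses uses only countable information at each stage; together these should allow the recursion to run through in ZFC, yielding the desired nontrivial copy.
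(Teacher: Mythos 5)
There is a genuine gap in the diagonalisation step, and it is a counting problem that your own sketch flags but does not resolve. A trivial copy is $\overline{B_0}\setminus B_0$ for some countable $B_0\subseteq\beta\bbN$; equivalently, a surjection $\cP(\bbN)/\Fin\to\cP(\bbN)/\Fin$ that lifts to a Boolean homomorphism $\cP(\bbN)\to\cP(\bbN)$, i.e.\ is induced by a sequence $(u_n)_n$ of ultrafilters on $\bbN$. There are $2^{\fc}$ such sequences, hence $2^{\fc}$ candidates to defeat, yet your recursion has length only $\fc$. The claim that ``every concrete $B_0$ relevant to the construction up to stage $\alpha$ involves only elements of $\bigcup_{\beta<\alpha}\mathcal B_\beta$'' is unsupported: a sequence of ultrafilters witnessing triviality of the \emph{final} copy $A$ need not be coded, captured, or even approximated by the countable data present at any bounded stage. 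Without such a bound, the bookkeeping simply runs out of room. (There is also a persistent confusion about duality: you construct an \emph{embedding} $\Psi\colon\cP(\bbN)/\Fin\hookrightarrow\cP(\bbN)/\Fin$, which Stone-dualises to a surjection $\bbN^*\to\bbN^*$, not to a closed subspace. A closed copy corresponds to an ideal, i.e.\ to a surjective Boolean homomorphism out of $\cP(\bbN)/\Fin$, and a subalgebra that is an image of $\Psi$ does not automatically carry a projection from $\cP(\bbN)/\Fin$ onto it.)

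What is a pity is that you already name the property that would make the entire diagonalisation unnecessary: that $A$ be a nonempty closed \emph{nowhere dense $P$-set}. This alone implies nontriviality in ZFC. Indeed, if $A=\overline{B}\setminus B$ with $B$ countable, one may assume $B\subseteq\bbN^*$ (an infinite principal part of $B$ would put a nonempty clopen set inside the nowhere dense $A$). Then $B$ is open in $\overline{B}\subseteq\bbN^*$, so for each $b\in B$ there is a clopen $U_b\ni b$ with $U_b\cap A=\emptyset$; the sets $V_b=\bbN^*\setminus U_b$ form a countable family of clopen neighbourhoods of $A$, and the $P$-set property produces a clopen $W$ with $A\subseteq W\subseteq\bigcap_b V_b$, hence $W\cap B=\emptyset$ --- contradicting that every point of $A\subseteq\overline{B}$ has $B$ accumulating at it. So the burden of the proof lies entirely in exhibiting, in ZFC, a nowhere dense $P$-set in $\bbN^*$ that is \emph{homeomorphic} to $\bbN^*$. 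Under CH this is routine (countable saturation turns the Parovi\v{c}enko-style back-and-forth); without CH it is exactly the delicate part, and your appeal to ``the independent family provides enough combinatorial freedom'' is where the actual content of the Dow--Hart argument must go, not a substitute for it. As it stands, neither the $P$-set property of your $A$ nor the isomorphism $A\cong\bbN^*$ is established in ZFC, and the diagonalisation you add on top cannot repair this.
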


Turning to the noncommutative setting, in~\cite{vaccaro2019trivial} rigidity phenomena are investigated for $^*$-homomorphisms from the Calkin algebra to itself. Unlike the abelian setting, in this case the rigidity forced on the structure of $\cQ(H)$ by $\OCA$, along with some peculiar features of this algebra, allow to extend the triviality results obtained for automorphisms in Theorem~\ref{T.Calkin} to all unital $^*$-homomorphisms.

As $\cQ(H)$ is simple (as a \cstar algebra), all $^*$-homomorphisms from the Calkin algebra to itself are either zero or injective, hence we are simply going to refer to them as endomorphisms. Although the results in~\cite{vaccaro2019trivial} also apply to non-unital maps, in the present discussion we restrict to the unital case, since the core ideas are the same and this allows us to avoid unnecessary technical details.


\begin{definition} \label{def:trivialendo}
A unital endomorphism $\Phi \colon \cQ(H) \to \cQ(H)$ is \emph{algebraically trivial} if there is a unitary $v \in \cQ(H)$ and a strictly continuous unital endomorphism $\Phi_*\colon \cB(H) \to \cB(H)$ such that the following diagram commutes. 
\[
\begin{tikzpicture}
 \matrix[row sep=1cm,column sep=1.5cm] 
 {
& & \node (M1) {$\cB(H)$}; & \node (M2) {$\cB(H)$};&
\\
& & \node (Q1) {$\cQ(H)$}; & \node (Q2) {$\cQ(H)$} ;
\\
};
\draw (M1) edge [->] node [above] {$\Phi_*$} (M2);
\draw (Q1) edge [->] node [above] {$\text{Ad}(v) \circ \Phi$} (Q2);
\draw (M1) edge [->] node [left] {$\pi$} (Q1);
\draw (M2) edge [->] node [left] {$\pi$} (Q2);
\end{tikzpicture}
\]
\end{definition}

Note that an automorphism of $\cQ(H)$ is algebraically trivial as in Definition~\ref{def:trivialendo} if and only if it is inner. Because of Theorem~\ref{thm:autocalk}, Definition~\ref{def:trivialendo} agrees with Definition~\ref{def:trivialmap} on automorphisms of $\cQ(H)$. Hence the following is an extension of Theorem~\ref{T.all} to endomorphisms of $\cQ(H)$.

\begin{theorem}[{\cite[Theorem 1.3]{vaccaro2019trivial}}] \label{thm:trivial}
Assume $\OCA$. All endomorphisms of $\cQ(H)$ are algebraically trivial.
\end{theorem}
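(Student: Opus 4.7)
The plan is to adapt, landmark by landmark, the six-step strategy outlined in \S\ref{6bii.PFAAVi} for the automorphism case (Theorem~\ref{T.Calkin}). Since $\cQ(H)$ is simple, every nonzero endomorphism $\Phi$ is injective; we focus on the unital case. The stratification $\cQ(H) = \bigcup_{\bfE \in \Part} \pi[\cF[\bfE]]$ of Lemma~\ref{lemma:strat} again allows us to work locally on each $\cD[\bfE]$ and then assemble the pieces. The target is to produce a unitary $v \in \cQ(H)$ and a strictly continuous unital endomorphism $\Phi_* \colon \cB(H) \to \cB(H)$ that lifts $\Ad(v) \circ \Phi$, in accordance with Definition~\ref{def:trivialendo}.

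The first three landmarks of the automorphism proof transfer without substantial change. The $\OCA_\infty$-argument producing $\sigma$-Borel $\varepsilon$-approximations to $\Phi$ on some $\cD_X[\bfE]$ uses only that $\Phi$ is a $^*$-homomorphism with a fixed lifting $\Phi_*$, and not its surjectivity; the combinatorial set-up around the sets $M^{\varepsilon,n}$ goes through verbatim. The subsequent Baire-category and Jankov--von Neumann uniformizations yield topological triviality of $\Phi$ on some $\cD_Z[\bfE]$, and Ulam-stability of finite-dimensional \cstar-algebras (Theorem~\ref{FD-Ulam} combined with Theorem~\ref{T.Ulam}) upgrades this to algebraic triviality: there is an injective unital $^*$-homomorphism $\Psi_{Z,\bfE} \colon \cD_Z[\bfE] \to \cB(H)$ lifting $\Phi|_{\cD_Z[\bfE]}$.

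The isometry trick (Lemma~\ref{lemma:partialiso}) must be recast: instead of transferring an implementing unitary, one transfers an implementing injective $^*$-homomorphism. If $\Psi_B$ lifts $\Phi|_B$ and $v \in \cB(H)$ is an isometry with $\Ad(\pi(v))[A] \subseteq B$, then picking any lifts $w, w' \in \cB(H)$ of $\Phi(\pi(v))$ and $\Phi(\pi(v^*))$ and setting $\Psi_A(a) = w' \Psi_B(v a v^*) w$ yields a lift of $\Phi|_A$ modulo $\cK(H)$, which is sufficient for our purposes. Combined with the Murray--von Neumann argument embedding $\cD[\bfF]$ into $\cD_Z[\bfE]$ whenever $\sup_n|E_n| = \infty$, this propagates algebraic triviality from $\cD_Z[\bfE]$ to every $\cD[\bfE]$, and thence to $\cF[\bfE] = \cD[\bfE^{\even}] + \cD[\bfE^{\odd}]$ after a small perturbation ensuring that the two summand-liftings agree on their intersection.

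The main obstacle, as in the automorphism proof, is the global coherence step. One has, for every $\bfE \in \Part$, a strictly continuous injective unital $^*$-homomorphism $\Psi_\bfE$ lifting $\Phi|_{\cF[\bfE]}$, pairwise compatible modulo inner perturbations. For automorphisms the implementing objects were unitaries in $\bbT^\bbN$ (after reducing to the atomic masa via Johnson--Parrott), so $\OCA$-uniformization of a coherent family inside the abelian compact group $\bbT^\bbN$ sufficed; for endomorphisms the analogous objects live in a non-abelian setting. The plan is to first reduce to the case where $\Phi$ restricted to the atomic masa $\cD[\bfF]$ (with each $|F_n|=1$) is implemented by the identity on a cofinite piece, by replacing $\Phi$ with $\Ad(v) \circ \Phi$ for a suitable unitary $v$ chosen via a Johnson--Parrott style analysis of the maximal abelian subalgebra $\Phi(\cD[\bfF])$. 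Once this normalization is in place, the residual obstruction is a coherent family of diagonal unitaries in $\bbT^\bbN$ modulo the subgroups $\sfF_\bfE$ exactly as in the automorphism case, and the $\OCA$-uniformization of \cite[\S 17.8]{Fa:STCstar} applies to produce the trivializing unitary, giving a global strictly continuous lift $\Phi_* \colon \cB(H) \to \cB(H)$ and hence algebraic triviality of $\Phi$.
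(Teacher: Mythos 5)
The overall architecture you propose—running the landmarks of the Calkin automorphism proof, with the local $\OCA_\infty$ arguments and Ulam-stability carrying over—is faithful to how \cite{vaccaro2019trivial} proceeds, and indeed this is exactly what the survey says: most of the technical lemmas from the automorphism case apply verbatim to endomorphisms. Your recasting of the Isometry Trick (transporting implementing $^*$-homomorphisms rather than unitaries) is also the right idea, though note that with ``any lifts $w,w'$'' of $\Phi(\pi(v))$, $\Phi(\pi(v^*))$ the map $\Psi_A(a)=w'\Psi_B(vav^*)w$ is multiplicative only modulo $\cK(H)$; one should take $w=\Psi_B(v)$, $w'=\Psi_B(v^*)$ (assuming $v\in\pi^{-1}[B]$), so that $ww'=\Psi_B(vv^*)$ absorbs the ranges $\Psi_B(vav^*)$ and the map is an honest $^*$-homomorphism.

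The genuine gap is in your final coherence step. You propose to normalize so that ``$\Phi$ restricted to the atomic masa $\cD[\bfF]$ is implemented by the identity on a cofinite piece''. This is impossible for a unital endomorphism that is not an automorphism: if $\Phi=\Ad(v)\circ\Phi_0$ acted as the identity on the masa cofinitely, then the Johnson--Parrott argument you invoke next would force the remaining implementing objects back into the masa and $\Phi$ would be inner, contradicting $\ind(\Phi(\dot s))<-1$. More concretely, a strictly continuous unital endomorphism of $\cB(H)$ has the form $a\mapsto\sum_{j<m}v_jav_j^*$ for orthogonal isometries $v_j$; its restriction to $\ell_\infty(\bbN)$ is the ``$m$-fold diagonal'' embedding, which no unitary conjugation can turn into the identity. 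Consequently the residual obstruction is not ``a coherent family of diagonal unitaries in $\bbT^\bbN$ modulo $\sfF_\bfE$'', as in the automorphism case, and the $\OCA$-uniformization of \cite[\S17.8]{Fa:STCstar} does not apply directly. What the actual proof does instead is to use the analog of the Isometry Trick in the other direction: after the local arguments, one identifies a projection $p$ (roughly, an image of a diagonal projection cut out by the $^*$-homomorphism lifting $\Phi$ on the masa) such that it suffices to prove algebraic triviality of $\Phi$ on the corner $p\cQ(H)p\cong\cQ(H)$; it is inside that corner, after this reduction, that one recovers an automorphism-like picture amenable to the coherent-family uniformization. That corner reduction—flagged explicitly in the survey's discussion—is missing from your plan, and without it the final gluing step does not close.
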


The techniques required to prove Theorem~\ref{thm:trivial} are refinements of the tools developed to prove Theorem~\ref{T.all}. Indeed a significant portion of the most technical results in~\cite{Fa:All} already applies to endomorphisms (the same arguments are also exposed in full detail in~\cite[\S 17]{Fa:STCstar}). By the appropriate analog of Lemma~\ref{lemma:partialiso}, it suffices to prove that the restriction of the endomorphism under consideration to some corner $p\cQ(H)p$ is algebraically trivial.

Along with some elementary observations on endomorphisms of $\cB(H)$ (see~\cite[Remark 2.3]{vaccaro2019trivial}), Theorem~\ref{thm:trivial} permits to completely classify unital endomorphisms of $\cQ(H)$ up to unitary equivalence via the Fredholm index of the image of the unilateral shift $s$. What can be obtained
under $\OCA$ is thus a result in the same spirit of the early studies on essential unitary equivalence discussed in \S\ref{3a.BDF}.

\begin{theorem}[{\cite[Theorem 1.1]{vaccaro2019trivial}}] \label{thm:classification}
Assume $\OCA$. Two unital endomorphisms $ \Phi_1, \Phi_2: \cQ(H) \to \cQ(H)$ are unitarily equivalent if and only if the Fredholm indices $\ind(\Phi_1(\dot s))$ and $\ind(\Phi_2(\dot s))$ are equal.
\end{theorem}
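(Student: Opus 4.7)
\emph{Proof plan for Theorem~\ref{thm:classification}.} The easy direction is that unitary equivalence preserves the Fredholm index: if $\Phi_2 = \Ad(w) \circ \Phi_1$ for some unitary $w \in \cQ(H)$, then $\Phi_2(\dot s) = w\, \Phi_1(\dot s)\, w^*$, and since $\ind$ is a group homomorphism on Fredholm unitaries of $\cQ(H)$, one has $\ind(\Phi_2(\dot s)) = \ind(w) + \ind(\Phi_1(\dot s)) - \ind(w) = \ind(\Phi_1(\dot s))$. This direction does not use $\OCA$.

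For the nontrivial direction, suppose $\ind(\Phi_1(\dot s)) = \ind(\Phi_2(\dot s))$. The plan is to invoke Theorem~\ref{thm:trivial} to each $\Phi_i$: under $\OCA$, each $\Phi_i$ is algebraically trivial, so there exist unitaries $v_i \in \cQ(H)$ and strictly continuous unital $^*$-endomorphisms $(\Phi_i)_*\colon \cB(H) \to \cB(H)$ making the square in Definition~\ref{def:trivialendo} commute. Thus the problem of classifying $\Phi_1, \Phi_2$ up to unitary equivalence reduces (after absorbing $\Ad(v_i)$) to classifying the strictly continuous lifts $(\Phi_i)_*$ up to conjugation by a unitary of $\cB(H)$.

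The next step is to use the classical structure theorem for strictly continuous unital $^*$-endomorphisms of $\cB(H)$ (the content of~\cite[Remark~2.3]{vaccaro2019trivial}): every such endomorphism is unitarily equivalent to the ``amplification'' map $a \mapsto a \otimes 1_K$ on $H \cong H \otimes K$, for a unique (up to dimension) Hilbert space $K$ with $1 \le \dim K \le \aleph_0$, and two such endomorphisms are unitarily equivalent in $\cB(H)$ if and only if the associated dimensions agree. One then reads off the multiplicity from the Fredholm index of the image of $s$: since $\ind(s \otimes 1_K) = -\dim K$ when $\dim K < \infty$ and $s \otimes 1_K$ fails to be Fredholm when $\dim K = \aleph_0$, and since $\Phi_i$ being unital forces $\Phi_i(\dot s)$ to be a unitary of $\cQ(H)$ (hence Fredholm), the multiplicity $n_i$ of $(\Phi_i)_*$ is necessarily finite, and
\[
\ind(\Phi_i(\dot s)) \;=\; \ind\bigl(\Ad(v_i)\Phi_i(\dot s)\bigr) \;=\; \ind\bigl(\pi((\Phi_i)_*(s))\bigr) \;=\; -n_i.
\]

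Combining the last two paragraphs, the hypothesis $\ind(\Phi_1(\dot s)) = \ind(\Phi_2(\dot s))$ forces $n_1 = n_2$, so there is a unitary $u \in \cB(H)$ with $(\Phi_1)_* = \Ad(u) \circ (\Phi_2)_*$; pushing through $\pi$ and rearranging $\Ad(v_1), \Ad(v_2)$, one obtains a unitary $w \in \cQ(H)$ with $\Phi_1 = \Ad(w) \circ \Phi_2$. The two inputs that make this argument work are Theorem~\ref{thm:trivial} (the $\OCA$-dependent reduction to strictly continuous liftings) and the rigidity of strictly continuous unital endomorphisms of $\cB(H)$; the main conceptual obstacle is the former, since the second is a standard consequence of the uniqueness up to multiplicity of nondegenerate representations of $\cB(H)$, while the first is the deep content of~\cite{vaccaro2019trivial} building on \S\ref{6bii.PFAAVi}.
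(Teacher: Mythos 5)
Your proposal is correct and follows essentially the same route the paper indicates: reduce to strictly continuous unital endomorphisms of $\cB(H)$ via the $\OCA$-dependent Theorem~\ref{thm:trivial}, then invoke the classical multiplicity classification of such endomorphisms (the ``elementary observations'' of \cite[Remark~2.3]{vaccaro2019trivial}) and read off the multiplicity from $\ind(\Phi(\dot s))$. The observation that unitality forces $\Phi_i(\dot s)$ to be Fredholm, hence the multiplicity finite, is exactly the point the paper uses to land the index map $\Theta$ in $\bbN^+$.
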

The latter theorem enables to define a bijection between the set of unital endomorphisms $\End(\cQ(H))$ modulo unitary equivalence $\sim_u$, and the set of positive integers. The index map is concretely defined as
\begin{align*}
\Theta\colon \End(\cQ(H)) / \sim_u & \to \bbN^+ \\
[\Phi] & \mapsto -\ind(\Phi(\dot s)).
\end{align*}
Both Theorem~\ref{thm:trivial} and Theorem~\ref{thm:classification} fail badly under $\CH$. In~\cite{PhWe:Calkin} the authors build a collection of size
$2^{\fc}$ of pairwise inequivalent, outer (hence nontrivial) automorphisms. Composing these automorphisms with an (algebraically trivial) endomorphism of index $m > 0$ produces an uncountable family $\{ \Phi_\alpha \}_{\alpha < 2^{\aleph_1}} \subseteq \End(\cQ(H))$ such that $[\Phi_\alpha] \not = [\Phi_\beta]$, but $\Theta([\Phi_\alpha]) = \Theta([\Phi_\beta]) = m$, for all $\beta \not = \alpha$.

A further difference in the behavior of the index map $\Theta$, when assuming CH, is its range. Under $\OCA$ the range of $\Theta$ coincides with the set of positive integers. On the other hand, $\CH$ entails the existence of a unital endomorphism $\Phi \in \End(\cQ(H))$ that maps every unitary of $\cQ(H)$ into a unitary with index zero, thus giving $\Theta([\Phi]) = 0$ (this follows from the main result of~\cite{farah2017calkin}; see also~\cite[Example 5.4]{vaccaro2019trivial}). It is not known whether the map $\Theta$ can assume negative values consistently with $\ZFC$. This problem relates to Question~\ref{ques:bdf}, since a $\Phi \in \End(\cQ(H))$ such that $\Theta([\Phi]) = -1$ is an endomorphism which, up to unitary equivalence, sends the unilateral shift to its adjoint. Finding such an endomorphism does not seem to be a much easier task than answering Question~\ref{ques:bdf} itself.

We conclude with an amusing corollary to Theorem~\ref{thm:trivial}. In \cite{farah2017calkin} it was proved that every \cstar-algebra of density character $\leq \aleph_1$ embeds into $\cQ(H)$, and therefore $\CH$ implies that $A$ embeds into $\cQ(H)$ if and only if its density character is not greater than $\aleph_1$. In particular, $\CH$ implies that the class of \cstar-algebras which embed into $\cQ(H)$ is, not surprisingly, closed under taking limits of countable inductive systems and under tensor products. What is surprising is that this is not a theorem of ZFC. 

\begin{corollary} $\OCA$ implies that the class of \cstar-algebras that embed into $\cQ(H)$ is not closed under tensor products and that it is not closed under countable inductive limits. 
\end{corollary}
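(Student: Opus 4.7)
The plan is to deduce both non-closure statements from Theorem~\ref{thm:trivial}, by first extracting a rigidity result on the relative commutant of any unital copy of $\cQ(H)$ inside $\cQ(H)$, and then exhibiting a tensor product and a countable inductive limit whose embedding into $\cQ(H)$ would force an infinite-dimensional algebra into such a commutant.

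First I would establish the auxiliary fact: assuming $\OCA$, for every unital $^*$-homomorphism $\rho\colon\cQ(H)\to\cQ(H)$ the relative commutant $\rho(\cQ(H))'\cap\cQ(H)$ is isomorphic to $M_n(\bbC)$ for some $n\geq 1$. Simplicity of $\cQ(H)$ makes $\rho$ an embedding, so Theorem~\ref{thm:trivial} provides a unitary $v\in\cQ(H)$ and a strictly continuous unital $^*$-endomorphism $\Phi_\ast\colon\cB(H)\to\cB(H)$ with $\pi\circ\Phi_\ast=\Ad(v)\circ\rho\circ\pi$. Commutativity of this diagram forces $\Phi_\ast(\cK(H))\subseteq\cK(H)$, while the restriction of $\Phi_\ast$ to bounded sets is normal; by the representation theory of the type $I_\infty$ factor $\cB(H)$, $\Phi_\ast$ is unitarily equivalent to $x\mapsto x\otimes 1_K$ on a decomposition $H\cong H_0\otimes K$, and compactness preservation forces $\dim K=n<\infty$. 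A direct matrix-entry computation (using that the centre of the simple algebra $\cQ(H_0)$ is $\bbC$, so each entry of an element commuting with $\cB(H_0)\otimes 1$ modulo $\cK$ lies in $\bbC\cdot 1+\cK(H_0)$) shows $\Phi_\ast(\cB(H))'\cap\cQ(H)=\pi(1\otimes M_n(\bbC))$, and conjugating by $v$ yields $\rho(\cQ(H))'\cap\cQ(H)\cong M_n(\bbC)$.

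Granted this lemma, both failures are immediate. For tensor products, pick any infinite-dimensional separable unital \cstar-algebra $B$ (for instance $B=M_{2^\infty}$): both $B$ and $\cQ(H)$ embed unitally into $\cQ(H)$, yet a unital embedding $\pi\colon\cQ(H)\otimes B\hookrightarrow\cQ(H)$ would make $\rho:=\pi(\cdot\otimes 1)$ a unital embedding of $\cQ(H)$ whose relative commutant contains $\pi(1\otimes B)$, contradicting the lemma. (If the embedding is only non-unital with image in a corner $p\cQ(H)p$, then $p$ can be taken with $p\cQ(H)p\cong\cQ(H)$ and one reduces to the unital case.) For countable inductive limits, set $A_n:=\cQ(H)\otimes M_{2^n}(\bbC)$ with the canonical connecting maps; each $A_n$ is isomorphic to $\cQ(H)$ via $H\otimes\bbC^{2^n}\cong H$, hence embeds, yet $\varinjlim_n A_n=\cQ(H)\otimes M_{2^\infty}$ fails to embed by the same relative-commutant argument applied to the subalgebra $\cQ(H)\otimes 1$, which would have to accommodate a unital copy of $M_{2^\infty}$ in its finite-dimensional relative commutant.

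The main point requiring care is the structural description of strictly continuous unital $^*$-endomorphisms of $\cB(H)$ that preserve $\cK(H)$, and in particular the finiteness of the multiplicity $n$; once this is in hand the rest is a standard computation of the commutant of $\cB(H_0)\otimes 1_n$ inside $\cQ(H)$, and everything rests on Theorem~\ref{thm:trivial} for the applicability to arbitrary unital copies of $\cQ(H)$.
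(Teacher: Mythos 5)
Your proof is correct and follows essentially the same route as the paper: the key point in both is that Theorem~\ref{thm:trivial} forces any unital copy of $\cQ(H)$ inside $\cQ(H)$ to come from a normal endomorphism of $\cB(H)$ of finite multiplicity, whence $\cQ(H)\otimes A$ embeds into $\cQ(H)$ only when $A$ is finite-dimensional, and both failures of closure then follow with $A$ an infinite-dimensional UHF algebra. The paper simply asserts this consequence of Theorem~\ref{thm:trivial} without spelling out the argument via normal endomorphisms and relative commutants that you supply.
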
 

\begin{proof} The conclusion of Theorem~\ref{thm:trivial} implies that, under $\OCA$, $\cQ(H)\otimes A$\footnote{For the readers who would care, this is the spatial tensor product.} embeds into $\cQ(H)$ if and only if $A$ is finite-dimensional. The first part follows immediately. For the second part, choose $A$ that is an inductive limit of a sequence of finite-dimensional \cstar-algebras (e.g., any infinite-dimensional AF algebra). 
\end{proof}

\section{Larger Calkin algebras} \label{S.large}
\enumtwo 
This section is devoted to the strain of research initiated in the last two pages of the pivotal paper~\cite{Ve:OCA}. There Veli\v ckovi\'c proved that $\OCA$ and $\MA_{\aleph_1}$ imply that all automorphisms of $\cP(\aleph_1)/\Fin$\footnote{$\Fin$ is the ideal of finite subsets of $\kappa$, denoted $[\kappa]^{<\aleph_0}$ by some authors. The cardinal $\kappa$ will be clear from the context and this notation should not lead to any confusion.} are trivial and that $\PFA$ implies that, for every infinite cardinal $\kappa$, all automorphisms of $\cP(\kappa)/\Fin$ are trivial. In the topological reformulation (see \S\ref{ss.remainders}), the conclusion of the latter result asserts that every autohomeomorphism of the \v Cech--Stone remainder $\beta\kappa\setminus \kappa$ taken with the discrete topology can be extended to a continuous map from $\beta\kappa$ into itself. A nontrivial automorphism of $\cP(\bbN)/\Fin$ trivially (no pun intended!) extends to a nontrivial automorphism of $\cP(\kappa)/\Fin$, and therefore by Rudin's result $\CH$ implies that $\cP(\kappa)/\Fin$ has a nontrivial automorphism for every infinite cardinal $\kappa$. The situation with the Calkin algebra associated to a nonseparable Hilbert space is, not surprisingly, considerably more complicated. 

\subsubsection{Large Calkin algebras}Fix an uncountable cardinal $\kappa$ and (in this paragraph only) let $H=\ell_2(\kappa)$. Let $\cK(H)$ denote the ideal of compact operators on $\cB(H)$. For an infinite cardinal $\lambda\leq \kappa$ let 
\[
\cK_\lambda(H)=\overline{\{a\in \cB(H)\mid \text{the density character of $a[H]$ is smaller than $\lambda$}\}}, 
\]
in particular $\cK(H)=\cK_{\aleph_0}(H)$. 
It is not difficult to see that every norm-closed, self-adjoint ideal of $\cB(H)$ is of this form for some $\lambda$ (\cite[Proposition~12.3.4]{Fa:STCstar}). In particular, for any $\aleph_\alpha$ the proper ideals of $\cQ(\ell_2(\aleph_\alpha))$ are linearly ordered by inclusion in type $\alpha$. Suppose that $\Phi$ is an automorphism of the Calkin algebra $\cQ(H)=\cB(H)/\cK(H)$. Then $\Phi$ sends $\cK_\lambda(H)/\cK(H)$ to itself for every uncountable $\lambda\leq \kappa$. A moment of thought reveals that it is not at all clear whether the existence of an outer automorphism of $\cQ(\ell_2(\aleph_0))$ implies the existence of an outer automorphism of $\cQ(\ell_2(\kappa))$ for any uncountable value of $\kappa$. Indeed, it is an open problem whether the existence of an outer automorphism of $\cQ(\ell_2(\kappa))$ is relatively consistent with $\ZFC$ for a single uncountable~$\kappa$. Constructions of awkward objects using $\CH$ are typically easier (and historically obtained much earlier) than the proofs from forcing axioms that those objects do not exist. Nevertheless, a modification of Veli\v ckovi\'c's argument can be adapted, by introducing curious objects called `Polish Aronszajn trees', to show that $\PFA$ implies all automorphisms of $\cQ(\ell_2(\kappa))$ are inner for all infinite $\kappa$ (\cite{Fa:AllAll}), in spite of an observation that the noncommutative analog of the poset for adding uncountably many Cohen reals fails to be ccc.

Our understanding of the triviality of automorphisms of `other Calkin algebras', those of the form $\cQ(\ell_2(\kappa))/\cK_\lambda(\ell_2(\kappa))$ for $\aleph_1\leq\lambda\leq \kappa$ brings yet another reversal. 

\begin{question}
Let $\lambda \le \kappa$ be uncountable cardinals. Are there outer automorphisms of $\cQ(\ell_2(\kappa))/\cK_\lambda(
\ell_2(\kappa))$?
\end{question}

The answer is easily positive if $\kappa=\lambda$ and $2^\kappa=\kappa^+$ (\cite[Theorem~1]{FaMcKSch}), but nothing else is known.
\subsubsection{The noncommutative Katowice nonproblem} The ideas from the previous paragraph can be used to undermine another heuristic principle, that questions about noncommutative objects are more difficult than their `commutative' analogs. The Katowice problem asks whether the Boolean algebras $\cP(\bbN)/\Fin$ and $\cP(\aleph_1)/\Fin$ can be isomorphic in some model of $\ZFC$; it is wide open (see e.g.,~\cite{chodounsky2016katowice}). On the other hand, the fact that the ideals of $\cQ(\ell_2(\aleph_\alpha))$ are linearly ordered in type $\alpha$ immediately implies that $\cQ(\ell_2(\kappa))$ and $\cQ(\ell_2(\lambda))$ are non-isomorphic for any pair of distinct infinite cardinals $\kappa$ and $\lambda$. (It should be noted that, by~\cite{balcar1978distinguish}, for uncountable cardinals $\kappa$ and $\lambda$, $\ZFC$ implies that $\cP(\kappa)/\Fin$ and $\cP(\lambda)/\Fin$ are isomorphic if and only if $\kappa=\lambda$.)

\subsubsection{$\cP(\kappa)/\Fin$ again} The study of automorphisms of $\cP(\kappa)/\Fin$ for uncountable~$\kappa$ is only loosely related to our story, but it hides surprises worth mentioning. If $\kappa$ is uncountable, then $\MA_{\aleph_1}$ and $\OCA$ already imply that all automorphisms of $\cP(\kappa)/\Fin$ are trivial, for all $\kappa\leq \mathfrak c$ (\cite[Theorem~3.4]{larson2016automorphisms}). 
In~\cite[Corollary~1.2]{shelah2016automorphisms} this conclusion was extended to all $\kappa$ smaller than the first inaccessible cardinal (or all $\kappa$, if there are no inaccessible cardinals).

Rigidity for the quotient of $\cP(\kappa)$ modulo the ideal of subsets of cardinality smaller than $\kappa$ was studied in \cite{larson2016automorphisms, kellner2024automorphisms, kellner2024nowhere}. 


\section{Uniform Roe coronas}\label{S.Roe}
\enumtwo 
In this section we study automorphisms of certain quotient structures constructed from a class of \cstar-algebras linked in a natural way with the coarse geometry of metric spaces.

Coarse geometry is the study of metric spaces when one forgets about the small scale structure and focuses only on large scale. This philosophy underlies much of geometric group theory. Since we can forget about what happens at a small scale, we focus on metric spaces which are discrete, or, even more, uniformly locally finite. A metric space $(X,d)$ is uniformly locally finite (u.l.f.\ from now on) if for every $r>0$ we have $\sup_{x\in X}|B_r(x)|<\infty$, $B_r(x)$ being the ball of radius $r$ around $x$. Examples of such spaces are finitely generated groups with the word metric and discretisations of Riemannian manifolds. 

Associated to a u.l.f.\ metric space $(X,d)$ is a \cstar-algebra called the \emph{uniform Roe algebra of $X$}. Prototypical versions of this \cstar-algebra were introduced by Roe~\cite{Roe:1988qy} for index-theoretic purposes. The theory was consolidated in the 1990s, and uniform Roe algebras have since found applications in index theory (e.g.,~\cite{Spakula:2009tg,Engel:2018vm}), \cstar-algebra theory (\cite{Rordam:2010kx}), single operator theory (\cite{Rabinovich:2004xe}), topological dynamics (\cite{Kellerhals:2013aa}), and mathematical physics (\cite{Ewert:2019tr}). 

The formal definition of $\cstu(X)$ is the following. For a metric space $(X,d)$, the \emph{propagation} of an $X$-by-$X$ matrix\footnote{We view an $X$-by-$X$ matrix as a function $a\colon X\times X\to \mathbb C$.} $a=[a_{xy}]$ is
\[
\propg(a) \coloneqq \sup\{d(x,y)\mid a_{xy}\neq 0\}\in [0,\infty].
\]
If $a=[a_{xy}]$ has finite propagation and uniformly bounded entries, then $a$ canonically induces a bounded operator on the Hilbert space $\ell_2(X)$ as long as $(X,d)$ is u.l.f.. For any such $(X,d)$, the operators with finite propagation form a $^*$-algebra and its norm closure, a \cstar-algebra, is the uniform Roe algebra of $(X,d)$, denoted by $\cstu(X)$. The algebra $\cstu(X)$ contains the compacts $\mathcal K(\ell_2(X))$ as a minimal ideal. We define the \emph{uniform Roe corona of $X$}, denoted by $\roeq(X)$, as the quotient $\cstu(X)/\mathcal K(\ell_2(X))$. 

In the setting of coarse geometry, homeomorphisms are replaced by maps remembering the large scale structure.
\begin{definition}
If $(X,d)$ and $(Y,\partial)$ are metric spaces, a map $f\colon X\to Y$ is said to be \emph{coarse} if for every $r>0$ there is $s>0$ such that if $x,x'\in X$ are such that $d(x,x')<r$ then $\partial(f(x),f(x'))<s$. If $f\colon X\to Y$ and $g\colon Y\to X$ are coarse maps such that 
\[
\sup_{x\in X}d(x,g\circ f(x))<\infty\text{ and } \sup_{y\in Y}\partial(y,f\circ g(y))<\infty,
\]
$f$ and $g$ are called \emph{mutual coarse inverses}. In this case, each of $f$ and $g$ is a \emph{coarse equivalence}, and the spaces $X$ and $Y$ are said coarsely equivalent.
\end{definition}

The `rigidity problem for uniform Roe algebras'\footnote{The word rigidity is so flexible that it is obviously overused in mathematics.} asks whether two u.l.f.\ metric spaces $X$ and $Y$ whose uniform Roe algebras are isomorphic must be coarsely equivalent. This was recently solved in~\cite{BBFKVW.Roe}\footnote{Although uniform Roe algebras are nonseparable, they are separably representable and an isomorphism between them is implemented by a unitary. The rigidity problem for uniform Roe algebras is therefore absolute, i.e., its solution cannot depend on the set theoretic ambient.} The `quotient' version of this problem was formulated and studied in~\cite{braga2018uniform}. It is known as the rigidity problem of uniform Roe coronas.

\begin{problem}\label{Problem:RigRoeCoronas}
Let $X$ and $Y$ be u.l.f.\ metric spaces such that $\roeq(X)$ and $\roeq(Y)$ are isomorphic. Does it follow that $X$ and $Y$ are coarsely equivalent? 
\end{problem}

In~\cite{braga2018uniform}, Problem~\ref{Problem:RigRoeCoronas} was solved in the presence of geometrical assumptions on the spaces involved, asking for the non-existence of certain `ghost' operators. Relying on the recent results obtained in~\cite{BBFKVW.Roe}, it is possible to remove these hypotheses and fully solve the rigidity problem for uniform Roe coronas.

\begin{theorem}[{\cite[Theorem 1.5]{BBFKVW.Roe}}]\label{T.RoeCoronas} Assume $\OCA$ and $\mathrm{MA}_{\aleph_1}$. 
Let $X$ and $Y$ be u.l.f.\ metric spaces. If $\roeq(X)\cong \roeq(Y)$, then $X$ and $Y$ are coarsely equivalent.
\end{theorem}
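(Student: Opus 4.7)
The plan is to reduce Problem~\ref{ProblemRigProbCorona} to the corresponding non-quotient rigidity question, which is a $\ZFC$ theorem of~\cite{BBFKVW.Roe}: if $\cstu(X)\cong\cstu(Y)$, then $X$ and $Y$ are coarsely equivalent. Since any u.l.f.\ metric space is countable, $\cstu(X)$ and $\cstu(Y)$ are separable \cstar-algebras containing $\cK(\ell_2(X))$ and $\cK(\ell_2(Y))$ as essential ideals, so $\roeq(X)$ and $\roeq(Y)$ are Borel quotient structures in the sense of Definition~\ref{Def.Borel}. My aim is to lift the abstract isomorphism $\Phi\colon\roeq(X)\to\roeq(Y)$ to a genuine $^*$-isomorphism between $\cstu(X)$ and $\cstu(Y)$, modulo a compact perturbation, and then invoke the $\ZFC$ rigidity result.

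First I would adapt the $\OCA$ Lifting framework of \S\ref{S.FABoole} and \S\ref{6bii.PFAAVi}. Although $\cstu(X)$ is not a multiplier algebra, it carries a canonical stratification by subalgebras of finite-propagation operators supported on growing finite pieces of $X$, furnishing an analog of the decomposition $\cF[\bfE]$ from \S\ref{S.Strat}. Applying $\OCA$ to a suitable Polish space of pairs (in the spirit of the proofs of Lemma~\ref{L.Vel.2.2} and Lemma~\ref{L.Just.WAT}) should produce, for every $\varepsilon>0$, a $\sigma$-Borel $\varepsilon$-approximation to $\Phi$ on finite-propagation corners, which via a Kuratowski--Ulam argument sharpens to a continuous $\varepsilon$-approximation on large corners, parallel to the noncommutative $\OCA$ Lifting Theorem underlying Theorem~\ref{T.mainPFA}.

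Next, I would combine these continuous approximations with the  Ulam-stability of finite-dimensional \cstar-algebras (Theorem~\ref{FD-Ulam}) to obtain an asymptotically $^*$-multiplicative lift, and then employ an isometry-trick-style argument in the spirit of Lemma~\ref{lemma:partialiso}---this is where $\MA_{\aleph_1}$ enters in the form used throughout \S\ref{6bii.PFAAVi}---to upgrade it to a genuine $^*$-isomorphism $\Psi\colon\cstu(X)\to\cstu(Y)$ that implements $\Phi$ modulo an inner perturbation. At this point the $\ZFC$ theorem of~\cite{BBFKVW.Roe} applies and yields the desired coarse equivalence between $X$ and $Y$.

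The main obstacle I expect will be controlling the Borel lifting on operators of finite propagation in the presence of \emph{ghost} operators: these are elements of $\cstu(X)$ whose matrix entries tend to zero at infinity yet which are not globally compact, and they were precisely the obstruction motivating the geometric hypotheses in the earlier partial result~\cite{braga2018uniform}. Handling them under $\OCA+\MA_{\aleph_1}$ in full generality will require importing the recent ghost-analysis techniques of~\cite{BBFKVW.Roe} into the quotient setting, ensuring that the $\sigma$-Borel approximations produced by $\OCA$ do not collapse ghost differences and so genuinely track the propagation structure of $\cstu(X)$.
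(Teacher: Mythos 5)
Your proposal diverges fundamentally from the paper's proof, and the divergence opens genuine gaps.

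The first problem is the opening claim: $\cstu(X)$ is \emph{not} a separable \cstar-algebra. Diagonal matrices have propagation zero, so $\ell_\infty(X)\subseteq\cstu(X)$, and $\ell_\infty(X)$ is nonseparable as soon as $X$ is infinite. Consequently $\roeq(X)$ is not a corona of a separable \cstar-algebra, and the $\OCA+\MA_{\aleph_1}$ lifting machinery of \S\ref{6bii.PFAAVi}---which is formulated precisely for coronas $\cQ(A)$ with $A$ separable---does not apply to the full quotient $\roeq(X)$ out of the box. Your stratification-plus-$\OCAi$ argument in the second paragraph is modelled on the treatment of $\cQ(H)$ via Lemma~\ref{lemma:strat}, but that argument relies on $\cM(A)_1$ carrying a Polish (strict) topology, and there is no such structure available on $\cstu(X)_1$.

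The second and more substantive problem is the target of the lifting. You aim to lift the entire isomorphism $\Phi\colon\roeq(X)\to\roeq(Y)$ to a $^*$-isomorphism $\Psi\colon\cstu(X)\to\cstu(Y)$ modulo a compact perturbation and then cite the $\ZFC$ rigidity theorem for uniform Roe algebras. That is a much stronger statement than what is needed or what the cited techniques deliver. It is not known (and is not part of~\cite{BBFKVW.Roe} or~\cite{braga2018uniform}) that $\OCA+\MA_{\aleph_1}$ produce such a global lift; the Ulam-stability results you invoke (Theorem~\ref{FD-Ulam} for finite-dimensional algebras) are adapted to reduced products of the $\calD[\bfE]$ type, not to finite-propagation corners of a uniform Roe algebra where the algebra structure is far more entangled.

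The paper's route sidesteps both issues by a deliberate narrowing. One restricts $\Phi$ to the canonical copy of $\ell_\infty(X)/c_0(X)$ sitting inside $\roeq(X)$; this restriction is an injective $^*$-homomorphism $\ell_\infty/c_0\to\roeq(Y)$ with \emph{separable} source algebra $c_0(X)$, so $\OCA+\MA_{\aleph_1}$ (together with the Ulam-stability results such as those in~\cite{mckenney2018forcing}) produce a Borel and then a genuine $^*$-homomorphism lift $\Phi_*\colon\ell_\infty(X)\to\cstu(Y)$. The content of~\cite[\S3]{BBFKVW.Roe}---the ghost analysis you correctly identified as the crux---is then used to show that an algebraic lift on the Cartan $\ell_\infty(X)$ alone forces a coarse map $X\to Y$, and running the argument symmetrically on $\Phi^{-1}$ yields a coarse inverse. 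No lift of $\Phi$ on all of $\roeq(X)$ is produced, and none is needed.
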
 

We streamline a proof of this result. Fix two u.l.f.\ metric spaces $(X,d)$ and $(Y,\partial)$, and let $\Phi\colon\roeq(X)\to\roeq(Y)$ be an isomorphism. We focus on the algebra of propagation $0$ operators, corresponding to diagonal matrices. As characteristic functions on subsets of $X$ give diagonal matrices which in turn correspond to propagation $0$ operators, we have that $\ell_\infty(X)\subseteq\cstu(X)$. Since $\ell_\infty(X)\cap\mathcal K(\ell_2(X))=c_0(X)$, we have a canonical copy of $\ell_\infty/c_0$ sitting inside $\roeq(X)$, and therefore we have an injective $^*$-homomorphism
\[
\Phi\restriction \ell_\infty(X)/c_0(X)\colon\ell_\infty/c_0\to\roeq(Y).
\]
The methods of~\cite[\S6]{braga2018uniform} (assuming geometric assumptions on the spaces $X$ and $Y$), recently generalised to all u.l.f.\ spaces in~\cite[\S3]{BBFKVW.Roe}, show that, to get coarse equivalence, it is enough that $\Phi$ lifts on the canonical copy of $\ell_\infty(X)$.

\begin{proposition}\label{prp:liftdiagonal}
Let $(X,d)$ and $(Y,\partial)$ be u.l.f.\ metric spaces. Suppose that $\Phi\colon \roeq(X)\to\roeq(Y)$ is an isomorphism and that $\Phi\restriction \ell_\infty(X)/c_0(X)$ has a lifting $\Phi_*\colon \ell_\infty(X)\to\cstu(Y)$ which is a $^*$-homomorphism. Then $X$ and $Y$ are coarsely equivalent.
\end{proposition}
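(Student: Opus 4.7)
The plan is to convert the algebraic data carried by $\Phi_*$ into a geometric map $f\colon X\to Y$, following the spectral scheme developed in \cite[\S 3]{BBFKVW.Roe} (which removes the ghost-operator hypothesis used earlier in \cite[\S 6]{braga2018uniform}). First, for each $x\in X$ set $p_x:=\Phi_*(\chi_{\{x\}})$. Since $\chi_{\{x\}}\in c_0(X)$ and $\Phi_*$ lifts $\Phi$ modulo $\mathcal K(\ell_2(Y))$, the element $p_x$ is a projection in $\mathcal K(\ell_2(Y))\cap \cstu(Y)$, hence of finite rank. Because $\Phi_*$ is a $^*$-homomorphism, the family $(p_x)_{x\in X}$ is pairwise orthogonal, with $\sum_{x\in F}p_x=\Phi_*(\chi_F)\leq\Phi_*(1)\leq 1$ for every finite $F\subseteq X$. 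Thus we already have a pairwise orthogonal family of finite-rank projections inside $\cstu(Y)$, canonically indexed by $X$.

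The central step is a uniform localisation property: there is an $R>0$ such that for every $x\in X$ one can find $F_x\subseteq Y$ of diameter at most $R$ with $\|p_x-\chi_{F_x}p_x\chi_{F_x}\|<1/2$. Pointwise, each $p_x\in\cstu(Y)$ admits a finite-propagation approximant within any given tolerance, but a priori the propagation depends on $x$. The uniformity is extracted from the $^*$-homomorphism structure as follows: for every $r>0$, use that $(X,d)$ is u.l.f.\ to partition $X=A_1\sqcup\cdots\sqcup A_N$ into finitely many $r$-separated pieces (with $N$ depending only on the local geometry at scale $r$); then $\Phi_*(\chi_{A_i})=\sum_{x\in A_i}p_x$ is a single projection in $\cstu(Y)$ admitting a common finite-propagation approximation, and the pairwise orthogonality of the summands together with this common scale forces each $p_x$ to be essentially supported on a piece of bounded diameter, independent of $x$.

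Once uniform localisation is established, define $f(x)$ to be any element of $F_x$. To see that $f$ is coarse, fix $r>0$ and apply the same partition-and-propagation mechanism: if $d(x,x')\leq r$, both $p_x$ and $p_{x'}$ appear as atoms of the spectral decomposition of $\Phi_*$ on a set of diameter $\leq r$, and the shared propagation bound yields $\partial(f(x),f(x'))\leq S(r)$ for some function $S$ depending only on $r$ and the geometry of $Y$.

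Finally, to produce a coarse inverse $g\colon Y\to X$ we apply the symmetric construction to $\Phi^{-1}$. The subalgebra $\Phi_*(\ell_\infty(X))\subseteq \cstu(Y)$ is a masa whose minimal projections $\{p_x\}$ are finite-rank; masa-rigidity inside uniform Roe algebras (as developed in \cite[\S 3]{BBFKVW.Roe}) shows that two such ``diagonal'' copies of an $\ell_\infty$-type algebra inside $\cstu(Y)$ are compatible modulo $\mathcal K(\ell_2(Y))$, which suffices to lift $\Phi^{-1}\restriction\ell_\infty(Y)/c_0(Y)$ to a $^*$-homomorphism $\ell_\infty(Y)\to\cstu(X)$ and then repeat the previous three steps to produce $g$. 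A tracking argument, comparing $p_x$ with the corresponding dual projection $q_{f(x)}$, gives $\sup_{x\in X}d(x,g(f(x)))<\infty$ and the symmetric bound, so $f$ is a coarse equivalence. The main obstacle throughout is the uniform localisation step: turning pointwise finite-propagation approximations of each $p_x$ into a single scale $R$ requires genuinely using that $\Phi_*$ is a $^*$-homomorphism (not merely a Borel or linear lift) together with the u.l.f.\ geometry of $X$.
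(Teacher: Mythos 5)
Your architecture tracks the paper's sketch: set $p_x:=\Phi_*(\chi_{\{x\}})$, extract a map $f\colon X\to Y$ from a uniform localisation property of the family $(p_x)$, show $f$ is coarse, and obtain a coarse inverse by running the argument in reverse. The preliminary observations (each $p_x$ is a finite-rank projection in $\cstu(Y)$, the family is pairwise orthogonal, $\sum_{x\in F}p_x\le 1$) are correct, and the ``geometry from algebra'' scheme is the right one.

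The gap is precisely where you claim the uniformity. You argue that partitioning $X$ into finitely many $r$-separated pieces $A_i$ gives a single finite-propagation approximant to each $\Phi_*(\chi_{A_i})=\sum_{x\in A_i}p_x$, and that pairwise orthogonality of the summands then forces a uniform diameter bound on the supports of the $p_x$. That inference does not hold: a propagation bound on a sum of pairwise orthogonal finite-rank projections says nothing about the support diameter of the individual summands. Already on $\ell_2(\bbZ)$, the identity (propagation $0$) decomposes as $I=\sum_n p_n$ with each $p_n$ a rank-one projection onto a vector of the form $(e_n\pm e_{-n})/\sqrt 2$; the $p_n$ are pairwise orthogonal, compact, and in $\cstu(\bbZ)$, yet their supports have unbounded diameter. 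So orthogonality plus a propagation bound on the sum do not preclude spreading. The uniformity is exactly the nontrivial content of the proposition and cannot be obtained by an elementary partition-and-propagation argument. As the paper's sketch indicates, what one actually proves is $\exists\delta>0\,\forall x\in X\,\exists y\in Y\,\|\Phi_*(\chi_x)\chi_y\|>\delta$, and the proof runs through the hard result (from \cite[\S 3]{BBFKVW.Roe}, which removes the geometric hypothesis of \cite[\S 6]{braga2018uniform}) that ghost projections in uniform Roe algebras over u.l.f. spaces are compact: failure of the uniform $\delta$ lets one assemble, from an appropriate infinite $A\subseteq X$, a non-compact ghost projection $\Phi_*(\chi_A)$, a contradiction since $\Phi$ is an isomorphism. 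You cite the correct sources but substitute for their content an argument that is false; that is where the proof breaks.
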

\begin{proof}[Sketch of a proof]
Using the geometric property on $Y$ (e.g.,~\cite[\S6]{braga2018uniform}), or the novel recent results contained in~\cite[\S3]{BBFKVW.Roe}, one can prove that 
\[
(\exists\delta>0)(\forall x\in X)(\exists y\in Y) \norm{\Phi_*(\chi_x)\chi_y}>\delta.
\]
This induces a function $f\colon X\to Y$, and such a function can be proved to be coarse. Similarly, by using that $\Phi$ is an isomorphism, one applies the above reasoning to get a coarse $g\colon Y\to X$. Using that $f$ is
constructed with an eye on $\Phi$, while $g$ gets constructed from $\Phi^{-1}$, one then shows that $f$ and $g$ are mutual coarse inverses, and therefore $X$ and $Y$ are coarsely equivalent.
\end{proof}

To get a $\Phi_*$ as above, we need Forcing Axioms. Using techniques similar to the ones of \S\ref{6bii.PFAAVi}, we show that under $\OCA$ and $\MA_{\aleph_1}$, $\Phi\restriction\ell_\infty(X)/c_0(X)$ must admit a Borel lifting $\ell_\infty(X)\to\cstu(Y)$. Consequently, using Ulam-stability type of results (in particular~\cite[Proposition 7.6 and 7.8]{mckenney2018forcing}), we can show that the $^*$-homomorphism $\Phi\restriction\ell_\infty(X)/c_0(X)\to\roeq(Y)$ is algebraically trivial (in the terminology of \S\ref{S.endo}), and therefore there exists a $^*$-homomorphism $\ell_\infty(X)\to\cstu(Y)$ that lifts $\Phi$.

Theorem~\ref{T.RoeCoronas} begs the question: 
Does the solution to the rigidity problem of uniform Roe coronas depend on set theory? Here is a soft partial result. Let $X=\{n^2\mid n\in\omega\}$ with the usual metric. In this case, all operators in $\mathcal B(\ell_2(X))$ with strictly positive propagation must be compact, hence $\cstu(X)=\ell_\infty(X)+\mathcal K(\ell_2(X))$, and $\roeq(X)=\ell_\infty(X)/c_0(X)$. If $\ell_\infty(\bbN)/c_0(\bbN)$ has nontrivial automorphisms, this gives an automorphism $\Phi$ of $\roeq(X)$ such that $\Phi\restriction \ell_\infty(X)/c_0(X)$ does not have a lifting $\Phi_*\colon \ell_\infty(X)\to\cstu(X)$ which is a $^*$-homomorphism, and therefore Proposition~\ref{prp:liftdiagonal} does not apply. Therefore, consistently, not all isomorphisms of uniform Roe coronas can be lifted $^*$-homomorphically on the canonical diagonal copy of $\ell_\infty$. In this case, though, we started by setting $X=Y$. More interestingly, 
by combining the idea from Theorem~\ref{FV-coro} with an analytic argument, in \cite[Theorem~6.5]{brian2024conjugating} it was proven that there are $2^{\aleph_0}$ coarsely non-equivalent uniformly locally finite metric spaces whose uniform Roe coronas are isomorphic under $\CH$. Together with Theorem~\ref{T.RoeCoronas} this gives an example of uniformly locally finite metric spaces  such that the assertion $\roeq(X)\cong \roeq(Y)$ is independent from $\ZFC$. 

\subsection{The Higson corona}
Another interesting quotient \cstar-algebra associated to a u.l.f.\ metric space $(X,d)$ is the \emph{Higson corona}. This algebra was introduced by Higson (yet it is sometimes referred to as the Higson--Roe corona) in connection with a $K$-theoretic analysis of the Roe index theorem for non-compact Riemannian manifolds. It is studied for its relevance in the connections between topology and coarse geometry (\cite{Dranishnikov:1997wd}) and index theory (e.g. \cite[\S6 and 7]{Roe1993} and \cite{Willett:2009jx}). In general, the Higson corona gives a unique access to study the ends (or directions) of a u.l.f.\ metric space, and specifically of a finitely generated group.

A bounded function $f\colon X\to\mathbb C$ is said to be \emph{slowly oscillating} (also known as \emph{a Higson function}) if 
\[
(\forall\epsilon,r>0)(\forall^\infty x,y\in X) (d(x,y)<r\Rightarrow |f(x)-f(y)|<\epsilon).
\]
In layman's terms, slowly oscillating functions become more and more constant as we go to infinity. Slowly oscillating functions belong to $\ell_\infty(X)$, and they form a \cstar-algebra denoted by $C_h(X)$. The \emph{Higson corona} of $X$, denoted by $C_\nu(X)$, is the quotient 
\[
C_\nu(X):=C_h(X)/C_0(X)\subseteq\roeq(X).
\]
The \emph{Higson remainder} $\nu X$ is the spectrum of the Higson corona, that is, the compact topological space such that $C_\nu(X)\cong C(\nu X)$. It is a good exercise (see \cite[Proposition 3.6]{baudier2023embeddings}) to show that $C_\nu(X)$ is exactly the center of the uniform Roe corona $\roeq(X)$. 

The following is (a restricted version of) Proposition 2.41 in \cite{roe2003lectures}.
\begin{proposition}
Let $X$ and $Y$ be infinite u.l.f.\ metric spaces. Every proper\footnote{In the setting of u.l.f.\ spaces properness reduces to `finite-to-one'.} coarse map $\varphi\colon X\to Y$ extends to a continuous map $\nu\varphi\colon\nu X\to \nu Y$. Also,  $\varphi$ is a coarse equivalence if and only if $\nu\varphi$ is a homeomorphism.
\end{proposition}

In \cite{Protasov.HC}, Protasov noticed that if the u.l.f.\ metric space $X$ has \emph{asymptotic dimension zero}\footnote{Asymptotic dimension is the large-scale analog of Lebesgue covering dimension. It was introduced in \cite{Gromov:1993tr} by Gromov in the context of geometric group theory.}, then the spectrum of $C_\nu(X)$ is a Parovi\v{c}enko space (see~\S\ref{S.CHModelTheory}).
This implies that the Higson corona of $X$ is a countably saturated \cstar-algebra elementarily equivalent to $\ell_\infty/c_0$. Parovi\v{c}enko's Theorem (see Corollary~\ref{cor:Parov}), as well as Keisler's Theorem (Theorem~\ref{T.Keisler}), then gives the following:

\begin{theorem}
Assume $\CH$. Let $X$ be a u.l.f.\ metric space of asymptotic dimension zero. Then $C_\nu(X)$ is isomorphic to $\ell_\infty/c_0$.
\end{theorem}
Since there are many coarsely inequivalent u.l.f.\ metric spaces of asymptotic dimension zero (e.g., if $X=\{n^2\mid n\in\omega\}$, then $X$ and $X^2$ are not coarsely equivalent), the result above shows that isomorphism of Higson coronas cannot detect coarsely equivalence, at least under $\CH$. 

What about the other side of the coin? The problem of establishing \emph{rigidity} of Higson coronas under suitable Forcing Axioms was studied by the fourth author, who in \cite{V.Higson} proved the following:
\begin{theorem}\label{thm:Higsoncoronas}
Assume $\OCA$ and $\mathrm{MA}_{\aleph_1}$.  Let $X$ and $Y$ be u.l.f.\ metric spaces. If $X$ and $Y$ have isomorphic Higson coronas then they are coarsely equivalent.
 \end{theorem}
In addition to Theorem~\ref{thm:Higsoncoronas}, \cite{V.Higson} characterises in a strong way all $^*$-homomorphisms between Higson coronas, stating the coarse version of the weak Extension Principle (see \S\ref{S.endo}) and proving it in presence of Forcing Axioms.

\section{This paper was too short for\dots}\label{S.other}
Given the scope of the present paper, it was inevitable that some of the topics closely related to its subject matter had to be omitted. We therefore feel obliged to indicate some of these topics, in spite of the fact that every topic mentioned brings us closer to additional omitted topics, conflicting with the finiteness requirement on this paper.

 \subsubsection{Dimension phenomena}\label{dim.phenomena} The reader may have noticed some asymmetry in our treatment of Boolean algebras and \cstar-algebras. In the latter case, the quotients were of the form $\cM(A)/A$, hence we considered quotients of a variety of \cstar-algebras $\cM(A)$. In the former case we (so far) considered only the quotients of $\cP(\bbN)$. There is however a rich rigidity theory of quotients $\cM/\cI$ where $\cM$ is a closed Boolean subalgebra of $\cP(\bbN)/\Fin$ and $\cI$ is an ideal of $\cM$. For example, the rigidity of homeomorphisms between the \v Cech--Stone remainders of countable locally compact spaces (i.e., the countable ordinals) discussed in \S\ref{S.Abel} are the Stone duals of quotients $\cM/\cI$ for a Borel subalgebra $\cM$ of $\cP(\bbN)$ and a Borel ideal $\cI$ on $\cM$. Another related thread of research arose from the consideration of dimension phenomena, initiated by Eric van Douwen in~\cite{vD:Prime}. 
 
 For $n\geq 2$, the tensor product $\bigotimes_n \cP(\bbN)$ can be identified with a Borel subalgebra of $\cP(\bbN^n)$. The Stone dual of this tensor product is $(\beta\bbN)^n$. Via this observation, the isomorphisms between finite powers of $\beta\bbN\setminus \bbN$ dualise to isomorphisms between quotients $\cM/\cI$ where $\cM$ is a Borel subalgebra of $\cP(\bbN)$ and $\cI$ is a Borel ideal, and hence   belong to our rigidity program. In~\cite{vD:Prime}, van Douwen proved that $(\beta\bbN\setminus \bbN)^m$ and $(\beta\bbN\setminus \bbN)^n$ are isomorphic if and only if $m=n$. This was extended in~\cite{Fa:Dimension}. The most appealing formulation of this result is topological: for every continuous function $f$ from a product of compact Hausdorff spaces $\prod_{\xi<\kappa} X_\xi$ into $\beta\bbN$\footnote{The theorem is more general: it holds when $\beta\bbN$ is replaced by any $\beta\bbN$-space, as defined in~\cite{vD:Prime}.} there is a partition of the domain into clopen sets such that the restriction of $f$ to each one of them depends on at most one coordinate (a similar result was proved from $\OCA$ in~\cite{Just:Omega^n}). This is one of the few nontrivial facts about $\beta\bbN\setminus \bbN$ that can be proved in $\ZFC$. The only known result along these lines for \cstar-algebras is that no corona of a separable \cstar-algebras can be presented as a tensor product of infinite-dimensional \cstar-algebras (\cite{Gha:SAW*}).

\subsubsection{The role of MA, $\OCAsharp$, and Biba's trick}\label{S.OCAsharp}
In~\cite{Ve:OCA}, it was proven that MA alone does not imply that all automorphisms of $\cP(\bbN)/\Fin$ are trivial, and in~\cite{shelah1994somewhere} it was shown that in Veli\v ckovi\'c's model every automorphism is somewhere trivial (see the last paragraph of \S\ref{S.OtherModels}). This was complemented in~\cite{shelah2002martin}, where a model of MA in which nowhere trivial automorphisms exist was constructed. These results explain, to some, why Shelah's eradication of nontrivial automorphisms proceeds in two stages, by considering somewhere trivial and nowhere trivial automorphisms separately. 

 MA is not needed in order to prove that all automorphisms of the Calkin algebra are inner, because of the \emph{isometry trick} (see Lemma \ref{lemma:partialiso}). In \cite[Theorem~1]{de2023trivial} it was finally proved that $\OCA$ alone imples all automorphisms of $\cP(\bbN)/\Fin$ are trivial. The proof filters through a variant of $\OCA$ called $\OCAsharp$ that follows from $\OCA$ by a modification of Moore's proof that $\OCA$ implies $\OCAi$ (\cite{moore2021some}, \cite[Theorem~3.3]{de2023trivial}). This axiom does not have the elegance of $\OCA$, but  its complexity is generously compensated by its utility in analysing automorphisms of quotient structures, via the so-called Biba's trick. It was used in \cite{farah2024biba} to prove that $\OCA$ and $\MAsigmalinked$ together imply the conclusion of the OCA Lifting Theorem (Theorem~\ref{thm:ilijasOCAideals}) and to show that all isomorphisms between quotients over countably $80$-determined ideals on $\bbN$ are topologically trivial, partially answering Question~\ref{Q.OCAlt} from an earlier version of this paper.

\subsubsection{The cardinality of the group of automorphisms of $\cP(\bbN)/\Fin$} In~\cite{steprans2003autohomeomorphism} it was proven that, in the presence of a topologically nontrivial automorphism, it can be any regular cardinal between $\mathfrak c$ and $2^{\mathfrak c}$.

\subsubsection{A `commutative' version of the BDF question} An analog of Question~\ref{ques:bdf} has attracted some attention. Consider the action of $\bbZ$ on $\cP(\bbN)/\Fin$ where 
\[
n.[A]=[\{j+n\mid j\in A\}]
\]
 for $n\in \bbN$ and $A\in \cP(\bbN)$. Is there an automorphism $\Phi$ of $\cP(\bbN)/\Fin$ such that $1.\Phi([A])=\Phi(-1.[A])$ for all $A$?\footnote{It should be noted that a positive answer to this question would not be a progress towards solving Question~\ref{ques:bdf}. This is because Truss's theorem (see Example~\ref{Ex.AlCoMac}) implies that if an automorphism of the Calkin algebra sends the atomic masa $\ell_\infty(\bbN)/c_0$ to itself then its restriction to the normaliser of the atomic masa is implemented by a unitary; see~\cite[Notes to \S 17.9]{Fa:STCstar}.\label{footnote:Truss}} Such $\Phi$ cannot be trivial. Numerous partial positive results on this problem have been obtained in~\cite{geschke2010shift} and~\cite{silveira2016quotients}. This and related problems were tackled from the dynamic side in~\cite{brian2020isomorphism} and~\cite{brian2019universal}. A positive answer to this question was announced in \cite{brian2024does}.

 \subsubsection{Near actions}
 The concept of near action studied in~\cite{cornulier2019near} is closely related and highly relevant to questions on the rigidity of uniform Roe coronas associated with countable groups (see \S\ref{S.Roe} and~\cite[Example~9.4]{braga2018uniform}). 
 
\subsubsection{Rigidity for ultrapowers} \label{S.Ultrapowers} The abstract and general setup outlined in~\S\ref{S.Intro} is an attempt to capture the notion of a definable (Borel) quotient structure whose properties (automorphism group in particular) are sensitive to the choice of additional set-theoretic axioms. Needless to say, some quotient structures of general interest are not covered by our setup---for example, ultraproducts (see also \S\ref{S.Hardy}). 
 Throughout this survey, we considered only structures $\cM/E$ for a Borel $\cM$ and a Borel $E$. An important instance of the rigidity question is obtained by relaxing the Borelness requirement on~$E$, is the rigidity question for ultrapowers. For a given Borel structure $\cM$ (belonging to an axiomatisable category, so that the ultrapowers are defined) consider the Borel structure $\cM^\bbN$. If $\cU$ is an ultrafilter on $\bbN$, then the ultrapower $\cM_\cU$ of $\cM$ associated with $\cU$ is of the form $\cM^\bbN/E_\cU$ for an appropriately defined~$E_\cU$. The standard saturation arguments\footnote{We assume that the ultrafilters $\cU$ and $\cV$ are nonprincipal.} (\S\ref{S.CHModelTheory}) implies that, under $\CH$, $\cM_\cU$ has~$2^{\aleph_1}$ automorphisms and that $\cM_\cU$ and $\cN_\cV$ are isomorphic if and only if $\cM$ and $\cN$ are elementarily equivalent. Whether this conclusion is true without the $\CH$ depends on whether the theory of~$\cM$ is stable or not (see~\cite{FaHaSh:Model2} and~\cite{FaSh:Dichotomy}). 
 
 Much deeper rigidity results appear in Shelah's series of papers (\cite{shelah1992vive, shelah1994vive, shelah2008vive}). In the third paper it was proved that, relatively consistently with ZFC, there exists a nonprincipal ultrafilter $\cU$ on $\bbN$ such that if $\cM$ and $\cN$ are models of the canonical theory of the independence property then every isomorphism between $\cM_\cU$ and $\cN_\cU$ is topologically trivial (and even a product isomorphism). To say that the ramifications of this remarkable result have not been explored would be a gross understatement. 
 
 \subsubsection{Maximal Hardy fields} \label{S.Hardy} Another class of examples not covered by our setup, well-known to model-theorists but largely overlooked by contemporary set-theorists, is given by maximal Hardy fields (\cite{aschenbrenner2023filling}). Being defined as \emph{maximal} subfields of the ring of germs at $+\infty$ of real functions differentiable on an end-segment of $\bbR$ and closed under differentiation, their construction involves the Axiom of Choice. The exact analogy with the second part of \META{}.\ref{1.item.1} holds: CH implies that all maximal Hardy fields are isomorphic (\cite[Corollary~B]{aschenbrenner2023filling}). Since the proof involves a lemma resembling countable saturation (\cite[Lemma~10.1]{aschenbrenner2023filling}), there is a possibility that the analogy extends to \META{}.\ref{1.item.2} and that forcing axioms (or some other additional set-theoretic axioms) imply the existence of non-isomorphic maximal Hardy fields. It should be noted that CH implies that all maximal Hardy fields are isomorphic to a canonical object, the ordered field $\mathbf{No}(\omega_1)$ of surreal numbers of countable length. 
 Analogous remarks apply to maximal analytic Hardy fields (\cite[Theorem~A]{aschenbrenner2023maximal}), and \cite[\S 8]{aschenbrenner2023maximal} contains several questions about the structure of maximal analytic Hardy fields with strong set-theoretic flavour.

 \subsubsection{Borel liftings of the measure algebra}
A well-known question on the boundary of our framework is: Do forcing axioms imply that the measure algebra has no Borel lifting (see~\cite{Sh:PIF} for an oracle-cc consistency proof)? Any attempt at sealing gaps in the algebra Borel$/\Null$ (where $\Null$ denotes the $\sigma$-ideal of Lebesgue negligible subsets of $\bbR$) is hindered by the fact that, because of the countable chain condition, the gaps produced to witness the fact that a given partial Borel lifting cannot be extended to a Borel lifting are countable. All presently known gap-sealing techniques rely on uncountable combinatorics and the methods going back to~\cite{Hau:Summen},~\cite{Luz:Chastyah}, and~\cite{Ku:Gaps}. It is not known whether (uncountable) gaps can be frozen outside of the context of certain $F_{\sigma\delta}$ ideals on~$\bbN$; see~\cite{Fa:Luzin}. 

\subsubsection{Maximal rigidity}
In Example~\ref{Ex.AlCoMac} we saw that by \cite{truss1996recovering} another quotient group, $S_\infty/H$, (where $H$ denotes the normal subgroup of all finitely supported permutations of $\bbN$) has the property that (provably in $\ZFC$) each of its automorphisms is given as conjugation by an almost permutation, and in particular it has a continuous lifting. One consequence of this strong rigidity result is the following. Fix a pair $f$, $g$ in $S_\infty$, and let $\cM=(S_\infty,f)$ and $\cN=(S_\infty,g)$. Then $\cM/H$ and $\cN/H$ are isomorphic if and only if $f$ and $g$ are conjugate (by an almost permutation) in the quotient, and in particular the question of the isomorphism of these structures is independent from $\ZFC$. 
\begin{question}
What other structures (or categories) have this rigid behaviour, and is it possible to give a general characterisation of such structures? 
 \end{question}

\section{Absoluteness}\label{S.Absoluteness}
\enumtwo 
In this, concluding, section we discuss the necessity of set theory and the relevance of model theory to the rigidity problems discussed in the earlier parts of this survey. We also outline a very general framework for rigidity problems of this sort, extending one that the senior author has been promoting for years (see e.g.,~\cite{Fa:AQ},~\cite{Fa:Rigidity},~\cite{Fa:Liftings}).

\subsubsection{Absoluteness of topologically trivial isomorphisms} A strong evidence of the merit of the notion of a topologically trivial isomorphism (Definition~\ref{Def.Trivial}) is given by an absoluteness argument. By counting quantifiers, one sees that if $\cM/E$ and $\cN/F$ are Borel quotients in the same countable signature and $F\colon \cM\to \cN$ is Borel, then the assertion ``$F$ lifts an isomorphism'' is $\bPi^1_2$ and therefore subject to Shoenfield's Absoluteness Theorem (e.g.,~\cite{Kana:Book}). Similarly, the assertion ``$F$ lifts an embedding'' is $\bPi^1_1$, and therefore the assertion ``There is an embedding of $\cM/E$ into $\cN/F$ with a Borel lifting'' is absolute between transitive models of $\ZFC$ that contain all countable ordinals. The assertion ``There is an isomorphism between $\cM/E$ and $\cN/F$ with a Borel lifting'' is $\bSigma^1_3$, and therefore absolute between forcing extensions, at least once one assumes the existence of class many measurable cardinals.

\subsection{The general rigidity problem (again)} The problem introduced in \S\ref{S.Intro} can be construed as follows. Given a Borel structure $\cM$ (in this paper we mostly considered Boolean algebras and \cstar-algebras, only because these are the contexts in which we had something to say), consider the space of congruences of $\cM$. In our case, congruences were given by ideals, but depending on the category one can consider normal subgroups or arbitrary equivalence relations that are congruences with respect to the algebraic and relational structure of $\cM$. For a pair of Borel structures $\cM$ and $\cN$ of the same signature and congruences $\cI,\cJ$ of $\cM$ and $\cN$, respectively, consider the following two questions. 
\begin{enumerate}
\item\label{10.abs.1} Is the assertion that $\cM/\cI\cong \cN/\cJ$ relatively consistent with $\ZFC$? 
\item\label{10.abs.2} Is the assertion that $\cM/\cI\cong \cN/\cJ$ provable in $\ZFC$? 
\end{enumerate}
We start with \ref{10.abs.1}, deferring the discussion of (a modified form of) \ref{10.abs.2} to \S\ref{S.Non-isomorphism}. 
If $\cI$ and $\cJ$ are Borel, or even projective, then the assertion that $\cM/\cI\cong \cN/\cJ$ is a $\Sigma^2_1$ statement: 
\begin{quote}
There exists $f\colon \cM\to \cN$ such that $\Phi_f([a]_\cI)=[f(a)]_\cJ$ is well-defined and an isomorphism between $\cM/\cI$ and~$\cN/\cJ$. 
\end{quote}
By Woodin's $\Sigma^2_1$-absoluteness theorem (see e.g.,~\cite{Lar:Stationary}), if~\ref{10.abs.1} can be forced to hold and there are class many measurable Woodin cardinals in the universe, then $\cM/\cI\cong \cN/\cJ$ in every forcing extension that satisfies $\CH$. For metamathematical reasons, this is strictly weaker than the assertion that if $\cM/\cI\cong \cN/\cJ$ is relatively consistent with $\ZFC$ then $\CH$ implies it. Nevertheless, this observation usually gives a good lead on how to approach a problem at hand, shifting focus from set-theoretic methods to model-theoretic analysis of the theories of $\cM/\cI$ and $\cN/\cJ$, as well as the saturation properties of these structures.\footnote{As we have seen in \S\ref{S.Cohomology}, in the case of \cstar-algebras model theory has to be supplemented by other methods.}

\subsubsection{Absoluteness of model-theoretic properties} The theory of a given structure is absolute between transitive models of $\ZFC$. However, if the structure in question is Borel, then it is re-evaluated from its Borel code in every model. This process results in not necessarily isomorphic structures. 

\begin{proposition}\label{P.10.1} Suppose that $\cM$ is a projective structure (of classical, discrete, logic) and $\cI$ is a Borel congruence on $\cM$. For every formula $\varphi(\bar x)$ of the signature of $\cM$ and every tuple $\bar a$ in $\cM/\cI$ of the appropriate sort, the assertion
\[
\cM/\cI\models \varphi(\bar a)
\]
is a projective statement. 
\end{proposition}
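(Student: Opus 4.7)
The plan is to prove this by induction on the complexity of the formula $\varphi(\bar x)$. First I would fix Polish spaces $X_s$ (one for each sort $s$) in which the sorts of $\cM$ sit as projective subsets, so that elements of $\cM/\cI$ are canonically coded by their representatives in $\cM$. The target statement is then that, for each $\varphi(\bar x)$ with $n$ free variables, the set
\[
S_\varphi = \bigl\{\bar b \in \cM^n \mid \cM/\cI \models \varphi([b_1]_\cI, \ldots, [b_n]_\cI)\bigr\}
\]
is a projective subset of $\cM^n$ (equivalently, the intersection of a projective subset of $\prod_i X_{s_i}$ with $\cM^n$). Since $\cI$ is a congruence, $S_\varphi$ is automatically invariant under replacing each coordinate by an $\cI$-equivalent one, so the statement $\cM/\cI\models \varphi(\bar a)$ does not depend on the choice of representatives.

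For the base case, consider an atomic formula. If it is $t(\bar x) = s(\bar x)$ for terms $t,s$ of the signature, then $\cM/\cI\models t(\bar b)=s(\bar b)$ translates to $t^\cM(\bar b)\,\cI\,s^\cM(\bar b)$, where $t^\cM$ and $s^\cM$ are compositions of the projective functions of $\cM$ and hence have projective graphs; combined with the Borel relation $\cI$, this produces a projective set. If the atomic formula is of the form $R(t_1(\bar x), \ldots, t_k(\bar x))$ for a projective relation symbol $R$, then $R^\cM$ is a projective subset of $\cM^k$ which is $\cI$-invariant in each coordinate (because $\cI$ is a congruence), and the preimage under the projective map $\bar b \mapsto (t_1^\cM(\bar b),\ldots,t_k^\cM(\bar b))$ is again projective.

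For Boolean combinations, I would use the closure of the projective pointclass under finite unions, intersections, and complements: $S_{\varphi\wedge\psi}=S_\varphi\cap S_\psi$, $S_{\varphi\vee\psi}=S_\varphi\cup S_\psi$, $S_{\neg\varphi}=\cM^n\setminus S_\varphi$. For the quantifier step, observe that
\[
\cM/\cI \models \exists y\,\psi(y,\bar a) \iff (\exists c\in \cM)\; \cM/\cI\models \psi([c]_\cI,\bar a),
\]
so $S_{\exists y\,\psi}$ is the projection of $S_\psi\subseteq \cM^{n+1}$ onto the last $n$ coordinates (intersected with $\cM^n$). Since projective sets are closed under projection along coordinates of a Polish space, and $\cM$ itself is projective, the resulting set is projective. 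The universal case follows by combining this with complementation.

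There is no serious obstacle here; the argument is a routine induction resting on the three closure properties of the classical projective hierarchy (Boolean operations, projections, preimages under projective maps). The only point that requires care is the observation that quantification over the (projective, non-Polish) set $\cM$ preserves projectiveness: this is because $\exists c\in\cM$ is the same as $\exists c\in X_s$ together with the projective side condition $c\in \cM$, and the conjunction and projection of projective sets are projective. Once this is noted, the induction runs through without further issue.
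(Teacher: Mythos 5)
Your proof is correct and follows the same approach as the paper's sketch: induction on formula complexity, with atomic formulas handled via the projective/Borel data and quantifiers handled via projection over the (projective) universe $\cM$. The paper additionally records a sharper calibration (when $\cM$ is Borel and $\cI$ is Borel, a $\Sigma_n$ formula yields a $\Sigma^1_n$ assertion), which your argument does not track explicitly but would recover by the same bookkeeping.
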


We will sketch the proof of a slightly more precise statement in case $\cI$ is Borel. If $\varphi$ is quantifier-free, then the assertion is clearly $\Delta^1_1$. By induction on complexity of $\varphi$ one shows that if $\varphi(\bar x)$ is $\Sigma_n$ ($\Pi_n$) then $\cM/\cI\models \varphi(\bar a)$ is $\Sigma^1_n$ ($\Pi^1_n$). The proof in the case when $\cI$ is projective is similar.

An analogous proof gives Proposition~\ref{P.10.2} below. The (hopefully inoffensive) phrase `Borel metric structure' is short for `a structure with a metric signature which is also Borel'. 

\begin{proposition} \label{P.10.2} Suppose that $\cM$ is a Borel metric structure and $\cI$ is a Borel congruence on $\cM$. For every formula $\varphi(\bar x)$ of the signature of $\cM$, every $r\in \bbR$, and every tuple $\bar a$ in $\cM/\cI$ of the appropriate sort, the assertion
\[
\cM/\cI\models \varphi(\bar a)\leq r
\]
is a projective statement. More precisely, if $\cI$ is Borel and $\varphi$ is $\Sigma_n$ ($\Pi_n$) then this assertion is $\Sigma^1_n$ ($\Pi^1_n$). 
\end{proposition}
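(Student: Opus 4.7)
The plan is to induct on the logical complexity of $\varphi$, closely mirroring the proof sketched for Proposition~\ref{P.10.1} in the discrete case. Throughout, I identify elements of $\cM/\cI$ with representatives in $\cM$ via the Borel quotient map, so the assertion ``$\cM/\cI\models \varphi(\bar a)\leq r$'' becomes a predicate on $\cM^k\times \bbR$ whose pointclass we analyze. We adopt the convention (as in classical logic) that $\Sigma_n,\Pi_n$ refer to formulas in prenex normal form, so that continuous connectives are absorbed into the quantifier-free layer at the base.

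Base case: If $\varphi$ is quantifier-free, then since $\cM$ is a Borel metric structure all atomic predicates are Borel, and the continuous connectives used to build $\varphi$ are continuous (hence Borel) functions on their arguments; thus $\varphi^{\cM}\colon \cM^k\to \bbR$ is Borel. Because $\cI$ is a congruence, $\varphi^{\cM/\cI}([\bar a])=\varphi^{\cM}(\bar a)$ for any representative, so the sublevel set $\{\bar a\mid \varphi^{\cM/\cI}([\bar a])\leq r\}$ is Borel, i.e., at level $\Pi^1_0 = \Sigma^1_0$.

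Inductive step: Assume the claim at levels $\Sigma_n,\Pi_n$, and let $\varphi(\bar x)=\inf_{\bar y}\psi(\bar x,\bar y)$ with $\psi$ of class $\Pi_n$. The real inequality $\varphi^{\cM/\cI}([\bar a])\leq r$ unfolds into the countable conjunction
\[
\forall \varepsilon\in \bbQ_{>0}\;\exists \bar b\in \cM\colon \psi^{\cM/\cI}([\bar a],[\bar b])\leq r+\varepsilon.
\]
By induction the inner matrix is $\Pi^1_n$ in $(\bar a,\bar b)$; the existential real quantifier over $\bar b\in \cM$ (a Borel set, hence a standard Borel space) raises the complexity to $\Sigma^1_{n+1}$; and the countable intersection over rational $\varepsilon>0$ preserves $\Sigma^1_{n+1}$ by the closure properties of the projective pointclasses. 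Dually, for $\varphi=\sup_{\bar y}\psi$ with $\psi$ of class $\Sigma_n$, the condition collapses to $\forall \bar b\in\cM\,\psi^{\cM/\cI}([\bar a],[\bar b])\leq r$, which is $\Pi^1_{n+1}$.

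I do not anticipate a genuine obstacle; the only points that deserve care are (i) replacing the real quantifier $\forall \varepsilon>0$ by the equivalent countable quantifier over positive rationals so as not to inflate the pointclass, (ii) verifying that continuous connectives do not leak out of the base case — which is precisely why we reduce to prenex normal form at the outset — and (iii) checking that when $\cI$ is only projective, say at level $\Sigma^1_m$ or $\Pi^1_m$, the same induction goes through with all pointclass computations shifted by $m$, yielding the general projective conclusion of the first assertion of the proposition.
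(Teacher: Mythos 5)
Your proof follows the same induction-on-complexity scheme the paper sketches for the discrete Proposition~\ref{P.10.1} and simply labels ``analogous'' for the metric case. You correctly identify the single genuinely metric wrinkle the paper leaves implicit, namely that $\inf_{\bar y}\psi\leq r$ must be unfolded as the countable conjunction over rational $\varepsilon>0$ of $\exists\bar b\,(\psi\leq r+\varepsilon)$ before the real existential can be counted, and your observation that countable intersections do not increase the projective level does the rest; the dual $\sup$ case needs no such unfolding because $\sup\leq r$ is literally a universal statement. The pointclass bookkeeping, including the base case and the closure properties invoked, is accurate, and the remark that the projective case for $\cI$ simply shifts every index by a constant is the intended reading of the final sentence of the proposition.

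One caveat worth stating more carefully than you do: unlike classical first-order logic, continuous logic has no general prenex normal form theorem, since an arbitrary continuous connective need not commute (or anticommute) with $\inf$/$\sup$, so ``we reduce to prenex normal form at the outset'' is not literally available for all formulas. This does not affect the ``more precisely'' clause, which by its wording only concerns $\Sigma_n$ and $\Pi_n$ formulas and is what your induction proves. For the unconditional first clause (``for every formula $\varphi$, the assertion is projective''), the remedy is a slight strengthening of the inductive hypothesis: track the projective complexity of the graph of the evaluation map $\bar a\mapsto\varphi^{\cM/\cI}([\bar a])$ rather than of a single sublevel set, so that for a connective $f(\psi_1,\ldots,\psi_k)$ the set $\{\varphi\leq r\}$ is recovered as the preimage of the closed set $f^{-1}((-\infty,r])$ under a projectively measurable map. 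Each $\inf$/$\sup$ then raises the level by one and each connective preserves it, giving projectivity at a level bounded by the quantifier depth of $\varphi$.
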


The assumption of Corollary~\ref{C.10.3} is, unlike the (to an untrained ear, similar-sounding) assumption of Woodin's $\Sigma^2_1$ absoluteness theorem, within reach of inner model theory (\cite{Nee:Inner}) and therefore fairly innocuous.\footnote{By $\Th(A)$ we denote the theory of a structure $A$.} 

\begin{corollary} \label{C.10.3} Suppose that there exist class many measurable Woodin cardinals. If $\cM$ is a Borel structure (discrete or metric) and $\cI$ is a projective congruence on $\cM$, then neither $\Th(\cM/\cI)$ nor the truth of the assertion `$\cM/\cI$ is countably saturated' can be changed by forcing. 
\end{corollary}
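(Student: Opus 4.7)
The plan is to deduce both conclusions from projective generic absoluteness, which under class many Woodin cardinals extends Shoenfield's absoluteness from $\bSigma^1_2$ statements to every projective statement about a real parameter: for any such statement the truth value is identical in $V$ and in any set-generic extension $V[G]$. Fixing a Borel code for $\cM$ and a projective code for the congruence $\cI$, the strategy is simply to verify that both ``$\varphi \in \Th(\cM/\cI)$'' and ``$\cM/\cI$ is countably saturated'' can be formalized as projective statements about these codes, whose complexity is controlled by that of $\varphi$ and of $\cI$.

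For the theory, Propositions \ref{P.10.1} and \ref{P.10.2} (applied with a projective congruence $\cI$, using the remark following \ref{P.10.2}) assert that for each sentence $\varphi$ in the signature and each $r \in \bbR$, the condition $\varphi^{\cM/\cI} \le r$ (respectively, $\cM/\cI\models\varphi$ in the discrete case) is a projective formula in the parameters coding $\cM$ and $\cI$. Since the signature is countable and projective truth is preserved by set forcing, every such condition has the same truth value in $V$ and $V[G]$; whence $\Th((\cM/\cI)^V) = \Th((\cM/\cI)^{V[G]})$.

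For countable saturation, a countable type $t(\bar x)$ over $\cM/\cI$ is naturally coded by a real: a sequence of formulas together with a countable sequence of parameters from the Polish space $\cM$. The satisfiability of $t$ reads, with the usual approximate satisfaction clause, as ``for every finite $t_0\subseteq t$ and every rational $\varepsilon>0$ there exists a tuple in $\cM$ of the appropriate sort whose image in $\cM/\cI$ satisfies each condition in $t_0$ up to $\varepsilon$'', while realization of $t$ is an existential quantification over tuples in $\cM$ asserting exact satisfaction. By Propositions \ref{P.10.1} and \ref{P.10.2} each basic clause is projective, so combining them with a universal quantifier over codes for countable types yields a projective statement, whose truth value is again absolute between $V$ and $V[G]$.

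The one point that demands attention is conceptual rather than technical: in $V[G]$ the Borel code for $\cM$ and the projective code for $\cI$ are reinterpreted as strictly larger Polish spaces and relations, so that $(\cM/\cI)^{V[G]}$ properly extends $(\cM/\cI)^V$ in general. The content of the corollary is that, despite this enlargement, the projective assertions formalized above take the same truth value in both models --- which is exactly what generic projective absoluteness from class many Woodin cardinals delivers. The routine inductive bookkeeping needed to bound the projective complexity of the saturation statement in terms of that of $\cI$ mirrors the inductions underlying Propositions \ref{P.10.1} and \ref{P.10.2} and poses no substantive difficulty.
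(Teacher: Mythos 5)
Your treatment of the theory part is essentially the paper's: formula by formula, Propositions \ref{P.10.1} and \ref{P.10.2} give a projective statement whose complexity is controlled by the quantifier rank of the individual $\varphi$, and that is absolute under class many Woodins; so $\Th(\cM/\cI)$ is preserved by forcing.

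The saturation part has a genuine gap. You assert that ``combining them with a universal quantifier over codes for countable types yields a projective statement,'' but this is exactly the step that fails, and the paper explicitly flags the obstruction: a countable type may contain formulas of unbounded quantifier complexity, so the projective bounds from Propositions \ref{P.10.1}--\ref{P.10.2} (which scale with $n$ for $\Sigma_n/\Pi_n$ formulas) do not settle at any fixed level of the projective hierarchy. The resulting assertion ``every consistent countable type over $\cM/\cI$ is realized'' is therefore \emph{not} projective. It \emph{is}, however, a statement expressible in $L(\bbR)$. The paper's proof replaces projective generic absoluteness (which is insufficient here) with Woodin's theorem that, granted class many Woodin cardinals, the theory of $L(\bbR)$ cannot be changed by set forcing; that stronger form of absoluteness is what makes the saturation clause go through. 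To repair your proof you should drop the claim that saturation is projective, observe instead that it is an $L(\bbR)$-statement with parameters from the ground model reals (the Borel code for $\cM$ and the projective code for $\cI$), and invoke $L(\bbR)$-generic absoluteness rather than projective absoluteness.
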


Note that the theory of the standard model of second-order arithmetic, which is a Borel structure (see e.g.,~\cite{Kana:Book}) can be changed by forcing if no large cardinals are assumed. 

For the theory of $\cM/\cI$ this is a consequence of Proposition~\ref{P.10.1}, Proposition~\ref{P.10.2}, and the fact that if there are class many Woodin cardinals then the truth in $L(\bbR)$ cannot be changed by forcing (e.g.,~\cite{Kana:Book}). 

Countable saturation of $\cM/\cI$ is not a projective statement since there is no guarantee that all formulas of the type in question will be $\Sigma_n$ for a fixed $n$. Nevertheless, it is clearly a statement of $L(\bbR)$, and Corollary~\ref{C.10.3} follows.

Under what conditions is the quotient $\cM/\cI$ as in Corollary~\ref{C.10.3} countably saturated? As we have seen in \S\ref{S.6aii}, under general conditions one can only expect a restricted form of saturation, such as the degree-1 saturation in the category of \cstar-algebras. It is tempting to conjecture that if $\cI$ is sufficiently ergodic---in the sense that each equivalence class $[a]_\cI$ is dense in the sort of $a$ as interpreted in $\cM$---then $\cM/\cI$ has some (nontrivial and useful) form of countable saturation. 

\subsubsection{BDF}
The Brown--Douglas--Filmore question (\S\ref{BDF}), whether there exists an automorphism of the Calkin algebra that sends the unilateral shift to its adjoint, can be considered within the framework of~\ref{10.abs.1}. Considering $\cM=(\cB(H),s)$ and $\cN=(\cB(H),s^*)$ as structures in the language of \cstar-algebras expanded by a constant interpreted by the shift or by its adjoint, the question is whether $\cM/\cK(H)$ is isomorphic to $\cN/\cK(H)$. The lamentable status of this question can be summarised in two remarks. First, it is not even known whether these two structures are elementarily equivalent. Second, even if they were, the Calkin algebra---and therefore each of these two structures---fails even to be countably homogeneous (see \S\ref{S.6aii}). Nevertheless, Woodin's $\Sigma^2_1$ absoluteness theorem suggests that assuming the Continuum Hypothesis may help. In addition, forcing $\CH$ without adding reals preserves $\Sigma^2_1$ statements and therefore assuming $\CH$ does no harm in this context.

\subsubsection{Non-isomorphism}\label{S.Non-isomorphism}
In order to avoid metamathematical detours, instead of question~\ref{10.abs.2} we consider a question that is, given the current state of the art in set theory, easier to analyse: When does $\cM/\cI\not\cong \cM/\cJ$ hold in a forcing extension? 

\begin{conjecture} \label{10.C.1} Suppose that there exist class many supercompact cardinals.\footnote{The lavish large cardinal assumption assures that every set-forcing extension has a further set-forcing extension that satisfies Martin’s Maximum.} Suppose that $\cM$ and $\cN$ are Borel structures (discrete or metric) of the same signature and that $E$ and $F$ are Borel congruences on $\cM$ and $\cN$, respectively. If $\cM/E$ and $\cN/F$ are non-isomorphic in some forcing extension, then they are non-isomorphic in every forcing extension that satisfies Martin's Maximum (MM).
\end{conjecture}

In this context, the choice of MM, provably the strongest forcing axiom,\footnote{Sort of---see~\cite{viale2015category}.} may appear to be arbitrary, but the recent unification of forcing axioms with Woodin's axiom (*) in~\cite{aspero2021martin} gives additional weight to the view that forcing axioms provide a coherent alternative to the $\CH$. In all the (many!) instances in which Conjecture~\ref{10.C.1} has been confirmed (\S\ref{6bii.PFAAVi}), $\OCAsharp$ and MA (see \S\ref{S.OCAsharp}) suffice. As we have seen in \S\ref{6bi} and \S\ref{6bii.PFAAVi}, in the case of Boolean algebras $\cP(\bbN)/\cI$ for a Borel ideal $\cI$, a confirmation of Conjecture~\ref{10.C.1} relies on whether gaps in the corresponding Boolean algebra can be frozen (i.e., can be made indestructible by an $\aleph_1$-preserving forcing) in $\cP(\bbN)/\cI$. This extends, using $\OCAsharp$ in place of $\OCA$, to reduced products of fields, linear orders, trees, and sufficiently random graphs (\cite{de2023trivial}, see \S\ref{S.fields-etc}). As noted in \S\ref{6bi}, in~\cite{FaSh:Trivial} and~\cite{Gha:FDD} it was proved that there exists forcing extensions of the universe in which every isomorphism 
\[
\Phi\colon \cP(\bbN)/\cI\to \cP(\bbN)/\cJ
\]
has a continuous lifting. Since counting the quantifiers shows that the existence of such isomorphism is a $\Sigma^1_3$ statement, granted a mild large cardinal axiom (class many measurable cardinals, see e.g.,~\cite{Kana:Book}), the existence of a continuous lifting of $\Phi$ is absolute between set-forcing extensions. Therefore in these models two quotients of the form $\cP(\bbN)/\cI$ are isomorphic if and only if they are isomorphic in some forcing extension. It is however not clear whether this conclusion follows from forcing axioms. In the case when $\cI$ is an $F_{\sigma\delta}$ ideal, the results of~\cite{Fa:Luzin} come close to confirming Conjecture~\ref{10.C.1} (see Question~\ref{Q.countablydetermined}). The most frustrating instance of this conjecture is when $\cI$ is the Fubini product $\Fin\otimes\Fin=\{A\subseteq \bbN^2\mid (\forall^\infty m)(\forall^\infty n) (m,n)\notin A\}$ (as common, for convenience $\cP(\bbN)$ is replaced with the isomorphic $\cP(\bbN^2)$.)

\bibliographystyle{plain}
\bibliography{ifmainbib}
\end{document}